\DeclareMathOperator{\Fac}{Fac}
\DeclareMathOperator{\N}{\mathbb{N}}
\DeclareMathOperator{\Z}{\mathbb{Z}}
\newcommand{\infw}[1]{\mathbf{#1}}
\newcommand{\fc}[1]{\mathsf{p}_{#1}}
\newcommand{\bc}[2]{
	\expandafter\ifstrequal\expandafter{#2}{1}{%
		 \mathsf{a}_{#1} }{%
		 \mathsf{b}_{#1}^{(#2)} }
	}
\DeclareMathOperator{\pref}{pref}
\DeclareMathOperator{\suff}{suff}
\DeclareMathOperator{\am}{\mathcal{A}_m}
\DeclareMathOperator{\ams}{\mathcal{A}^*_m}
\DeclareMathOperator{\sm}{\sigma_{m}}
\newcommand{\smn}[1]{\sigma_m^{#1}}
\newcommand{\mi}[1]{\overline{#1}}
\declaretheorem[numberwithin=section]{theorem}
\declaretheorem[sibling=theorem]{lemma,corollary,proposition,conjecture}
\declaretheorem[sibling=theorem,style=definition]{example,definition,remark}
\declaretheorem[refname={Claim,Claims},Refname={Claim,Claims}]{claim}
\declaretheoremstyle[
    headfont=\normalfont\itshape, 
    bodyfont = \normalfont,
    qed=$\blacksquare$, 
    headpunct={:}]{claimproofstyle} 
\declaretheorem[name={Proof of claim}, style=claimproofstyle, unnumbered]{claimproof}
\crefname{equation}{}{}
\title{Computing the $k$-binomial complexity of generalized Thue--Morse words}
\author[1]{M. Golafshan}
\author[1]{M. Rigo\thanks{The first two authors are supported by the FNRS Research grant T.196.23 (PDR)}}
\author[2]{M. A. Whiteland\thanks{Part of the work was performed while affiliated with Univeristy of Liège and supported by the FNRS Research grant 1.B.466.21F}}
\affil[1]{Department of Mathematics, University of Li\`ege, Li\`ege, Belgium\\\texttt{\{mgolafshan,m.rigo\}@uliege.be}}
\affil[2]{Department of Computer Science, Loughborough University, Epinal Way, LE11 3TU Loughborough, Leicestershire, United Kingdom\\\texttt{m.a.whiteland@lboro.ac.uk}}
\date{}
\begin{document}
\maketitle            

\begin{abstract}
   Two finite words are $k$-binomially equivalent if each subword (i.e., subsequence) of
  length at most $k$ occurs the same number of times in both
  words. 
  The $k$-binomial complexity of an infinite word is a function that maps the integer  $n\geqslant 0$ to the number of $k$-binomial equivalence classes represented by its factors of length $n$.  

The Thue--Morse (TM) word and its generalization to larger
  alphabets are ubiquitous in mathematics due to their rich combinatorial properties.
  This work addresses the $k$-binomial
  complexities of generalized TM words.
  Prior research by Lejeune, Leroy, and Rigo determined the $k$-binomial complexities of the $2$-letter TM word. For larger alphabets,
  work by Lü, Chen, Wen, and Wu determined the $2$-binomial
  complexity for $m$-letter TM words, for arbitrary $m$, but the exact behavior for $k\geqslant 3$
  remained unresolved. They conjectured that the $k$-binomial complexity function of the $m$-letter TM word is eventually periodic with period $m^k$.
  
  We resolve the conjecture positively by deriving explicit formulae for the $k$-binomial complexity functions for any generalized TM word. We do this by characterizing $k$-binomial equivalence among factors of generalized TM words.
  This comprehensive analysis not only solves the open conjecture, but also develops tools such as abelian Rauzy graphs.
\end{abstract}

\tableofcontents

\section{Introduction}

The Thue--Morse infinite word (or sequence) $\infw{t}_2=011010011001\cdots$ is the fixed point of the morphism $\sigma_2:0\mapsto 01, 1\mapsto 10$ starting with $0$. It was originally constructed by A.~Thue in the context of avoidable patterns. It does not contain any overlap of the form $auaua$ where $a\in\{0,1\}$ and $u\in\{0,1\}^*$. This word was later rediscovered by M.~Morse while studying differential geometry and geodesics on surfaces of negative curvature \cite{Morse1921}. The study of non-repetitive structures is fundamental in combinatorics. See references \cite{MR4046776,MR3032928}
for further details.
 The Thue--Morse word has found applications across a wide range of fields including mathematics, physics, economics, and computer science \cite{JTNB_2015__27_2_375_0,ubiquitous}. In number theory, the word is linked to the Prouhet--Tarry--Escott problem \cite{MR104622}. Additionally,  L.~Mérai and A.~Winterhof
 have analyzed its pseudo-random characteristics; see e.g., \cite{Merai}. The Thue--Morse word also emerges in physics as an example of an aperiodic structure that exhibits a singular continuous contribution to the diffraction pattern \cite{WOLNY2000313,PhysRevB.43.1034}. This  property is  significant in  the study of quasi-crystals and materials with non-periodic atomic arrangements \cite{SAHEL20171} or fractal geometry \cite{Kennard}. In economics or game theory, the Thue--Morse word has been proposed  to ensure fairness in sequential tournament competitions between two agents \cite{Palacios}.

The Thue--Morse word arises in a wide range of unexpected contexts due to its remarkable combinatorial properties.
For instance, consider the study of arithmetic complexity of an infinite word $\infw{w}=w_0w_1w_2\cdots$. This function maps $n$ to  the number of subwords of size $n$ that appear in~$\infw{w}$ in an arithmetic progression, i.e., 
\[n\mapsto \#\{ w_tw_{t+r}\cdots w_{t+(n-1)r}\mid \, t\geqslant 0, r\geqslant 1\}.\]

Let $m\geqslant 2$ be an integer and $\am=\{0,\ldots,m-1\}$ be the alphabet
identified with the additive group $\Z/(m\Z)$. Hereafter, all operations on letters are considered modulo~$m$, and notation  $\pmod{m}$ 
will be omitted.
Avgustinovich et al.~showed that, under some mild assumptions, 
 the fixed point of a {\em symmetric morphism} over $\am$ achieves a maximal arithmetic complexity $m^n$. Such a symmetric morphism $\varphi:\ams\to\ams$ is defined as follows. If $\varphi(0)$ is the finite word $x_0\cdots x_\ell$ over~$\am$, then for $i>0$, $\varphi(i)=(x_0+i)\cdots (x_\ell+i)$,  with all sums taken  modulo~$m$.

This article deals with a natural generalization of the Thue--Morse word over an alphabet of size $m\geqslant 2$. Our primary goal is to identify and count its subwords. It directly relates to the notion of binomial complexity. 
We
consider the symmetric morphism
$\sm:\ams\to\ams$, defined by
\begin{align*}
  \sm:i\mapsto i (i+1) \cdots (i+m-1).
\end{align*}  
With our convention along the paper, integers out of the range $\{0,\ldots,m-1\}$ are reduced modulo~$m$.
The images
$\sm(i)$ correspond to cyclic shifts of the word $012\cdots (m-1)$. For
instance, $\sigma_2$ is the classical Thue--Morse morphism.  Our focus is on the infinite words
$\infw{t}_m:=\lim_{j\to\infty}\smn{j}(0)$. For example, we have
\begin{align*}
\infw{t}_3=012 120 201 120 201 012 201 012 120\cdots.
\end{align*}
Throughout this paper, infinite words are denoted using boldface symbols.
The Thue--Morse word~$\infw{t}_2$ and its generalizations~$\infw{t}_m$ play a prominent role in combinatorics on words~\cite{ubiquitous}. It serves as an example of an $m$-automatic sequence, where each letter is mapped by the morphism  $\sm$ to an image of uniform  length~$m$.
Thus,
 $\sm$ 
 is said to be {\em $m$-uniform}.  The $j^{\text{th}}$ term of $\infw{t}_m$ is equal to the $m$-ary sum-of-digits of $j\geqslant 0$, reduced modulo~$m$. Further results on subwords of $\infw{t}_m$ in arithmetic progressions can be found in \cite{Parshina}.

In this paper, we distinguish between a {\em factor} and a {\em subword} of a word $w=a_1a_2\cdots a_\ell$. 
A factor consists of consecutive symbols
$a_ia_{i+1}\cdots a_{i+n-1}$,  whereas a subword is a subsequence $a_{j_1}\cdots a_{j_n}$, with $1\leqslant j_1<\cdots <j_n\leqslant \ell$. Every factor is a subword, but the converse does not always hold. 
The set of factors of an infinite word 
$\infw{w}$
(respectively, factors of length~$n$) 
is denoted by $\Fac(\infw{w})$
(respectively, $\Fac_n(\infw{w})$).
We denote the length of a finite word~$x$
by~$|x|$,
and
 the number of occurrences of a letter~\(a\)
 in~\(x\)
 by~$|x|_a$.
 For general references on binomial coefficients of words and binomial equivalence, see~\cite{Lothaire,RigoBook,RigoRelations,RigoSalimov}.

\begin{definition}
  Let $u$ and $w$ be words over a finite alphabet~$\mathcal{A}$.  The \emph{binomial
    coefficient} 
    \(\binom{u}{w}\)
    is the number of occurrences of
     $w$  as a subword of $u$.
      Writing $u = a_1\cdots a_n$, where $a_i \in \mathcal{A}$ for all $i$,
      it is defined as
\[
  \binom{u}{w}=\#\left\{ i_1<i_2<\cdots < i_{|w|} \mid \,
    a_{i_1}a_{i_2}\cdots a_{i_{|w|}}=w\right\}.
\]
\end{definition}
Note that the same notation is used for the binomial coefficients of words and integers, as the context prevents any ambiguity (the binomial coefficient of unary words naturally coincides with the integer version: $\binom{a^n}{a^k} = \binom{n}{k}$).

\begin{definition}[\cite{RigoSalimov}] 
Two words $u, v\in \mathcal{A}^*$ are said to be \emph{$k$-binomially equivalent}, and we write $u \sim_k v$, if
\[
  \binom{u}{x} = \binom{v}{x}, \quad \forall\, x\in \mathcal{A}^{\leqslant k}.
\]
If 
\(u\) and
\(v\) are not 
$k$-binomially equivalent,
we write $u\not\sim_k v$.
\end{definition}

A word $u$  is a permutation of the letters in $v$ if and
only if $u \sim_1 v$. 
This relation is known as the \emph{abelian equivalence}.

\begin{definition}
  Let $k\geqslant 1$ be an integer. The \emph{$k$-binomial complexity function} $\bc{\infw{w}}{k} \colon \N \to \N$ for an infinite word $\infw{w}$ is defined as
  \[
    \bc{\infw{w}}{k}: n \mapsto \#\left(\Fac_n(\infw{w})/{\sim_k}\right).
  \]
 
\end{definition}

For
\(k =1\),
the
$k$-binomial complexity
is nothing else but the
 {\em abelian complexity function}, denoted by $\bc{\infw{w}}{1}(n)$.

For instance, M. Andrieu and L. Vivion have recently shown that the $k$-binomial complexity function  is well-suited for studying hypercubic billiard words \cite{Andrieu}. 
These words encode the sequence of faces successively hit by a billiard ball in a 
 $d$-dimensional unit cube.
 The ball moves in straight lines until it encounters a face, then bounces elastically according to the law of reflection.
A notable property is that removing a symbol from a $d$-dimensional billiard word results in a $(d-1)$-dimensional billiard word. 
Consequently, the projected factors of the
$(d-1)$-dimensional
word are subwords of the
$d$-dimensional word.

The connections between binomial complexity and Parikh-collinear morphisms are studied in~\cite{RSW}.

\begin{definition}\label{def:Parikh-collinear}
  Let $\Psi:\mathcal{B}^*\to\N^{\# \mathcal{B}}$, defined as $w\mapsto \left(|w|_{b_1},\ldots,|w|_{b_m}\right)$ be the Parikh map for a totally ordered alphabet~$\mathcal{B}=\{b_1<\cdots<b_m\}$.  A morphism $\varphi\colon \mathcal{A}^* \to \mathcal{B}^*$ is said to be \emph{Parikh-collinear} if, for all letters $a,b \in \mathcal{A}$, there exist constants $r_{a,b}, s_{a,b} \in\mathbb{N}$ such that
  $r_{a,b} \Psi\left(\varphi(b)\right) = s_{a,b} \Psi\left(\varphi(a)\right)$.
  If
  $r_{a,b}=s_{a,b}$
  for all $a,b\in \mathcal{A}$, the morphism is called \emph{Parikh-constant}.
\end{definition}

\begin{proposition}[{\cite[Cor.~3.6]{RSW}}]\label{bkbound}
  Let $\infw{w}$ denote a fixed point of a Parikh-collinear morphism.  For
  any $k \geqslant 1$, there exists a constant $C_k \in \N$  satisfying
  \(\bc{\infw{w}}{k}(n)\leqslant C_{k}\) for all \(n \in \N\).
\end{proposition}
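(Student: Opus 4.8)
The plan is to reduce the boundedness of $\bc{\infw{w}}{k}$ to two ingredients: a \emph{level-raising} property that turns Parikh-collinearity into control over binomial coefficients, together with the standard desubstitution of factors of a morphic word. The crucial lemma I would isolate is that, if $\varphi$ is Parikh-collinear, then $u\sim_\ell v$ implies $\varphi(u)\sim_{\ell+1}\varphi(v)$ for every $\ell\geqslant 1$. Granting this, and using that $\sim_k$ is a congruence for concatenation (since $\binom{uv}{x}=\sum_{x=x_1x_2}\binom{u}{x_1}\binom{v}{x_2}$, so $|x_1|,|x_2|\leqslant k$ whenever $|x|\leqslant k$), I can later propagate equivalences through images and borders freely.

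To prove the key lemma, I would expand $\binom{\varphi(u)}{x}$ along the block structure $\varphi(u)=\varphi(a_1)\cdots\varphi(a_p)$, where $u=a_1\cdots a_p$. Summing over the number $r$ of blocks contributing a nonempty piece, the factorizations $x=y_1\cdots y_r$ into nonempty pieces, and the letters $b_1,\dots,b_r$ of the selected blocks, one obtains
\[
\binom{\varphi(u)}{x}=\sum_{r\geqslant 1}\ \sum_{\substack{x=y_1\cdots y_r\\ y_j\neq\varepsilon}}\ \sum_{(b_1,\dots,b_r)\in\mathcal{A}^r}\binom{u}{b_1\cdots b_r}\prod_{j=1}^{r}\binom{\varphi(b_j)}{y_j}.
\]
For $|x|\leqslant\ell+1$, every term with $r<|x|$ uses a coefficient $\binom{u}{b_1\cdots b_r}$ with $r\leqslant\ell$, hence is $\sim_\ell$-invariant; the only dangerous term is $r=|x|$, where every $y_j$ is a single letter. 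Here I would use collinearity decisively: $\binom{\varphi(b)}{c}=|\varphi(b)|_c=c_b\,v_c$ \emph{factors}, so this term equals $\bigl(\prod_j v_{x_j}\bigr)\,e_{|x|}(c_{a_1},\dots,c_{a_p})$, an elementary symmetric polynomial in the multiset $\{c_{a_i}\}$ that depends only on $\Psi(u)$, hence only on the $\sim_1$-class of $u$. This collapse of the top-degree term is the heart of the matter and the step I expect to be the main obstacle, since it is precisely where the hypothesis is consumed.

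For the desubstitution I would record that the incidence matrix factors as $M_\varphi=\mathbf v\,\mathbf c^{\mathsf T}$ (rank one), so $M_\varphi^2=\lambda M_\varphi$ with $\lambda=\mathbf c\cdot\mathbf v$; after restricting to the letters actually occurring in $\infw{w}$, the matrix $M_\varphi$ is strictly positive, whence $\varphi$ is primitive and $\infw{w}$ is linearly recurrent. Consequently every $w\in\Fac_n(\infw{w})$ decomposes as $w=p\,\varphi(z)\,s$ with $z\in\Fac(\infw{w})$, where $p,s$ range over the finite set of proper suffixes and prefixes of images of letters, and—crucially, by linear recurrence—with $|z|=n/\lambda+O(1)$ confined to a window of bounded size independent of $n$.

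Finally I would assemble the pieces and induct on $k$. By the congruence property, the $\sim_k$-class of $w=p\,\varphi(z)\,s$ is determined by the pair $(p,s)$ together with the $\sim_k$-class of $\varphi(z)$, which by the lemma (with $\ell=k-1$) is determined by the $\sim_{k-1}$-class of $z$. The base case $k=1$ is immediate: since $\Psi(\varphi(z))=(|\varphi(z)|/\|\mathbf v\|_1)\,\mathbf v$ depends only on $|\varphi(z)|=n-|p|-|s|$, the abelian class of $w$ is a function of $(p,s)$ and $n$ alone, so $\bc{\infw{w}}{1}$ is bounded. For the inductive step, $\bc{\infw{w}}{k}(n)$ is bounded by the product of the number of border pairs, the bounded window of admissible lengths $|z|$, and $C_{k-1}$, yielding a constant $C_k$ independent of $n$, as required.
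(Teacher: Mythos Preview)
The paper does not supply a proof of this proposition; it is quoted verbatim from \cite[Cor.~3.6]{RSW}, so there is no in-paper argument to compare against. Your reconstruction follows the expected route (level-raising lemma plus desubstitution), and the key lemma is handled correctly: the elementary-symmetric collapse of the $r=|x|$ term is exactly the mechanism by which Parikh-collinearity is consumed.

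There is, however, one genuine gap. The assertion that ``by linear recurrence, $|z|=n/\lambda+O(1)$ is confined to a window of bounded size'' is not a consequence of linear recurrence: linearly recurrent words need not have bounded abelian complexity, and for a non-uniform $\varphi$ the preimage-length window $\{|z|:|\varphi(z)|=N\}$ can grow linearly in $N$ in general. What rescues the claim here is the rank-one structure $M_\varphi=\mathbf v\,\mathbf c^{\mathsf T}$ itself: every factor $z$ of $\infw{w}$ satisfies $\Psi(z)\in\mathbb{R}\mathbf v+O(1)$ (write $z=p'\varphi(z')s'$ and note $\Psi(\varphi(z'))\in\mathbb{R}\mathbf v$), whence $|z|$ is determined by $|\varphi(z)|$ up to a bounded error. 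Alternatively, and more cleanly, desubstitute $k$ times at once: for long $w$ write $w=P\,\varphi^k(Z)\,S$ with bounded borders, use the lemma iteratively to get that $[\varphi^k(Z)]_{\sim_k}$ is determined by $[\varphi(Z)]_{\sim_1}$, and observe that $M_\varphi^j=\lambda^{j-1}M_\varphi$ forces $|\varphi(Z)|=(n-|P|-|S|)/\lambda^{k-1}$, so the $\sim_k$-class of $w$ depends only on $(P,S)$. Either fix closes the argument; you should also flag that the primitivity claim requires excluding erasing letters (when some $c_a=0$ the matrix is not strictly positive), with the periodic case handled separately.
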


It is worth noting that the above proposition was previously stated for Parikh-constant fixed points in
\cite{RigoSalimov}.


\subsection{Previously known results on generalized Thue--Morse words}
It is well-known that the factor complexity of any automatic word, including the generalized Thue--Morse words, is in $\mathcal{O}(n)$.
The usual factor complexity function of $\infw{t}_m$ is known exactly via results of Starosta \cite{Starosta}:
\begin{theorem}\label{thm:starosta}
For any $m \geq 1$, we have $\fc{\infw{t}_m}(0)=1$, $\fc{\infw{t}_m}(1) = m$, and
\[
\fc{\infw{t}_m}(n) = \begin{cases}
				m^2(n - 1) - m(n - 2) 		& \text{if }2\leqslant n \leqslant m;\\
				m^2(n - 1) - m^{k+1} + m^k 	& \text{if } m^{k} + 1 \leqslant n \leqslant 2m^{k}-m^{k-1},\ k\geq 1;\\
				m^2(n-1)-m^{k+1}+m^k + m\ell	& \text{if } n = 2m^k-m^{k-1} +1 + \ell,\\
				& \text{ with } 0 \leqslant \ell < m^{k+1} - 2m^k + m^{k-1},\ k\geq 1.
\end{cases}				
\]
\end{theorem}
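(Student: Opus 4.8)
The plan is to compute the factor complexity $\fc{\infw{t}_m}$ through its second difference, which concentrates on the bispecial factors, and then to recover $\fc{\infw{t}_m}$ by summing twice against the initial values $\fc{\infw{t}_m}(0)=1$ and $\fc{\infw{t}_m}(1)=m$. For a factor $w$ of $\infw{t}_m$ write $E^-(w)=\{a\in\am:aw\in\Fac(\infw{t}_m)\}$, $E^+(w)=\{b\in\am:wb\in\Fac(\infw{t}_m)\}$ and $E(w)=\{(a,b):awb\in\Fac(\infw{t}_m)\}$ for its left, right, and bilateral extensions, and set $\mu(w)=\#E(w)-\#E^+(w)-\#E^-(w)+1$. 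A standard counting identity then reads
\[
  \fc{\infw{t}_m}(n+2)-2\,\fc{\infw{t}_m}(n+1)+\fc{\infw{t}_m}(n)=\sum_{w\in\Fac_n(\infw{t}_m)}\mu(w),
\]
and $\mu(w)=0$ unless $w$ is bispecial, i.e.\ $\#E^-(w)\geqslant 2$ and $\#E^+(w)\geqslant 2$. The whole problem is thereby reduced to locating the bispecial factors of $\infw{t}_m$, sorted by length, and computing their multiplicities.

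The first reduction is a symmetry. The letterwise shift $w=w_1\cdots w_n\mapsto w+j:=(w_1+j)\cdots(w_n+j)$ defines an action of $\Z/m\Z$ that preserves $\Fac(\infw{t}_m)$ and commutes with $\sm$, because $\sm(i+j)=\sm(i)+j$. Since a shift by $j\neq 0$ alters every letter, this action is free on nonempty words; hence every orbit of nonempty factors has size exactly $m$, and $\mu$ is constant along orbits, as $w\mapsto w+j$ is a factor-preserving bijection. Consequently both $\fc{\infw{t}_m}(n)$ and the second differences above are multiples of $m$ for $n\geqslant 1$, which explains the factor $m$ appearing throughout the statement, and it suffices to exhibit one bispecial representative per orbit together with its multiplicity.

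The core of the proof is a self-similar recursion coming from the recognizability of the $m$-uniform morphism $\sm$. Since $\sm(i)=i(i+1)\cdots(i+m-1)$ begins with $i$ and ends with $i-1$, whenever $awb\in\Fac(\infw{t}_m)$ we have $(a-1)\,\sm(w)\,b\in\Fac(\infw{t}_m)$; I would show, using a synchronization statement for $\sm$, that for every sufficiently long bispecial $w$ the assignment $(a,b)\mapsto(a-1,b)$ is in fact a bijection $E(w)\to E(\sm(w))$ (and likewise on the one-sided extensions), so that $\sm(w)$ is again bispecial, of length $m\lvert w\rvert$, with $\mu(\sm(w))=\mu(w)$. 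A finite inspection of the short factors (equivalently, of the first Rauzy graphs of $\infw{t}_m$) should then reveal that the only bispecial factors of nonzero multiplicity are: the empty word, with $\mu(\varepsilon)=\fc{\infw{t}_m}(2)-2\,\fc{\infw{t}_m}(1)+1=(m-1)^2$; a \emph{positive} orbit of length $m$ with $\mu=+1$; and a \emph{negative} orbit of length $2m-1$ with $\mu=-1$. Applying $\sm$ repeatedly to these two seeds then produces exactly two infinite families: one of length $m^{k}$ with $\mu=+1$ and one of length $2m^{k}-m^{k-1}$ with $\mu=-1$, for every $k\geqslant 1$, each an orbit of size $m$.

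It remains to assemble the pieces. The resulting second difference equals $(m-1)^2$ at $n=0$, equals $+m$ at every $n=m^{k}$ $(k\geqslant 1)$, equals $-m$ at every $n=2m^{k}-m^{k-1}$ $(k\geqslant 1)$, and vanishes elsewhere. Summing once, the first difference $\fc{\infw{t}_m}(n+1)-\fc{\infw{t}_m}(n)$ becomes a step function taking only the two values $m^{2}$ and $m^{2}-m$: it equals $m^{2}$ on the ranges $m^{k}<n\leqslant 2m^{k}-m^{k-1}$ and $m^{2}-m$ on the complementary ranges $2m^{k}-m^{k-1}<n\leqslant m^{k+1}$, together with the initial block $1\leqslant n\leqslant m$; one checks $m^{k}<2m^{k}-m^{k-1}<m^{k+1}$ for $m\geqslant 2$, so these ranges are correctly ordered. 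Summing a second time with the initial data then yields precisely the three affine pieces of the statement, the breakpoints $m^{k}$ and $2m^{k}-m^{k-1}$ being the lengths of the two families. The main obstacle is the content of the third paragraph: formulating and proving a clean recognizability statement for $\sm$ that controls the extensions of $\sm(w)$; pinning down the two seed orbits and the value of $\mu$ on them; and, above all, proving that no further bispecial factors of nonzero multiplicity occur at intermediate lengths. The small-length and seed cases ($k=1$, and the block below $n=m$) need separate treatment, and it is exactly there that the boundaries separating the three regimes originate.
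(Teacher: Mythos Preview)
The paper does not prove this theorem; it is quoted verbatim as a known result of Starosta \cite{Starosta} and used only as background for the main results on binomial complexity. There is therefore no ``paper's own proof'' to compare against.

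Your outline is the standard route (and essentially the one Starosta takes): compute the second difference of $\fc{\infw{t}_m}$ via the bilateral multiplicities $\mu(w)$ of bispecial factors, exploit the $\Z/m\Z$-symmetry $w\mapsto w+j$ to reduce to orbits of size $m$, and use recognizability of $\sm$ to propagate bispecial factors from one scale to the next by $w\mapsto\sm(w)$. The identification of the two seed orbits at lengths $m$ and $2m-1$, with multiplicities $+1$ and $-1$ respectively, and the resulting two infinite families at lengths $m^{k}$ and $2m^{k}-m^{k-1}$, is correct and matches the breakpoints in the stated formula.

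What you flag as ``the main obstacle'' is exactly where the work lies, and your sketch leaves it genuinely open. Two points deserve care. First, the map $(a,b)\mapsto(a-1,b)$ from $E(w)$ to $E(\sm(w))$ is easy in one direction, but surjectivity requires knowing that \emph{every} extension of $\sm(w)$ desynchronizes correctly; this is where one needs a circularity statement for $\sm$ (e.g.\ that factors of length $\geqslant 2m$ have a unique $\sm$-factorization), and one must check separately that $\sm(w)$ is long enough for this to apply. Second, your claim that there is exactly one orbit of nonzero-$\mu$ bispecials at each of the lengths $m$ and $2m-1$, and \emph{none} at any other short length, is stated rather than argued; this finite verification is where the case $2\leqslant n\leqslant m$ in the formula comes from and cannot be skipped. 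Both are doable, but neither is automatic.
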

The abelian complexity of $\infw{t}_m$ is known to be ultimately periodic with period $m$,
 as established by Chen and Wen \cite{ChenWen2019}.
For example, $\left(\bc{\infw{t}_2}{1}(n)\right)_{n\geqslant 0}=(1,2,3,2,3,\ldots)$ and $\left(\bc{\infw{t}_3}{1}(n)\right)_{n\geqslant 0}=(1,3,6,7,6,6,7,6,\ldots)$.
Moreover, the period takes either two or three distinct values, depending on the parity of $m$, as described in the following result.

\begin{theorem}[{\cite{ChenWen2019}}]\label{thm:abelian_complexity}
Let $m \geqslant 2$ and  $n\geqslant m$. Let $\nu=n\pmod{m}$.

\begin{itemize}
    \item 
    If $m$ is odd, then we have
    \[
  \bc{\infw{t}_m}{1}(n)=\#\left(\Fac_{n}(\infw{t}_m)/\!\sim_1\right)
  =\begin{cases}
            \frac14m(m^2-1)+1, & \text{ if }\nu=0;\\
            \frac14m(m-1)^2+m, & \text{ otherwise.}
    \end{cases}
    \]
    \item 
    If $m$ is even, then we have
     \[
   \bc{\infw{t}_m}{1}(n)
  = \begin{cases}
            \frac14m^3+1, & \text{ if } \nu=0;\\
            \frac14m(m-1)^2+\frac54m, & \text{ if } \nu\neq 0 \text{ is even};\\
            \frac14m^2(m-2)+m, & \text{ if }\nu\text{ is odd}.\\
          \end{cases}
\]
\end{itemize}
\end{theorem}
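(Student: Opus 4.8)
The plan is to determine $\bc{\infw{t}_m}{1}(n)$ by counting the distinct Parikh vectors carried by the factors in $\Fac_n(\infw{t}_m)$, exploiting the uniform substitutive structure. The starting point is that each image $\sm(a)=a(a+1)\cdots(a+m-1)$ is a cyclic permutation of $0\,1\cdots(m-1)$, and therefore has Parikh vector $\mathbf 1=(1,\dots,1)$. First I would \emph{desubstitute}: since $\infw{t}_m=\sm(\infw{t}_m)$, any factor $w$ of length $n\geqslant m$ sits inside a run of consecutive images $\sm(a_0)\sm(a_1)\cdots$, so $w$ is a suffix of some $\sm(a)$, followed by $q$ complete images, followed by a prefix of some $\sm(b)$. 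The suffix of length $\alpha$ of $\sm(a)$ is the cyclic arc $(a-\alpha)\cdots(a-1)$ and the prefix of length $\beta$ of $\sm(b)$ is $b\,(b+1)\cdots(b+\beta-1)$; both are intervals of $\Z/m\Z$. Hence the Parikh vector of $w$ equals $q\mathbf 1+\chi_A+\chi_B$, where $\chi_A,\chi_B$ are the indicator vectors of cyclic arcs $A,B$ of lengths $\alpha,\beta\in\{0,\dots,m-1\}$ with $\alpha+\beta+qm=n$.

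Setting $\nu=n\bmod m$, the length constraint forces $\alpha+\beta\in\{\nu,\nu+m\}$, and subtracting the fixed vector $\lfloor n/m\rfloor\mathbf 1$ shows that the abelian class of $w$ is recorded by the reduced vector $Q=\chi_A+\chi_B$ (when $\alpha+\beta=\nu$) or $Q=\chi_A+\chi_B-\mathbf 1$ (when $\alpha+\beta=\nu+m$). Thus $\bc{\infw{t}_m}{1}(n)$ equals the number of distinct such $Q$. The next step is a realizability lemma: once at least one full intermediate image separates the two partial blocks, every pair of arcs $(A,B)$ of the admissible lengths actually arises. Because $A$ and $B$ are pinned down by the free endpoints $a-1$ and $b$, this reduces to showing that all letter pairs $(a,b)\in\am^2$ occur as $(\infw{t}_m(j),\infw{t}_m(j'))$ with $j'-j$ equal to the required block distance and with prescribed residues $j\bmod m$, which I would extract from the digit-sum description of $\infw{t}_m$ together with its uniform recurrence. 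This simultaneously proves that $\bc{\infw{t}_m}{1}(n)$ depends only on $\nu$, i.e.\ the eventual periodicity of period $m$; the finitely many smallest lengths $m\leqslant n<2m$, where no full intermediate block is guaranteed, are then checked directly against the formula.

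What remains is a self-contained counting problem on $\Z/m\Z$: enumerate the distinct vectors $\chi_A+\chi_B$ and $\chi_A+\chi_B-\mathbf 1$ as $A,B$ range over cyclic arcs with $|A|+|B|\in\{\nu,\nu+m\}$. I would organise this by quotienting the raw count of arc pairs by the two evident symmetries — cyclic rotation of the whole configuration and the swap $A\leftrightarrow B$ — and then correct for degenerate configurations by inclusion–exclusion. The principal collisions to track are complementary pairs (disjoint arcs with $A\cup B=\Z/m\Z$), which all collapse to $\chi_A+\chi_B=\mathbf 1$ and hence to $Q=\mathbf 0$, accounting for the isolated additive constant $+1$ appearing in each formula, together with the overlap between the two types $\alpha+\beta=\nu$ and $\alpha+\beta=\nu+m$.

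I expect this last bookkeeping to be the main obstacle, and it is precisely where the parity of $m$ enters. When $m$ is even, arcs of length exactly $m/2$ become available, and these produce additional coincidences among the sum-vectors $\chi_A+\chi_B$ (their supports can match that of a shifted complement), which is what splits the case $\nu\neq 0$ into the even-$\nu$ and odd-$\nu$ subcases; when $m$ is odd no arc of length $m/2$ exists, the symmetry group acts with fewer fixed configurations, and a single expression covers all $\nu\neq 0$. Carrying out the inclusion–exclusion carefully in each residue-and-parity class, and combining it with the isolated diagonal class $\lfloor n/m\rfloor\mathbf 1$, should yield exactly the stated cubic-in-$m$ values.
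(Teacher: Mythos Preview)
This theorem is not proved in the paper; it is quoted from \cite{ChenWen2019} as background. Consequently there is no in-paper proof to compare against, and your proposal is really a sketch of how one might re-derive the cited result.

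That said, your strategy is essentially the one the present paper uses in \cref{sec:abco} to compute the missing short-length values $\bc{\infw{t}_m}{1}(\ell)$ for $\ell<m$ (\cref{pro:abco_small}): write a factor as a suffix-arc, some full $\sm$-blocks, and a prefix-arc, reduce the Parikh vector to a sum of two arc-indicators modulo the constant vector, invoke \cref{lem:boundaryseq} for realizability, and then count arc-pair configurations up to the obvious symmetries. So the decomposition and the realizability input you describe are exactly right and already available in the paper.

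What is missing is the actual enumeration. You correctly flag it as ``the main obstacle'' but then stop. In particular:
\begin{itemize}
\item The collisions between the two regimes $\alpha+\beta=\nu$ and $\alpha+\beta=\nu+m$ are not just the complementary pairs giving $Q=\mathbf 0$; whenever two arcs overlap, $\chi_A+\chi_B$ has entries in $\{0,1,2\}$ and can coincide with a configuration coming from different $(\alpha,\beta)$. The paper's short-length computation handles this by classifying the shapes $1^i2^j1^r0^s$ explicitly, and the same case analysis is what produces the parity split.
\item Your invocation of the swap symmetry $A\leftrightarrow B$ needs care when $\alpha\neq\beta$, since then swapping does not preserve the pair of lengths; the genuine double-counting occurs only within a fixed unordered length pair, and the ``half-length arc'' phenomenon you mention for even $m$ is only one of the fixed-point corrections.
\end{itemize}
Until that case analysis is written out, the proposal is a plausible plan rather than a proof.
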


It is important to note that the abelian complexity function of a word generated by a Parikh-collinear morphism is not always eventually periodic
\cite{RigoSW2023automaticity}.
Furthermore, \cite{RigoSW2024automatic} shows that the abelian complexity function of such a word is automatic in the sense defined by Allouche and Shallit \cite{AS}.

According to  \cref{bkbound} the $k$-binomial complexity of $\infw{t}_m$ is bounded by a constant (that depends on~$k$). 
Explicit expressions of the functions $\bc{\infw{t}_2}{k}$ have been established:

\begin{theorem}[{\cite[Thm.~6]{LLR}}]\label{thm:kbin2} Let $k\geqslant 1$. For every length $n\geqslant 2^k$, 
the
$k$-binomial complexity
\(
\bc{\infw{t}_2}{k}(n)
\)
is given by
  \[
  \bc{\infw{t}_2}{k}(n)=3\cdot 2^k+\left\{\begin{array}{ll}                                                                                                                                                       -3, & \text{ if }n\equiv 0 \pmod{2^k};\\ 
      -4, & \text{ otherwise}.\\
                                          \end{array}\right.
                                      \]
If $n<2^k$,  the $k$-binomial complexity $\bc{\infw{t}_2}{k}(n)$ is equal to the factor complexity $\mathrm{p}_{_{\infw{t}_m}}(n)$.
\end{theorem}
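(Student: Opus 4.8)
The plan is to exploit the self-similar structure $\infw{t}_2=\sigma_2(\infw{t}_2)$, reducing the classification of $\sim_k$-classes at length $n$ to data at length roughly $n/2$ and one lower binomial order. Since $\sigma_2$ is $2$-uniform and Parikh-constant (both images $01,10$ have Parikh vector $(1,1)$), \cref{bkbound} already guarantees that $\bc{\infw{t}_2}{k}$ is bounded, so the real task is to pin down the exact constants. The first step is a \emph{transfer lemma}: for all binary words $u,v$,
\[
  \sigma_2(u)\sim_k\sigma_2(v)\iff u\sim_{k-1}v,
\]
with the convention that $\sim_0$ means ``equal length''. The engine is a block decomposition of $\binom{\sigma_2(u)}{x}$ for $|x|\leqslant k$: grouping the chosen positions by the length-$2$ blocks of $\sigma_2(u)$ splits $x$ into contiguous parts of size $1$ or $2$, where a length-$2$ part forces a block to equal $\sigma_2(0)$ or $\sigma_2(1)$ (hence reads one letter of $u$), while a length-$1$ part is realizable in every block. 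Carrying out the summation, each $\binom{\sigma_2(u)}{x}$ becomes a $\Z$-combination of the $\binom{u}{z}$ with $|z|\leqslant k-1$, and conversely the latter are recovered; the Parikh-constant property is exactly what makes the cross-block counts depend only on the block count $|u|$.

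Next I would treat genuine factors. Every factor of $\infw{t}_2$ of length $n\geqslant 2$ desubstitutes almost uniquely as $p\,\sigma_2(w')\,s$, where $w'\in\Fac(\infw{t}_2)$ and $p,s$ are boundary pieces of length at most $1$ dictated by the starting parity and by $n\bmod 2$. I would upgrade the transfer lemma to carry these boundary letters, showing that the $\sim_k$-class of $p\,\sigma_2(w')\,s$ is determined by the $\sim_{k-1}$-class of $w'$ together with the pair $(p,s)$, and that this correspondence is, up to controlled collisions, a bijection. Iterating $k$ times peels off $k$ boundary layers and drops the binomial order down to $1$, landing on the \emph{abelian} classification of factors of length $\approx n/2^k$, which is known explicitly from \cref{thm:abelian_complexity} (period $2$, values in $\{2,3\}$ when $m=2$). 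Because each of the $k$ desubstitution steps contributes a two-fold boundary choice and halves the length, the accumulated boundary data is governed by $n\bmod 2^k$, which is the source of the eventual period $2^k$.

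The counting then assembles from these two ingredients: the roughly $3$ abelian core classes multiply against the $2^k$ boundary configurations, and a careful inclusion–exclusion of the configurations that coincide yields $3\cdot 2^k-3$ when $n\equiv 0\pmod{2^k}$ (the case where the core length is a clean multiple and no boundary letter is lost) and $3\cdot 2^k-4$ otherwise. For the short range $n<2^k$ I would argue separately that no two distinct factors are $k$-binomially equivalent, so that $\sim_k$ restricts to equality on $\Fac_n(\infw{t}_2)$ and $\bc{\infw{t}_2}{k}(n)=\fc{\infw{t}_2}(n)$, the factor complexity of \cref{thm:starosta}; concretely this is a finite distinctness check, verified by the balance properties of $\infw{t}_2$.

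The main obstacle is the exact bookkeeping rather than the asymptotics. Establishing the transfer lemma with the precise order drop $k\mapsto k-1$ (and \emph{not} $k\mapsto\lfloor k/2\rfloor$, which a naive count of doubled blocks would suggest) requires showing that the ordering of an unconstrained singleton block relative to a constrained block raises the recovered binomial order by one; indeed $\binom{\sigma_2(u)}{010}$ already depends on $\binom{u}{01}$, a degree-$2$ quantity. Propagating the boundary letters through $k$ iterations while determining exactly which $(p,s)$-decorated classes collide is where the delicate, constant-sensitive work lies, and it is precisely this step that separates the sharp formula from the mere boundedness supplied by \cref{bkbound}.
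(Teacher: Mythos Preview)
Your iterative desubstitution strategy is essentially the original approach of \cite{LLR}, from which this theorem is quoted. The present paper does not reprove the statement directly; it recovers it as the $m=2$ specialization of the general \cref{thm:main}, together with \cref{thm:main1short} for the range $n<2^k$. The paper's route is a single jump to the $\smn{k-1}$-factorization $U=p_{_U}\smn{k-1}(u)s_{_U}$, followed by the characterization of \cref{prop:conclusion-final-generalization} ($U\sim_k V$ iff the boundary words coincide and the cores are abelian equivalent) and a count of equivalence classes of boundary pairs via the relation $\equiv_k$ of \cref{def:equivk}. Your step-by-step lowering $\sim_k\to\sim_{k-1}\to\cdots\to\sim_1$ reaches the same destination (abelian cores plus accumulated boundary data), but is tailored to $m=2$; the one-shot $\smn{k-1}$-formulation is what allows the paper to handle all $m$ uniformly and to carry out the delicate collision count once, in \cref{thm:main_equiv}.

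Two concrete gaps in your outline. First, the short-length claim ``for $n<2^k$ no two distinct factors are $k$-binomially equivalent'' is not a finite check, since $k$ is arbitrary; balance alone does not give it. The paper obtains this uniformly from \cref{lem:k-1-factorisation-first-last}: once common prefixes and suffixes are cancelled, any pair of distinct $k$-binomially equivalent factors must both be full $\smn{k-1}$-images, forcing length at least $2\cdot 2^{k-1}=2^k$. Second, your iteration count is off by one: reaching abelian equivalence from $\sim_k$ takes $k-1$ desubstitutions, landing at core length $\approx n/2^{k-1}$, not $n/2^k$; this is exactly why the paper works with $\smn{k-1}$-factorizations. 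The period $2^k$ then arises because the boundary words $p_{_U},s_{_U}$ have lengths ranging over $\{0,\ldots,2^{k-1}-1\}$ and the abelian complexity contributes an extra factor-of-$2$ periodicity.
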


Let us also mention that infinite recurrent words, where all factors appear infinitely often, sharing the same
 $j$-binomial complexity as the Thue–Morse word $\infw{t}_2$, 
 for all
 \(
 j\leqslant k
 \),
 have been characterized in~\cite{RSW}.

 The authors of~\cite{LLR} conclude  that  ``\ldots the expression of a formula describing the $k$-binomial complexity of $\infw{t}_m$ ($m>2$) seems to be more intricate. Therefore, a sharp description of the constants related to a given Parikh-constant morphism appears to be challenging''.
 
Indeed, the difficulty in obtaining such an expression already becomes apparent with the $2$-binomial complexity. 
In~
\cite{ChenWen2024},
 Lü, Chen, Wen, and Wu derived a closed formula for the $2$-binomial complexity of $\infw{t}_m$.

\begin{theorem}[{\cite[Thm.~2]{ChenWen2024}}]\label{thm:2bin_complexity}
  For every length $n\geqslant m^2$ and alphabet size $m\geqslant 3$,
   the $2$-binomial complexity 
   \(
    \bc{\infw{t}_m}{2}(n)
   \)
   is given by
  \[
    \bc{\infw{t}_m}{2}(n) =\left\{\begin{array}{ll}
        \bc{\infw{t}_m}{1}(n/m)+m(m-1)(m(m-1)+1), & \text{ if }n\equiv 0\pmod{m};\\ \rule{0pt}{2.5ex}
                                    m^4-2m^3+2m^2,& \text{ otherwise}.\\
                                  \end{array}\right.
  \]
\end{theorem}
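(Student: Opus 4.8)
The plan is to reduce the problem of counting $2$-binomial equivalence classes among factors of length $n$ to a more tractable combinatorial count, exploiting the self-similar structure induced by the uniform morphism $\sm$. First I would recall that $\sm$ is $m$-uniform and Parikh-constant, so by \Cref{bkbound} the function $\bc{\infw{t}_m}{2}$ is bounded; the goal is an exact formula, so boundedness alone is insufficient and one needs to pin down representatives of every class. The natural starting point is to understand when two factors $u,v$ of length $n$ satisfy $u\sim_2 v$. By definition this means $|u|_a=|v|_a$ for every letter $a$ (abelian equivalence, controlling the length-$1$ subwords) together with $\binom{u}{ab}=\binom{v}{ab}$ for every pair of letters $ab$. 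So the invariant attached to a factor is its Parikh vector together with the matrix of pair-counts $\bigl(\binom{u}{ab}\bigr)_{a,b\in\am}$. I would first establish the standard identity $\binom{u}{ab}+\binom{u}{ba}=|u|_a|u|_b$ for $a\neq b$ and $\binom{u}{aa}=\binom{|u|_a}{2}$, which shows that once the Parikh vector is fixed, the pair-data is determined by the antisymmetric part, i.e.\ by the quantities $\binom{u}{ab}-\binom{u}{ba}$ for $a<b$. Thus a $\sim_2$-class is encoded by a Parikh vector plus an antisymmetric integer matrix, and counting classes amounts to counting the realizable pairs of such data among length-$n$ factors.

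The key structural step is to exploit the factorization of any factor of $\infw{t}_m$ through the morphism. For $n\equiv 0\pmod m$, any factor $w$ of length $n$ is (up to a bounded boundary adjustment) an image $\sm(x)$ of a factor $x$ of length $n/m$, and because $\sm$ is Parikh-constant each block $\sm(i)$ has the same Parikh vector (namely $(1,1,\ldots,1)$) and the same internal pair-counts. I would compute how the pair-count matrix of $\sm(x)$ decomposes into an "intra-block" contribution (constant across all blocks, hence depending only on $n/m$) and an "inter-block" contribution coming from pairs of letters lying in different blocks $\sm(x_i),\sm(x_j)$ with $i<j$; the latter depends only on the abelian data of the blocks, i.e.\ on the abelian class of $x$. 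This is precisely the mechanism that should yield the recursive term $\bc{\infw{t}_m}{1}(n/m)$ in the formula: the inter-block part reduces the $2$-binomial classification of $\sm(x)$ to the abelian classification of $x$, while the additive constant $m(m-1)(m(m-1)+1)$ accounts for the additional distinctions created by the intra-block structure and by the way boundary blocks can be truncated. For $n\not\equiv 0\pmod m$ the factor straddles block boundaries in a way that is no longer a clean image, and I expect the count to stabilize to the constant $m^4-2m^3+2m^2$ reflecting the full generic range of realizable antisymmetric matrices, independent of the residue.

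Concretely, I would proceed as follows. First, classify the factors of $\infw{t}_m$ of each length according to their desubstitution: determine the possible "synchronizing" decompositions $w=s\,\sm(x)\,p$ where $s$ is a proper suffix of some $\sm(i)$ and $p$ a proper prefix of some $\sm(j)$, using that $\sm$ is a comma-free-like uniform morphism so that such decompositions are essentially unique. Second, derive a transfer formula expressing the pair-count matrix of $w$ in terms of (a) the Parikh vector of $x$, (b) the antisymmetric pair-matrix contributions of the fixed intra-block words $\sm(i)$, and (c) the cross-terms governed by the ordered pair of Parikh vectors of consecutive blocks. Third, count the number of distinct resulting invariants: in the divisible case show that two images $\sm(x),\sm(x')$ are $\sim_2$-equivalent if and only if $x\sim_1 x'$ up to the boundary corrections, contributing $\bc{\infw{t}_m}{1}(n/m)$ classes, and then enumerate the extra classes arising from the finitely many boundary-truncation types to obtain the stated additive constant; in the non-divisible case show the invariant ranges over a fixed finite set of size $m^4-2m^3+2m^2$.

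The hard part will be the precise bookkeeping of the boundary contributions and proving that the cross-term data collapses \emph{exactly} onto the abelian complexity of the desubstituted word, rather than onto something finer. Controlling the antisymmetric part $\binom{w}{ab}-\binom{w}{ba}$ under the morphism requires showing that the inter-block cross-contribution is an affine function of the abelian vector of $x$ that is injective on abelian classes (so that no collapse or spurious merging occurs beyond what abelian equivalence already dictates), and simultaneously that the residual boundary degrees of freedom contribute a \emph{fixed} number of additional classes independent of $n$. Establishing the $n\geqslant m^2$ threshold — i.e.\ that beyond this length all these structural phenomena have stabilized and the formula holds with no further exceptional small-$n$ behaviour — is where the most delicate case analysis will be concentrated, and I would expect to handle it by a careful finite verification of the possible block-boundary configurations combined with the recursive reduction just described.
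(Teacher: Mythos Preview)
This theorem is quoted from \cite{ChenWen2024}; the paper does not prove it directly but recovers it as the $k=2$ instance of \cref{thm:main} (equivalently \cref{thm:main_equiv}), whose proof rests on the characterization in \cref{prop:conclusion-final-generalization}: two factors $U,V$ satisfy $U\sim_2 V$ if and only if they admit $\sm$-factorizations $U=p\,\sm(u)\,s$ and $V=p\,\sm(v)\,s$ with the \emph{same} boundary words $p,s$ and $u\sim_1 v$. The count of $\sim_2$-classes is then reduced to counting pairs $(p_{_{U}},s_{_{U}})$ modulo the explicit relation~$\equiv_2$ of \cref{def:equivk}, carried out combinatorially in the proof of \cref{thm:main_equiv}.

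Your plan shares the same skeleton (desubstitute, control the $2$-binomial data through the morphism, count), and your observation that the $\sim_2$-invariant is the Parikh vector plus the antisymmetric part of the pair matrix is correct and useful. But two pivotal steps are left as assertions rather than arguments. First, your transfer formula is meant to show that the inter-block contribution ``depends only on the abelian data of the blocks, i.e.\ on the abelian class of $x$''; this is false as stated, because the boundary words $p,s$ genuinely enter the pair counts, and the real content is that the dependence on the middle $x$ is \emph{only} through its Parikh vector while the boundary pair $(p,s)$ must match exactly. Proving this ``only if'' direction is the substantial part (see \cref{lem:generalised-k=2-distinct-extremal-letters}); it requires exhibiting a specific length-$2$ subword---the paper uses $\alpha(\alpha-1)$ for a well-chosen $\alpha$---whose coefficient detects any mismatch, not a generic antisymmetric-matrix argument. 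Second, your enumeration of ``the extra classes arising from the finitely many boundary-truncation types'' to obtain the constant $m(m-1)(m(m-1)+1)$, and the independent constant $m^4-2m^3+2m^2$ in the non-divisible case, is not derived: in the paper these come from the structured case analysis of \cref{tab:pnl} in the proof of \cref{thm:main_equiv}, which is where all the actual arithmetic happens. Without a concrete mechanism for that count (and for why it stabilizes exactly at $n\geqslant m^2$), the proposal remains a strategy rather than a proof.
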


The authors of~\cite{ChenWen2024} 
propose the conjecture that,
 for all $k\geqslant 3$, the $k$-binomial complexity of the generalized Thue–Morse word $\infw{t}_m$ is ultimately periodic. Precisely,

\begin{conjecture}[{\cite[Conj.~1]{ChenWen2024}}]\label{conj:1}
For every $k\geqslant 3$, the $k$-binomial complexity $\bc{\infw{t}_m}{k}$ of the generalized Thue--Morse word is ultimately periodic with period $m^k$.
\end{conjecture}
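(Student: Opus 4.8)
The plan is to prove something stronger than bare eventual periodicity: to obtain a recursion expressing $\bc{\infw{t}_m}{k}$ in terms of $\bc{\infw{t}_m}{k-1}$, from which \cref{conj:1} follows by induction. The whole argument rests on two features of $\infw{t}_m$: it is the fixed point of the $m$-uniform morphism $\sm$, and $\sm$ is Parikh-constant, each image $\sm(i)=i(i+1)\cdots(i+m-1)$ being a cyclic permutation of $0\,1\cdots(m-1)$ and hence having Parikh vector $(1,\ldots,1)$; the latter is what makes binomial coefficients of images tractable. First I would fix a desubstitution normal form: since $\sm$ is $m$-uniform and recognizable, every sufficiently long factor $w\in\Fac_n(\infw{t}_m)$ decomposes as $w=p\,\sm(v)\,s$, where $v\in\Fac(\infw{t}_m)$, the word $p$ is a proper suffix of some $\sm(a)$ and $s$ a proper prefix of some $\sm(b)$, and $|p|+m|v|+|s|=n$. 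Thus, modulo the bounded boundary tuple $(a,p,b,s)$, a factor of length $n$ is governed by an interior factor $v$ of length roughly $n/m$, and the residue $n\bmod m$ constrains the admissible boundary lengths.

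The crux is a transfer lemma describing how $\sim_k$ passes through $\sm$, namely that for words of equal length $\sm(u)\sim_k\sm(v)$ if and only if $u\sim_{k-1}v$. To see the forward implication I would expand $\binom{\sm(u)}{x}$ over the blocks $\sm(u_1),\ldots,\sm(u_{L})$ by the concatenation rule for binomial coefficients. The term in which every block contributes at most one letter of $x$ equals $\binom{L}{k}$, a quantity independent of the content of $u$ because each letter occurs exactly once in each block; in every remaining term some block absorbs at least two consecutive letters of $x$, so at most $k-1$ blocks are effectively selected. A careful bookkeeping then shows that each such term is a universal linear combination of the coefficients $\binom{u}{y}$ with $|y|\leqslant k-1$ and of the fixed local values $\binom{\sm(a)}{z}$, so that $\binom{\sm(u)}{x}$ depends on $u$ only through its $\sim_{k-1}$-class and its length. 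The converse implication, that $\sim_k$ upstairs forces $\sim_{k-1}$ downstairs, is the delicate one: it requires inverting this linear map, which is where the specific order structure of the cyclic images $\sm(i)$ must be used.

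With the transfer lemma in hand, I would count $k$-binomial classes of $\Fac_n(\infw{t}_m)$ by fibering over $n\bmod m$ and the boundary tuple, enumerating the interior contributions via an abelian Rauzy graph whose vertices are the relevant refined classes of interior factors. This should yield, for all large $n$, a recursion of the shape $\bc{\infw{t}_m}{k}(n)=\bc{\infw{t}_m}{k-1}(n/m)+C_k$ when $n\equiv 0\pmod m$ and an eventually constant value otherwise, exactly the pattern already visible at $k=2$ in \cref{thm:2bin_complexity}, where the additive constant is $m(m-1)(m(m-1)+1)$. An induction on $k$ then closes the argument: the base case $k=1$ is \cref{thm:abelian_complexity}, periodic with period $m$; and if $\bc{\infw{t}_m}{k-1}$ is eventually periodic with period $m^{k-1}$, then $n\mapsto\bc{\infw{t}_m}{k-1}(n/m)$ on multiples of $m$ is eventually periodic with period $m^{k}$, while the non-recursive branches contribute periods dividing $m$, so $\bc{\infw{t}_m}{k}$ is eventually periodic with period $m^k$.

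The main obstacle is the treatment of the boundary pieces $p$ and $s$ in the counting step, together with the converse half of the transfer lemma. When $n\not\equiv 0\pmod m$ the normal form is not block-aligned, so a subword $x$ may straddle a boundary piece and the interior $\sm(v)$; one must show that, in the differences $\binom{w}{x}-\binom{w'}{x}$ controlling $k$-binomial equivalence, these mixed contributions either cancel or depend only on the bounded tuple $(a,p,b,s)$, so that the recursion stays clean and the boundary contributes only an eventually constant number of classes. Proving precisely that the induced interior relation is $\sim_{k-1}$ — neither coarser nor finer — and that the class count does not drift with $n$ is where the combinatorics of the cyclic images $\sm(i)$ is indispensable.
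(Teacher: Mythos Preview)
Your recursive strategy---desubstitute by one application of $\sm$ and reduce $\sim_k$ on factors of $\infw{t}_m$ to $\sim_{k-1}$ on the interiors---is a legitimate alternative to the paper's route, which desubstitutes by $\smn{k-1}$ in one shot and reduces directly to abelian equivalence (\cref{prop:conclusion-final-generalization}, \cref{pro:both_dir}). Unrolling your recursion $k-1$ times recovers exactly the paper's characterization, and the recursion $\bc{\infw{t}_m}{k}(n)=\bc{\infw{t}_m}{k-1}(n/m)+C_k$ on multiples of $m$ that you anticipate is indeed a consequence of the explicit formula in \cref{thm:main}, with $C_k=m^{k-2}(m-1)(m^3-m^2+m)$. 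So the overall architecture is sound and the periodicity induction would go through once the characterization is in hand.

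The gap is that the two steps you correctly identify as obstacles are precisely where all the content lies, and your outline does not supply the ideas needed for them. The converse direction of the transfer---that $\sim_k$ upstairs forces the interior relation downstairs \emph{and} forces the boundary pieces to coincide---is not obtained in the paper by ``inverting a linear map'' in any soft sense. It is obtained by exhibiting one specific subword $0\,\mi{1}\cdots\mi{k}$, computing the exact difference $\binom{\smn{k}(u)}{0\mi{1}\cdots\mi{k}}-\binom{\smn{k}(v)}{0\mi{1}\cdots\mi{k}}=(|u|_0-|v|_0)m^{\binom{k}{2}}$ (\cref{prop:-1}, \cref{cor:notequiv}), and then proving a delicate boundary identity (\cref{lem:bigdiff}) that controls the mixed contributions when the subword straddles $\smn{k-1}(\gamma)$, $\smn{k}(u)$, and $\smn{k-1}(\delta)$. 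A lengthy case analysis (\cref{sec:characterizing}) on the signs of the resulting expression then forces the boundaries to match exactly. None of this is visible from the block expansion you sketch for the forward direction; in particular, your bare transfer lemma $\sm(u)\sim_k\sm(v)\Leftrightarrow u\sim_{k-1}v$ says nothing about factors of the form $p\,\sm(v)\,s$ with nontrivial $p,s$, which is the case that actually governs $\bc{\infw{t}_m}{k}$. As written, the proposal is a correct program whose hard steps remain to be carried out.
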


 In this paper, we confirm this conjecture by getting the exact expression for the $k$-binomial complexity of $\infw{t}_m$ for alphabet of any size~$m$.


\subsection{Main results}

Let $k\geqslant 2$ and $m\geqslant 2$. The behavior of $\bc{\infw{t}_m}{k}(n)$ depends on the length~$n$ of the factors and is fully characterized by the following three results.

\begin{restatable}{theorem}{shortlengths}\label{thm:main1short}
The shortest pair of distinct factors that are
 $k$-binomially  equivalent have a  length of $2m^{k-1}$. In particular, for any length~$n<2m^{k-1}$, the $k$-binomial complexity $\bc{\infw{t}_m}{k}(n)$ coincides with the factor complexity $\mathrm{p}_{_{\infw{t}_m}}(n)$.
\end{restatable}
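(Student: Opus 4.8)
The plan is to isolate a single \emph{desubstitution lemma} governing the interaction of $k$-binomial equivalence with the morphism~$\sm$, and then to deduce both halves of the statement from it by induction on~$k$: the existence of an equivalent pair at length $2m^{k-1}$, and the absence of one below that length (which gives the coincidence $\bc{\infw{t}_m}{k}(n)=\fc{\infw{t}_m}(n)$ for $n<2m^{k-1}$, since the factors then fall into singleton $\sim_k$-classes).

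\emph{The engine.} Since $\sm$ is $m$-uniform and each image $\sm(a)$ is a permutation of $0\,1\cdots(m-1)$, every letter occurs exactly once in each block, so $\binom{\sm(a)}{c}=1$ for every single letter~$c$. The first step is to expand $\binom{\sm(w)}{x}$, for $|x|=p\leqslant k$, according to how the $p$ chosen positions distribute among the blocks $\sm(w_1),\sm(w_2),\dots$: an occurrence of $x$ meets at most $p$ blocks, and the term in which the $p$ letters land in $p$ distinct blocks contributes $1$ per choice of blocks, hence equals $\binom{|w|}{p}$, independently of the content of~$w$. All information about $w$ is therefore carried by the terms in which some block receives two or more letters of~$x$; these involve the structure constants $\binom{\sm(a)}{y}$ with $2\leqslant|y|\leqslant p$, and the most merged such terms recover length-$1$ data of $w$ while the least merged ones (one block doubly occupied, using $p-1$ blocks) recover length-$(p-1)$ data. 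A triangular (inclusion–exclusion) relation in~$p$ then shows that, at a fixed length, the data $\bigl(\binom{\sm(w)}{x}\bigr)_{|x|\leqslant k}$ is equivalent to the data $\bigl(\binom{w}{y}\bigr)_{|y|\leqslant k-1}$. This yields the lemma: for all words $u,v$ and all $k\geqslant 2$, one has $\sm(u)\sim_k\sm(v)$ if and only if $u\sim_{k-1}v$.

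\emph{Tightness at length $2m^{k-1}$.} Iterating the ``if'' direction $k-1$ times gives $\smn{k-1}(u)\sim_k\smn{k-1}(v)$ whenever $u\sim_1 v$. Choosing distinct letters $a\neq b$ with both $ab$ and $ba$ in $\Fac_2(\infw{t}_m)$ (already $01,10\in\Fac_2(\infw{t}_m)$), the words $\smn{k-1}(ab)$ and $\smn{k-1}(ba)$ are distinct factors of $\infw{t}_m$ of length $2m^{k-1}$, and since $ab\sim_1 ba$ the lemma gives $\smn{k-1}(ab)\sim_k\smn{k-1}(ba)$. This exhibits the required pair.

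\emph{The lower bound, and the main obstacle.} I induct on~$k$. For $k=1$ a factor of length $<2$ is empty or a single letter, on which $\sim_1$ is equality. For the step, take factors $u,v$ with $u\sim_k v$ and $|u|=|v|=n<2m^{k-1}$. As $\infw{t}_m=\sm(\infw{t}_m)$ is partitioned into length-$m$ blocks, each determined by its first letter, once the phase (the start position of $u$ modulo~$m$) is fixed one gets a decomposition $u=s\,\sm(\hat u)\,p$ with $s,p$ a proper block-suffix and block-prefix and $\hat u\in\Fac(\infw{t}_m)$, and similarly for~$v$. The delicate point — which I expect to be the real obstacle — is to prove that $u\sim_k v$ forces the two decompositions to share the same phase and identical boundary pieces $s=s'$, $p=p'$; this is exactly where the smallness $n<2m^{k-1}$ matters, as it confines $u,v$ to a bounded number of blocks, and where a rank analysis of the structure matrices $\bigl(\binom{\sm(a)}{bc}\bigr)_{a,b,c}$ and their iterates (organized through the abelian Rauzy graphs) is needed to separate phases. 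Once alignment is established, the block-distribution formula with boundary corrections reduces the comparison to $\sm(\hat u)\sim_k\sm(\hat v)$, whence $\hat u\sim_{k-1}\hat v$ by the lemma; since $|\hat u|=|\hat v|\leqslant n/m<2m^{k-2}$, the induction hypothesis yields $\hat u=\hat v$, and together with the matching boundaries we conclude $u=v$.
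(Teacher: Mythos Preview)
Your tightness argument (existence of an equivalent pair at length $2m^{k-1}$) is correct and matches the paper.

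The gap is the alignment step, which you correctly flag as the obstacle but do not resolve; moreover, the hints you give point the wrong way. The smallness $n<2m^{k-1}$ is \emph{not} what drives alignment: in the paper, alignment of $\smn{k-1}$-factorizations holds for \emph{all} lengths (\cref{prop:conclusion-final-generalization}), and smallness is only used afterwards to force the abelian-equivalent middles to coincide. The abelian Rauzy graphs are a counting device for the periodic regime and play no role here. What the paper does instead is strip common prefixes and suffixes by cancellativity (\cref{lem:cancel}), reducing to $U,V$ that begin and end with distinct letters, and then invoke \cref{lem:k-1-factorisation-first-last}, which asserts that in this situation $U\sim_kV$ forces $U=\smn{k-1}(u)$, $V=\smn{k-1}(v)$ with $u\sim_1v$; since $u,v$ then also begin and end with distinct letters, $|u|\geqslant 2$ and the bound follows at once. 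The substantial work is the proof of \cref{lem:k-1-factorisation-first-last}: an induction on $k$ whose step relies on the explicit computation of \cref{lem:bigdiff} for the distinguished subword $0\,\mi{1}\cdots\mi{k}$ (cf.\ \cref{prop:-1}), yielding an equality whose two sides are shown, via a case analysis on the last letter of the boundary pieces, to have opposite signs unless those pieces are empty. Your desubstitution lemma handles only full images $\sm(u)$ versus $\sm(v)$; the entire difficulty lies in the partial boundary blocks $s,p$, and your inductive scheme merely relocates that difficulty into the unproved alignment claim.
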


Recall \cref{thm:starosta} for an explicit expression for $\mathrm{p}_{_{\infw{t}_m}}(n)$.

\begin{theorem}\label{thm:inter}
  Let~$n\in [2m^{k-1},2m^k)$. 
\begin{enumerate}
    \item
    If 
    \(
    n=\nu\, m^{k-1}
    \)  for some $\nu\in\{2,\ldots,2m-1\}$, then 
\[\bc{\infw{t}_m}{k}(\nu\, m^{k-1})=(m^{k-1}-1) \#E_{m}(\nu)+\bc{\infw{t}_m}{1}(\nu).\]
    \item 
     If 
    \(n=
   \nu\, m^{k-1}+\mu
    \) for some $\nu\in\{2,\ldots,2m-1\}$ and $0<\mu<m^{k-1}$, then 
\begin{align*}
\bc{\infw{t}_m}{k}(\nu\, m^{k-1}+\mu) = (\mu-1)\#E_{m}(\nu+1)  + (m^{k-1}-\mu-1)\# E_{m}(\nu) + \# Y_m(\nu)
\end{align*}
\end{enumerate}
where
    \[
  \#E_{m}(\nu)=\begin{cases}
                      m(1+\nu m-\nu),&\text{ if }\nu<m;\\
                      m^3-m^2+m,&\text{ otherwise}\\
                    \end{cases}\]

and
    \[\#Y_{m}(\nu)=\begin{cases}
                      2m(1+\nu m-\nu)-m\nu(\nu-1),&\text{ if }\nu<m;\\
                      m^3-m^2+2m,&\text{ otherwise.}\\
                    \end{cases}
                \]
\end{theorem}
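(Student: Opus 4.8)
The plan is to reduce $k$-binomial equivalence of a factor to a finite invariant attached to its position relative to the level-$(k-1)$ block structure of $\infw{t}_m$, and then to count the realizable invariants offset by offset. First I would use the recognizability of the $m$-uniform morphism $\sm$ to write every factor $w$ with $|w|=n\in[2m^{k-1},2m^k)$ uniquely as the length-$n$ window of $\smn{k-1}(Z)$ starting at some offset $r\in\{0,\dots,m^{k-1}-1\}$, where $Z\in\Fac(\infw{t}_m)$; concretely $w=p\,\smn{k-1}(z)\,s$ with $p$ a proper suffix of some block $\smn{k-1}(a)$, $s$ a proper prefix of some block $\smn{k-1}(b)$, and $z$ the central run of complete blocks. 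The offset $r$ dictates the \emph{shape}: the aligned shape ($p,s$ both empty), the one-sided shapes (exactly one of $p,s$ empty), and the generic two-sided shape. A routine window computation shows that, for $n=\nu m^{k-1}$, the offset $r=0$ is aligned with central length $\nu$ while each $r\in\{1,\dots,m^{k-1}-1\}$ is two-sided with central length $\nu-1$; for $n=\nu m^{k-1}+\mu$, the offsets $r=0$ and $r=m^{k-1}-\mu$ are one-sided (central length $\nu$), the $\mu-1$ offsets $r>m^{k-1}-\mu$ are two-sided with central length $\nu$, and the remaining $m^{k-1}-\mu-1$ offsets are two-sided with central length $\nu-1$.

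The heart of the argument is a characterization of $\sim_k$ in terms of this data. Since $\sm$ is Parikh-constant, iterating the lifting property $u\sim_j v\Rightarrow\sm(u)\sim_{j+1}\sm(v)$ gives $z\sim_1 z'\Rightarrow\smn{k-1}(z)\sim_k\smn{k-1}(z')$; conversely, the Parikh-constant expansion of $\binom{\smn{k-1}(z)}{x}$ for $|x|\leqslant k$ is determined by, and determines, the Parikh vector of $z$, so for aligned factors $\smn{k-1}(z)\sim_k\smn{k-1}(z')$ forces $z\sim_1 z'$. I would upgrade this to the key lemma: $w\sim_k w'$ if and only if $r=r'$, the central words are abelian-equivalent, and the boundary blocks agree in the residual binomial data carried by $p$ and $s$ (which, once the central run is nonempty, reduces to the identities of the boundary letters $a,b$ together with a bounded correction). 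This makes the offset $r$ and the shape into $\sim_k$-invariants, whence the classes attached to distinct offsets are disjoint and the total complexity is the sum of the per-offset counts.

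With the characterization in hand the count is bookkeeping over $r$. For the aligned offset (present only when $\mu=0$) the classes are exactly the abelian classes of the length-$\nu$ central word, contributing $\bc{\infw{t}_m}{1}(\nu)$, which is the source of that term in part~(1). For a generic two-sided offset the class count depends only on the number $L$ of complete central blocks, through a quantity that I would show equals $\#E_m(L+1)$ and is independent of $r$: in part~(1) every nonzero offset has $L=\nu-1$, giving $(m^{k-1}-1)\#E_m(\nu)$; in part~(2) the $m^{k-1}-\mu-1$ offsets with $L=\nu-1$ and the $\mu-1$ offsets with $L=\nu$ give $(m^{k-1}-\mu-1)\#E_m(\nu)+(\mu-1)\#E_m(\nu+1)$. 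The two one-sided offsets $r=0$ and $r=m^{k-1}-\mu$ (left- and right-aligned, $L=\nu$) are treated together and contribute $\#Y_m(\nu)$. Summing these per-offset counts yields the two displayed formulas.

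Finally I would verify the closed forms for $\#E_m(\nu)$ and $\#Y_m(\nu)$ by enumerating, at a fixed offset, the tuples consisting of the boundary letter(s) and the abelian class of the central word that are actually realized as factors of $\infw{t}_m$, modulo the equivalences forced by the key lemma. The realizability constraints imposed by the structure of $\infw{t}_m$ produce the dichotomy $\nu<m$ versus $\nu\geqslant m$: below the threshold not all boundary/abelian types occur, giving the $\nu$-dependent value $m(1+\nu m-\nu)$, whereas above it the configuration saturates at $m^3-m^2+m$. I expect the main obstacle to be twofold: establishing the converse direction of the key lemma with the precise residual boundary condition, i.e.\ showing that $p$ and $s$ carry nothing beyond their boundary letters once the central run is nonempty; and, for $\#Y_m$, correctly accounting for the coincidences between the two one-sided configurations, which is exactly the overlap term $-m\nu(\nu-1)$ when $\nu<m$ and the collapse to $\#E_m(\nu)+m$ when $\nu\geqslant m$.
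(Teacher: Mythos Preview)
Your approach is essentially the paper's: the reduction to $\smn{k-1}$-factorizations, the offset-by-offset count, and the identification of the per-offset counts with $\#E_m$ and $\#Y_m$ are exactly how the paper proceeds (via \cref{pro:5.5} together with the computations of \cref{sec:arg}). Two remarks sharpen your sketch against the paper. First, the ``key lemma'' is cleaner than you suggest: the paper's \cref{prop:conclusion-final-generalization} says $U\sim_k V$ iff the (unique, for $|U|\geqslant 2m^{k-1}$) $\smn{k-1}$-factorizations satisfy $p_{_U}=p_{_V}$, $s_{_U}=s_{_V}$ and $u\sim_1 v$; since a proper prefix or suffix of $\smn{k-1}(a)$ is determined by its length and the boundary letter $a$, there is no ``residual binomial data'' or ``bounded correction'' beyond the pair $(a,b)$ once the offset is fixed. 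Second, the paper packages your per-offset tuple count as the abelian Rauzy graph $G_{m,\nu}$: your two-sided count is literally the number of edges $\#E_m(\nu)$ (triples $(a,[z],b)$ with $azb\in\Fac_{\nu+1}(\infw{t}_m)$), and your pair of one-sided offsets gives $\#Y_{m,L}(\nu)+\#Y_{m,R}(\nu)=\#Y_m(\nu)$; the explicit formulas then come from the vertex-by-vertex degree analysis in \cref{sec:arg}, which is where the $\nu<m$ versus $\nu\geqslant m$ dichotomy is actually worked out.
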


\begin{theorem}\label{thm:main}
  For every length~$n\geqslant 2m^k$, if $\lambda=n \pmod{m^k}$ and $\lambda=\nu m^{k-1}+\mu$ with $\nu<m$ and $\mu<m^{k-1}$, we have
  \[
    \bc{\infw{t}_m}{k}(n)=(m^{k-1}-1)(m^3-m^2+ m)+
    \begin{cases}
             \bc{\infw{t}_m}{1}(m+\nu), & \text{ if }\mu =0;\\ \rule{0pt}{2.5ex}
             m, & \text{ otherwise}.\\
           \end{cases}
       \]
       In particular, $\left(\bc{\infw{t}_m}{k}(n)\right)_{n\geqslant 2m^k}$ is periodic with period $m^k$.
 \end{theorem}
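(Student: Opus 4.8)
The plan is to analyze long factors through their block decomposition under the $(k-1)$-fold iterate $\smn{k-1}$, to reduce $k$-binomial equivalence to abelian equivalence of a central desubstituted factor, and then to count classes offset by offset. Since $\infw{t}_m$ is the fixed point of the $m$-uniform morphism $\sm$ and each image $\sm(i)$ begins with the letter $i$, the blocks $\smn{k-1}(a_0),\smn{k-1}(a_1),\dots$ (where $\infw{t}_m=a_0a_1\cdots$), each of length $m^{k-1}$, are recognizable. Hence every factor $w$ of length $n$ has a unique decomposition $w=s\cdot\smn{k-1}(c)\cdot t$, where $s$ is a (possibly empty) proper suffix of a block, $t$ a (possibly empty) proper prefix of a block, $c\in\Fac(\infw{t}_m)$ is the central factor, and the offset $r\in\{0,\dots,m^{k-1}-1\}$ records the position of the start of $w$ inside its first block. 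For $n\geqslant 2m^k$ the boundary pieces strip off fewer than $2m^{k-1}$ letters, so $c$ spans at least $2m-1\geqslant m$ blocks; this places the relevant abelian counts in the eventually periodic regime of \cref{thm:abelian_complexity}.

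The structural engine is that $\sm$ is Parikh-constant: every $\smn{k-1}(a)$ has the same Parikh vector (each letter occurring $m^{k-2}$ times), so the full-block part $\smn{k-1}(c)$ has a Parikh vector depending only on $|c|$, not on $c$. Consequently all abelian variation of $w$ is carried by the partial boundary blocks $s$ and $t$, while the internal structure of $c$ is invisible to $\sim_1$ but visible to $\sim_k$ through binomial coefficients of length $\geqslant 2$. I would then invoke the characterization of $k$-binomial equivalence among factors of $\infw{t}_m$ established earlier via abelian Rauzy graphs: for $n\geqslant 2m^k$, two factors $w=s\,\smn{k-1}(c)\,t$ and $w'=s'\,\smn{k-1}(c')\,t'$ satisfy $w\sim_k w'$ exactly when their boundary blocks agree in the appropriate sense and $c\sim_1 c'$. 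The justification is that any subword of length at most $k$ meets only boundedly many length-$m^{k-1}$ blocks, so its count is governed, up to boundary corrections, by the Parikh vector of $c$; verifying that this dependence is \emph{precisely} the abelian class of $c$, with no residual coincidences, is the step I expect to be the main obstacle.

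Granting the characterization, I would count classes by grouping factors of length $n$ according to their offset $r$ and summing, for each offset, the number of admissible pairs (boundary configuration, abelian class of $c$). The hypothesis $n\geqslant 2m^k$ triggers a saturation: because $c$ always spans at least $m$ blocks, the per-offset count at each of the $m^{k-1}-1$ non-aligned offsets attains the maximal value $m^3-m^2+m=\#E_m(\nu)$ for $\nu\geqslant m$ from \cref{thm:inter}; this yields the main term $(m^{k-1}-1)(m^3-m^2+m)$. The single aligned offset $r=0$ (empty $s$) is the delicate one. When $\mu=0$ the factor is the clean image $\smn{k-1}(c)$ with $|c|\equiv\nu\pmod m$, its abelian class is constant, and $\sim_k$ collapses to $\sim_1$ on $c$, so the contribution is the abelian count $\bc{\infw{t}_m}{1}(m+\nu)$, the shift by $m$ placing the argument safely in the periodic range of \cref{thm:abelian_complexity}. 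When $\mu\neq0$ a single trailing partial block survives and the characterization forces the contribution down to $m$.

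Finally, the periodicity is read off the formula: the right-hand side depends on $n$ only through $\lambda=n\bmod m^k$, hence through $\nu$ and $\mu$, and $\bc{\infw{t}_m}{1}(m+\nu)$ is well defined because the abelian complexity is periodic with period $m$ beyond length $m$; thus $\bigl(\bc{\infw{t}_m}{k}(n)\bigr)_{n\geqslant 2m^k}$ has period $m^k$. Apart from the characterization lemma, the remaining work is bookkeeping that I would carry out with care: confirming that the saturation threshold $2m^k$ is sharp, so that distinct offsets never merge into a single $\sim_k$ class for $n\geqslant 2m^k$ (in contrast to the shorter range of \cref{thm:inter}), and pinning down the two aligned-offset values $\bc{\infw{t}_m}{1}(m+\nu)$ and $m$ against the cases $\mu=0$ and $\mu\neq0$.
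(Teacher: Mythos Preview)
Your overall strategy—reduce $\sim_k$ on long factors to abelian equivalence of the desubstituted central word via the $\smn{k-1}$-factorization characterization, then count classes offset by offset—is sound and is in fact one of the two routes the paper develops. The paper's \emph{primary} derivation proceeds instead through the $\smn{k}$-factorization and a tailored equivalence $\equiv_k$ on the boundary pair $(p_{_{U}},s_{_{U}})$ (\cref{def:equivk}, \cref{thm:main_equiv}); your route is closer to the alternative via \cref{pro:5.5} and the abelian Rauzy graph counts of \cref{sec:arg}, which the paper explicitly notes also yields \cref{thm:main}.

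Two concrete problems. First, the characterization (\cref{prop:conclusion-final-generalization}) is not ``established via abelian Rauzy graphs'': it is proved by an induction on $k$ driven by explicit binomial-coefficient identities (\cref{lem:bigdiff}, \cref{lem:k-1-factorisation-first-last}); the Rauzy graphs enter only afterwards, as a counting device. Second, and more substantively, your per-offset accounting for $\mu\neq 0$ is wrong. It is not the case that one aligned offset contributes $m$ while the remaining $m^{k-1}-1$ each contribute $m^3-m^2+m$. When $\mu\neq 0$ there are \emph{two} degenerate offsets—those with empty left boundary and with empty right boundary (i.e., $\ell=0$ and $\ell=\mu$)—and each contributes $\#Y_{m,R}(j)=\#Y_{m,L}(j)=\tfrac12(m^3-m^2+2m)$; the remaining $m^{k-1}-2$ offsets each contribute $\#E_m(j)=m^3-m^2+m$ once $j\geqslant m$. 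The additive $+m$ in the final formula arises from the identity $(m^3-m^2+2m)=(m^3-m^2+m)+m$, not from any single offset collapsing to $m$. Your description of the $\mu=0$ case, by contrast, is correct: there the unique offset with both boundary pieces empty contributes $\bc{\infw{t}_m}{1}(j)=\bc{\infw{t}_m}{1}(m+\nu)$, and the other $m^{k-1}-1$ offsets each contribute $\#E_m(j)$.
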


 Combining the above two theorems, 
 we conclude that the periodic part of 
\(
\bc{\infw{t}_m}{k}(n)
\)
begins at~$m^k$ and therefore answer positively to~\cref{conj:1}.

\begin{corollary}
   The sequence $\left(\bc{\infw{t}_m}{k}(n)\right)_{n\geqslant m^k}$ is periodic with period $m^k$.
 \end{corollary}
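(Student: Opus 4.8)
The plan is to verify directly that the value of $\bc{\infw{t}_m}{k}$ on the block $[m^k,2m^k)$ agrees with the already-periodic formula on $[2m^k,3m^k)$, and then to invoke the periodicity for $n\geqslant 2m^k$ supplied by \cref{thm:main}. Since $m\geqslant 2$ gives $m^k\geqslant 2m^{k-1}$, every $n\in[m^k,2m^k)$ lies in the range $[2m^{k-1},2m^k)$ handled by \cref{thm:inter}, so its complexity is known explicitly. Writing $n=\nu\,m^{k-1}+\mu$ with $\nu\in\{m,\ldots,2m-1\}$ and $0\leqslant\mu<m^{k-1}$, I would first note that $\nu\geqslant m$ forces the ``otherwise'' branches $\#E_m(\nu)=\#E_m(\nu+1)=m^3-m^2+m$ and $\#Y_m(\nu)=m^3-m^2+2m$ throughout \cref{thm:inter}.

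I then carry out the two-case comparison. For $\mu=0$, part~(1) of \cref{thm:inter} yields $(m^{k-1}-1)(m^3-m^2+m)+\bc{\infw{t}_m}{1}(\nu)$; on the other side, $n+m^k=(\nu+m)m^{k-1}$ has residue $\lambda=(\nu-m)m^{k-1}$, so \cref{thm:main} with $\nu'=\nu-m<m$ and $\mu'=0$ gives $(m^{k-1}-1)(m^3-m^2+m)+\bc{\infw{t}_m}{1}(m+\nu')$; since $m+\nu'=\nu$, the two expressions coincide. For $0<\mu<m^{k-1}$, part~(2) of \cref{thm:inter} collapses to
\[
  \bigl[(\mu-1)+(m^{k-1}-\mu-1)\bigr](m^3-m^2+m)+(m^3-m^2+2m)=(m^{k-1}-1)(m^3-m^2+m)+m,
\]
which is exactly the ``otherwise'' value of \cref{thm:main} at $n+m^k$ (where $\mu'=\mu>0$). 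In both cases $\bc{\infw{t}_m}{k}(n)=\bc{\infw{t}_m}{k}(n+m^k)$.

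Finally, I combine this equality on $[m^k,2m^k)$ with the statement of \cref{thm:main} that $\bc{\infw{t}_m}{k}(n)=\bc{\infw{t}_m}{k}(n+m^k)$ already holds for every $n\geqslant 2m^k$. Together these give $\bc{\infw{t}_m}{k}(n)=\bc{\infw{t}_m}{k}(n+m^k)$ for all $n\geqslant m^k$, which is precisely periodicity with period $m^k$ from $m^k$ onward. This argument is essentially bookkeeping: the only genuine point to watch is that the index shift $n\mapsto n+m^k$ sends $\nu$ to $\nu-m$ in the residue decomposition, so that the abelian term $\bc{\infw{t}_m}{1}(\nu)$ of the $\mu=0$ case reappears as $\bc{\infw{t}_m}{1}(m+\nu')$ under the shift. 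No fresh computation of the abelian complexity is needed, and the matching is forced by the algebra once the ``otherwise'' regime of $\#E_m$ and $\#Y_m$ is correctly triggered by $\nu\geqslant m$.
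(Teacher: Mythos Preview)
Your proof is correct and follows exactly the approach the paper indicates: the paper merely states that the corollary is obtained ``combining the above two theorems'' (\cref{thm:inter} and \cref{thm:main}), and you have written out precisely this combination, verifying that the formulas from \cref{thm:inter} on $[m^k,2m^k)$ (where $\nu\geqslant m$ triggers the ``otherwise'' branches) coincide with those of \cref{thm:main} shifted by $m^k$.
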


\begin{example}
    \cref{fig:complexm3}
     illustrates 
     the $2$- and $3$-binomial complexities of $\infw{t}_3$.
     For short lengths, as described by \cref{thm:main1short},
     the factor complexity is shown using a black dashed line, while values from \cref{thm:inter} 
     are depicted in yellow.
     For larger lengths, values given by \cref{thm:main} are shown in purple and blue, with one period over $[2m^k,3m^k)$ highlighted in purple.
     
     \begin{figure}[h!t]
  \centering
  \includegraphics[width=11cm]{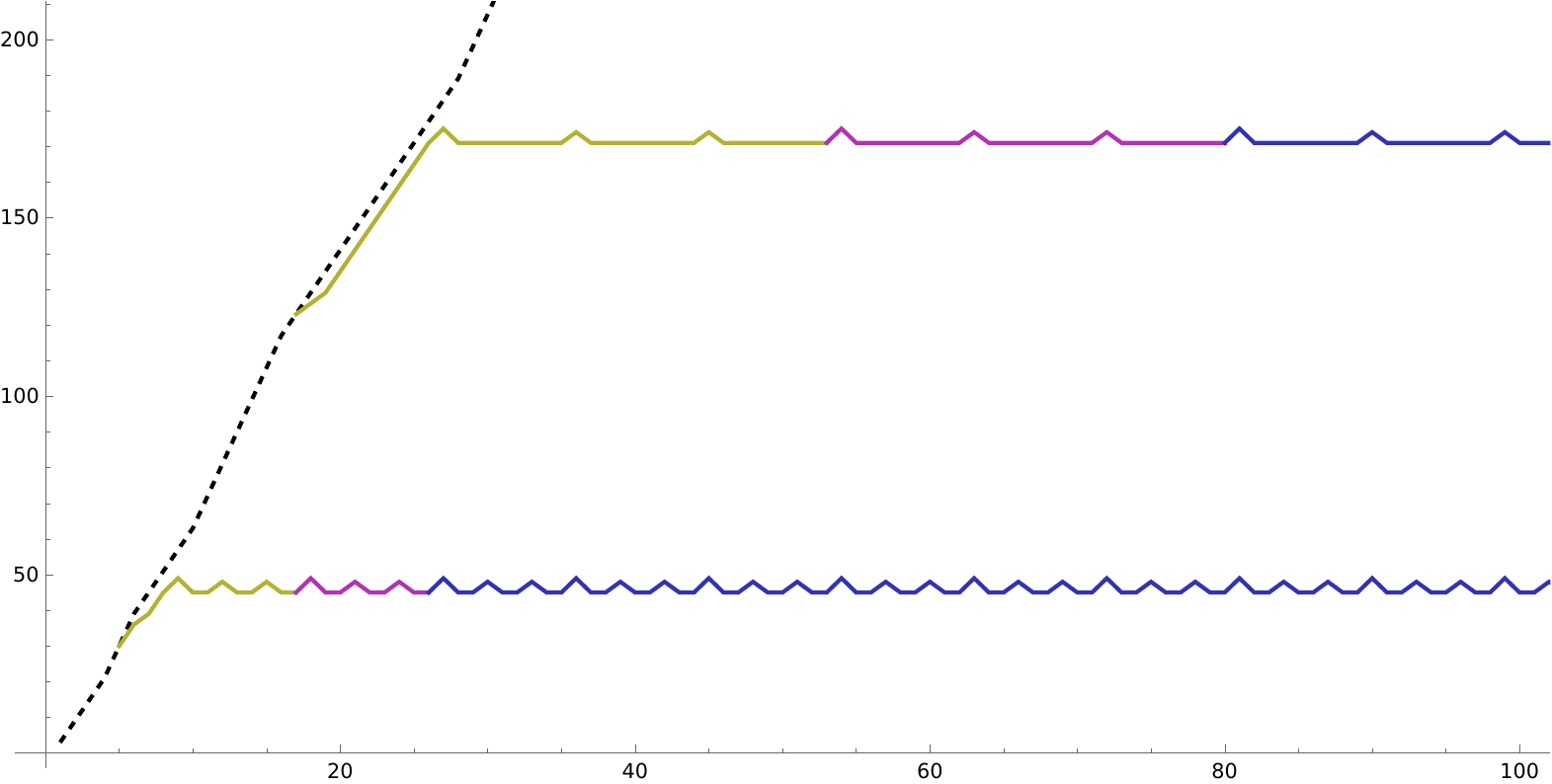}
  \caption{The first few values of the factor complexity (dashed), $2$-, and $3$-binomial complexities of $\infw{t}_3$.}
  \label{fig:complexm3}
\end{figure}

 For $m=3$ and $k=2, \ldots,6$,
\cref{tabt3}
provides the period of the
 $k$-binomial complexity 
 of $\infw{t}_3$,
 where exponents denote repetitions.
 \begin{table}[h!tbp]
 \small{
 \[
   \begin{array}{l}
(49,45^{2},48,45^{2},48,45^{2}); \ 
(175,171^{8},174,171^{8},174,171^{8});\  
(553,549^{26},552,549^{26},552,549^{26}); \\
(1687,1683^{80},1686,1683^{80},1686,1683^{80}); \
(5089,5085^{242},5088,5085^{242},5088,5085^{242})
 \end{array}
\]
}
\caption{The period of $\bc{\infw{t}_3}{k}$ for $k=2,\ldots,6$.}\label{tabt3}
\end{table}

\end{example}

Let us highlight that  \cref{thm:main} simultaneously generalizes the results from \cite{LLR} and \cite{ChenWen2024}. Furthermore, for $k=2$, our formula reduces to \cref{thm:2bin_complexity}. 
We also compute the values of $\bc{\infw{t}_m}{2}(n)$ for the short lengths $n<m^2$.
For $m=2$, \cref{thm:main} provides the following result. For every length $n\geqslant 2^k$, we have:
  \[
    \bc{\infw{t}_2}{k}(n)=3\cdot 2^k+\left\{\begin{array}{ll}
      -6 + \bc{\infw{t}_m}{1}(2), & \text{ if }n\equiv 0 \pmod{2^k};\\  
      -6 + \bc{\infw{t}_m}{1}(3), & \text{ otherwise}.  
                                          \end{array}\right.
\]
This result corresponds to~\cref{thm:kbin2},
with the shortest factors being handled by~\cref{thm:main1short}.


\section{
 Key Points of Our Proof Strategy
}\label{sec:strategy}

The developments presented are relatively intricate. Therefore, we found it useful to schematically outline the main steps of the proof. We hope this provides the reader with a general understanding about the structure of the paper, allowing each section to be read almost independently of the others. This, we believe, makes the paper easier to follow.

\begin{definition}\label{def:factorization}
Let $j\geqslant 1$ and $U$ be a factor of $\infw{t}_m$.
A factorization of the form $U = x\smn{j}(u)y$ is referred to as a
\emph{$\smn{j}$-factorization}  if there exists a factor $aub$ of $\infw{t}_m$, where $a,b \in \am \cup \{\varepsilon\}$.
In this factorization,
 $x$ (respectively,  $y$) must be a proper suffix (respectively, prefix) of $\smn{j}(a)$ (respectively,  $\smn{j}(b)$). Here, 
 $\varepsilon$ is regarded as both a proper prefix and a proper suffix of itself.
\end{definition}

In the literature, 
 the terms
 {\em interpretation in} $\infw{t}_m$ and {\em ancestor}
 are also used. See, for instance, 
 \cite{Frid1998}.

\cref{thm:main} addresses long enough factors.
As discussed in~\cref{sec:rec}, any factor~$U\in\Fac(\infw{t}_m)$ of length $\geqslant 2m^k$  has a unique $\smn{k}$-factorization of the form $p_{_{U}}\smn{k}(u)s_{_{U}}$.
In particular, notice that   
\(|p_{_{U}}|, |s_{_{U}}| <m^k\).
Thus, we can associate each such factor~$U$ with a unique pair  $(p_{_{U}},s_{_{U}})$, 
leading to the following definition.

\begin{definition}\label{def:equivk}
The equivalence relation on~$\mathcal{A}_m^{<m^k}\times \mathcal{A}_m^{<m^k}$ is defined by $(p_1,s_1)\equiv_k (p_2,s_2)$  if there exist $x,y,p,q,r,t\in \ams$ satisfying $|x|,|y|<m^{k-1}$ and 
\begin{eqnarray*}
(p_1,s_1)&=&\left(x \smn{k-1}(p),\smn{k-1}(q) y\right),\\
(p_2,s_2)&=&\left(x \smn{k-1}(r),\smn{k-1}(t) y\right),  
\end{eqnarray*}
and one of the following conditions holds
\begin{itemize}
    \item $pq\sim_1 rt$,
    \item $pq\sim_1 rt\sm(0)$, 
    \item $pq\sm(0)\sim_1 rt$.
\end{itemize}
\end{definition}

We will show the following result in~\cref{sec:discern}.

\begin{restatable}{proposition}{bothdir}\label{pro:both_dir}
    Let $x,y\in\ams$ and $k\geqslant 1$. Then, $x\sim_1 y$ 
    holds
    if and only if $\smn{k}(x)\sim_{k+1}\smn{k}(y)$.
\end{restatable}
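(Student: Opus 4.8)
The plan is to reduce the statement to a single inductive step and then exploit a generating-function encoding of binomial coefficients in a free associative algebra. For a word $w=w_1\cdots w_n$ over $\am$, set $\mathbb{B}(w)=\prod_{i=1}^{n}(1+w_i)$ in the algebra $\Z\langle\langle\am\rangle\rangle$ of noncommutative formal power series. Expanding the product gives $\mathbb{B}(w)=\sum_{x}\binom{w}{x}\,x$, so that $u\sim_j v$ is equivalent to $\mathbb{B}(u)$ and $\mathbb{B}(v)$ agreeing on all monomials of degree at most $j$. Since $\mathbb{B}$ turns concatenation into the algebra product, $\mathbb{B}(\sm(w))=\prod_i P_{w_i}$ with $P_a:=\mathbb{B}(\sm(a))=\prod_{l=0}^{m-1}\bigl(1+(a+l)\bigr)$. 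Writing $P_a=1+\theta(a)$ and extending $\theta$ to the linear substitution operator $\Theta$ defined by $\Theta(x_1\cdots x_r)=\theta(x_1)\cdots\theta(x_r)$, one checks the key identity $\mathbb{B}(\sm(w))=\Theta(\mathbb{B}(w))$. The crucial structural fact is that the degree-one part of $\theta(a)=P_a-1$ equals $\Sigma:=\sum_{b\in\am}b$, which is \emph{independent of $a$} because $\sm(a)$ is a permutation of the alphabet; thus $\theta(a)=\Sigma+Q_a$ with $Q_a$ of valuation (lowest degree) at least $2$. By induction on $k$, the proposition follows from the one-step equivalence $\sm(u)\sim_{j+1}\sm(v)\iff u\sim_j v$ for every $j\geqslant 1$, applied with $u,v$ replaced by $\smn{i}(x),\smn{i}(y)$.

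For the forward implication, assume $u\sim_j v$; then $|u|=|v|$ and $E:=\mathbb{B}(u)-\mathbb{B}(v)$ has valuation at least $j+1$. Applying $\Theta$, the lowest-degree contribution of $\Theta(E)$ sits in degree $j+1$, and since every $\theta(x_i)$ contributes its uniform degree-one part $\Sigma$ there, it equals $\bigl(\sum_{|x|=j+1}(\binom{u}{x}-\binom{v}{x})\bigr)\Sigma^{\,j+1}$. This scalar is $\binom{|u|}{j+1}-\binom{|v|}{j+1}=0$, so $\Theta(E)=\mathbb{B}(\sm u)-\mathbb{B}(\sm v)$ has valuation at least $j+2$, whence $\sm(u)\sim_{j+1}\sm(v)$. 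Combinatorially, this is the statement that when $\binom{\sm u}{z}$ is expanded over distributions of $z$ among the blocks $\sm(u_i)$, the fully spread term contributes only $\binom{|u|}{|z|}$ while every coarser term is a sum of coefficients $\binom{u}{\cdot}$ of length at most $j$.

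The reverse implication is the heart of the matter. Assume $\sm(u)\sim_{j+1}\sm(v)$; then $|u|=|v|$ since $\sm$ is $m$-uniform. I will show $\binom{u}{w}=\binom{v}{w}$ for all $|w|\leqslant j$ by induction on $\ell=|w|$. Granting the claim for lengths $<\ell$, the series $E$ has valuation $\geqslant\ell$, and in the degree-$(\ell+1)$ part of $\Theta(E)=0$ the term from the degree-$(\ell+1)$ part of $E$ again carries the vanishing scalar $\sum_{|x|=\ell+1}(\binom{u}{x}-\binom{v}{x})=0$; so only the contribution of the degree-$\ell$ part $E_\ell$ survives. Its coefficient on a monomial $y=y_1\cdots y_{\ell+1}$ is $\sum_{i=1}^{\ell}\sum_{w:\,w_i\in S(y_iy_{i+1})}\bigl(\binom{u}{w}-\binom{v}{w}\bigr)$, where $S(bc)=\{a\in\am:bc\text{ is a subword of }\sm(a)\}$ is the cyclic arc $\{c+1,\dots,b\}$, empty when $b=c$. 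Choosing $y=\beta^{\,i}\gamma^{\,\ell+1-i}$ makes every arc empty except the $i$-th, isolating the single-position marginal $\sum_{w:\,w_i\in A}(\binom{u}{w}-\binom{v}{w})=0$ for each position $i$ and arc $A$; taking differences of arcs yields the singleton marginals $\sum_{w:\,w_i=a}(\binom{u}{w}-\binom{v}{w})=0$.

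For $\ell=1$ these singleton marginals are the differences $\binom{u}{a}-\binom{v}{a}$ themselves, so $u\sim_1 v$. For $\ell\geqslant 2$, vanishing marginals alone do not force $E_\ell=0$ (e.g.\ the ``checkerboard'' functions lie in their kernel), so I bring in the binomial relations obeyed by genuine coefficients: from $\binom{u}{p}\binom{u}{a}=\sum_{p=p'p''}\binom{u}{p'ap''}$ with $|p|=\ell-1$, together with the induction hypothesis $\binom{u}{p}=\binom{v}{p}$ and $|u|_a=|v|_a$, one obtains the \emph{insertion relations} $\sum_{l}\bigl(\binom{u}{\iota_l(p,a)}-\binom{v}{\iota_l(p,a)}\bigr)=0$, where $\iota_l(p,a)$ inserts $a$ into the $l$-th gap of $p$. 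I expect the main obstacle to be the purely linear-algebraic claim that the single-position marginal relations and the insertion relations \emph{together} force $E_\ell\equiv 0$ on $\am^{\ell}$; the natural route is to describe the joint kernel of the marginal and insertion operators on $\Q^{\am^{\ell}}$ and to exploit the cyclic symmetry $a\mapsto a+1$ of $\sm$ to cut it down to zero. Once this lemma is established the induction on $\ell$ closes, giving $u\sim_j v$, and iterating the one-step equivalence $k$ times yields the proposition.
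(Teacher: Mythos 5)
Your generating-series formalism is set up correctly, the forward implication is sound (it is essentially the paper's \cref{prop:phik}(ii)), and your $\ell=1$ marginal argument does prove $\sm(u)\sim_2\sm(v)\Rightarrow u\sim_1 v$. The fatal problem is the reduction itself: you rest everything on the one-step equivalence $\sm(u)\sim_{j+1}\sm(v)\iff u\sim_j v$ for \emph{every} $j$, and for $j\geqslant 2$ you leave its hard half hanging on the ``linear-algebraic claim'' that the marginal relations and insertion relations force $E_\ell\equiv 0$. That claim is not merely unproven, it is false for $m\geqslant 3$, and so is the one-step converse you want it for. Take $\ell=2$ and $d(a(a+1))=1$, $d(a(a-1))=-1$, $d(ab)=0$ otherwise (indices mod $m$): this $d$ is antisymmetric (hence satisfies every insertion relation), has all row and column marginals zero, and is invariant under $a\mapsto a+1$, so the cyclic symmetry you hoped would cut the kernel down to zero cannot remove it. Worse, it is realized by genuine words: over $\mathcal{A}_3$ take $u=011220$ and $v=102102$ ($v$ is obtained from $u$ by the three adjacent swaps $01\to10$, $12\to21$, $20\to02$, each of which changes exactly one ordered-pair coefficient by one). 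Then $u\sim_1 v$, $\binom{u}{ab}-\binom{v}{ab}=d(ab)$, and $u\not\sim_2 v$. Your own computation of the low-degree parts of $\Theta(E)$ now backfires: degrees $0,1,2$ of $\mathbb{B}(\sigma_3(u))-\mathbb{B}(\sigma_3(v))$ vanish because $u\sim_1v$ and $|u|=|v|$, and the degree-$3$ part vanishes because the $E_3$-contribution carries the scalar $\binom{|u|}{3}-\binom{|v|}{3}=0$ while the $E_2$-contribution equals $\Sigma\bigl(\sum_b c_bQ_b\bigr)+\bigl(\sum_a r_aQ_a\bigr)\Sigma$ with $r_a=\sum_b d(ab)$ and $c_b=\sum_a d(ab)$ all zero. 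Hence $\sigma_3(u)\sim_3\sigma_3(v)$ although $u\not\sim_2 v$: the implication $\sm(u)\sim_{j+1}\sm(v)\Rightarrow u\sim_j v$ fails already at $m=3$, $j=2$. (It \emph{is} true for $m=2$, which is the Thue--Morse case treated in~\cite{LLR}; that is presumably where the intuition came from, but it does not survive on larger alphabets.)

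Note that this counterexample does not contradict \cref{pro:both_dir} itself (there $u\sim_1v$, so the proposition makes no claim about $\sim_2$); it only shows your route to it cannot be repaired by better linear algebra, and any fix would have to exploit the special structure of the words $\smn{i}(x)$, which your argument never uses. This is exactly why the paper does something different: it never descends one level at a time. The hard direction is proved in one shot by exhibiting a single explicit subword of length $k+1$, namely $0\,\mi{1}\cdots\mi{k}$, and establishing (by induction on $k$ in \cref{prop:-1}, then \cref{cor:notequiv}) the exact identity
\begin{equation*}
\binom{\smn{k}(u)}{0\,\mi{1}\cdots\mi{k}}-\binom{\smn{k}(v)}{0\,\mi{1}\cdots\mi{k}}=\bigl(|u|_0-|v|_0\bigr)\,m^{\binom{k}{2}},
\end{equation*}
so abelian inequivalence is detected at level $k+1$ after all $k$ applications of $\sm$ at once. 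In your language, any corrected series argument is forced into the same computation: since $\smn{k}(a)\sim_k\smn{k}(a')$ for all letters $a,a'$ (\cref{prop:phik}(iii)), the degree-$\leqslant k$ parts of $\mathbb{B}(\smn{k}(a))-1$ are letter-independent, so the first informative degree of $\Theta^k(E)$ is $k+1$, and what is needed there is precisely the letter-dependence of $\binom{\smn{k}(a)}{z}$ for a word $z$ of length $k+1$ --- i.e., \cref{prop:-1}. (A small additional slip: your infiltration identity should read $\binom{u}{p}\binom{u}{a}=\sum_{l}\binom{u}{\iota_l(p,a)}+|p|_a\binom{u}{p}$; the missing merge term does not affect the insertion relations you derive, but the identity as written is wrong.)
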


To achieve this result, a key challenge was identifying a suitable subword $z$ of length~$k+1$ such that  $x\not\sim_1 y$, implies $\binom{x}{z}\neq\binom{y}{z}$.  \cref{sec:discern} focuses on providing the necessary computations to distinguish non-equivalent factors.

It can easily be shown that if $U,V\in\Fac(\infw{t}_m)$ are factors of length at least $2m^k$  and $(p_{_{U}},s_{_{U}})\equiv_k (p_{_{V}},s_{_{V}})$, then $U\sim_k V$. 
See~\cref{pro:imply}. 
Moreover, the converse of this property is also valid.
However, further developments, as outlined below, are necessary to prove this result.

Assuming, for now, that $(p_{_{U}},s_{_{U}})\equiv_k (p_{_{V}},s_{_{V}})$
if and only if $U\sim_k V$,  proving  \cref{thm:main},  reduces to counting the number  \[\#\, \left\{(p_{_{U}},s_{_{V}})\mid \, U\in \Fac_n(\infw{t}_m)\right\}/\!\!\equiv_k\] of such equivalence classes for $n\geqslant 2m^k$.
This forms the core of~\cref{sec:4} and is given by~\cref{thm:main_equiv}, whose statement is similar to \cref{thm:main}.

To prove that $U\sim_k V$ implies $(p_{_{U}},s_{_{U}})\equiv_k (p_{_{V}},s_{_{V}})$, we first obtain the generalization of \cite[Thm.~2]{ChenWen2024}  originally stated for  $2$-binomial equivalence. This result is then extended to all $k\geqslant 2$.

\begin{restatable}{proposition}{conclusionfinalgeneralization}\label{prop:conclusion-final-generalization}
Let $k\geqslant 2$.
For any two factors $U$ and $V$ of $\infw{t}_m$,
 the relation
$U \sim_k V$
holds if and only if there exist 
 $\smn{k-1}$-factorizations $U = p_{_{U}}\smn{k-1}(u) s_{_{U}}$ and $V = p_{_{V}} \smn{k-1}(v) s_{_{V}}$, such that $p_{_{U}} = p_{_{V}}$, $s_{_{U}} = s_{_{V}}$, and $u \sim_1 v$.
\end{restatable}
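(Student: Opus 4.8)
\emph{Overall strategy.} The plan is to prove both implications by induction on $k$, taking $k=2$ as the base case. The sufficiency direction needs no induction and follows from two structural facts about binomial equivalence; the necessity direction is the substantial one, and I will reduce it to the base case together with a single ``descent'' property for $\sm$ that I expect to be the real obstacle. Throughout I use that $\sim_k$ is a congruence for concatenation: from the convolution identity $\binom{xy}{w}=\sum_{w=w_1w_2}\binom{x}{w_1}\binom{y}{w_2}$ one obtains, for a common prefix $p$, a common suffix $s$, and any $w$ with $|w|\leqslant k$,
\[
\binom{p\,a\,s}{w}=\sum_{w=w_1w_2w_3}\binom{p}{w_1}\binom{a}{w_2}\binom{s}{w_3},
\]
and since every $w_2$ here has length $\leqslant k$, the relation $a\sim_k b$ forces $pas\sim_k pbs$.

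\emph{Sufficiency.} Suppose we are given $\smn{k-1}$-factorizations $U = p_U\,\smn{k-1}(u)\,s_U$ and $V = p_V\,\smn{k-1}(v)\,s_V$ with $p_U=p_V$, $s_U=s_V$, and $u\sim_1 v$. Applying \cref{pro:both_dir} with parameter $k-1$ (legitimate since $k\geqslant 2$) gives $\smn{k-1}(u)\sim_k\smn{k-1}(v)$, and the congruence property above, applied with the common affixes $p_U$ and $s_U$, yields $U\sim_k V$. This is essentially \cref{pro:imply}.

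\emph{Necessity.} I prove by induction that $U\sim_k V$ forces a factorization of the required shape. The base case $k=2$ is the affix-matching description of $2$-binomial equivalence underlying \cref{thm:2bin_complexity} (the $k=2$ instance of the present statement). For the step, assume the claim for $k$ and let $U\sim_{k+1}V$. As $\sim_{k+1}$ refines $\sim_2$, the base case supplies $\sm$-factorizations $U=p\,\sm(\tilde u)\,s$ and $V=p\,\sm(\tilde v)\,s$ with \emph{identical} affixes $p,s$. I then cancel these affixes: subtracting the equalities $\binom{U}{w}=\binom{V}{w}$ and expanding as above produces a linear system in the differences $\binom{\sm(\tilde u)}{w_2}-\binom{\sm(\tilde v)}{w_2}$ that is unitriangular in $|w_2|$ (the diagonal term arising from $w_1=w_3=\varepsilon$); solving from short to long words forces every difference to vanish, so that $\sm(\tilde u)\sim_{k+1}\sm(\tilde v)$.

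\emph{The main obstacle, and conclusion.} At this point I need the descent property
\[
\sm(\tilde u)\sim_{k+1}\sm(\tilde v)\ \Longrightarrow\ \tilde u\sim_k\tilde v,
\]
the single-$\sm$ analogue of \cref{pro:both_dir} and the step I expect to be hardest. To establish it I would expand $\binom{\sm(w)}{z}$ by distributing the at most $k+1$ chosen positions of a subword $z$ among the length-$m$ blocks $\sm(w_i)$: since each block is a permutation of $\am$, a maximal run of $z$ falling in one block $\sm(c)$ contributes a factor depending only on $c$ (namely whether that run is an ordered subword of the cyclic shift $\sm(c)$), and summing over increasing choices of distinct blocks converts the block-index count into genuine binomial coefficients $\binom{w}{c_1\cdots c_j}$. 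This expresses $\bigl(\binom{\sm(w)}{z}\bigr)_{|z|\leqslant k+1}$ as a fixed, $w$-independent linear combination of $\bigl(\binom{w}{y}\bigr)_{|y|\leqslant k}$, and the task reduces to exhibiting, for each $y$ with $|y|\leqslant k$, a distinguishing subword $z$ of length $\leqslant k+1$ isolating $\binom{w}{y}$ -- precisely the ``suitable subword'' difficulty flagged for \cref{pro:both_dir}, and the content I would locate in \cref{sec:discern}. Granting the descent, $\tilde u\sim_k\tilde v$, so the induction hypothesis gives $\smn{k-1}$-factorizations $\tilde u=p'\,\smn{k-1}(w)\,s'$ and $\tilde v=p'\,\smn{k-1}(w')\,s'$ with $w\sim_1 w'$. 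Applying $\sm$ and absorbing the outer affixes yields
\[
U=\bigl(p\,\sm(p')\bigr)\,\smn{k}(w)\,\bigl(\sm(s')\,s\bigr),\qquad V=\bigl(p\,\sm(p')\bigr)\,\smn{k}(w')\,\bigl(\sm(s')\,s\bigr),
\]
whose affixes coincide and whose cores satisfy $w\sim_1 w'$. A length check, $|p\,\sm(p')|\leqslant (m-1)+m(m^{k-1}-1)<m^{k}$, together with the self-similarity of ancestors in \cref{def:factorization}, confirms that these are legitimate $\smn{k}$-factorizations, completing the induction. Verifying that the recomposed decomposition genuinely corresponds to an ancestor $a\,w\,b$ (obtained by composing the level-$1$ and level-$(k-1)$ ancestors) is a secondary technical point I would dispatch alongside the uniqueness discussion of \cref{sec:rec}.
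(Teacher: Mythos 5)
Your sufficiency argument and your use of cancellation (your ``unitriangular'' system is exactly \cref{lem:cancel}) are fine, but the necessity direction has a genuine gap: the entire mathematical content of your induction step is the descent property $\sm(\tilde u)\sim_{k+1}\sm(\tilde v)\Rightarrow \tilde u\sim_k\tilde v$, and you do not prove it — you only sketch a hoped-for expansion and acknowledge that it ``reduces to exhibiting a distinguishing subword,'' which is a restatement of the difficulty, not a solution. Nor can this be cited from the paper: \cref{pro:both_dir} is a different statement ($x\sim_1 y$ iff $\smn{k}(x)\sim_{k+1}\smn{k}(y)$, relating \emph{abelian} equivalence of preimages under the $k$-th iterate to $(k+1)$-equivalence of images); it does not give a single-step descent from $\sim_{k+1}$ to $\sim_k$. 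Worse, for factors of $\infw{t}_m$ the descent property is essentially a \emph{consequence} of the proposition you are trying to prove (given the claimed factorizations of $\sm(\tilde u)$ and $\sm(\tilde v)$, recognizability forces the affixes to be $\sm$-images and one concludes $\tilde u \sim_k \tilde v$ by the sufficiency direction at level $k$), so assuming it without an independent proof is dangerously close to circular. The paper avoids ever needing this statement: it proves the restricted \cref{lem:k-1-factorisation-first-last} (no affixes, distinct extremal letters) by induction, applying the induction hypothesis directly to $U\sim_k V$ (since $\sim_{k+1}$ refines $\sim_k$) to obtain a partial factorization, and then the hard analytic work — ruling out nonempty affixes $\gamma_u,\delta_u,\gamma_v,\delta_v$ — is done by the explicit difference computation of \cref{lem:bigdiff} applied with the subword $\alpha(\alpha-1)\cdots$ together with the signed sums $\sum_{b}\binom{\cdot}{b(\alpha-1)}+\binom{\cdot}{\alpha b}$. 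Something playing the role of \cref{lem:bigdiff} is exactly what is missing from your argument.

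A second, smaller gap: your base case $k=2$ is asserted, not proved. Citing ``the $k=2$ instance of the present statement'' is circular, and citing the result behind \cref{thm:2bin_complexity} (Lü--Chen--Wen--Wu) does not suffice either, because their theorem carries the extra hypothesis $|u|,|v|\geqslant 3$; the paper explicitly reproves the base case (\cref{lem:generalised-k=2-distinct-extremal-letters}) precisely to remove that assumption. Finally, your recomposition step (splicing the level-$1$ and level-$(k-1)$ factorizations into a $\smn{k}$-factorization with a valid ancestor) is indeed only a technical verification, but note that it requires the two factorizations to come from compatible occurrences, which for short cores is not automatic from uniqueness; this too would need to be written out.
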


We proceed by induction on $k$.
 The base case for $k=2$ is essentially  \cite[Thm.~2]{ChenWen2024}.
 However, our result slightly improves upon that of Chen et al. by not requiring any assumptions about the lengths of
 $U$ and $V$ in the factorizations.

Using
\cref{prop:conclusion-final-generalization},
 we can easily deduce the following result, thereby concluding this part.

\begin{restatable}{proposition}{propconverse}\label{prop:converse}
 Let $k\geqslant 2$. Let $U$ and $V$ be factors of $\infw{t}_m$ with the same length~$\geqslant 2m^k$ such that
 \begin{align*}
      U=p_{_{U}}\smn{k-1}\left(\alpha_{{u}}\sm(u)\beta_{{u}}\right)s_{_{U}},
      \quad
      \text{and}
      \quad
       V=p_{_{V}}\smn{k-1}\left(\alpha_{{v}}\sm(v)\beta_{{v}}\right)s_{_{V}},
 \end{align*}
  where $|p_{_{U}}|,|s_{_{U}}|,|p_{_{V}}|,|s_{_{V}}|<m^{k-1}$
  and $|\alpha_{{u}}|,|\beta_{{u}}|,|\alpha_{{v}}|,|\beta_{{v}}|<m$.
 If
 \(
 U\sim_{k} V
 \),
 then
  \[
     \left(p_{_{U}}\smn{k-1}(\alpha_{{u}}),\smn{k-1}(\beta_{{u}})s_{_{U}}\right)\equiv_{k}\left(p_{_{V}}\smn{k-1}(\alpha_{{v}}),\smn{k-1}(\beta_{{v}})s_{_{V}}\right).
  \]
\end{restatable}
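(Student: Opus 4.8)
The plan is to derive \cref{prop:converse} as a direct corollary of \cref{prop:conclusion-final-generalization}, using the definition of $\equiv_k$ (\cref{def:equivk}) as the target. The overall strategy is to take the hypothesis $U \sim_k V$, feed it into \cref{prop:conclusion-final-generalization} to extract $\smn{k-1}$-factorizations with a common prefix/suffix and abelian-equivalent central parts, and then show that the resulting data matches one of the three abelian conditions in the definition of $\equiv_k$ once we carefully track how the decorations $\alpha_u,\beta_u,\alpha_v,\beta_v$ interact with the $\smn{k-1}$-factorizations already fixed in the statement.

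\medskip\noindent\textbf{Key steps.} First I would apply \cref{prop:conclusion-final-generalization} to the pair $U,V$: since $U\sim_k V$, there exist $\smn{k-1}$-factorizations
\[
U = p'\,\smn{k-1}(u')\,s', \qquad V = q'\,\smn{k-1}(v')\,s'',
\]
with $p'=q'$, $s'=s''$, and $u'\sim_1 v'$. The subtlety is that the factorizations produced by \cref{prop:conclusion-final-generalization} need not be literally the ones appearing in the statement of \cref{prop:converse} (where the central word is already presented as $\alpha_u\sm(u)\beta_u$). So the second step is to reconcile the two factorizations. By the uniqueness of the $\smn{k}$-factorization for long factors (the discussion in \cref{sec:rec}, giving $p_{_U},s_{_U}<m^k$), and by the compatibility of $\smn{k-1}$-factorizations, I would argue that the common prefix $p'$ and the prefix $p_{_{U}}\smn{k-1}(\alpha_u)$ agree up to reassigning a boundary block: precisely, one can write $p' = p_{_{U}}\smn{k-1}(x)$ and $s' = \smn{k-1}(y)\,s_{_{U}}$ for suitable $x,y\in\ams$ with $|x|,|y|<m^{k-1}$ coming from the decorations, where the common values of $p',s'$ across $U$ and $V$ force the same $x,y$ to appear on the $V$ side. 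This is exactly the shape
\[
\bigl(p_{_{U}}\smn{k-1}(\alpha_u),\smn{k-1}(\beta_u)s_{_{U}}\bigr)
=\bigl(x\,\smn{k-1}(p),\,\smn{k-1}(q)\,y\bigr)
\]
demanded by \cref{def:equivk}.

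\medskip\noindent Third, having aligned the outer data, the abelian condition $u'\sim_1 v'$ from \cref{prop:conclusion-final-generalization} must be translated into one of the three alternatives $pq\sim_1 rt$, $pq\sim_1 rt\sm(0)$, $pq\sm(0)\sim_1 rt$. The point is that the central words $p,q$ (resp.\ $r,t$) assembled from the $U$-side (resp.\ $V$-side) factorization may differ from $u',v'$ by a single extra block $\sm(0)$ at the seam, depending on whether the $\smn{k-1}(\alpha_u)$ part absorbs a full $\smn{k-1}(\sm(0))$ block or not. Since $\smn{k-1}(\sm(0))=\smn{k}(0)$ is abelian-equivalent to $\sm(0)$ under $\smn{k-1}$ (here one uses that $\smn{k-1}$ is Parikh-constant, so applying it preserves abelian relations and the seam discrepancy is exactly a $\sm(0)$-block), the relation $u'\sim_1 v'$ yields exactly one of the three stated possibilities. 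I would enumerate the (at most three) cases according to the relative lengths at the two seams.

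\medskip\noindent\textbf{Main obstacle.} The delicate point — and the step I expect to cost the most — is the \emph{bookkeeping at the two seams}, i.e.\ showing that the factorization furnished by \cref{prop:conclusion-final-generalization} can be massaged into one whose outer blocks $p_{_{U}},s_{_{U}}$ and decorations $\alpha_u,\beta_u$ match those fixed in the statement, and correctly identifying when an extra $\sm(0)$ block is created or destroyed. This requires invoking the uniqueness of $\smn{k-1}$-factorizations carefully (two $\smn{k-1}$-factorizations of the same factor can differ only by shifting a boundary $\smn{k-1}$-block between the central image and the prefix/suffix), and using that $\smn{k-1}$ is Parikh-constant to push abelian equivalence through the morphism without loss. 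Everything else is formal substitution into \cref{def:equivk}.
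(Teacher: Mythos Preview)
Your overall approach matches the paper's: derive \cref{prop:converse} directly from \cref{prop:conclusion-final-generalization} and then verify the conditions of \cref{def:equivk}. The paper in fact treats this as an immediate consequence and gives no further detail.

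However, you are substantially overcomplicating the ``bookkeeping at the seams''. Since $|U|,|V|\geqslant 2m^{k}\geqslant 2m^{k-1}$, \cref{cor:unique-factorization-bound} guarantees that the $\smn{k-1}$-factorization of each word is \emph{unique}. The factorization $U=p_{_U}\smn{k-1}(\alpha_u\sm(u)\beta_u)s_{_U}$ given in the statement, with $|p_{_U}|,|s_{_U}|<m^{k-1}$, \emph{is} the unique $\smn{k-1}$-factorization of $U$; likewise for $V$. Hence the factorization produced by \cref{prop:conclusion-final-generalization} coincides with it term-by-term: you get $p_{_U}=p_{_V}$, $s_{_U}=s_{_V}$, and $\alpha_u\sm(u)\beta_u\sim_1\alpha_v\sm(v)\beta_v$ directly, with no massaging or boundary-block shifting. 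In the notation of \cref{def:equivk} one simply takes $x=p_{_U}=p_{_V}$, $y=s_{_U}=s_{_V}$, $p=\alpha_u$, $q=\beta_u$, $r=\alpha_v$, $t=\beta_v$. (Your suggestion that two $\smn{k-1}$-factorizations ``can differ only by shifting a boundary block'' is incorrect here: for factors this long they cannot differ at all.)

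The remaining step is exactly as you outline: from $\alpha_u\sm(u)\beta_u\sim_1\alpha_v\sm(v)\beta_v$ and Parikh-constancy of $\sm$, one gets $\Psi(\alpha_u\beta_u)-\Psi(\alpha_v\beta_v)=(|v|-|u|)\Psi(\sm(0))$; the length constraint $|\alpha_u\beta_u|,|\alpha_v\beta_v|<2m$ forces $|v|-|u|\in\{-1,0,1\}$, yielding precisely the three alternatives in \cref{def:equivk}.
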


We now focus on factors of length $n\in[2m^{k-1},2m^k)$.
The proof of \cref{thm:inter} relies on analyzing the so-called abelian Rauzy graphs.

\begin{definition}\label{def:abr}
For an infinite word, the {\em abelian Rauzy graph} of order $\ell\geqslant 1$ 
is defined with vertices corresponding to the abelian equivalence classes of factors of length~$\ell$ (or equivalently, to their Parikh vectors). 
The edges of the graph are defined as follows. Let $a,b$ be letters.
If $aUb$ is a factor of length $\ell+1$, there exists a directed edge from $\Psi(aU)$ to $\Psi(Ub)$  labeled $(a,b)$.
\end{definition}

We denote the abelian Rauzy graph of order
$\ell$
of
 $\infw{t}_m$
 by
 $G_{m,\ell}$.
 The number of vertices in
  $G_{m,\ell}$
  is clearly 
  $\bc{\infw{t}_{m}}{1}(\ell)$.
 For all $\ell\geqslant 1$, we define the  following sets:
\begin{eqnarray*}
  Y_{m,R}(\ell)&:=&\left\{\left(\Psi(U),a\right)\mid\, a\in\am, \, Ua\in\Fac_{\ell+1}(\infw{t}_m)\right\},\\
  Y_{m,L}(\ell)&:=&\left\{\left(a,\Psi(U)\right)\mid\, a\in\am,  \, aU\in\Fac_{\ell+1}(\infw{t}_m)\right\},\\
  Y_m(\ell)&:=&Y_{m,R}(\ell)\cup Y_{m,L}(\ell).
\end{eqnarray*}
Since  
$\infw{t}_m=\smn{k-1}(\infw{t}_m)$,  it is quite straightforward to adapt \cite[Prop.~5.5]{RSW}. The idea behind the following formula is that to get $\bc{\infw{t}_m}{k}(j\, m^{k-1}+r)$, one has to count the distinct $\smn{k-1}$-factorizations up to the equivalence relation given by \cref{prop:conclusion-final-generalization}. 
  \begin{proposition}\label{pro:5.5}
Let $k\geqslant 2$.
We let $E_{m}(j)$ denote the set of edges in the abelian Rauzy graph $G_{m,j}$.
For all $j\geqslant 2$ and $0<r<m^{k-1}$, 
the following holds
\[\bc{\infw{t}_m}{k}\left(j\, m^{k-1}\right)=\left(m^{k-1}-1\right) \,\#E_{m}(j)+\bc{\infw{t}_m}{1}(j),\]
and
\[\bc{\infw{t}_m}{k}\left(j\, m^{k-1}+r\right)=(r-1)\, \#E_{m}(j+1) +(m^{k-1}-r-1)\,\#E_{m}(j)+\#Y_m(j).\]
\end{proposition}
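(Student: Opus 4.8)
The plan is to combine the uniqueness of the $\smn{k-1}$-factorization with \cref{prop:conclusion-final-generalization} so as to turn the count of $\sim_k$-classes into a count of vertices and edges of abelian Rauzy graphs (plus the boundary sets $Y_{m,R}$, $Y_{m,L}$), exactly in the spirit of the adaptation of \cite[Prop.~5.5]{RSW} announced before the statement. First I would observe that, since $j\geqslant 2$, every factor under consideration has length $\geqslant 2m^{k-1}$, so by the recognizability developed in \cref{sec:rec} (applied to $\smn{k-1}$ in place of $\smn{k}$) it admits a \emph{unique} $\smn{k-1}$-factorization $U=x\,\smn{k-1}(u)\,y$ with $|x|=i\in\{0,\dots,m^{k-1}-1\}$. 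Because this factorization is unique, the ``there exist factorizations'' in \cref{prop:conclusion-final-generalization} collapses to a statement about the canonical one: two factors $U=x\,\smn{k-1}(u)\,y$ and $V=x'\,\smn{k-1}(v)\,y'$ satisfy $U\sim_k V$ if and only if $x=x'$, $y=y'$ and $u\sim_1 v$. In particular $|x|=i$ is a $\sim_k$-invariant, so the classes split according to $i$ and it suffices to count, for each fixed $i$, the distinct triples $(x,[u]_{\sim_1},y)$ and then to sum over the (disjoint) values of $i$.

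Next I would enumerate the admissible shapes: solving $|x|+|u|\,m^{k-1}+|y|=n$ under $0\leqslant|y|<m^{k-1}$ determines $|u|$ and $|y|$ from $i$. For $n=j\,m^{k-1}$ this yields the aligned shape $i=0$ (with $x=y=\varepsilon$, $u\in\Fac_j(\infw{t}_m)$) and the shifted shapes $i\in\{1,\dots,m^{k-1}-1\}$ (with $u\in\Fac_{j-1}(\infw{t}_m)$, and $x,y$ a suffix and a prefix of $\smn{k-1}(a),\smn{k-1}(b)$ for a factor $aub$). For $n=j\,m^{k-1}+r$ one gets four regimes according to $i=0$, $0<i<r$, $i=r$, $r<i<m^{k-1}$. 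The small but crucial elementary fact I would isolate here is that a \emph{nonempty} prefix (resp.\ suffix) of $\smn{k-1}(c)$ determines the letter $c$: the first letter of $\smn{k-1}(c)$ is $c$, and its last letter is $c-(k-1)$ reduced modulo $m$, both injective in $c$. Hence whenever $x$ (resp.\ $y$) is nonempty, the condition $x=x'$ (resp.\ $y=y'$) is equivalent to equality of the corresponding letter $a$ (resp.\ $b$).

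With this, for each fixed $i$ the invariant $(x,[u]_{\sim_1},y)$ reduces to a triple $(a,\Psi(u),b)$ realized by a factor $aub$, or to a pair when a block boundary suppresses one side. I would then match each regime to its combinatorial count. For $n=j\,m^{k-1}$: the case $i=0$ contributes the vertices of $G_{m,j}$, i.e.\ $\bc{\infw{t}_m}{1}(j)$ classes, and each $i\in\{1,\dots,m^{k-1}-1\}$ contributes the triples $(a,\Psi(u),b)$ with $aub\in\Fac_{j+1}(\infw{t}_m)$, which are in bijection with the edges $E_m(j)$, giving $(m^{k-1}-1)\#E_m(j)$. For $n=j\,m^{k-1}+r$: the case $i=0$ has $x=\varepsilon$ and a nonempty right prefix, contributing $Y_{m,R}(j)$; each $0<i<r$ has $aub\in\Fac_{j+2}(\infw{t}_m)$, contributing the edges $E_m(j+1)$; the case $i=r$ has $y=\varepsilon$, contributing $Y_{m,L}(j)$; and each $r<i<m^{k-1}$ has $aub\in\Fac_{j+1}(\infw{t}_m)$, contributing the edges $E_m(j)$. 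Summing over the disjoint regimes yields $(r-1)\#E_m(j+1)+(m^{k-1}-r-1)\#E_m(j)+\#Y_{m,R}(j)+\#Y_{m,L}(j)$; it then remains only to note that $Y_{m,R}(j)$ and $Y_{m,L}(j)$ are disjoint as sets, since a pair $(\Psi(U),a)$ can never equal a pair $(a',\Psi(U'))$, so $\#Y_{m,R}(j)+\#Y_{m,L}(j)=\#Y_m(j)$, which is the announced formula.

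I expect the main difficulty to be bookkeeping rather than conceptual: pinning down, in each of the four regimes, the exact middle length $|u|$ and hence the correct Rauzy-graph order ($G_{m,j}$ versus $G_{m,j+1}$) together with the right $Y$-set, and verifying that the boundary regimes $i=0$ and $i=r$ produce genuinely distinct classes (distinguished by $|x|$) so that nothing is double counted. The one substantive input, as opposed to routine enumeration, is the uniqueness of the $\smn{k-1}$-factorization: it is precisely what lets the existential statement of \cref{prop:conclusion-final-generalization} be read off from the single invariant $(x,[u]_{\sim_1},y)$, and without it the partition of classes by $i$ would not be valid.
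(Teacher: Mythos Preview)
Your proposal is correct and follows exactly the approach the paper sketches: it works out in detail the ``straightforward adaptation of \cite[Prop.~5.5]{RSW}'' announced just before the statement, using the uniqueness of the $\smn{k-1}$-factorization (available because $j\geqslant 2$ forces length $\geqslant 2m^{k-1}$) together with \cref{prop:conclusion-final-generalization} to reduce the count of $\sim_k$-classes to a count of triples $(a,\Psi(u),b)$, and then matches each regime of $i=|x|$ to vertices, edges, or $Y$-sets of the appropriate abelian Rauzy graph. The paper itself does not spell out the argument beyond this idea, so your write-up is in fact more detailed than the paper's treatment.
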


The reader may notice that the formula leading to \cref{thm:inter}
requires the values of the abelian complexity for short factors.
However, \cref{thm:abelian_complexity} provides these values only for $j\geqslant m$,
 leaving the case
$j<m$
unaddressed.
Therefore,  in~\cref{sec:abco}, we describe the missing values of $\bc{\infw{t}_m}{1}(j)$ for $j<m$.
In
\cref{sec:arg},
 we proceed to a detailed analysis of the structure of the abelian Rauzy graph of order~$j$.
We are thus able to determine explicit expressions for $\#E_{m}(j)$ and $\#Y_m(j)$.


\section{
Compilation of Preliminary Results
}\label{sec:collecting}

For the sake of completeness, we recall some basic properties of binomial coefficients~\cite{Lothaire,RigoSalimov},
which are implicitly applied throughout this paper.

\begin{lemma}\label{lem:binomial2}
Let $x,y,z$ be three words over the alphabet $\mathcal{A}$. The following relation holds
\[
  \binom{xy}{z}=\sum_{\substack{u,v\in \mathcal{A}^*\\ uv=z}} \binom{x}{u}\binom{y}{v}.
\]
More generally, let $x_1,\ldots,x_\ell$, $z \in \mathcal{A}^*$
and 
$\ell \geqslant 1$. 
 Then, the following relation holds
\[
  \binom{x_1 \cdots x_{\ell}}{z}=\sum_{\substack{e_1,\ldots,e_{\ell}\in \mathcal{A}^*\\ e_1\cdots e_\ell=z}} \, \prod_{i=1}^{\ell}\binom{x_i}{e_i}.
\]

\end{lemma}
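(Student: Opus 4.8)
The plan is to first establish the two-word case by a direct counting argument on the position of the boundary between $x$ and $y$, and then obtain the general statement by a routine induction on $\ell$.

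For the two-word identity, I would write $xy = a_1\cdots a_{|x|+|y|}$, where $a_1\cdots a_{|x|}=x$ and $a_{|x|+1}\cdots a_{|x|+|y|}=y$. By definition, each occurrence of $z=z_1\cdots z_{|z|}$ as a subword corresponds to an increasing sequence of indices $i_1<\cdots<i_{|z|}$ with $a_{i_1}\cdots a_{i_{|z|}}=z$. Since the indices are strictly increasing, there is a unique threshold $j\in\{0,\ldots,|z|\}$ such that $i_1,\ldots,i_j\leqslant |x|$ while $i_{j+1},\ldots,i_{|z|}>|x|$. The first group is exactly an occurrence of the prefix $u:=z_1\cdots z_j$ inside $x$, and the second group is an occurrence of the complementary suffix $v:=z_{j+1}\cdots z_{|z|}$ inside $y$; moreover these two choices may be made independently. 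Thus the occurrences of $z$ partition according to the factorization $z=uv$, and the number realizing a fixed factorization equals $\binom{x}{u}\binom{y}{v}$. Summing over all $|z|+1$ factorizations $uv=z$ (equivalently, over the threshold $j$) yields the first formula. Note that each factorization $uv=z$ is determined by the choice of $u$ as a prefix of $z$, so the sum is finite and well defined.

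For the general case, I would proceed by induction on $\ell$. The base case $\ell=1$ is the trivial identity $\binom{x_1}{z}=\binom{x_1}{z}$, whose right-hand side has the single summand indexed by $e_1=z$. For the inductive step, I would apply the already-proved two-word identity with $x:=x_1\cdots x_{\ell-1}$ and $y:=x_\ell$ to obtain
\[
  \binom{x_1\cdots x_\ell}{z}=\sum_{\substack{u,v\in\mathcal{A}^*\\ uv=z}}\binom{x_1\cdots x_{\ell-1}}{u}\binom{x_\ell}{v},
\]
and then expand each factor $\binom{x_1\cdots x_{\ell-1}}{u}$ using the induction hypothesis. The one point requiring care is the reindexing of the resulting nested sum: every factorization $e_1\cdots e_\ell=z$ arises uniquely from a factorization $uv=z$ together with a factorization $e_1\cdots e_{\ell-1}=u$ (upon setting $e_\ell=v$), by associativity of concatenation. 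Consequently the double sum collapses into the single sum over all $\ell$-fold factorizations $e_1\cdots e_\ell=z$ of the product $\prod_{i=1}^{\ell}\binom{x_i}{e_i}$, which is the desired identity.

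I do not expect any genuine obstacle here, as this is a standard identity; the only mildly delicate step is verifying that the threshold $j$ is well defined and unique (which follows from strict monotonicity of the indices) and that the index merging in the inductive step is a genuine bijection between pairs of factorizations and $\ell$-fold factorizations.
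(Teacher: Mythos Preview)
Your proof is correct. The paper does not actually supply a proof of this lemma: it is stated in the preliminaries section as a recalled fact with references to \cite{Lothaire,RigoSalimov}, so there is no in-paper argument to compare against. Your counting argument via the unique threshold $j$ and the subsequent induction on $\ell$ is exactly the standard proof one finds in those references.
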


\begin{lemma}[Cancellation property]\label{lem:cancel}
Let  $u,v,w$ be three words. 
 The following equivalences hold
 \begin{itemize}
     \item $v \sim_k w$ if and only if $u v \sim_k u w$;

     \item $v \sim_k w$ if and only if $v u \sim_k w u$.
 \end{itemize}
\end{lemma}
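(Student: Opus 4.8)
The plan is to derive both biconditionals from the convolution identity in \cref{lem:binomial2}, treating each direction separately. The forward implications will be immediate from that identity, whereas the backward implications will require a short induction on word length. Throughout, I would use that $x \sim_k y$ means precisely $\binom{x}{z} = \binom{y}{z}$ for every $z \in \mathcal{A}^{\leqslant k}$.

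First I would handle the forward direction of the first bullet. Assuming $v \sim_k w$, fix any $z \in \mathcal{A}^{\leqslant k}$; \cref{lem:binomial2} gives $\binom{uv}{z} = \sum_{z_1 z_2 = z} \binom{u}{z_1}\binom{v}{z_2}$. Every suffix $z_2$ occurring here satisfies $|z_2| \leqslant |z| \leqslant k$, so $\binom{v}{z_2} = \binom{w}{z_2}$ by hypothesis; substituting and reassembling the sum yields $\binom{uv}{z} = \binom{uw}{z}$, hence $uv \sim_k uw$.

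For the backward direction, assume $uv \sim_k uw$ and prove $\binom{v}{z} = \binom{w}{z}$ for all $z \in \mathcal{A}^{\leqslant k}$ by induction on $|z|$. The base case $z = \varepsilon$ is trivial, as $\binom{v}{\varepsilon} = 1 = \binom{w}{\varepsilon}$. For $|z| \geqslant 1$, I would isolate the term $z_1 = \varepsilon$ in the convolution, writing $\binom{uv}{z} = \binom{v}{z} + \sum_{z_1 z_2 = z,\ z_1 \neq \varepsilon} \binom{u}{z_1}\binom{v}{z_2}$. In every remaining summand $|z_2| < |z| \leqslant k$, so the induction hypothesis gives $\binom{v}{z_2} = \binom{w}{z_2}$; the identical decomposition applied to $uw$ therefore has the same tail sum, and since $\binom{uv}{z} = \binom{uw}{z}$ we may cancel to conclude $\binom{v}{z} = \binom{w}{z}$, completing the induction.

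The second bullet is entirely symmetric: one instead uses the convolution $\binom{vu}{z} = \sum_{z_1 z_2 = z} \binom{v}{z_1}\binom{u}{z_2}$ and isolates the term $z_2 = \varepsilon$, running the same length induction on the remaining prefixes. The only step requiring care, and the sole obstacle, is the backward induction: one must verify that the isolated summand is exactly $\binom{v}{z}$ and that all other summands involve a strictly shorter suffix (respectively prefix) so that the induction hypothesis applies. This is guaranteed precisely because the bound $|z| \leqslant k$ keeps every relevant subword within the range controlled by $\sim_k$.
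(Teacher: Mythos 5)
Your proof is correct and complete. Note, however, that the paper itself gives no proof of this lemma: it is recalled as a basic known property with a citation to the literature (Lothaire, Rigo--Salimov), so there is no in-paper argument to compare against. Your route---the convolution identity of \cref{lem:binomial2} for the easy directions, plus a strong induction on $|z|$ that isolates the summand with $z_1=\varepsilon$ (respectively $z_2=\varepsilon$) for the cancellation directions---is precisely the standard argument; it amounts to observing that the linear map taking the vector $\left(\binom{v}{z}\right)_{|z|\leqslant k}$ to $\left(\binom{uv}{z}\right)_{|z|\leqslant k}$ is unitriangular with respect to subword length, hence injective. The one step requiring care, which you handle correctly, is that the induction hypothesis must cover \emph{all} strictly shorter subwords (strong induction on length), since the tail of the convolution involves arbitrary proper suffixes (respectively prefixes) of $z$, not just the one of length $|z|-1$.
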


We present a few straightforward observations regarding generalized Thue--Morse words.
See, for instance, \cite{Seebold}.

\begin{proposition}[{\cite[Thm.~1]{AlloucheS2000sums}}]\label{prop:GTM-overlap-free}
For any $m\geqslant 2$, the word $\infw{t}_m$ is overlap-free.
\end{proposition}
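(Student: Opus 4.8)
The plan is to argue by infinite descent, exploiting the fixed-point identity $\infw{t}_m=\sm(\infw{t}_m)$ together with the rigid local structure of the blocks $\sm(c)=c(c+1)\cdots(c+m-1)$. Recall that an \emph{overlap} is a factor of the form $UUx$, where $U$ is a nonempty word, $x$ is the first letter of $U$, and $p:=|U|\geqslant 1$ is its period; equivalently, it is a factor occurring at some position $i$ with $\infw{t}_m[i+j]=\infw{t}_m[i+j+p]$ for all $0\leqslant j\leqslant p$. Suppose, for contradiction, that $\infw{t}_m$ contains an overlap, and fix one whose period $p$ is minimal.

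First I would record the local increment structure. Writing $s_m$ for the base-$m$ digit sum, we have $\infw{t}_m[n]=s_m(n)\bmod m$, and for two positions $n,n+1$ lying in a common $\sm$-block (that is, $n\not\equiv m-1\pmod m$) one gets $\infw{t}_m[n+1]=\infw{t}_m[n]+1\pmod m$. In particular a letter is never immediately repeated inside a block, so a factor $ccc$ would require two block boundaries among three consecutive positions; since boundaries are $m\geqslant 2$ apart this is impossible, ruling out $p=1$. A short case analysis of the same flavour, using that the blocks are rotations of $012\cdots(m-1)$ and are therefore determined by their first letter, disposes of all periods below a fixed bound, providing the base of the descent.

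For large $p$ the heart of the argument is a synchronization step: I would show that the two coinciding length-$(p+1)$ occurrences forming the overlap must be \emph{block-aligned}, i.e.\ $i\equiv i+p\pmod m$, whence $m\mid p$. Granting this, one desubstitutes: writing $p=mp'$ and shifting the window so that it begins at a block boundary, the factor $\infw{t}_m[i,i+2p]$ is a concatenation of $2p'$ full blocks followed by the extra letter $x$; applying $\sm^{-1}$ block-by-block, which is legitimate because distinct letters have distinct images, yields a factor $U'U'x'$ of $\infw{t}_m$ with $|U'|=p'$ and $x'$ the first letter of $U'$, using that $x$ equals the first letter of the block at position $i$, which is exactly its $\sm$-preimage. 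This is an overlap of period $p'=p/m<p$, contradicting minimality and completing the descent.

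The main obstacle is precisely the synchronization claim $m\mid p$. It is genuinely subtle because block boundaries are \emph{not} locally detectable from the increment pattern: at a boundary following a run of $t$ trailing digits equal to $m-1$, the difference $\infw{t}_m[n+1]-\infw{t}_m[n]$ equals $2+t\pmod m$, which collapses to $+1$ exactly when $t\equiv m-1\pmod m$, so a long increasing run may silently cross several boundaries. I would resolve this either by invoking recognizability of the aperiodic uniform morphism $\sm$ (Mossé's theorem), which gives a length threshold beyond which the cutting into blocks is unique and hence forces the phases at $i$ and $i+p$ to agree, or, in an elementary self-contained spirit, through the sum-of-digits reformulation of Allouche and Shallit: the overlap condition reads $s_m(n)\equiv s_m(n+p)\pmod m$ for $p+1$ consecutive integers $n$, and a direct analysis of how carries propagate across such a window yields both $m\mid p$ and the desired contradiction.
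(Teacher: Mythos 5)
First, a point of reference: the paper does not prove this proposition at all --- it imports it verbatim from Allouche and Shallit (their Theorem~1), whose proof is precisely the sum-of-digits/carry analysis you mention as your second fallback. So your proposal has to stand on its own, and as written it does not. The step you yourself single out as ``the main obstacle'', namely that an overlap of large period $p$ forces $m\mid p$, is never established: you name two tools that \emph{could} establish it (Mossé recognizability, or carry propagation in $s_m$), but you execute neither. Even granting recognizability as a citable black box (the paper's own \cref{pro:circular} would serve: factors of length at least $2m$ have a unique $\sm$-factorization, so two occurrences of $U$ at distance $p=|U|\geqslant 2m$ must induce the same cutting, whence $i\equiv i+p\pmod{m}$), you are then left with the regime $m\nmid p$ with $p$ below the recognizability threshold, which you dismiss as ``a short case analysis''. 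It is neither supplied nor short: by your own observation that block boundaries can be invisible in the increment pattern, one must rule out windows in which every boundary position carries at least $m$ trailing digits $m-1$, and that trailing-digit bookkeeping is exactly the content of the Allouche--Shallit proof. Deferring it leaves the crux of the theorem unproved.

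Second, the descent step contains a concrete error. You cannot ``shift the window so that it begins at a block boundary'': the overlap gives equalities $\infw{t}_m[i+j]=\infw{t}_m[i+j+p]$ only for $0\leqslant j\leqslant p$, and moving the left endpoint of the window rightward loses equalities at the start without gaining any at the end, so the shifted window is no longer an overlap of period $p$. Fortunately, no alignment is needed. Since $s_m(mn+r)=s_m(n)+r$ for $0\leqslant r<m$, any single equality $\infw{t}_m[a]=\infw{t}_m[a+p]$ with $m\mid p$ desubstitutes to $\infw{t}_m[\lfloor a/m\rfloor]=\infw{t}_m[\lfloor a/m\rfloor+p/m]$, because $a$ and $a+p$ occupy the same offset inside their respective blocks; the $p+1$ consecutive equalities of the given overlap therefore descend to $p/m+1$ consecutive equalities, i.e.\ an overlap of period $p/m$, with no hypothesis on where the occurrence starts. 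With that repair, and with the case $m\nmid p$ actually argued (recognizability plus a completed small-period analysis, or the carry analysis of the cited source), your plan would become a proof; in its present form it is a correct strategy with a genuine gap at its central step.
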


\begin{lemma}\label{lem:subwords}
Let
$i,j\in\am$.
 If $i<j$ (respectively,  $i>j$),  the word $ij$ appears exactly once as a subword in $m-j+i$ (respectively, $i-j$) of the images $\sm(0), \sm(1), \ldots,\sm(m-1)$.
 Furthermore, the word $ii$  does not occur as a subword in any of these images. Conversely, the $\binom{m}{2}$ 
 distinct
 $2$-subwords appearing in $\sm(j)$ are given by  $(j+t) (j+t+r)$,
 for $t=0,\ldots,m-2$ and $r=1,\ldots,m-t-1$.
\end{lemma}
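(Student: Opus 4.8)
The plan is to exploit the fact that each image $\sm(\ell)=\ell(\ell+1)\cdots(\ell+m-1)$ is a \emph{permutation} of the alphabet $\am$: every letter of $\am$ occurs exactly once in $\sm(\ell)$. Two immediate consequences follow. First, since each letter appears only once, the word $ii$ can never occur as a subword of any image, which settles that claim. Second, for $i\neq j$ the word $ij$ occurs in a given image either zero times or exactly once, according to whether $i$ precedes $j$ there; so the entire quantitative statement reduces to counting in how many of the $m$ images the letter $i$ precedes the letter $j$.

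To carry this out I would record the position of each letter: in $\sm(\ell)$ the letter $a$ sits at index $(a-\ell)\bmod m$. Hence $ij$ is a subword of $\sm(\ell)$ precisely when $(i-\ell)\bmod m<(j-\ell)\bmod m$. As $\ell$ runs over $\{0,\ldots,m-1\}$, the quantity $p:=(i-\ell)\bmod m$ runs over all residues exactly once, and $q:=(j-\ell)\bmod m$ equals $(p+d)\bmod m$, where $d:=(j-i)\bmod m\neq 0$ is fixed. The condition $p<q$ holds exactly when there is no wraparound, i.e.\ when $p+d<m$, which occurs for $p\in\{0,\ldots,m-d-1\}$, that is, for exactly $m-d$ values; whenever $p\geqslant m-d$ one has $q=p+d-m<p$ and the condition fails. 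Therefore $ij$ appears in exactly $m-d$ of the images. Translating back through the definition of $d$ yields the two stated cases: if $i<j$ then $d=j-i$ and the count is $m-j+i$, while if $i>j$ then $d=j-i+m$ and the count is $m-(j-i+m)=i-j$.

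For the converse statement I would simply read the $2$-subwords off the explicit form of $\sm(j)$. Choosing two positions with indices $t<t+r$, where $0\leqslant t\leqslant m-2$ and $1\leqslant r\leqslant m-t-1$, yields the subword $(j+t)(j+t+r)$, and there are exactly $\binom{m}{2}$ such pairs of positions. Since $\sm(j)$ is a permutation, the first letter $j+t$ determines $t$ and the second letter then determines $r$, so these $\binom{m}{2}$ words are pairwise distinct; as each occurs exactly once, they form precisely the set of $2$-subwords of $\sm(j)$.

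The computations are elementary; the only point demanding genuine care is the modular bookkeeping in the counting step, specifically isolating the non-wraparound regime $p+d<m$ and checking that the complementary regime always violates $p<q$. This is where an off-by-one or a sign slip would most easily creep in, so I would verify the boundary values $p=m-d-1$ and $p=m-d$ explicitly before concluding.
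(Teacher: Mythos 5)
Your proof is correct. The paper in fact states this lemma with no proof at all, listing it among its ``straightforward observations,'' so there is no argument of the paper to compare against; your verification supplies precisely the natural reasoning the paper presupposes. Concretely: treating each image $\sm(\ell)$ as a permutation of $\am$ immediately gives the claims about $ii$ and about multiplicity at most one; locating the letter $a$ at index $(a-\ell)\bmod m$ and counting the shifts $\ell$ for which no wraparound occurs yields the count $m-d$ with $d=(j-i)\bmod m$, which correctly specializes to $m-j+i$ when $i<j$ and to $i-j$ when $i>j$; and the enumeration of the $\binom{m}{2}$ pairwise distinct $2$-subwords of $\sm(j)$, with distinctness following again from the permutation property, completes the converse. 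The boundary cases $p=m-d-1$ and $p=m-d$ that you flag do behave as you say, so the modular bookkeeping is sound.
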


Let 
 $\tau_m\colon \ams \to \ams$ be the cyclic morphism
 where  each letter $a \in \am$  is mapped to
 $a+1$.
Because the compositions
$\sm \circ \tau_m$ and $\tau_m \circ \sm$ are equal, the following lemma holds.

\begin{lemma}[Folklore]\label{lem:permut}
  For all $n\geqslant 1$, the set $\Fac_n(\infw{t}_m)$ is closed under $\tau_m$.
\end{lemma}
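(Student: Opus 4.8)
The plan is to exploit the commutation relation $\sm \circ \tau_m = \tau_m \circ \sm$ recorded just above, together with the fact that $\infw{t}_m$ is a fixed point of $\sm$. Since $\tau_m$ is a letter-to-letter morphism (a coding), it preserves lengths; hence it suffices to prove that $\tau_m(w)\in\Fac(\infw{t}_m)$ whenever $w\in\Fac(\infw{t}_m)$, as the length $n$ is then automatically preserved and the inclusion $\tau_m(\Fac_n(\infw{t}_m))\subseteq \Fac_n(\infw{t}_m)$ is exactly the asserted closure.

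First I would iterate the commutation relation to obtain $\tau_m \circ \smn{j} = \smn{j} \circ \tau_m$ for every $j\geqslant 0$, by a one-line induction on $j$. Applying this identity to the letter $0$ yields $\tau_m(\smn{j}(0)) = \smn{j}(\tau_m(0)) = \smn{j}(1)$, so that coding the prefix $\smn{j}(0)$ of $\infw{t}_m$ through $\tau_m$ produces exactly $\smn{j}(1)$. Next I would observe that every letter $i\in\am$ occurs in $\infw{t}_m$, since $\sm(0)=0\,1\cdots(m-1)$ is a prefix of $\infw{t}_m$. Because $\infw{t}_m=\smn{j}(\infw{t}_m)$, applying $\smn{j}$ to an occurrence of the letter $i$ produces an occurrence of $\smn{j}(i)$; thus $\smn{j}(i)\in\Fac(\infw{t}_m)$ for all $j\geqslant 0$, and in particular $\smn{j}(1)\in\Fac(\infw{t}_m)$.

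To finish, I would fix $w\in\Fac_n(\infw{t}_m)$. As the prefixes $\smn{j}(0)$ of $\infw{t}_m$ have unbounded length $m^j$, the factor $w$ occurs inside $\smn{j}(0)$ once $j$ is large enough. Since $\tau_m$ is a coding, $\tau_m(w)$ then occurs inside $\tau_m(\smn{j}(0))=\smn{j}(1)$, which is itself a factor of $\infw{t}_m$ by the previous step. Therefore $\tau_m(w)\in\Fac(\infw{t}_m)$, and since $|\tau_m(w)|=|w|=n$, we conclude $\tau_m(w)\in\Fac_n(\infw{t}_m)$, establishing the closure.

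I do not expect any serious obstacle here, which is precisely why the statement is labelled folklore; the only point requiring a little care is the interplay between the limit defining $\infw{t}_m$ and finite occurrences, namely the justification that an arbitrary factor eventually lands inside a prefix $\smn{j}(0)$, which follows from $m^j\to\infty$. If one prefers to bypass the prefixes, an equivalent route is to note that $\tau_m(\infw{t}_m)=\lim_{j}\smn{j}(1)$ is again a fixed point of $\sm$ all of whose factors appear in some $\smn{j}(1)$, yielding $\Fac(\tau_m(\infw{t}_m))\subseteq\Fac(\infw{t}_m)$ directly; finiteness of $\Fac_n(\infw{t}_m)$ together with injectivity of $\tau_m$ would then even upgrade this inclusion to an equality.
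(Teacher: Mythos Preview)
Your argument is correct and is essentially a detailed unpacking of the paper's own justification: the paper simply records the commutation relation $\sm\circ\tau_m=\tau_m\circ\sm$ immediately before the lemma and declares that the lemma follows, without spelling out the steps you supply. Your write-up fills in exactly the intended reasoning.
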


The following result, proven in 
~\cite[Lem.~2]{ChenWen2019},
uses the concept of boundary sequence introduced in~\cite{GuoWen2022}.

\begin{lemma}\label{lem:boundaryseq}
  For all letters  $a,b\in\am$ and all integer $n\geqslant 0$, there exists a factor of $\infw{t}_m$ in the form $awb$, where $|w|=n$. In particular, $\Fac_2(\infw{t}_m)=\mathcal{A}_m^2$. 
\end{lemma}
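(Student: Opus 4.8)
The plan is to abandon the combinatorial/morphic viewpoint and argue purely arithmetically, using the description of $\infw{t}_m$ recalled in the introduction: the $t$-th letter of $\infw{t}_m$ equals $s_m(t)\bmod m$, where $s_m(t)$ denotes the base-$m$ sum of digits of $t$. Under this description the statement becomes index-free: a word $awb$ with $|w|=n$ occurs in $\infw{t}_m$ if and only if there is an index $t\geqslant 0$ with $s_m(t)\equiv a$ and $s_m(t+n+1)\equiv b \pmod m$. Writing $d:=n+1\geqslant 1$, it therefore suffices to prove that for every $d\geqslant 1$ and every pair $(a,b)\in(\Z/m\Z)^2$ there is some $t\geqslant 0$ realizing both congruences simultaneously. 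Specializing to $d=1$ (that is, $n=0$) will yield the final assertion $\Fac_2(\infw{t}_m)=\mathcal{A}_m^2$.

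The first key step is the classical carry identity for the sum-of-digits function: if $k(t,d)$ denotes the number of carries produced when adding $t$ and $d$ in base $m$, then
\[ s_m(t+d)=s_m(t)+s_m(d)-(m-1)\,k(t,d). \]
Reducing modulo $m$ and using $m-1\equiv -1$, the two target congruences decouple into the independent requirements $s_m(t)\equiv a\pmod m$ and $k(t,d)\equiv b-a-s_m(d)\pmod m$. So I would first arrange the carry count to land in the prescribed residue class, and then, separately, tune $s_m(t)$ to $a$ without disturbing the carries.

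To realize both, I would build $t$ out of three disjoint zones of base-$m$ digits. Fix $P\geqslant r+m$, where $r$ is the number of base-$m$ digits of $d$. The lowest $P$ digits form a value $C$ that alone interacts with $d$; the digit at position $P$ is a buffer $0$ that absorbs any carry-out; and the digits above position $P$ form a high part $A$ that is free to adjust the total digit sum. Taking $C=m^i-1$ (i.e.\ $i$ trailing digits equal to $m-1$) with $r\leqslant i\leqslant P-1$, one computes $C+d=m^i+(d-1)$ with $d-1<m^i$, so $s_m(C+d)=1+s_m(d-1)$, and the carry identity gives $k(C,d)=i+\delta$ with $\delta=\bigl(s_m(d)-1-s_m(d-1)\bigr)/(m-1)$. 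Using $s_m(N)\equiv N\pmod{m-1}$ one checks $\delta\in\Z$, so as $i$ ranges over the $P-r\geqslant m$ consecutive values in $\{r,\dots,P-1\}$ the count $k(C,d)=i+\delta$ runs through $m$ consecutive integers and hence attains every residue modulo $m$; I pick the $i$ hitting $b-a-s_m(d)$. Since $C+d<m^P$, no carry escapes the low zone, the buffer stays $0$, and the high part is untouched, whence $k(t,d)=k(C,d)$ and $s_m(t)=i(m-1)+s_m(A)$. Finally I choose $A$ to be a block of $g$ ones with $g\equiv a-i(m-1)\pmod m$, forcing $s_m(t)\equiv a$ while leaving the carry count intact.

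The main obstacle is precisely this coupling: the very choice of the low zone $C$ that fixes the carry residue also changes $s_m(C)$, so one must ensure that enough independent freedom survives to repair the digit-sum congruence. The buffer-zero decomposition is what guarantees this, since it isolates the carry-generating region from the free high part $A$. Secondary points to verify carefully are that $\delta$ is genuinely an integer (via the digit sum $\bmod\,(m-1)$) and that $C=m^i-1$ with $i\geqslant r$ produces the clean value $C+d=m^i+(d-1)$ with no overflow past position $P-1$.
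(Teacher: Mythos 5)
Your proof is correct, and it takes a genuinely different route from the paper: the paper does not prove this lemma at all, but quotes it from Chen and Wen (their Lemma~2), whose argument is combinatorial and goes through the notion of boundary sequence of Guo and Wen, tied to the morphic structure of $\infw{t}_m$. You instead exploit the arithmetic description recalled in the introduction (the $t$-th letter of $\infw{t}_m$ is $s_m(t)\bmod m$), reduce the lemma to the simultaneous congruences $s_m(t)\equiv a$ and $s_m(t+d)\equiv b \pmod m$ with $d=n+1$, and solve them by the carry identity $s_m(t+d)=s_m(t)+s_m(d)-(m-1)\,k(t,d)$ together with a three-zone digit construction. The details check out: $\delta=\bigl(s_m(d)-1-s_m(d-1)\bigr)/(m-1)$ is an integer by $s_m(N)\equiv N\pmod{m-1}$ and is independent of $i$; for $i\geqslant r$ one has $C+d=m^i+(d-1)$ with $d-1<m^i$, hence $C+d<m^{i+1}\leqslant m^P$, so no carry reaches the buffer, giving $k(t,d)=k(C,d)=i+\delta$ and $s_m(t)=g+i(m-1)$; letting $i$ run over $m$ consecutive values in $\{r,\dots,P-1\}$ realizes any carry residue, and the repunit high block then adjusts $s_m(t)$ to $a$ without disturbing the carries. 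What each approach buys: the paper's citation is short and connects the lemma to existing boundary-sequence results, while your argument is elementary and self-contained, and even produces an explicit position $t$ at which the factor $a\cdots b$ occurs. One possible streamlining of your write-up: since your construction yields the closed forms $s_m(t)=g+i(m-1)$ and $s_m(t+d)=g+1+s_m(d-1)$, you could choose $i$ and $g$ directly from these two expressions and dispense with the carry-count bookkeeping entirely.
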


Since $\sm$ is Parikh-constant, the following result holds.

\begin{proposition}\label{prop:phik} 
  Assume $k\geqslant 1$. For all $u,v\in \ams$, the following hold
  \begin{itemize}
  \item[(i)] If $u\sim_k v$, then $\sm(u)\sim_{k+1}\sm(v)$.
  \item[(ii)] If $u\sim_1 v$, then $\smn{k}(u)\sim_{k+1}\smn{k}(v)$.
  \item[(iii)] If $|u|=|v|$, then $ \smn{k}(u)\sim_{k}\smn{k}(v)$.
      \end{itemize}
\end{proposition}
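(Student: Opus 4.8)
The plan is to reduce parts (ii) and (iii) to part (i), and then to prove (i) directly from the multi-factor expansion in \cref{lem:binomial2}, exploiting that $\sm$ is Parikh-constant (every image $\sm(c)$ has Parikh vector $(1,\dots,1)$, so each letter occurs exactly once in each image).

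First I would dispatch (ii) and (iii). For (ii), I start from $u\sim_1 v$ and apply (i) iteratively: $u\sim_1 v$ gives $\sm(u)\sim_2\sm(v)$, which gives $\smn{2}(u)\sim_3\smn{2}(v)$, and after $k$ applications $\smn{k}(u)\sim_{k+1}\smn{k}(v)$. For (iii) I induct on $k$. The base case $k=1$ holds because Parikh-constancy gives $\Psi(\sm(u))=|u|\cdot(1,\dots,1)=|v|\cdot(1,\dots,1)=\Psi(\sm(v))$ whenever $|u|=|v|$, i.e.\ $\sm(u)\sim_1\sm(v)$. For the step, the inductive hypothesis gives $\smn{k-1}(u)\sim_{k-1}\smn{k-1}(v)$, and a single application of (i) yields $\sm\bigl(\smn{k-1}(u)\bigr)\sim_k\sm\bigl(\smn{k-1}(v)\bigr)$, that is $\smn{k}(u)\sim_k\smn{k}(v)$. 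Thus everything rests on (i).

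For (i), write $u=a_1\cdots a_n$ and $v=b_1\cdots b_n$ (same length, since $\sim_k$ refines $\sim_1$), fix $z$ with $|z|\leqslant k+1$, and aim to show $\binom{\sm(u)}{z}=\binom{\sm(v)}{z}$. The cases $|z|\leqslant 1$ are immediate, so assume $2\leqslant|z|\leqslant k+1$ and expand
\[
\binom{\sm(u)}{z}=\sum_{\substack{e_1,\dots,e_n\in\ams\\ e_1\cdots e_n=z}}\ \prod_{i=1}^{n}\binom{\sm(a_i)}{e_i}.
\]
Call a block $e_i$ \emph{long} if $|e_i|\geqslant 2$; since each letter occurs once in each image, $\binom{\sm(a_i)}{e_i}=1$ whenever $|e_i|\leqslant 1$, so only long blocks carry data. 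I group the decompositions by the number $p$ of long blocks. The term $p=0$ contributes exactly $\binom{n}{|z|}$, which depends only on $n=|u|=|v|$. For $p\geqslant 1$, I fix the induced splitting $z=g_0f_1g_1\cdots f_pg_p$ with $|f_j|\geqslant 2$, and fix the letters $c_1,\dots,c_p$ read at the long-block positions; the contribution factors as $\prod_{j}\binom{\sm(c_j)}{f_j}$ times the number of ways to place these marked positions together with $|g_j|$ single letters in each gap. The key identity is
\[
\sum_{\substack{1\leqslant i_1<\cdots<i_p\leqslant n\\ a_{i_j}=c_j}}\binom{i_1-1}{|g_0|}\binom{i_2-i_1-1}{|g_1|}\cdots\binom{n-i_p}{|g_p|}
=\sum_{\substack{x_0,\dots,x_p\in\ams\\ |x_j|=|g_j|}}\binom{u}{x_0\,c_1\,x_1\,c_2\cdots c_p\,x_p},
\]
whose right-hand side involves only subwords of $u$ of length $p+\sum_j|g_j|=|z|-\sum_j(|f_j|-1)\leqslant|z|-p\leqslant k$.

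The crucial point is this length bound: every long block absorbs at least two letters of $z$ but contributes only one marked letter $c_j$ to the relevant $u$-subword, so all occurring binomial coefficients $\binom{u}{\,\cdot\,}$ have length at most $k$ and therefore coincide for $u$ and $v$ by hypothesis $u\sim_k v$. Summing over the letters $c_1,\dots,c_p$ and over the splittings of $z$, and adding the length-independent $p=0$ term, gives $\binom{\sm(u)}{z}=\binom{\sm(v)}{z}$, completing (i). I expect the main obstacle to be precisely this bookkeeping: reorganizing the expansion so that the position-dependent gap counts reassemble into genuine binomial coefficients $\binom{u}{x_0c_1\cdots c_px_p}$ of controlled length. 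Once that identification is in place, the inequality $|z|-p\leqslant k$ does all the work.
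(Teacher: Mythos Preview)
Your proposal is correct. The key identity you state reorganizes the gap-counts into genuine binomial coefficients of $u$ of length $p+\sum_j|g_j|\leqslant |z|-p\leqslant k$, and this is exactly what is needed; the double-counting argument equating both sides is just the observation that summing $\binom{u}{x_0c_1\cdots c_px_p}$ over all fillers $x_j$ of prescribed lengths counts increasing position-tuples in $u$ with the $c_j$-positions constrained and the others free.

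The paper, however, does not give this argument. Its proof simply cites \cite[Prop.~3.9]{RSW} (stated there for arbitrary Parikh-collinear morphisms) for parts (i) and (ii), and then deduces (iii) from (ii) by first noting that Parikh-constancy gives $\sm(u)\sim_1\sm(v)$ whenever $|u|=|v|$, and then applying (ii) with $k-1$ in place of $k$ to $\sm(u)$ and $\sm(v)$. So the paper's proof is a three-line appeal to an external lemma, while you supply a self-contained combinatorial proof of (i) from \cref{lem:binomial2}. Your reductions of (ii) and (iii) to (i) are essentially the same iterative idea the paper uses, just organized around (i) rather than (ii). What your approach buys is independence from the external reference and an explicit mechanism (the long-block grouping) that makes transparent \emph{why} Parikh-constancy upgrades $\sim_k$ to $\sim_{k+1}$; what the paper's approach buys is brevity and the more general context of Parikh-collinear morphisms.
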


\begin{proof}
  The first two statements are direct consequences of ~\cite[Prop.~3.9]{RSW}, which applies to any Parikh-collinear morphism.
For all letters $i,j\in \am$, 
 it holds that
   $\sm(i)\sim_1\sm(j)$.
    Hence, if two words $u$ and $v$ have the same length, then $\sm(u)\sim_1\sm(v)$. So statement (iii) follows directly from statement (ii). 
    Therefore, (iii) holds true for any Parikh-constant morphism.
\end{proof}


\section{Ability to Discern 
\texorpdfstring{$k$-Binomially Non-Equivalent Factors}{k-Binomially Non-Equivalent Factors}}
\label{sec:discern}

The purpose of this section is to express differences of the form $\binom{\smn{k}(u)}{x}-\binom{\smn{k}(v)}{x}$  for suitable subwords $x$.  We additionally compute $\binom{\smn{k}(u)}{x}-\binom{\smn{k}(u)}{y}$ for an appropriate choice of $x$ and $y$.

Recall the convention that $\am=\Z/(m\Z)$, meaning any 
 $i\in\Z$ is replaced with $(i\bmod{m})$.
For example, a letter like $(-1)$  is identified as $m-1$.
For convenience, if $a\in\mathbb{N}$, we let $\overline{a}$ denote $-a$.
As an example, with $m=4$, the expression $2(-3)4(-1)=2\overline{3}0\overline{1}$ is indeed $2103$. 
In particular, the word $0 \mi{1} \cdots \mi{k}$ which has length~$k+1$, is a prefix of the periodic word $(0 \mi{1}\, \mi{2} \cdots 1)^\omega$.

In the following statement,  the letter~$0$  does not have any particular role.  By \cref{lem:permut},
one can instead consider $\smn{k}(i)$ and the subword $i (i-1)\cdots (i-k)$. This kind of result is particularly useful for proving that two factors are not $(k+1)$-binomially equivalent.

\begin{proposition}\label{prop:-1}
  Let $m\geqslant 2$ and $k\geqslant 1$. Then for all $j\in\mathcal{A}_m \setminus \{0\}$,
the following holds
  \[
    \binom{\smn{k}(0)}{0 \mi{1} \cdots \mi{k}}-\binom{\smn{k}(j)}{0 \mi{1} \cdots \mi{k}}=m^{\binom{k}{2}}.
  \]
    In particular, the  coefficients $\binom{\smn{k}(j)}{0 \mi{1} \cdots \mi{k}}$  are identical for all  $j\neq 0$.
\end{proposition}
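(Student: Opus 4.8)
The plan is to prove the slightly stronger bookkeeping statement by induction on $k$. Writing $f_k(i):=\binom{\smn{k}(i)}{0\mi1\cdots\mi k}$, I would establish simultaneously that (i) $f_k(i)$ takes one common value for all $i\neq 0$ and (ii) $f_k(0)-f_k(i)=m^{\binom{k}{2}}$ for $i\neq0$; the ``in particular'' part is exactly (i). Since $\binom{k}{2}=\binom{k-1}{2}+(k-1)$, the natural inductive target is the multiplicative recursion $D_k=m^{k-1}D_{k-1}$ for $D_k:=f_k(0)-f_k(i)$, which unwinds from $D_1=1$ to $m^{\binom{k}{2}}$. The base case $k=1$ is immediate from \cref{lem:subwords}: the descending pair $0\mi1=0(m-1)$ occurs (once) only in $\sm(0)$ and in none of the other images $\sm(j)$, so $f_1(0)=1$, $f_1(j)=0$, and $D_1=1=m^{\binom{1}{2}}$.

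For the inductive engine I would exploit the self-similarity $\smn{k}(i)=\smn{k-1}(i)\smn{k-1}(i+1)\cdots\smn{k-1}(i+m-1)$ together with the concatenation identity of \cref{lem:binomial2}, expanding $f_k(i)$ as a sum over the ways to distribute the $k+1$ letters of $0\mi1\cdots\mi k$ among the $m$ blocks. Every block receives a \emph{factor} of the target, which is again a descending run $a(a-1)\cdots$, and the shift-equivariance $\tau_m\sm=\sm\tau_m$ (\cref{lem:permut}) lets me rewrite each such block-contribution as $\binom{\smn{k-1}(c)}{0\mi1\cdots\overline{\ell-1}}$ for the appropriate length $\ell$ and shift $c$, i.e.\ in terms of lower-order instances of the very same quantity. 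The completely trivial distributions are $i$-independent — for instance the ``all single letters'' distribution contributes the total count $\binom{m^{k-1}}{k+1}$ of increasing $(k+1)$-subsequences, which does not see $i$ — and so these drop out of the difference $D_k=\sum_{\text{distributions}}\bigl[\,\binom{\smn{k-1}(0)}{\cdot}-\binom{\smn{k-1}(i)}{\cdot}\,\bigr]$. The distributions in which all but one block carries a single letter reproduce terms of the shape (number of occurrences of a fixed letter in a block)$\,\times\,f_{k-1}(\cdot)$, and since $\sm$ is Parikh-constant each letter occurs $m^{k-2}$ times per block; summed over the $m$ admissible placements these account for the expected factor $m^{k-1}$.

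The hard part is the remaining cross-terms. Distributions that split $0\mi1\cdots\mi k$ into several shorter descending runs straddling block boundaries produce binomial coefficients $\binom{\smn{k-1}(c)}{0\mi1\cdots\overline{\ell-1}}$ with intermediate lengths $\ell<k$, and these individual values genuinely depend on the shift $c$ (already the single-letter-fixed pattern counts vary with $c$). The crux of the argument is to show that, once summed and compared between $i=0$ and $i\neq0$, all of these intermediate contributions cancel, leaving only the clean single power $m^{\binom{k}{2}}$, while simultaneously preserving invariant (i). I expect this to force a strengthening of the induction hypothesis: rather than controlling only the top-length coefficient $f_{k-1}(c)=\binom{\smn{k-1}(c)}{0\mi1\cdots\overline{k-1}}$, one must track $\binom{\smn{k-1}(c)}{0\mi1\cdots\overline{\ell-1}}$ (equivalently the pattern counts with a prescribed number of letters fixed and the remaining positions free) for \emph{every} length $\ell\le k$ and \emph{every} shift $c$. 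Carrying out this cancellation — verified in small cases such as $m=2$ giving $D_1,D_2,D_3=1,2,8$ and $m=3$ giving $D_2=3$ — is where the bulk of the computation lies, and it is the step I would expect to be the main obstacle.
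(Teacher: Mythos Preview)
Your framework is right—induction on $k$, the block decomposition $\smn{k}(i)=\smn{k-1}(i)\smn{k-1}(i+1)\cdots\smn{k-1}(i+m-1)$, and \cref{lem:binomial2}—but you are missing the single observation that makes the cross-terms vanish for free and removes any need for a strengthened hypothesis. By \cref{prop:phik}(iii) one has $\smn{k-1}(c)\sim_{k-1}\smn{k-1}(c')$ for \emph{all} $c,c'\in\am$, hence $\binom{\smn{k-1}(c)}{x}$ is independent of $c$ whenever $|x|\leqslant k-1$. Consequently every distribution of the $k+1$ letters of $0\mi1\cdots\mi k$ among the $m$ blocks that places at most $k-1$ letters in each block contributes identically to $f_k(0)$ and to $f_k(j)$ and drops out of the difference. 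Your parenthetical that ``already the single-letter-fixed pattern counts vary with $c$'' is simply false: $\binom{\smn{k-1}(c)}{a}=m^{k-2}$ for every letter $a$ and every $c$, and more generally the $\ell<k$ coefficients you worry about are all $c$-independent for the same reason.

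What survives in $D_k$ is therefore only: (a) occurrences lying entirely in one block, which cancel because $\smn{k}(0)$ and $\smn{k}(j)$ use the same multiset of blocks; and (b) distributions with exactly $k$ letters in one block and the remaining letter in another, i.e.\ the prefix $0\mi1\cdots\mi{k-1}$ in some block with $\mi k$ to its right, or the suffix $\mi1\cdots\mi k$ in some block with $0$ to its left. For these the block coefficient is $\binom{\smn{k-1}(c)}{0\mi1\cdots\mi{k-1}}$ (up to a $\tau_m$-shift), which by the unstrengthened induction hypothesis takes a common value for $c\neq0$ and jumps by $m^{\binom{k-1}{2}}$ at $c=0$ (respectively $c=\mi1$). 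This is exactly how the paper proceeds: locate the special blocks $\smn{k-1}(0)$ and $\smn{k-1}(\mi1)$ inside $\smn{k}(0)$ versus $\smn{k}(j)$, count the $m^{k-2}$ occurrences of the lone letter in each of the blocks on the appropriate side, and collect terms to get
\[
2(m-1)m^{k-2}\cdot m^{\binom{k-1}{2}}-(m-2)m^{k-2}\cdot m^{\binom{k-1}{2}}=m^{\binom{k}{2}}.
\]
So the ``bulk of the computation'' you anticipate is a few lines once \cref{prop:phik}(iii) is invoked; no tracking of intermediate-length coefficients is required.
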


As an example, for the classical Thue--Morse morphism, where $m=2$,
it follows that
 $\mi{1}=1$.  We have: 
 \[
 \binom{\sigma_2^{2n}(0)}{(01)^n0}-\binom{\sigma_2^{2n}(1)}{(01)^n0}=2^{n(2n-1)} 
 \]
 and
\[
  \binom{\sigma_2^{2n+1}(0)}{(01)^{n+1}}-\binom{\sigma_2^{2n+1}(1)}{(01)^{n+1}}=2^{n(2n+1)}.
\]

 \begin{proof}
     We proceed by induction on $k$. 
     For the base case $k=1$, \cref{lem:subwords} shows that the subword $0\mi{1}$ occurs exactly once in $\sm(0)$ and does not appear in any other $\sm(j)$ for $j\neq 0$.
    Assume that the statement holds for some $k\geqslant 1$.
    We now prove it for $k+1$.

The word $u=\smn{k+1}(0)$ can be factorized into $m$ consecutive words, each of length~$m^k$ (referred to as \emph{$m^k$-blocks}),
as follows: $u=\smn{k}(0)\smn{k}(1)\cdots \smn{k}(\mi{1})$. Similarly, the word $v=\smn{k+1}(j)$ is a cyclic permutation of the $m^k$-blocks of $u$,
given by
\[
  v=\smn{k}(j) \cdots \smn{k}(\mi{1})\smn{k}(0)\cdots \smn{k}(j-1).
\]
Our task is to count (or at least compare, as we are only interested in the difference) the occurrences of subwords $w=0 \mi{1}\cdots \mi{k}\,\mi{k+1}$ of length $k+2$  in  $u$ and $v$.

First, the number of occurrences fully contained within a single $m^k$-block is identical in $u$ and $v$ because 
they have the same
 $m^k$-blocks.

 Next, we count the occurrences of $w$ that are split across more than one $m^k$-block. 
These occurrences can be categorized into two cases:
    \begin{itemize}
    \item[I)] $w$ is split across at least two blocks, with no more than $k$ letters of $w$ appearing in each $m^k$-block.
     \cref{prop:phik}  ensures that $\smn{k}(i)\sim_{k}\smn{k}(i')$ for all letters $i$ and $i'$.
     So  $u$ and $v$
      contain the same number of these types of occurrences.
      \item[II)] $w$ is split across at least two blocks, with $k+1$ letters of $w$ appearing within a single  $m^k$-block.
    \end{itemize} 
    A difference arises only when $k+1$ letters of $w$ appear within a single $m^k$-block, while its first or last letter belongs to a different $m^k$-block.
By induction hypothesis, $\binom{\smn{k}(i)}{0 \mi{1} \cdots \mi{k}}=\binom{\smn{k}(i')}{0 \mi{1} \cdots \mi{k}}$ for any $i,i'\neq 0$.
 Similarly, $\binom{\smn{k}(i)}{\mi{1} \cdots \mi{k+1}}=\binom{\smn{k}(i')}{\mi{1} \cdots \mi{k+1}}$ for $i,i'\neq \mi{1}$.  So to get different contributions, we only focus where the blocks $\smn{k}(0)$ and $\smn{k}(1)$ occur in $u$ and $v$.
    
Let us first consider $\smn{k}(0)$.
 It appears at the beginning of $u$ and it contains the subword
 $0 \mi{1}\cdots \mi{k}$
 exactly
 $\binom{\smn{k}(0)}{0 \mi{1} \cdots \mi{k}}$ times.
 Moreover $\mi{k+1}$ occurs once in every of the subsequent $(m-1)m^{k-1}$ blocks of length~$m$ within $\smn{k}(1)\cdots \smn{k}(\mi{1})$. 
 However, the first $m^k$-block in $v$ is $\smn{k}(j)$, where the subword $0 \mi{1}\cdots \mi{k}$ appears only $\binom{\smn{k}(j)}{0 \mi{1} \cdots \mi{k}}$ times.
By induction hypothesis, the resulting difference is
\[
  m^{\binom{k}{2}} (m-1) m^{k-1}.
\]
A similar reasoning applies to  $\smn{k}(\mi{1})$,  which appears as the suffix of $u$  and contains the subword 
$\mi{1}\, \mi{2}\cdots \mi{k+1}$
exactly
$\binom{\smn{k}(\mi{1})}{\mi{1}\, \mi{2} \cdots \mi{k+1}}$ times.
Moreover, $0$ occurs exactly once in each of the preceding $(m-1)m^{k-1}$ blocks of length~$m$ within  $\smn{k}(0)\cdots \smn{k}(\mi{2})$.
Using \cref{lem:permut} and the induction hypothesis, the resulting difference is once again $m^{\binom{k}{2}} (m-1) m^{k-1}$. 

We still have to take into account the contributions of 
 $\smn{k}(0)$ and $\smn{k}(\mi{1})$ within $v$.
 The word $v$ begins with $m-1-j$ 
 blocks of length $m^k$ followed by $\smn{k}(\mi{1})\smn{k}(0)$,
 and ends with $j-1$ blocks of length $m^k$. 
 We have to count the number of $0$'s appearing before $\smn{k}(\mi{1})$ and the $\mi{k+1}$'s appearing after $\smn{k}(0)$.
 There are $(m-1-j)m^{k-1}$ such $0$'s and $(m-3-j)m^{k-1}$ such  $\mi{k+1}$'s.
 By comparing with the blocks occurring in the corresponding position in $u$,
we obtain the following difference
     \[
    \left(\binom{\smn{k}(\mi{j+1})}{\mi{1}\, \mi{2}\cdots \mi{k}}-\binom{\smn{k}(\mi{1})}{\mi{1}\, \mi{2} \cdots \mi{k}}\right) (m-1-j)m^{k-1} + \left(\binom{\smn{k}(\mi{j})}{0 \mi{1} \cdots \mi{k}}-\binom{\smn{k}(0)}{0 \mi{1} \cdots \mi{k}}\right) (j-1)m^{k-1}.
\]
By induction hypothesis, we find that both terms in parentheses are equal to $-m^{\binom{k}{2}}$. 
Therefore, the difference is $-m^{\binom{k}{2}}(m-2)m^{k-1}$. 

Combining the results from the three preceding discussions, we get a total difference of
\[
    2m^{\binom{k}{2}} (m-1) m^{k-1}-m^{\binom{k}{2}}(m-2)m^{k-1}=m^{\binom{k+1}{2}}
\]
matching the expected result.
 \end{proof}

\begin{corollary}\label{cor:notequiv}
  Let $u,v\in\ams$ with the same length. Then,
  \[
    \binom{\smn{k}(u)}{0 \mi{1} \cdots \mi{k}}-\binom{\smn{k}(v)}{0 \mi{1} \cdots \mi{k}}=\left(|u|_0-|v|_0\right)\, m^{\binom{k}{2}}.
  \]
  In particular, if $u\not\sim_1v$, then $\smn{k}(u)\not\sim_{k+1} \smn{k}(v)$.
\end{corollary}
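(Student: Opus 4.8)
The plan is to derive the displayed identity directly from \cref{prop:-1} by expanding $\smn{k}(u)$ blockwise and applying the product formula of \cref{lem:binomial2}. Writing $u=u_1\cdots u_n$, I would view $\smn{k}(u)=\smn{k}(u_1)\cdots\smn{k}(u_n)$ as a concatenation of $n$ blocks, each of length $m^k$, and expand
\[
\binom{\smn{k}(u)}{0\mi{1}\cdots\mi{k}}=\sum_{e_1\cdots e_n=0\mi{1}\cdots\mi{k}}\ \prod_{i=1}^n\binom{\smn{k}(u_i)}{e_i},
\]
the sum ranging over all factorizations of the length-$(k+1)$ subword into $n$ consecutive (possibly empty) pieces.

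The key structural point is a clean dichotomy: either some single block receives the entire subword (so that $e_i=0\mi{1}\cdots\mi{k}$ for one index $i$ and $e_j=\varepsilon$ otherwise), or every piece has length at most $k$. By \cref{prop:phik}(iii), all blocks $\smn{k}(u_i)$ are pairwise $k$-binomially equivalent; hence every factor $\binom{\smn{k}(u_i)}{e_i}$ with $|e_i|\leqslant k$ is independent of the letter $u_i$. Since $u$ and $v$ have the same length, the combined contribution of the second type of factorization is identical for $\smn{k}(u)$ and $\smn{k}(v)$ and therefore cancels in the difference. The first type contributes exactly $\sum_{i=1}^n\binom{\smn{k}(u_i)}{0\mi{1}\cdots\mi{k}}$, which by \cref{prop:-1} equals $nc+|u|_0\,m^{\binom{k}{2}}$, where $c$ denotes the common value of $\binom{\smn{k}(j)}{0\mi{1}\cdots\mi{k}}$ for $j\neq0$. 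Subtracting the analogous expression for $v$ yields $(|u|_0-|v|_0)\,m^{\binom{k}{2}}$, as claimed.

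For the final assertion, the subtlety is that $u\not\sim_1 v$ does not force $|u|_0\neq|v|_0$, so the single subword $0\mi{1}\cdots\mi{k}$ need not separate $\smn{k}(u)$ from $\smn{k}(v)$. To circumvent this I would exploit the shift morphism $\tau_m$, which commutes with $\smn{k}$ (as noted before \cref{prop:-1}) and relabels letters bijectively, so that $\binom{\tau_m^{-a}(w)}{0\mi{1}\cdots\mi{k}}=\binom{w}{a(a-1)\cdots(a-k)}$ for every word $w$ and every $a\in\am$. Applying the main identity to $\tau_m^{-a}(u)$ and $\tau_m^{-a}(v)$, whose counts of the letter $0$ are $|u|_a$ and $|v|_a$, gives
\[
\binom{\smn{k}(u)}{a(a-1)\cdots(a-k)}-\binom{\smn{k}(v)}{a(a-1)\cdots(a-k)}=(|u|_a-|v|_a)\,m^{\binom{k}{2}}.
\]
If $u\not\sim_1 v$, then $|u|_a\neq|v|_a$ for some letter $a$, and since $m^{\binom{k}{2}}\neq0$ this length-$(k+1)$ subword already distinguishes $\smn{k}(u)$ and $\smn{k}(v)$, whence $\smn{k}(u)\not\sim_{k+1}\smn{k}(v)$.

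I expect the main obstacle to be precisely this last point: the displayed formula detects only discrepancies in the count of the letter $0$, so recovering full abelian-inequivalence requires invoking the shift-invariance of $\Fac_n(\infw{t}_m)$ (\cref{lem:permut}) to transport the count to the appropriate ``descending'' subword $a(a-1)\cdots(a-k)$. The blockwise computation itself, by contrast, should be a routine consequence of \cref{prop:-1} once the full-block factorizations are isolated from those in which every piece has length at most~$k$.
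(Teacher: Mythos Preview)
Your proof is correct and rests on the same three ingredients as the paper's: the blockwise expansion via \cref{lem:binomial2}, the $k$-binomial equivalence of all blocks $\smn{k}(a)$ from \cref{prop:phik}(iii), and the value computed in \cref{prop:-1}. The main difference is organizational. The paper first passes to auxiliary words $u',v'$ with disjoint alphabets (writing $u\sim_1 pu'$, $v\sim_1 pv'$) before doing the block computation; this detour is not needed for the displayed identity and your direct argument is cleaner there. Conversely, the disjoint-alphabet reduction lets the paper dispatch the ``in particular'' clause with a one-line WLOG (some letter, taken to be $0$, lies in $u'$ but not $v'$), whereas you spell out the shift by $\tau_m$ explicitly. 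Both routes are valid; yours is more self-contained, the paper's slightly more efficient for the final sentence once the reduction is in hand.
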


\begin{proof}
There exist words $p,u'$, and $v'$ such that $u\sim_1 p u'$ and $v\sim_1 p v'$,
where
$u'$ and $v'$ share no common letters, and $|u'|=|v'|$.
Let 
$\Psi(u)=(s_1,\ldots,s_m)$ and $\Psi(v)=(t_1,\ldots,t_m)$.
Then,  $p$ is a word such that
$\Psi(p)=\left(\min\{s_1,t_1\},\ldots,\min\{s_m,t_m\}\right)$. 
By  \cref{prop:phik}, $\smn{k}(u)\sim_{k+1} \smn{k}(pu')$. 
Therefore,
\[
  \binom{\smn{k}(u)}{0 \mi{1} \cdots \mi{k}}=\binom{\smn{k}(p u')}{0 \mi{1} \cdots \mi{k}}
  =\sum_{\substack{x,y\in\ams \\ xy=0 \mi{1} \cdots \mi{k}}} \binom{\smn{k}(p)}{x} \binom{\smn{k}(u')}{y}.
\]
Thus,
\[
  \binom{\smn{k}(u)}{0 \mi{1} \cdots \mi{k}}-\binom{\smn{k}(v)}{0 \mi{1} \cdots \mi{k}}
  =\sum_{\substack{x,y\in\ams \\ xy=0 \mi{1} \cdots \mi{k}}} \binom{\smn{k}(p)}{x} \left(\binom{\smn{k}(u')}{y}-\binom{\smn{k}(v')}{y}\right).
\]
Using  \cref{prop:phik}
again,
$\smn{k}(u')\sim_k\smn{k}(v')$.
Therefore, if 
$|y|\leqslant k$,
 we have
\[
  \binom{\smn{k}(u')}{y}-\binom{\smn{k}(v')}{y}=0.
\]
Hence, we conclude
\[
  \binom{\smn{k}(u)}{0 \mi{1} \cdots \mi{k}}-\binom{\smn{k}(v)}{0 \mi{1} \cdots \mi{k}}=
  \binom{\smn{k}(u')}{0 \mi{1} \cdots \mi{k}}-\binom{\smn{k}(v')}{0 \mi{1} \cdots \mi{k}}.
\]
As shown in the proof of  \cref{prop:-1}, since $\smn{k}(i)\sim_k\smn{k}(j)$  for all $i,j\in\am$, a non-zero difference arises only if a subword $0 \mi{1} \cdots \mi{k}$ appears entirely within an $m^k$-block.
More precisely, if $u'=a_1\cdots a_r$ and $v'=b_1\cdots b_r$ where $a_i$'s and $b_j$'s are letters, the difference can be expressed as
\[
  \binom{\smn{k}(u)}{0 \mi{1} \cdots \mi{k}}-\binom{\smn{k}(v)}{0 \mi{1} \cdots \mi{k}}=\sum_{i=1}^r \binom{\smn{k}(a_i)}{0 \mi{1} \cdots \mi{k}}-\sum_{i=1}^r \binom{\smn{k}(b_i)}{0 \mi{1} \cdots \mi{k}}
\]
Using \cref{prop:-1}, 
it follows that
\[
  \binom{\smn{k}(u)}{0 \mi{1} \cdots \mi{k}}-\binom{\smn{k}(v)}{0 \mi{1} \cdots \mi{k}}=(|u'|_0-|v'|_0)\, m^{\binom{k}{2}}.
\]

In the particular case where  $u$ and $v$ are not abelian equivalent, the words $u'$ and $v'$ must be non-empty. W.l.o.g., we assume that 
$0$ appears in $u'$ (and does not appear in $v'$).
The conclusion then follows.
\end{proof}

 By combining \cref{prop:phik,cor:notequiv}, we obtain \cref{pro:both_dir},
 which is restated below.
\bothdir*

\cref{prop:-1}  dealt with subwords of length~$k+1$ occurring in $m^k$-blocks.
The next statement focuses on subwords of length at most $k$ that appear in the image of a word under $\smn{k}$.  This result will play a key role in the proof of \cref{lem:bigdiff}.

\begin{lemma}\label{lem:smart} Let $\ell\leqslant k$. For all $j$, the following holds
  \[
    \binom{\smn{k}(u)}{0 \mi{1} \cdots \mi{\ell-1}}
    =\binom{\smn{k}(u)}{\mi{j} \cdots \mi{j+\ell-1}}
  \]
\end{lemma}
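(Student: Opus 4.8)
The plan is to recognise that the two subwords appearing in the statement are images of one another under a cyclic shift of the alphabet, and then to combine two symmetries of $\infw{t}_m$ that are already at hand: the invariance of binomial coefficients under a simultaneous renaming of the alphabet, and the fact (\cref{prop:phik}(iii)) that $\smn{k}(u)$ ``detects'' subwords of length at most~$k$ only through the length of~$u$. First I would fix notation: write $w=0\mi{1}\cdots\mi{\ell-1}$, whose letter in position~$t$ (counting from $t=0$) is $\mi{t}=-t$, and $w'=\mi{j}\cdots\mi{j+\ell-1}$, whose letter in position~$t$ is $\mi{j+t}=-(j+t)$. The inverse shift $\tau_m^{-j}$ (which is well defined since $\tau_m$ is a bijection of $\am$) subtracts $j$ from every letter, sending $-t$ to $-(t+j)$; hence $w'=\tau_m^{-j}(w)$.

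The first ingredient I would record is the elementary renaming identity: for any bijection $\phi\colon\am\to\am$ extended to a morphism, and any words $x,y$, one has $\binom{x}{y}=\binom{\phi(x)}{\phi(y)}$, because $\phi$ acts injectively on letters, so a set of positions realises $y$ in~$x$ if and only if it realises $\phi(y)$ in $\phi(x)$. Applying this with $\phi=\tau_m^{j}$ and $y=w'=\tau_m^{-j}(w)$, together with the commutation $\tau_m\circ\sm=\sm\circ\tau_m$ (whence $\tau_m^{j}\circ\smn{k}=\smn{k}\circ\tau_m^{j}$), gives
\[
\binom{\smn{k}(u)}{w'}
=\binom{\tau_m^{j}\bigl(\smn{k}(u)\bigr)}{\tau_m^{j}(w')}
=\binom{\smn{k}\bigl(\tau_m^{j}(u)\bigr)}{w}.
\]

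For the second ingredient, I would invoke \cref{prop:phik}(iii): since $|\tau_m^{j}(u)|=|u|$, we have $\smn{k}(\tau_m^{j}(u))\sim_k\smn{k}(u)$, and because $|w|=\ell\leqslant k$ the definition of $\sim_k$ yields $\binom{\smn{k}(\tau_m^{j}(u))}{w}=\binom{\smn{k}(u)}{w}$. Chaining this with the previous display closes the argument. The one point to highlight is that the hypothesis $\ell\leqslant k$ is used exactly once, in this last step, to pass through $k$-binomial equivalence; for subwords of length $k+1$ the corresponding equality genuinely fails, and \cref{prop:-1} measures precisely that failure. Thus there is no real computational obstacle here: the entire content is the observation that $w$ and $w'$ differ by the alphabet automorphism $\tau_m^{j}$, which commutes with $\smn{k}$, and everything else is bookkeeping on top of results already established.
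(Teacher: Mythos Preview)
Your proof is correct and follows essentially the same route as the paper: both arguments use the renaming identity $\binom{x}{y}=\binom{\tau_m^{j}(x)}{\tau_m^{j}(y)}$, the commutation $\tau_m\circ\sm=\sm\circ\tau_m$, and then \cref{prop:phik}(iii) together with $\ell\leqslant k$ to finish. The only cosmetic difference is that the paper applies $\tau_m^{j}$ directly to $\mi{j}\cdots\mi{j+\ell-1}$ rather than first writing $w'=\tau_m^{-j}(w)$, but the logic is identical.
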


\begin{proof}
  Let $u=a_1\cdots a_t$, where $a_i\in \am$. First of all, we note that trivially
  \[
  \binom{\smn{k}(a_1\cdots a_t)}{\mi{j} \cdots \mi{j+\ell-1}}
    = \binom{\tau_m^{j}(\smn{k}(a_1 \cdots a_t))}{\tau_m^{j}(\mi{j} \cdots \mi{j+\ell-1})},
  \]
  as the subwords occur at the same positions in the respective words.
  Furthermore, we have $\tau_m^{j}(\mi{j} \cdots \mi{j+\ell-1}) = 0 \mi{1} \cdots \mi{\ell-1}$.
  Finally, since $\sm \circ \tau_m = \tau_m \circ \sm$, it follows that
  \[
    \binom{\smn{k}(a_1\cdots a_t)}{\mi{j} \cdots \mi{j+\ell-1}}
    = \binom{\tau_m^{j}(\smn{k}(a_1 \cdots a_t))}{0 \mi{1} \cdots \mi{\ell-1}}
    = \binom{\smn{k}(\tau_m^{j}(a_1 \cdots a_t))}{0 \mi{1} \cdots \mi{\ell-1}} = \binom{\smn{k}((a_1+j)\cdots (a_t+j))}{0 \mi{1} \cdots \mi{\ell-1}}.
  \]
  Furthermore, by
  \cref{prop:phik}(iii),
  we know that
  $
  \smn{k}(a_1\cdots a_t) \sim_k \smn{k}\left((a_1+j)\cdots (a_t+j)\right).
  $ Hence, the desired result.
\end{proof}

The next lemma is presented in its full generality. For the sake of presentation, the proof is given in \cref{sec:appbigfiff}.

  \begin{lemma}\label{lem:bigdiff}
Let $k\geqslant 2$. 
Suppose
$u,u',\gamma,\gamma',\delta,\delta'\in\ams$ 
are words such that
 $\gamma\delta\sim_1\gamma'\delta'$ and $|u|=|u'|$. Then, the difference
\[
  \binom{\smn{k-1}(\gamma \sm(u) \delta)}{0 \mi{1}\cdots \mi{k}}-
  \binom{\smn{k-1}(\gamma' \sm(u') \delta')}{0 \mi{1}\cdots \mi{k}}
\]
is given by 
\[
\begin{aligned}
    &m^{\binom{k}{2}} \biggl[ |u|_0 - |u'|_0 + |u| \, \left(|\gamma|_0 - |\gamma'|_0 + |\delta|_{\mi{1}} - |\delta'|_{\mi{1}}\right) \biggr] \\
    &\quad + m^{\binom{k}{2}-1} \sum_{b \in \am} \left( \binom{\gamma\delta}{b\mi{1}} - \binom{\gamma'\delta'}{b\mi{1}} + \binom{\gamma\delta}{0b} - \binom{\gamma'\delta'}{0b} \right).
\end{aligned}
\]
 
\end{lemma}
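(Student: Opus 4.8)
The plan is to reduce the computation of the difference to an accounting of where a length-$(k+1)$ subword $0\mi1\cdots\mi k$ can sit relative to the inner $\sm$-image, and then to invoke the earlier tools (\cref{cor:notequiv} and \cref{lem:smart}) to evaluate each contribution. First I would write $W=\smn{k-1}(\gamma\sm(u)\delta)$ and decompose it into the three juxtaposed images $\smn{k-1}(\gamma)$, $\smn{k-1}(\sm(u))=\smn{k}(u)$, and $\smn{k-1}(\delta)$, each factored further into $m^{k-1}$-blocks. Using \cref{lem:binomial2}, the coefficient $\binom{W}{0\mi1\cdots\mi k}$ splits as a sum over all ways of cutting the subword $0\mi1\cdots\mi k$ into pieces, one piece landing in each block. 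The central observation, exactly as in the proof of \cref{prop:-1}, is that a nonzero \emph{difference} between the $(\gamma,u,\delta)$-term and the $(\gamma',u',\delta')$-term can only come from cuts in which the full length-$(k+1)$ subword, or a length-$k$ subword $0\mi1\cdots\mi{k-1}$ or $\mi1\cdots\mi k$, lies entirely inside a single $m^{k-1}$- or $m^k$-block, since \cref{prop:phik}(iii) forces all shorter pieces to contribute identically across abelian-equivalent data.

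Next I would organize the surviving contributions by which block carries the ``heavy'' ($k+1$ or $k$ letters) piece. There are three regimes. The piece of full length $k+1$ sitting inside the middle block $\smn{k}(u)$: by \cref{cor:notequiv} this contributes $m^{\binom{k}{2}}(|u|_0-|u'|_0)$, and since $\gamma\delta\sim_1\gamma'\delta'$ and $|u|=|u'|$, the corresponding full-length pieces inside the outer $\smn{k-1}(\gamma)$ and $\smn{k-1}(\delta)$ blocks cancel. The pieces of length $k$ sitting inside the middle block $\smn{k}(u)$, with the remaining single letter ($0$ before, or $\mi k$ after) supplied by an outer block: here \cref{lem:smart} lets me replace the shifted length-$k$ subwords by the canonical $0\mi1\cdots\mi{k-1}$, reducing everything to a count of how many $0$'s precede and how many $\mi k$'s follow the middle block; these counts are governed by $|\gamma|_0$, $|\delta|_{\mi1}$ and $|u|$, producing the term $m^{\binom{k}{2}}\,|u|\,(|\gamma|_0-|\gamma'|_0+|\delta|_{\mi1}-|\delta'|_{\mi1})$. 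Finally, the pieces of length $k$ sitting inside an outer block $\smn{k-1}(\gamma_i)$ or $\smn{k-1}(\delta_i)$, with the missing letter supplied by a neighboring block: applying \cref{lem:smart} at level $k-1$ and \cref{prop:-1} to evaluate $\binom{\smn{k-1}(\text{letter})}{0\mi1\cdots\mi{k-1}}=m^{\binom{k-1}{2}}$ on the correct residue, these collapse to a count over adjacent letter-pairs inside $\gamma$ and $\delta$, giving the $m^{\binom{k}{2}-1}$ sum over $b\in\am$ of $\binom{\gamma\delta}{b\mi1}$ and $\binom{\gamma\delta}{0b}$ differences (the exponent drops by one because $\binom{k-1}{2}=\binom{k}{2}-(k-1)$ and a further factor $m^{k-2}$ of interior length gets absorbed).

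The main obstacle I anticipate is the careful bookkeeping at the two seams, between $\smn{k-1}(\gamma)$ and $\smn{k}(u)$, and between $\smn{k}(u)$ and $\smn{k-1}(\delta)$, where one must correctly count both the single boundary letters (the $0$'s and $\mi k$'s) and avoid double-counting configurations that straddle a block boundary within $\gamma$ or $\delta$ themselves. In particular, translating the ``number of $0$'s appearing before the middle block'' into $|\gamma|_0$ requires using \cref{lem:subwords} to see that each block $\smn{k-1}(\gamma_i)$ contributes a fixed number of copies of the needed boundary letter, and that this number is proportional to the relevant letter-count; the constants must line up to produce precisely the coefficients $m^{\binom{k}{2}}$ and $m^{\binom{k}{2}-1}$ stated. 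Because the argument is essentially a refinement of the induction in \cref{prop:-1} with an extra layer (the distinguished $\sm$-image inside), I expect the computation to be routine but lengthy, which is why deferring it to \cref{sec:appbigfiff} is appropriate; the conceptual content is entirely the case split above together with the repeated use of \cref{cor:notequiv,lem:smart,prop:-1}.
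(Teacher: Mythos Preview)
Your toolkit and regimes 1 and 3 are right, but regime 2 as you describe it is a genuine gap. If the length-$k$ piece of $e=0\mi1\cdots\mi k$ sits in the middle piece $\smn{k}(u)$ and the remaining single letter lies in $\smn{k-1}(\gamma)$ or $\smn{k-1}(\delta)$, then after \cref{lem:smart} the contribution is $\binom{\smn{k}(u)}{0\mi1\cdots\mi{k-1}}\cdot m^{k-2}|\gamma\delta|$; since $\smn{k}(u)\sim_k\smn{k}(u')$ and $|\gamma\delta|=|\gamma'\delta'|$, the \emph{difference} is zero. (Note that the number of $0$'s in $\smn{k-1}(\gamma)$ is $m^{k-2}|\gamma|$, not something involving $|\gamma|_0$.) So your regime 2 does not produce the term $m^{\binom{k}{2}}|u|\bigl(|\gamma|_0-|\gamma'|_0+|\delta|_{\mi1}-|\delta'|_{\mi1}\bigr)$, and your three regimes do not account for all surviving contributions.

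The paper's organization is different. It stays at the coarse three-piece level and sums over \emph{all} middle lengths $|y|=\ell$ with $1\le\ell\le k$. The key manoeuvre is to apply \cref{lem:smart} \emph{twice}: first to make $\binom{\smn{k}(u)}{y}$ independent of the cut position, then to shift the $\smn{k-1}(\delta)$-factor so that the inner sum over cut positions telescopes into a single coefficient $\binom{\smn{k-1}(\gamma\delta)}{0\mi1\cdots\mi{k-\ell}}$. For $\ell\ge 2$ this vanishes because $\smn{k-1}(\gamma\delta)\sim_{k-1}\smn{k-1}(\gamma'\delta')$. The crucial exception is $\ell=1$, $j=0$: there the $\delta$-factor has length $k$ and the second shift is illegal. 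Isolating this single boundary term yields $m^{\binom{k}{2}}|u|\bigl(|\delta|_{\mi1}-|\delta|_0-|\delta'|_{\mi1}+|\delta'|_0\bigr)$, which (using $|\gamma\delta|_0=|\gamma'\delta'|_0$) is exactly the expression in the statement. A purely block-level approach like yours can be pushed through, but it lands on sums of the form $\sum_b\binom{\gamma\sm(u)\delta}{0b}$ that then require substantial further expansion; the double application of \cref{lem:smart} is what keeps the paper's argument short.
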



\section{Recognizability and Structure of Factors}\label{sec:rec}

First, we recall a recognizability property stating that any long enough factor~$U\in\Fac(\infw{t}_m)$ has a unique  $\smn{k}$-factorization 
of the form
$p_{_{U}}\smn{k}(u)s_{_{U}}$,
where
$p_{_{U}}$
and
$s_{_{U}}$
are blocks of length less than $m^k$.
Next, we examine the structure of those pairs 
$\left(p_{_{U}}, s_{_{U}}\right)$
 in detail and show that they are subject to strong constraints. This will allow us to carry out precise counting in \cref{sec:4}.

We summarize some well-known concepts and results (see, for instance, \cite{Balkova2012,Frid1998}).
A morphism~$\varphi$ is called {\em marked} if, for every pair of distinct letters, their images under~$\varphi$ differ  in both the first and last letters.
 A morphism~$\varphi\colon \mathcal{A}^*\to \mathcal{A}^*$ is said to be {\em primitive} if there exists an integer~$n$ such that, for all $a\in \mathcal{A}$, 
 the word
 $\varphi^n(a)$ contains all letters of $\mathcal{A}$.
 
\begin{remark}
  Let $\varphi:\mathcal{A}^*\to \mathcal{A}^*$ be a morphism, and 
  let $n\geqslant 1$ be an integer. If $\varphi$ is marked (respectively, primitive, $\ell$-uniform), then $\varphi^n$ has the same properties, meaning
  $\varphi^n$
  is marked
  (respectively, primitive, $\ell^n$-uniform).
\end{remark}

Note that, for all $k\geqslant 1$, the $k^{\text{th}}$ power of our morphism of interest $\sm$ is such that $\smn{k}(i)$ begins with $i$ and ends with $i-k$. 
Therefore, the morphism~$\smn{k}$ is marked.

Let $\infw{x}$ be a fixed point of a morphism $\varphi$ over $\mathcal{A}$.
A factor $w$ of $\infw{x}$ 
is said to contain a
{\em synchronization point}
$(w_1,w_2)$ if $w=w_1w_2$ and, for all $v_1,v_2\in \mathcal{A}^*$, $s\in\Fac(\infw{x})$ such that $\varphi(s)=v_1w_1w_2v_2$, there exist $s_1,s_2\in\Fac(\infw{x})$ such that $s=s_1s_2$, $\varphi(s_1)=v_1w_1$, and $\varphi(s_2)=w_2v_2$. A factor $w$ that contains a synchronization point is said to be 
{\em circular}.

\begin{proposition}\label{pro:gen-circular}
Let $\varphi$ be an $\ell$-uniform, primitive, marked morphism with $\infw{x}$ as one of its fixed points. If $u$ is a circular factor of
$\infw{x}$, then $u$ has a unique $\varphi$-factorization
 (in the sense of
\cref{def:factorization}).
\end{proposition}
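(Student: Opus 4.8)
## Proof Proposal for Proposition (uniqueness of $\varphi$-factorization for circular factors)

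The plan is to exploit the defining property of a synchronization point directly, combined with the fact that a $\varphi$-factorization of $u$ corresponds to choosing an ancestor $s$ together with a splitting of $\varphi(s)$ that exposes $u$ as a central block. The key insight is that circularity pins down \emph{where} the image boundaries fall inside $u$, and uniformity together with markedness then forces the ancestor and the boundary positions to be uniquely determined.

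First I would unwind what a $\varphi$-factorization asserts in the sense of \cref{def:factorization}: writing $u = x\,\varphi(w)\,y$, we have a factor $aw b$ of $\infw{x}$ (with $a,b \in \mathcal{A} \cup \{\varepsilon\}$) where $x$ is a proper suffix of $\varphi(a)$ and $y$ a proper prefix of $\varphi(b)$. Equivalently, $u$ occurs inside $\varphi(awb) = \varphi(a)\varphi(w)\varphi(b)$ starting at position $|\varphi(a)| - |x|$, and the two cut-points that delimit $\varphi(w)$ inside $u$ are image boundaries of $\varphi$. So a $\varphi$-factorization is precisely the data of an ancestor $s = awb$ and a pair of image boundaries $(\ell_1, \ell_2)$ inside $\varphi(s)$ that bracket $u$. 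Suppose $u$ admits two such factorizations, coming from ancestors $s, s'$ with boundary positions; my goal is to show they coincide.

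Next I would invoke circularity. Since $u = w_1 w_2$ contains a synchronization point $(w_1, w_2)$, apply its defining property to \emph{each} of the two ancestors. For the first factorization, $\varphi(s) = v_1 w_1 w_2 v_2$ for suitable $v_1, v_2$, so there is a splitting $s = s_1 s_2$ with $\varphi(s_1) = v_1 w_1$ and $\varphi(s_2) = w_2 v_2$; this places an image boundary exactly at the synchronization point inside $u$. The same applies to the second factorization. Thus \emph{both} factorizations must have an image boundary at the single position $|w_1|$ inside $u$. This is the crux: circularity forces the two factorizations to agree on one interior boundary. From there, uniformity of $\varphi$ (each image has length $\ell$) means that \emph{all} image boundaries inside $u$ sit at positions congruent to $|w_1|$ modulo $\ell$; hence the two factorizations have the \emph{same set} of interior image boundaries, which fixes $x$, $y$, and the length of $\varphi(w)$. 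Finally, markedness (or just injectivity of $\varphi$ on the exposed blocks) forces the letters of $w$, and the boundary letters $a, b$, to be identical, so $s = s'$ and the factorizations coincide.

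The step I expect to be the main obstacle is the passage from ``the two factorizations share the one synchronization boundary'' to ``they share all interior boundaries.'' The alignment modulo $\ell$ argument is clean for the boundaries strictly interior to $u$, but one must handle the two end-blocks $x$ and $y$ carefully: these are proper (possibly empty) prefixes/suffixes of images, and I must check that fixing the interior boundary positions together with the total length $|u|$ leaves no freedom in $|x|$ and $|y|$. Here I would use that $|x| \equiv |w_1| \pmod{\ell}$ with $0 \leqslant |x| < \ell$ determines $|x|$ uniquely, and symmetrically for $|y|$; a short case check (including the degenerate cases $a = \varepsilon$ or $b = \varepsilon$, i.e. $x = \varepsilon$ or $y = \varepsilon$) then closes the argument. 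Markedness is what guarantees that recovering $x$ as a proper suffix of $\varphi(a)$ pins down $a$, ruling out the ambiguity that could otherwise arise at the two ends.
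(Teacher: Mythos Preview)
The paper does not actually prove this proposition: it is stated without proof in \cref{sec:rec} as a known result, with a pointer to the references \cite{Balkova2012,Frid1998}. So there is no ``paper's own proof'' to compare against.

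That said, your outline is essentially the standard argument and is correct. The key steps are exactly the ones you isolate: (i) any $\varphi$-factorization $u = x\varphi(w)y$ gives rise to an ancestor $s = awb \in \Fac(\infw{x})$ with $\varphi(s) = v_1\,w_1\,w_2\,v_2$, so the synchronization property forces an image boundary at position $|w_1|$ inside $u$; (ii) by $\ell$-uniformity all image boundaries of the factorization sit in a single residue class modulo $\ell$, hence $|x| \equiv |w_1| \pmod{\ell}$ with $0 \leqslant |x| < \ell$ determines $|x|$ (and then $|y|$ and $|w|$); (iii) injectivity of $\varphi$ on letters, which follows from markedness, lets you recover $w$ from $\varphi(w)$. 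Two small remarks. First, the factorization data in \cref{def:factorization} is the triple $(x,w,y)$; the witnesses $a$ and $b$ are existential, so you do not need to pin them down (when $x = \varepsilon$ the letter $a$ is genuinely not determined, but this is irrelevant). Second, your worry about the end-blocks is already handled by the congruence $|x| \equiv |w_1| \pmod{\ell}$ together with $0 \leqslant |x| < \ell$, which covers the degenerate cases $x = \varepsilon$ or $y = \varepsilon$ without a separate case split.
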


\begin{proposition}\label{pro:circular}
 For all $k\geqslant 1$, the morphism~$\smn{k}$ is an $m^k$-uniform, primitive, marked morphism.
 Moreover, 
  every factor of its fixed point
  $\infw{t}_m$
  that has length at least
 $2m^k$  is circular.
\end{proposition}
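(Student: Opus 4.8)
The first assertion is immediate from the definitions and the preceding remark on powers of morphisms. Each image $\sm(i)$ has length~$m$, so $\sm$ is $m$-uniform and hence $\smn{k}$ is $m^k$-uniform. Each $\sm(i)$ is a cyclic shift of $012\cdots(m-1)$ and therefore already contains every letter of $\am$, so $\sm$ is primitive (with $n=1$), and thus so is $\smn{k}$. For markedness, recall that $\smn{k}(i)$ begins with $i$ and ends with $i-k$; since both $i\mapsto i$ and $i\mapsto i-k$ are bijections of $\am=\Z/(m\Z)$, any two distinct letters have images differing in both their first and their last letters. Hence $\smn{k}$ is marked.

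For the circularity statement, fix a factor $U$ of $\infw{t}_m$ with $|U|\geqslant 2m^k$ and consider an arbitrary occurrence of $U$ in $\infw{t}_m=\smn{k}(\infw{t}_m)$. Relative to the block grid (the partition of $\infw{t}_m$ into consecutive images $\smn{k}(a)$, whose boundaries sit at the multiples of $m^k$), the positions covered by this occurrence contain a full grid-interval $[jm^k,(j+1)m^k)$, because $|U|\geqslant 2m^k$ while blocks have length $m^k$; on that interval $\infw{t}_m$ reads a complete block $\smn{k}(a)$. Let $U=w_1w_2$ be the factorization of $U$ at the left endpoint of this interval, so that $w_2$ begins with $\smn{k}(a)$. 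The plan is to show that $(w_1,w_2)$ is a synchronization point. Since every image $\smn{k}(s)$ is a concatenation of blocks, its internal boundaries are exactly the multiples of $m^k$; thus for every $s\in\Fac(\infw{t}_m)$ and every $v_1,v_2$ with $\smn{k}(s)=v_1w_1w_2v_2$, the required conclusion is that the cut $w_1\mid w_2$ lands at an offset divisible by $m^k$. Equivalently, the complete block contained in $U$ always sits on the grid, i.e.\ $U$ has a \emph{unique phase} modulo $m^k$ across all of its occurrences. This is precisely the recognizability of $\smn{k}$ at $U$.

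The main obstacle is this phase-rigidity, and it is where overlap-freeness enters. Suppose, for contradiction, that $U$ admitted two occurrences whose phases modulo $m^k$ differ by some $\delta$ with $0<\delta<m^k$. Superimposing the two occurrences of the identical word $U$ forces an occurrence of the complete block $\smn{k}(a)$ displaced by $\delta$ from the grid: this displaced copy straddles two consecutive grid-blocks $\smn{k}(b)\smn{k}(c)$, covering the last $m^k-\delta$ letters of the first and the first $\delta$ letters of the second. Comparing the displaced image $\smn{k}(a)$ against the grid-blocks it overlaps, and using that each block is marked and begins with its index letter, one extracts a repetition long enough to contain an overlap $xuxux$, which $\infw{t}_m$ cannot contain by \cref{prop:GTM-overlap-free}. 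The bound $|U|\geqslant 2m^k$ is exactly what guarantees that the superimposed region carries a full block together with enough alignment context for this contradiction to trigger (and it cannot be lowered, since shorter factors genuinely admit distinct phases). Hence $U$ has a unique phase, the cut $w_1\mid w_2$ always falls on a block boundary, and $(w_1,w_2)$ is a synchronization point; so $U$ is circular. Combined with \cref{pro:gen-circular}, this yields the unique $\smn{k}$-factorization of every factor of length at least $2m^k$ that is used throughout the paper.
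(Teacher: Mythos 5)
Your proof of the first assertion (that $\smn{k}$ is $m^k$-uniform, primitive, and marked) is correct and complete. For context: the paper does not actually prove this proposition at all; it appears in a section that summarizes well-known results with citations to Frid and to Balkov\'a et al., so your attempt is a self-contained reconstruction rather than a parallel of an in-paper argument. Your reduction of circularity to ``phase-rigidity'' (every occurrence of $U$ starts at the same position modulo $m^k$, and the cut placed at a grid boundary is then a synchronization point) is also sound.

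The gap is in the step that is supposed to deliver phase-rigidity. You claim that superimposing two occurrences with different phases forces a complete block $\smn{k}(a)$ to occur displaced from the grid, straddling two grid blocks, and that comparing this displaced block with the blocks it overlaps ``one extracts a repetition long enough to contain an overlap $xuxux$.'' That implication is false: a single displaced complete block yields no forbidden repetition, because displaced blocks genuinely occur in $\infw{t}_m$. Concretely, $00\in\Fac(\infw{t}_m)$ by \cref{lem:boundaryseq}, so $\smn{k}(00)=\smn{k}(0)\smn{k}(0)$ is a factor, and the window of length $m^k$ starting at offset $qm^{k-1}$ inside it reads exactly $\smn{k}(q)$ --- a complete block displaced from the grid. (This is precisely the paper's example following the proposition.) What a displaced block $\smn{k}(a)$ straddling $\smn{k}(b)\smn{k}(c)$ actually forces, via an induction on $k$, is only $b=c$ and a displacement divisible by $m^{k-1}$: a \emph{square} of grid blocks, which is perfectly legal in $\infw{t}_m$. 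To get a contradiction with \cref{prop:GTM-overlap-free} one must exploit the entire length-$2m^k$ window: applying the displaced-block analysis in both directions (each occurrence's full block is displaced relative to the other occurrence's grid) produces squares $b_1b_1$ and $c_1c_1$ in the two preimages, and only a further comparison of the residual prefix/suffix pieces of $U$ lying outside these blocks upgrades one square to a cube $c_1c_1c_1$ in the preimage, i.e.\ an overlap $aua\,ua$ with $u=\varepsilon$. None of this appears in your write-up; your sentence that the bound $2m^k$ ``is exactly what guarantees \dots enough alignment context for this contradiction to trigger'' is precisely the statement that requires proof, and it is the entire mathematical content of the circularity claim.
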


\begin{example}
The factor $\sm(0)^2$ of $\infw{t}_m$ has  $m$ factorizations
$\sm(00)$ and
\[
\suff_{j}\left(\sm(j)\right) \cdot \sm(j) \cdot \pref_{m-j}\left(\sm(j)\right), \qquad j=1,\ldots,m-1.
\]
However, only one of these is a valid $\sm$-factorization, namely $\sm(00)$.
This is because 
 $j^3$ does not occur in $\infw{t}_m$ for any $j$ (cf.~\cref{prop:GTM-overlap-free}), implying that none of the other factorizations are valid $\sm$-factorizations.

The factor $\sm(0)01\cdots (m-2)$ which has a length of $2m-1$,
has two possible $\sm$-factorizations:
\[
\sm(0)\cdot \pref_{m-1}\left(\sm(0)\right) \quad \text{and} \quad \suff_{m-1}\left(\sm(m-1)\right) \cdot \sm(m-1).
\]
Recall from \cref{lem:boundaryseq} that $00$ and $(m-1)(m-1)$ are indeed factors of $\infw{t}_m$.
\end{example}

\begin{remark}\label{rem:all-factors-smk-factorization}
For any $k\geqslant 1$, it is obvious that all factors of length at least~$m^k-1$ in $\infw{t}_m$ have a $\smn{k}$-factorization, since the image of a letter has length~$m^k$.
To simplify the arguments in \cref{sec:characterizing}, we extend this observation to all factors.
Namely, for any $k\geqslant 1$, any factor $U$ of $\infw{t}_m$ has a $\smn{k}$-factorization.
We will prove this by induction on $k$.

For $k=1$, the only case to consider is when a factor 
$U$ appears properly within the image of a letter, i.e., $U = \ell \cdots \left(\ell + |U|-1\right)$ for some $\ell \in \am$ with $|U| \leqslant m-2$. Notice that
\[
\pref_j(U) = \suff_j\left(\sm(\ell + j)\right)
\quad \text{and} \quad
\suff_{|U|-j}(U) = \pref_{|U| - j}\left(\sm(\ell+j)\right).
\]
Since all squares $a^2$, where $a \in \am$, appear in $\infw{t}_m$,  it follows that
for each value of 
$j$,
where
$0 \leqslant j \leqslant |U|$,
the word
$U$ has  $|U|+1$  distinct $\sm$-factorizations 
of the form
\[
\suff_{j}(\ell+j) \cdot \sm(\varepsilon) \cdot \pref_{|U|-j}\left(\sm(\ell + j)\right).
\]

Now,
assume that $U$ has a $\smn{k}$-factorization
 of the form
$x \smn{k}(u)y$, where $x$ is a proper suffix of $\smn{k}(a)$ and $y$ is a proper prefix of $\smn{k}(b)$, and $aub$ is a factor of $\infw{t}_m$.
If $u = \varepsilon$,
then we have the $\smn{k+1}$-factorization $x\cdot \smn{k+1}(\varepsilon)\cdot y$. This is valid since $(a+1)b$ is a factor of
$\infw{t}_m$, $\smn{k}(a)$ is a suffix of
$\smn{k+1}(a+1)$, and $\smn{k}(b)$ is a prefix of $\smn{k+1}(b)$. 
Now, assume  $|u|\geqslant 1$, implying $|U| \geqslant m^k$. If $U$ does not appear properly within the $\smn{k+1}$-image of a letter, there is nothing to prove. Thus consider the case that $U$ appears, w.l.o.g., properly within $\smn{k+1}(0) = \smn{k}(0\cdots(m-1))$, which implies $|U| \leqslant m^{k+1}-2$.
We can express 
$U$ as
$U = x'\smn{k}(u')y'$, where $u' = \ell(\ell+1)\cdots (\ell + t)$ for some $\ell \geqslant 1$ and  $t < m-1 - \ell$, with $x'$ being a proper suffix of  $\smn{k}(\ell-1)$,
and $y'$ a proper prefix of $\smn{k}(\ell + t + 1)$. 
Here, we allow $t=-1$ to indicate that $u'$
is empty.
For instance, we obtain the
$\smn{k+1}$-factorization $x' \cdot \smn{k+1}(\varepsilon) \cdot \smn{k}(u')y'$, where $x'$, being a suffix of $\smn{k}(\ell-1)$, is a proper suffix of $\smn{k+1}(\ell)$, and $u'y'$ is a proper prefix of $\smn{k+1}(\ell)$.
As $\ell\ell$ is a factor of $\infw{t}_m$, the conclusion holds. If $x' = \varepsilon$, then we obtain the $\smn{k+1}$-factorization $\varepsilon \cdot \smn{k+1}(\varepsilon) \cdot \smn{k}(u')y'$. This concludes the proof.
\end{remark}

\begin{corollary}\label{cor:unique-factorization-bound}
For all factors $U\in\Fac(\infw{t}_m)$ of length $|U|\geqslant 2m^k$, there exists a unique $\smn{k}$-factorization:
\[
  U=p_{_{U}} \smn{k}(u) s_{_{U}}.
\]
In particular, the words $p_{_{U}}$, $s_{_{U}}$, and $u$ are unique.
\end{corollary}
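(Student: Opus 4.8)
The plan is to derive the corollary directly from the recognizability results already assembled in this section, handling existence and uniqueness separately. Both are near-immediate consequences of \cref{rem:all-factors-smk-factorization}, \cref{pro:circular}, and \cref{pro:gen-circular}, so the proof is essentially a packaging step.

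For existence, I would invoke \cref{rem:all-factors-smk-factorization}, which guarantees that \emph{every} factor of $\infw{t}_m$ admits a $\smn{k}$-factorization; in particular $U$ admits one, say $U = p_{_{U}}\smn{k}(u)s_{_{U}}$, where by \cref{def:factorization} the word $p_{_{U}}$ is a proper suffix of $\smn{k}(a)$ and $s_{_{U}}$ a proper prefix of $\smn{k}(b)$ for some factor $aub$ of $\infw{t}_m$. Since \cref{pro:circular} asserts that $\smn{k}$ is $m^k$-uniform, we have $|\smn{k}(a)| = |\smn{k}(b)| = m^k$, whence the bounds $|p_{_{U}}|, |s_{_{U}}| < m^k$ follow from properness.

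For uniqueness, the decisive hypothesis is $|U| \geqslant 2m^k$. By \cref{pro:circular}, the morphism $\smn{k}$ is $m^k$-uniform, primitive, and marked, and every factor of $\infw{t}_m$ of length at least $2m^k$ is circular; hence $U$ is circular. Observing that $\infw{t}_m$ is a fixed point of $\smn{k}$ (being fixed by $\sm$, it is fixed by every power $\smn{k}$), I would then apply \cref{pro:gen-circular} with $\varphi = \smn{k}$ and $\infw{x} = \infw{t}_m$ to conclude that the $\smn{k}$-factorization of $U$ is unique. This pins down the triple $(p_{_{U}}, \smn{k}(u), s_{_{U}})$ as a sequence of words, hence in particular $p_{_{U}}$ and $s_{_{U}}$.

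Finally, to extract uniqueness of $u$ itself from uniqueness of $\smn{k}(u)$, I would note that $\smn{k}$ is injective on $\ams$: being $m^k$-uniform with pairwise distinct letter images (it is marked, so images differ already in their first letters), one can decode any image block by block, so distinct words have distinct images. Thus $u$ is uniquely recovered, completing the argument. I do not expect any substantial obstacle, as every ingredient is already in place; the only point meriting care is confirming that the threshold $2m^k$ is exactly what \cref{pro:circular} requires to force circularity, and that it cannot be lowered — the examples preceding \cref{rem:all-factors-smk-factorization} exhibit shorter factors with genuinely several valid $\smn{k}$-factorizations, showing the bound is sharp.
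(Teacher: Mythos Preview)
Your proof is correct and follows the same route as the paper, which simply cites \cref{pro:gen-circular,pro:circular} without further comment. You have spelled out more detail (existence via \cref{rem:all-factors-smk-factorization}, injectivity of $\smn{k}$ to recover $u$), but the underlying argument is identical.
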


\begin{proof}
This result follows directly from  \cref{pro:gen-circular,pro:circular}.
\end{proof}

\begin{example}
   Let $m=3$ and $k=2$.
   The word 
   \[
   U=1200121202011202010122010121,
   \]
   which has length~$28$, is a factor of $\infw{t}_3$.
    It can be factorized as: 
   \[
\sigma_3(1)\sigma_3^2(01)\sigma_3(20)1
   \]
where $p_{_{U}}=\sigma_3(1)$
and
$s_{_{U}}=\sigma_3(20)1$. 
\end{example}

Since the word $s_{_{U}}$ is a proper prefix of some $\smn{k}(j)$, it has a specific structure. Since $|s_{_{U}}|<m^k$, this length can be uniquely expressed using a base-$m$ expansion as
\[
  |s_{_{U}}|=\sum_{i=0}^{k-1} c_{k-i}\, m^i,\quad c_1,\ldots,c_k\in\{0,\ldots,m-1\}.
\]
By applying a similar greedy procedure to the word $s_{_{U}}$ (refer to~\cite{DumontThomas} for details on Dumont--Thomas numeration systems associated with a morphism, or~\cite{RigoBook}), we obtain the following unique decomposition
\begin{equation}
     \label{eq:decompsu}
     s_{_{U}}=\prod_{i=1}^k \smn{k-i}\left(v_{i}\right)
\end{equation}
where the words $v_i$ are defined as follows 
\[
  v_i=(j+\sum_{r=1}^{i-1}c_r) \, (j+\sum_{r=1}^{i-1}c_r+1) \cdots (j+\sum_{r=1}^{i}c_r-1).
\]
Notice that $|v_i|=c_i$, and $v_1\cdots v_k$ is a prefix of $\left(j(j+1)\cdots (j+m-1)\right)^\omega$.

\begin{example}
The base-$4$ expansion of $226$ is $3.4^3+2.4^2+2$. The prefix of $\sigma_4^4(0)$ with a length~$226$
is given by
\[
  \sigma_4^3(012)\sigma_4^2(30)12
\]
where $v_1=012$, $v_2=30$, $v_3=\varepsilon$, and $v_4=12$. Thus, $v_1\cdots v_4=0123012$. For instance, $\sigma_4^3(\underline{02})\sigma_4^2(30)12$ is not the prefix of any $\sigma_4^4(a)$,
as it involves applying
 $\sigma_4^3$ 
 to a block composed of non-consecutive letters.
\end{example}

\begin{remark}\label{rem:uniquesu}
Knowing the value of $j$ and 
the length
$|s_{_{U}}|$ uniquely determines the decomposition given in~\eqref{eq:decompsu}. Equivalently, for all $n\geqslant 1$ and letter~$a$, there exists a unique factor of the form $s_{_{U}}$, of length~$n$, that starts (respectively, ends) with the letter~$a$. 
\end{remark}

\begin{corollary}\label{cor:delpu} 
 The collect the following facts.
  \begin{itemize}
  \item[(i)] With the above notation, let $q$ (respectively, $r$) be the least (respectively, largest) integer such that 
 $c_q$ (or $c_r$) is non-zero.
Let $v_q=xy$ and $v_r=zh$, 
 such that
 $v_1\cdots v_k=xy v_{q+1}\cdots v_{r-1}zh$. Then,
    \[
      \smn{k-q}(y)\prod_{i=q+1}^{r-1} \smn{k-i}\left(v_{i}\right)\smn{k-r}(z)
    \]
is the proper prefix of the image of a letter under $\smn{k}$. 

\item[(ii)] If $c_1>0$ and at least one of $c_2,\ldots,c_k$ is non-zero, the only admissible deletion of letters from $v_1$, leading to a proper prefix of some $\smn{k}(a)$, is to suppress a prefix of $v_1$. Removing a proper suffix of $v_1$ or any ``internal'' factor would violate the constraint that $v_1\cdots v_k$ must be a prefix of the sequence $(j(j+1)\cdots (j+m-1))^\omega$.

\item[(iii)] If $c_1$ is the only non-zero coefficient, the only permissible deletion of letters from $v_1$,  resulting in a proper prefix of some $\smn{k}(a)$, is to suppress 
either
a prefix or a suffix of $v_1$.
\end{itemize}
\end{corollary}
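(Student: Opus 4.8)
The three parts share a single engine: an explicit description of which words arise as proper prefixes of a single image $\smn{k}(a)$, read off directly from the Dumont--Thomas decomposition in~\eqref{eq:decompsu}. The plan is first to isolate this as a working lemma: a word of the shape $\prod_{i=1}^{k}\smn{k-i}(w_i)$, with $|w_i|<m$ for every $i$, is a proper prefix of $\smn{k}(a)$ if and only if the concatenation $w_1\cdots w_k$ reads consecutive letters of $\am$ starting at $a$, i.e.\ $w_1\cdots w_k$ is a prefix of $(a(a+1)\cdots(a+m-1))^\omega$. I would prove this by peeling one level of the recursion $\smn{k}(a)=\smn{k-1}(\sm(a))=\smn{k-1}\!\left(a(a+1)\cdots(a+m-1)\right)$ at a time: a proper prefix detaches a first block $\smn{k-1}(w_1)$ with $w_1$ a (possibly full) prefix of $\sm(a)$, after which the remainder is a prefix of $\smn{k-1}(a+|w_1|)$; iterating $k$ times yields the consecutive-letter condition, and $\sum_{i=1}^{k}(m-1)m^{k-i}=m^k-1$ guarantees properness.

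With this lemma in hand, part~(i) is a matter of bookkeeping the boundary letters. Since $c_1=\cdots=c_{q-1}=0$ and $c_{r+1}=\cdots=c_k=0$, the nonempty blocks are $v_q,\dots,v_r$ and, by the defining formula for the $v_i$, their concatenation $v_q v_{q+1}\cdots v_r$ reads consecutive letters starting at $j$. Writing $v_q=xy$ and $v_r=zh$, deleting the prefix $x$ merely advances the starting letter to $a:=j+|x|$ (the first letter of $y$), while deleting the suffix $h$ truncates the run from the right; hence $y\,v_{q+1}\cdots v_{r-1}\,z$ is still a run of consecutive letters starting at $a$. Taking $w_q=y$, $w_i=v_i$ for $q<i<r$, $w_r=z$ and all remaining $w_i=\varepsilon$, every block has length $<m$, so the lemma gives that $\smn{k-q}(y)\prod_{i=q+1}^{r-1}\smn{k-i}(v_i)\smn{k-r}(z)$ is a proper prefix of $\smn{k}(a)$.

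Parts~(ii) and~(iii) both ask which subwords $v_1'$ of $v_1$ leave $\smn{k-1}(v_1')\prod_{i=2}^{k}\smn{k-i}(v_i)$ a proper prefix of some $\smn{k}(a)$. By the lemma this happens exactly when $v_1'v_2\cdots v_k$ reads consecutive letters (and $|v_1'|<m$). The key elementary observation is that $v_1=j(j+1)\cdots(j+c_1-1)$ consists of distinct, strictly consecutive letters, so any subword of $v_1$ that is itself a consecutive run must be a contiguous factor of $v_1$. For~(ii) the hypothesis that some $c_i$ with $i\geq 2$ is nonzero means $v_2\cdots v_k$ is nonempty and begins with $j+c_1$; for $v_1'v_2\cdots v_k$ to be consecutive, $v_1'$ must end at $j+c_1-1$, which forces $v_1'$ to be a suffix of $v_1$ --- deleting a suffix or an interior factor would open a gap before $v_2$. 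For~(iii) the trailing blocks vanish, so there is no right-hand constraint and $v_1'$ may be any contiguous factor, i.e.\ obtained by deleting a prefix and/or a suffix.

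The genuinely load-bearing step is the working lemma and, within parts~(ii)--(iii), the observation that a consecutive-run subword of the distinct-letter block $v_1$ is necessarily a factor; everything else is boundary accounting. I expect the main source of error --- hence where I would be most careful --- to be the off-by-one tracking of the first and last letters of each block (which letter $v_i$ starts on, where $y$ begins, where $z$ ends), since a single miscounted index there would invalidate the consecutiveness claims that drive all three parts.
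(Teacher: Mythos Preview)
Your proposal is correct and follows the same line as the paper, which in fact gives no separate proof for this corollary: the paper treats it as an immediate consequence of the Dumont--Thomas decomposition~\eqref{eq:decompsu} and the remark that a word $\prod_i\smn{k-i}(w_i)$ is a proper prefix of $\smn{k}(a)$ exactly when $w_1\cdots w_k$ is a prefix of $(a(a+1)\cdots(a+m-1))^\omega$. Your working lemma makes this characterization explicit, and your verification that a consecutive-run subword of the distinct-letter block $v_1$ must be a contiguous factor (using $c_1<m$ to force $i_{s+1}-i_s=1$ from $i_{s+1}\equiv i_s+1\pmod m$) supplies the detail the paper only gestures at in the parenthetical of~(ii).
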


A similar observation applies to $p_{_{U}}$,  which is the proper suffix of some $\smn{k}(j+1)$. 
The only difference lies in the fact that
$\sm(j+1)$ ends with $j$.

Since $|p_{_{U}}|<m^k$, this length can be uniquely expressed using a base-$m$ expansion as:
\[
  |p_{_{U}}|=\sum_{i=0}^{k-1} c_{i+1}\, m^i,\quad c_k,\ldots,c_1\in\{0,\ldots,m-1\}.
\]
By applying a similar greedy procedure to the word $p_{_{U}}$, we obtain the following decomposition:
\[
  p_{_{U}}=\prod_{i=1}^{k} \smn{i-1}\left(v_{i}\right)
\]
where the words $v_i$ 
are defined as follows
\[
v_i = \left(j-k+i-\sum_{r=i}^{k}c_r+1\right) \cdots \left(j-k+i-\sum_{r=i+1}^{k}c_r-1\right)\left(j-k+i-\sum_{r=i+1}^{k}c_r\right).
\]
Notice  that $|v_i|=c_i$.

\begin{example}
The base-$4$ representation of $226$ is $3.4^3+2.4^2+2$.
Here, the suffix of $\sigma_4^4(0)$  with a length of  $226$
is given by
\[
  23\sigma_4^2(23)\sigma_4^3(123)
\]
where $v_4=123$, $v_3=23$, $v_2=\varepsilon$, and $v_1=23$.
\end{example}

\begin{remark}\label{rem:uniquepu}
Similar to the previous case, knowing the value of $j$
and the length
$|p_{_{U}}|$ uniquely determines the decomposition. 
Equivalently, for all integers $n\geqslant 1$ and and any letter~$a$, there exists a unique factor of the form $p_{_{U}}$, of length~$n$,  that starts (respectively, ends) with the letter~$a$. 
\end{remark}

\begin{corollary}
We collect the following facts.

\begin{itemize}
\item[(i)] If $c_k>0$ and at least one of $c_1,\ldots,c_{k-1}$ is non-zero, the only admissible deletion of letters from $v_k$ resulting in a proper suffix of some $\smn{k}(a)$ is to suppress a suffix of $v_k$. Deleting a proper prefix of $v_k$ or some ``internal'' factor would not yield a valid suffix.

\item[(ii)] If $c_k$ is the only non-zero coefficient, the only admissible deletion of letters from $v_k$ leading to a proper suffix of some $\smn{k}(a)$, is to suppress 
either
a prefix or a suffix of $v_k$.
\end{itemize}
\end{corollary}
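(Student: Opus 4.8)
The plan is to mirror the proof of \cref{cor:delpu}, exploiting the symmetry between proper prefixes and proper suffixes of images of $\smn{k}$. First I would record the shape of $p_{_{U}}$ coming from the decomposition above: writing $p_{_{U}}=\prod_{i=1}^{k}\smn{i-1}(v_i)$, we have $p_{_{U}}=x\,\smn{k-1}(v_k)$ with $x=\prod_{i=1}^{k-1}\smn{i-1}(v_i)$ of length $\sum_{i=1}^{k-1}c_i\,m^{i-1}<m^{k-1}$, and with $v_k=(j-c_k+1)\cdots j$ a block of $c_k$ consecutive letters. By construction $x$ is a proper suffix of $\smn{k-1}(j-c_k)$, the $\smn{k-1}$-block immediately preceding $\smn{k-1}(v_k)$ inside $\smn{k}(j+1)$, and $x=\varepsilon$ precisely when $c_1=\cdots=c_{k-1}=0$; this dichotomy is exactly what separates cases (i) and (ii).

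The key step is a local characterization of proper suffixes. I claim that for any nonempty word $w$ over $\am$ and any $x'$ with $|x'|<m^{k-1}$, the word $x'\smn{k-1}(w)$ is a proper suffix of some $\smn{k}(a)$ if and only if $w$ is a run of consecutive letters and, when $x'\neq\varepsilon$, $x'$ is a proper suffix of $\smn{k-1}(c-1)$, where $c$ denotes the first letter of $w$ (the case $w=\varepsilon$ being immediate). This follows because $\smn{k}(a)=\smn{k-1}(a)\smn{k-1}(a+1)\cdots\smn{k-1}(a-1)$, so the last $|w|$ full $\smn{k-1}$-blocks of $\smn{k}(a)$ are the images of the consecutive letters ending at $a-1$, and the block immediately preceding them is $\smn{k-1}(c-1)$. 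Since $|x'|<m^{k-1}$ cannot accommodate a full block, the maximal full-block suffix of $x'\smn{k-1}(w)$ is exactly $\smn{k-1}(w)$, and uniqueness of the suffix decomposition (\cref{rem:uniquepu}) then forces the stated conditions.

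I would then apply this to the three possible deletions inside $v_k$. Suppressing a suffix of $v_k$ retains a prefix $v_k'$ of $v_k$, which still begins with $j-c_k+1$; hence $x$ remains a proper suffix of $\smn{k-1}(j-c_k)$ and $x\,\smn{k-1}(v_k')$ is a proper suffix of $\smn{k}(j-c_k+|v_k'|+1)$, always admissible. Suppressing a prefix of $v_k$ retains a suffix beginning at $j-c_k+s+1$ for some $0<s<c_k$, so the characterization now requires $x$ to be a proper suffix of $\smn{k-1}(j-c_k+s)$. Here I would use that $\smn{k-1}$ is marked: since $\smn{k-1}(b)$ ends with $b-(k-1)$, the last letter of the nonempty word $x$ determines the unique block it can suffix, namely $\smn{k-1}(j-c_k)$; as $0<s<c_k\leqslant m-1$ gives $s\not\equiv 0\pmod m$, no such relation holds. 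Thus a prefix deletion is inadmissible exactly when $x\neq\varepsilon$ (case (i)) and becomes admissible when $x=\varepsilon$ (case (ii)), where the constraint on $x$ is vacuous. Finally, deleting an internal factor of $v_k$ leaves a non-consecutive survivor $v_k'''$; as $v_k$ is a strictly increasing run of length $<m$ without wraparound, any internal gap breaks consecutiveness, so $\smn{k-1}(v_k''')$ is never the block-suffix of an image, inadmissible in both cases.

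The main obstacle is the prefix-deletion analysis, and specifically the risk that a deletion could let $x'\smn{k-1}(w)$ re-decompose in an unforeseen way. The safeguard is the length bound $|x|<m^{k-1}$, which guarantees that the surviving blocks $\smn{k-1}(v_k^{\bullet})$ genuinely constitute the maximal full-block suffix, so that \cref{rem:uniquepu} applies and the clean dichotomy above is forced; the marked property of $\smn{k-1}$ then does the decisive work, playing the role that the constraint ``$v_1\cdots v_k$ is a prefix of $(j(j+1)\cdots)^\omega$'' played in \cref{cor:delpu}.
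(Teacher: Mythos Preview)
Your proposal is correct and matches the spirit of the paper, which states this corollary without proof as the mirror image of \cref{cor:delpu}; your local characterization via the block structure of $\smn{k}(a)=\smn{k-1}(a)\cdots\smn{k-1}(a-1)$ together with the marked property of $\smn{k-1}$ is exactly the right mechanism, and it is the suffix analogue of the paper's constraint that ``$v_1\cdots v_k$ must be a prefix of $(j(j+1)\cdots)^\omega$''.

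One small imprecision worth fixing: you assert that $v_k=(j-c_k+1)\cdots j$ is ``strictly increasing \ldots\ without wraparound'', but since the letters are taken modulo~$m$ this run may well wrap (e.g.\ $m=5$, $j=1$, $c_k=3$ gives $v_k=401$). Fortunately your argument does not actually need this: for an internal deletion removing a block of length $r$ with $0<r<c_k\leqslant m-1$, the two surviving endpieces fail to concatenate into a consecutive run precisely because $r\not\equiv 0\pmod m$, which is the same modular obstruction you already used in the prefix-deletion case. With that tweak the proof is complete, and it also covers arbitrary (non-contiguous) deletions, since any surviving word that is not a contiguous factor of $v_k$ automatically fails consecutiveness by the same reasoning.
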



\section{Counting Classes of a New Equivalence Relation}\label{sec:4}

Since $\sm$ is Parikh-constant, to determine $k$-binomial equivalence of two factors primarily depends on their short prefixes and suffixes, rather than their central part composed of $m^k$-blocks.
Thus, it is meaningful to focus on these prefixes and suffixes for our analysis.
This section presents the core of our counting methods.

For the sake of presentation, let us recall \cref{def:equivk}. 
Let $(p_1,s_1),(p_2,s_2)\in\mathcal{A}_m^{<m^k}\times \mathcal{A}_m^{<m^k}$. We have $(p_1,s_1)\equiv_k (p_2,s_2)$ whenever there exist $x,y,p,q,r,t\in \ams$ with $|x|,|y|<m^{k-1}$ such that 
\begin{eqnarray*}
(p_1,s_1)&=&\left(x \smn{k-1}(p),\smn{k-1}(q) y\right),\\
(p_2,s_2)&=&\left(x \smn{k-1}(r),\smn{k-1}(t) y\right),  
\end{eqnarray*}
and one of the following conditions holds
\begin{itemize}
    \item $pq\sim_1 rt$,
    \item $pq\sim_1 rt\sm(0)$, 
    \item $pq\sm(0)\sim_1 rt$.
\end{itemize}
Notice that if $(p_1,s_1)\equiv_k (p_2,s_2)$, then
\[
|p_1s_1|=|p_2s_2| \quad \text{or} \quad \left|\, |p_1s_1|-|p_2s_2|\, \right|=m^k.
\]

\begin{proposition}\label{pro:imply}
  Let $k\geqslant 2$, and  $U,V\in \Fac(\infw{t}_m)$ of length at least $2m^k$.
  If $(p_{_{U}},s_{_{U}})\equiv_k (p_{_{V}},s_{_{V}})$, then $U\sim_k V$.
\end{proposition}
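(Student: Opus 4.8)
The plan is to unfold both factorizations, cancel the common border, and reduce the claim to an abelian equivalence of the inner words. By \cref{cor:unique-factorization-bound} we may write $U = p_{_{U}}\smn{k}(u)s_{_{U}}$ and $V = p_{_{V}}\smn{k}(v)s_{_{V}}$, and by \cref{def:equivk} the hypothesis $(p_{_{U}},s_{_{U}})\equiv_k(p_{_{V}},s_{_{V}})$ supplies $x,y,p,q,r,t\in\ams$ with $|x|,|y|<m^{k-1}$ such that $p_{_{U}}=x\smn{k-1}(p)$, $s_{_{U}}=\smn{k-1}(q)y$, $p_{_{V}}=x\smn{k-1}(r)$, and $s_{_{V}}=\smn{k-1}(t)y$. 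Since $\smn{k}=\smn{k-1}\circ\sm$ and $\smn{k-1}$ is a morphism, this yields the compact forms $U = x\,\smn{k-1}\!\left(p\,\sm(u)\,q\right)y$ and $V = x\,\smn{k-1}\!\left(r\,\sm(v)\,t\right)y$. Applying the cancellation property \cref{lem:cancel} on both sides to strip the common prefix $x$ and suffix $y$, it suffices to prove $\smn{k-1}\!\left(p\,\sm(u)\,q\right)\sim_k\smn{k-1}\!\left(r\,\sm(v)\,t\right)$.

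Next I would reduce this to a purely abelian statement. By \cref{prop:phik}(ii), applied with index $k-1$ (legitimate as $k\geqslant 2$), if $p\,\sm(u)\,q\sim_1 r\,\sm(v)\,t$ then $\smn{k-1}\!\left(p\,\sm(u)\,q\right)\sim_k\smn{k-1}\!\left(r\,\sm(v)\,t\right)$, so the proposition follows once this single abelian equivalence is established. To check it I would compute Parikh vectors, using that $\sm$ is Parikh-constant, so $\Psi(\sm(w))=|w|\,(1,\dots,1)$ for every word $w$; hence $\Psi(p\,\sm(u)\,q)=\Psi(pq)+|u|\,(1,\dots,1)$, and similarly for the other side. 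The three alternatives of \cref{def:equivk} then enter: in the case $pq\sim_1 rt$ one has $\Psi(pq)=\Psi(rt)$; in the case $pq\sim_1 rt\,\sm(0)$ one has $\Psi(pq)=\Psi(rt)+(1,\dots,1)$; and symmetrically $\Psi(pq)+(1,\dots,1)=\Psi(rt)$ in the third case. In every case the difference $\Psi(pq)-\Psi(rt)$ is an integer multiple of $(1,\dots,1)$, so the desired $p\,\sm(u)\,q\sim_1 r\,\sm(v)\,t$ collapses to a single scalar identity matching $|u|-|v|$ to that multiple, that is, to the equality $|U|=|V|$ of total lengths.

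The heart of the argument, and the only genuinely delicate point, is therefore the length bookkeeping. Since $\smn{k-1}$ is $m^{k-1}$-uniform, $|U|=|V|$ is equivalent to $|p|+|q|+m|u|=|r|+|t|+m|v|$, and the three cases of $\equiv_k$ are precisely calibrated to this: they correspond to $|pq|-|rt|\in\{0,m,-m\}$, matching $|v|-|u|\in\{0,1,-1\}$. Note that $U\sim_k V$ forces $|U|=|V|$ for any $k\geqslant 1$, so this length matching is necessary as well as sufficient, and in the intended application $U,V$ range over $\Fac_n(\infw{t}_m)$ and thus automatically share the same length. Once the lengths agree, the Parikh-constancy of $\sm$ makes the letter counts of the two inner words coincide, giving $p\,\sm(u)\,q\sim_1 r\,\sm(v)\,t$ and closing the chain of implications down to $U\sim_k V$. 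I expect the Parikh computation and the enumeration of the three cases to be routine; the care lies in tracking the $\sm(0)$ contributions and confirming they are exactly absorbed by the $\pm 1$ shift in the number of inner $m^k$-blocks.
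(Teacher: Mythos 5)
Your proof is correct and takes essentially the same route as the paper's: both unfold the hypothesis into $U=x\,\smn{k-1}\!\left(p\,\sm(u)\,q\right)y$ and $V=x\,\smn{k-1}\!\left(r\,\sm(v)\,t\right)y$, reduce to the abelian equivalence $p\,\sm(u)\,q\sim_1 r\,\sm(v)\,t$ using the Parikh-constancy of $\sm$ together with $|U|=|V|$, and then lift through $\smn{k-1}$ via \cref{prop:phik} and conclude by the congruence/cancellation property. Your uniform Parikh-vector treatment of the three alternatives of $\equiv_k$ is just a compact rephrasing of the paper's explicit case split on $|p_{_{U}}s_{_{U}}|$ versus $|p_{_{V}}s_{_{V}}|$, and like the paper you correctly note that the needed length equality comes from the intended setting $U,V\in\Fac_n(\infw{t}_m)$.
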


\begin{proof}
  Suppose first that $|p_{_{U}}s_{_{U}}|=|p_{_{V}}s_{_{V}}|$.  By definition, there exist $x,y,p,q,r,t,u,v\in A^*$ such that:
\begin{eqnarray*}
  U=p_{_{U}}\smn{k}(u)s_{_{U}}&=x \smn{k-1}(p)\smn{k}(u)\smn{k-1}(q) y&=x \smn{k-1}\left(p\sm(u)q\right)y\\
  V=p_{_{V}}\smn{k}(v)s_{_{V}}&=x \smn{k-1}(r)\smn{k}(v)\smn{k-1}(t) y&=x \smn{k-1}\left(r\sm(v)t\right)y    
  \end{eqnarray*}
  and $pq\sim_1 rt$. Since $|U|=|V|$, it follows that $|u|=|v|$ and $\sm(u)\sim_1\sm(v)$. 
  Thus,
  $p\sm(u)q\sim_1 r\sm(v)t$. 
  By \cref{prop:phik},
  we have
  \[
  \smn{k-1}\left(p\sm(u)q\right)\sim_k \smn{k-1}\left(r\sm(v)t\right).
  \]
  
  For the second case, suppose that $|p_{_{U}}s_{_{U}}|=|p_{_{V}}s_{_{V}}|+m^k$.  Using the same notation as above, we have $pq\sim_1 rt\sm(0)$ and $|v|=|u|+1$.
  Therefore
  \[
  r\sm(v)t\sim_1 r\sm(u)\sm(0)t\sim_1 p\sm(u)q
  \]
  and we reach the same conclusion.
\end{proof}

We have an immediate lower bound for the  $k$-binomial complexity of the generalized Thue--Morse word~$\infw{t}_m$.
Using
 \cref{thm:main_equiv},  we will get the value of 
 \(
 \#\left( \left\{(p_{_{U}},s_{_{U}})\mid \, U\in \Fac_n(\infw{t}_m)\right\}/\equiv_k\right)
 \).

\begin{corollary}
  For all $n\geqslant 2m^k$, 
  the
$k$-binomial complexity
$\bc{\infw{t}_m}{k}(n)$
 satisfies the inequality
 \[
 \bc{\infw{t}_m}{k}(n) \geqslant \#\left( \{(p_{_{U}},s_{_{U}})\mid \, u\in \Fac_n(\infw{t}_m)\}/\equiv_k\right).
 \]
\end{corollary}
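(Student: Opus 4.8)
The plan is to derive the stated inequality as a direct consequence of \cref{pro:imply}. The key observation is that $\bc{\infw{t}_m}{k}(n)$ counts the number of $\sim_k$-equivalence classes among factors in $\Fac_n(\infw{t}_m)$, whereas the right-hand side counts the number of $\equiv_k$-classes among the associated pairs $(p_{_{U}},s_{_{U}})$. Since we are told that $n\geqslant 2m^k$, \cref{cor:unique-factorization-bound} guarantees that every factor $U\in\Fac_n(\infw{t}_m)$ admits a \emph{unique} $\smn{k}$-factorization $U=p_{_{U}}\smn{k}(u)s_{_{U}}$, so the assignment $U\mapsto (p_{_{U}},s_{_{U}})$ is well defined on $\Fac_n(\infw{t}_m)$.

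The argument I would carry out is a standard comparison of two quotient sets via a surjection that descends to the quotients in a length-preserving way. Concretely, consider the map that sends each factor $U\in\Fac_n(\infw{t}_m)$ to its pair $(p_{_{U}},s_{_{U}})$, and thence to the $\equiv_k$-class of that pair. The crucial step is to verify that this map factors through the $\sim_k$-relation, i.e., that it sends each $\sim_k$-class to a \emph{single} $\equiv_k$-class. By \cref{pro:imply}, whenever $(p_{_{U}},s_{_{U}})\equiv_k (p_{_{V}},s_{_{V}})$ we have $U\sim_k V$; this is the converse of what I need here, but it shows that the fibers of the natural map in the \emph{reverse} direction are well behaved. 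The clean way to phrase the inequality is therefore to exhibit a \emph{surjection} from the set of $\sim_k$-classes of $\Fac_n(\infw{t}_m)$ onto the set $\{(p_{_{U}},s_{_{U}})\mid U\in\Fac_n(\infw{t}_m)\}/\!\equiv_k$. Since every pair $(p_{_{U}},s_{_{U}})$ arises from some factor $U$ (by definition of the set on the right), the map $[U]_{\sim_k}\mapsto [(p_{_{U}},s_{_{U}})]_{\equiv_k}$ is surjective, and a surjection between finite sets forces the cardinality of the domain to be at least that of the codomain, which is precisely the claimed inequality.

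The one point requiring care—and the main obstacle—is confirming that $[U]_{\sim_k}\mapsto [(p_{_{U}},s_{_{U}})]_{\equiv_k}$ is genuinely \emph{well defined} as a map on $\sim_k$-classes, that is, that $U\sim_k V$ implies $(p_{_{U}},s_{_{U}})\equiv_k (p_{_{V}},s_{_{V}})$. This is exactly the converse direction of \cref{pro:imply}, which at this stage of the paper has \emph{not} yet been established (it is the harder implication, slated for the sections on \cref{prop:conclusion-final-generalization} and \cref{prop:converse}). To obtain the inequality without circularity, I would instead phrase the descent through the pairs only in the surjective direction and avoid assuming well-definedness on $\sim_k$-classes. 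The correct formulation is to fix, for each $\equiv_k$-class of pairs, a representative factor $U$ realizing a pair in that class, and observe via \cref{pro:imply} that distinct $\equiv_k$-classes of pairs must yield \emph{distinct} $\sim_k$-classes of factors: if two representatives $U,V$ satisfied $U\sim_k V$ while their pairs lay in different $\equiv_k$-classes, we would still only be using the established implication of \cref{pro:imply} in its stated direction to group factors, never its converse.

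Thus the proof reduces to the following short chain: by \cref{pro:imply}, each $\equiv_k$-class of pairs determines a unique $\sim_k$-class of factors (mapping a pair-class to the common $\sim_k$-class of all factors producing pairs in it); this assignment is injective from pair-classes to factor-classes, because \cref{pro:imply} shows every factor whose pair lies in a given $\equiv_k$-class falls into one fixed $\sim_k$-class, so different factor-classes force different pair-classes. An injection from the set of $\equiv_k$-classes into the set of $\sim_k$-classes immediately yields
\[
\#\left(\{(p_{_{U}},s_{_{U}})\mid U\in\Fac_n(\infw{t}_m)\}/\!\equiv_k\right)\leqslant \#\left(\Fac_n(\infw{t}_m)/\!\sim_k\right)=\bc{\infw{t}_m}{k}(n),
\]
which is the desired lower bound. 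The only genuinely nontrivial input is \cref{pro:imply}; everything else is a counting argument about injections between finite quotient sets.
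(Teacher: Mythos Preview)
Your argument has a genuine gap at the injectivity step. You correctly observe that \cref{pro:imply} makes the assignment
\[
\Phi:\ [(p_{_{U}},s_{_{U}})]_{\equiv_k}\ \longmapsto\ [U]_{\sim_k}
\]
well defined: if two factors have $\equiv_k$-equivalent pairs, the factors lie in the same $\sim_k$-class. But your claim that $\Phi$ is \emph{injective} does not follow. Injectivity would say: if $\Phi(C_1)=\Phi(C_2)$ then $C_1=C_2$, i.e.\ whenever $U\sim_k V$ one has $(p_{_{U}},s_{_{U}})\equiv_k(p_{_{V}},s_{_{V}})$. That is precisely the converse of \cref{pro:imply}, which you yourself flagged as unavailable at this point. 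Your justification ``different factor-classes force different pair-classes'' is the contrapositive of \cref{pro:imply} and is a true statement, but it concerns the (ill-defined) map in the opposite direction; it does not exclude two \emph{distinct} $\equiv_k$-classes of pairs landing in the \emph{same} $\sim_k$-class of factors, which is exactly what would violate injectivity of $\Phi$.

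In fact, $\Phi$ is \emph{surjective} (every $\sim_k$-class $[U]_{\sim_k}$ is hit by $[(p_{_{U}},s_{_{U}})]_{\equiv_k}$), so what \cref{pro:imply} alone delivers is the \emph{reverse} inequality
\[
\bc{\infw{t}_m}{k}(n)\ \leqslant\ \#\bigl(\{(p_{_{U}},s_{_{U}})\mid U\in\Fac_n(\infw{t}_m)\}/\!\equiv_k\bigr).
\]
This is the bound that is genuinely ``immediate'' from \cref{pro:imply}. The inequality as written in the corollary (with $\geqslant$) is the harder direction and requires the converse implication, established only later via \cref{prop:conclusion-final-generalization} and \cref{prop:converse}; no reshuffling of \cref{pro:imply} alone can produce it.
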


Let $n\geqslant 2 m^k$. Define $n=\lambda \pmod{m^k}$, where $\lambda=\mu+\nu m^{k-1}$, with $\mu<m^{k-1}$ and $\nu<m$. 
We begin by defining a partition of the set of pairs.

\begin{definition}
Let $\ell\in\{0,\ldots,m^{k-1}\}$. 
Let
\[
  P_\ell^{(n)}:=\left\{(p_{_{U}},s_{_{U}}) \mid \, U\in \Fac_n(\infw{t}_m), \, |p_{_{U}}|\equiv \ell\pmod m^{k-1}\right\}.
\]
and similarly, 
\[
  S_{\ell'}^{(n)}:=\left\{(p_{_{U}},s_{_{U}}) \mid \, U\in \Fac_n(\infw{t}_m), \, |s_{_{U}}|\equiv \ell'\pmod m^{k-1}\right\}.
\]
\end{definition}

Note that
\[
  \bigcup_{\ell=0}^{m-1}P_\ell^{(n)}=\left\{(p_{_{U}},s_{_{U}}) \mid \, U\in \Fac_n(\infw{t}_m)\right\}=\bigcup_{\ell'=0}^{m-1}S_{\ell'}^{(n)}.
\]

Let $(p,s)\in P_\ell^{(n)}$. By Euclidean division, since $|p|,|s|<m^k$, we have
\[
  |p|=\ell +\alpha\, m^{k-1} \text{ and }\quad |s|=\ell' +\alpha'\, m^{k-1},
\]
for some $\alpha,\alpha'<m$ and $\ell'<m^{k-1}$.
We show that $n,\ell,\alpha$ completely determine $\ell'$ and $\alpha'$. In particular, for each $\ell$, there exists a unique $\ell'$ such that $P_\ell^{(n)}=S_{\ell'}^{(n)}$.

Since 
\[
\ell +\ell'+(\alpha+\alpha')\, m^{k-1}=|ps|\equiv n \pmod{m ^k},
\]
 we have   
 \[
 \ell +\ell'\equiv \mu\bmod{m^{k-1}}.
 \]
 Thus, either
 \begin{enumerate}
     \item $\ell\leqslant\mu$ and $\ell'=\mu-\ell$ or,
     \item $\ell>\mu$ and $\ell'=m^{k-1}+\mu-\ell$.
 \end{enumerate}

If $\ell\leqslant\mu$, then $\ell +\ell'=\mu$ and:
\[
|ps|=\mu+(\alpha+\alpha')\, m^{k-1}\equiv \mu+\nu m^{k-1}\pmod{m^k}.
\]
 If $\alpha\leqslant\nu$, then $\alpha'=\nu-\alpha$. Otherwise $\alpha>\nu$, then $\alpha'=\nu+m-\alpha$.

In the second case ($\ell>\mu$),
we have 
\[
\ell +\ell'=\mu+m^{k-1},
\quad
\text{and}
\quad
|ps|=\mu+(\alpha+\alpha'+1)\, m^{k-1}
\]
 If $\alpha\leqslant\nu-1$, then $\alpha'=\nu-\alpha-1$. Otherwise, $\alpha>\nu-1$, then $\alpha'=\nu+m-\alpha-1$. These observations are recorded in \cref{tab:summary}.

\begin{table}[h!tb]
  \centering
  \[
    \begin{array}{ll|ll}
      &\ell\leqslant\mu& &\ell>\mu\\
      \hline
      \alpha\leqslant\nu: & \ell'=\mu-\ell&  \alpha\leqslant\nu-1: & \ell'=m^{k-1}+\mu-\ell\\
      & \alpha'=\nu-\alpha & & \alpha'=\nu-\alpha-1\\
      i.e.,  & \alpha+\alpha'=\nu & i.e.,  & \alpha+\alpha'=\nu-1 \\
      \hline
      \alpha>\nu: & \ell'=\mu-\ell &  \alpha>\nu-1: & \ell'=m^{k-1}+\mu-\ell\\
      & \alpha'=\nu+m-\alpha & & \alpha'=\nu+m-\alpha-1 \\
      i.e.,  & \alpha+\alpha'=\nu+m & i.e.,  & \alpha+\alpha'=\nu+m-1 \\
      \hline
    \end{array}
  \]
  \caption{Summary for $(\ell',\alpha')$ for fixed $\mu,\nu$ and $\alpha$ varying.}
  \label{tab:summary}
\end{table}

\begin{example}
Let $m=3$ and $k=2$. If $n\equiv 4\pmod{9}$, then $\mu=1$ and $\nu=1$.
The set $P^{(n)}_0$ contains pairs $(p,s)$ such that $|p|=0,3,6$,
which is $0+\alpha\, 3$ for $\alpha=0,1,2$.
Since $\ell=0\leqslant 1=\mu$, we have $\ell'=1$. For $\alpha=0$
or
$1$,
which is less than or equal to
$\nu$, the corresponding values of $\alpha'$ are $1$ and
$0$,
respectively.
For $\alpha=2$
which is greater than
$\nu$, $\alpha'$
 is 
$\nu+3-\alpha=2$.
Thus, the lengths of $s$ corresponding to $|p|=0,3,6$ are $4,1,7$,
respectively.
Therefore, note that
 $P^{(n)}_0=S^{(n)}_1$.

The set $P^{(n)}_1$ contains pairs $(p,s)$ such that $|p|=1,4,7$, 
which is 
$1+\alpha\, 3$ for $\alpha=0,1,2$.
Since $\ell=1\leqslant 1=\mu$, 
we have
$\ell'=0$.
For $\alpha=0$
or
$1$,
which is less than or equal to
$\nu$, the corresponding values of $\alpha'$ are $1$ and
$0$,
respectively.
For $\alpha=2$,
which is greater than
$\nu$,
$\alpha'$
is 
$\nu+3-\alpha=2$. 
Thus, the lengths of 
$s$
 corresponding to $|p| =1,4,7$ are $3,0,6$,
 respectively.
 Therefore, note that $P^{(n)}_1=S^{(n)}_0$.

The set $P^{(n)}_2$ contains pairs $(p,s)$ such that $|p|=2,5,8$,
which is 
$2+\alpha\, 3$ for $\alpha=0,1,2$.
Since $\ell=2$
 is greater than
 $\mu=1$, we have
$\ell'=3+\mu-2=2$.
For $\alpha=0$, the corresponding value of $\alpha'$ is $\nu-1=0$.
For 
$\alpha=1$ and
$\alpha=2$,
both greater than
$\nu-1$, the corresponding values of 
 $\alpha'$ are $2$
 and $1$
 respectively.
  Thus, the lengths of 
$s$
 corresponding to $|p|=2,5,8$ are $2,8,5$,
  respectively.
 Finally, note that  $P^{(n)}_2=S^{(n)}_2$.
\end{example}

Note that if $\mu=0$, then $P^{(n)}_0=S^{(n)}_0$. If $\mu\neq 0$, then for $\ell=0$,
we have
$\ell'=\mu\neq 0$. In that case $P^{(n)}_0=S^{(n)}_\mu\neq S^{(n)}_0=P^{(n)}_\mu$. This observation gives an initial hint as to why the statement of \cref{thm:main_equiv} contains two cases.

Recall that the abelian complexity of $\infw{t}_m$ is well known (see \cref{thm:abelian_complexity}).

\begin{theorem}\label{thm:main_equiv}
  Let $n\geqslant 2 m^k$. If $\lambda=n \bmod{m^k}$ and $\lambda=\nu m^{k-1}+\mu$, where $\nu<m$ and $\mu<m^{k-1}$, then the value of 
  \[\#\left\{(p_{_{U}},s_{_{U}}) \mid \, U\in \Fac_n(\infw{t}_m)\right\}/\equiv_k\]
  is given by 
  \[
    (m^{k-1}-1)(m^3-m^2+ m)+\left\{\begin{array}{ll}
                                     \bc{\infw{t}_m}{k}(m+\nu), & \text{ if }\mu =0;\\
                                     m, & \text{ otherwise.}\\
                                   \end{array}\right.
                               \]
\end{theorem}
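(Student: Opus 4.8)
The plan is to count the $\equiv_k$-classes by partitioning the pairs according to $\ell=|p_{_{U}}|\bmod m^{k-1}$ and summing the contributions of the blocks $P_\ell^{(n)}$. I would first observe that $\equiv_k$ can never merge pairs from different blocks: by \cref{def:equivk} two equivalent pairs share the same prefix $x$ of length $\ell$, hence have the same residue $\ell$. Thus the quantity we want equals $\sum_{\ell=0}^{m^{k-1}-1}\#(P_\ell^{(n)}/\!\equiv_k)$, and by \cref{tab:summary} each block is governed by the length data $(\ell,\ell')$ together with the admissible pairs $(\alpha,\alpha')$ satisfying $\alpha+\alpha'\in\{\nu,\nu+m\}$. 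Using the greedy decomposition \eqref{eq:decompsu} of $s_{_{U}}$ and its analogue for $p_{_{U}}$, I would write every pair in a block as $(x\,\smn{k-1}(p),\,\smn{k-1}(q)\,y)$, where $p$ and $q$ are runs of consecutive letters with $|p|=\alpha$ and $|q|=\alpha'$.

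Next I would make the within-block relation explicit. With $x$ and $y$ fixed, \cref{def:equivk} identifies two pairs exactly when $\Psi(pq)$ and $\Psi(rt)$ agree up to adding or subtracting $\Psi(\sm(0))=(1,\dots,1)$; hence the classes in a block are the distinct Parikh vectors $\Psi(pq)$ modulo this shift, which also glues the regime $\alpha+\alpha'=\nu$ to the regime $\alpha+\alpha'=\nu+m$. The decisive structural point is whether $x$ or $y$ is empty. By \cref{rem:uniquepu}, a nonempty $x$ of length $\ell$ is, for each $c\in\am$, the unique proper suffix of $\smn{k-1}(c)$ of that length, so it determines $c$ and forces the run $p$ to start at $c+1$; dually a nonempty $y$ forces $q$ to end at $d-1$ by \cref{rem:uniquesu}. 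When $x=\varepsilon$ (that is, $\ell=0$) this pinning disappears and the starting letter of $p$ becomes a free parameter, and symmetrically for $q$ when $y=\varepsilon$ (that is, $\ell=\mu$). The Parikh vectors of the runs are read off from \cref{lem:subwords}, and $\Fac_2(\infw{t}_m)=\mathcal{A}_m^2$ (\cref{lem:boundaryseq}) guarantees that every admissible configuration is realized by an actual factor.

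The enumeration then breaks into three regimes. In a \emph{generic} block, where both $x$ and $y$ are nonempty, there are $m$ admissible choices for each of $x,y$, and running over the admissible $(\alpha,\alpha')$ modulo the $\Psi(\sm(0))$-shift yields exactly $m^3-m^2+m$ classes, independently of $\ell,\mu,\nu$ and $n$; such blocks number $m^{k-1}-1$ when $\mu=0$ and $m^{k-1}-2$ when $\mu\neq 0$. In a \emph{half-empty} block, arising at $\ell=0$ and (when $\mu\neq 0$) at $\ell=\mu$, the freed endpoint enlarges the run family, and a parallel computation shows that the two half-empty blocks present when $\mu\neq 0$ together contribute $m^3-m^2+2m$; rewriting this as $(m^3-m^2+m)+m$ produces the ``$+m$'' term. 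Finally, in the \emph{doubly-empty} block, which occurs only for $\mu=0$ at $\ell=0$, both runs are free and the classes are the Parikh vectors of boundary-straddling factors of lengths $\nu$ and $\nu+m$ glued by the shift; identifying a straddling factor of length $\nu$ with a factor $p\,\sm(c)\,q$ of length $m+\nu$ (inserting one block, whose Parikh vector is $\Psi(\sm(0))$) and a straddling factor of length $\nu+m$ with a length-$(m+\nu)$ factor having empty central block, this count is exactly $\bc{\infw{t}_m}{1}(m+\nu)$.

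The main obstacle is the enumeration in the non-generic regimes. Already the generic value $m^3-m^2+m$ requires tracking how the $\Psi(\sm(0))$-shift identifies the two length regimes without double counting, leaning on the rigidity of the runs from \cref{rem:uniquepu,rem:uniquesu}. The two genuinely delicate points are (i) proving that the two half-empty blocks contribute $m^3-m^2+2m$ rather than $2(m^3-m^2+m)$, i.e. quantifying the extra coincidences caused by the freed letter, and (ii) establishing the bijection, in the doubly-empty block, between shift-classes of straddling Parikh vectors and abelian classes of length-$(m+\nu)$ factors, which is what produces the abelian complexity term and explains the otherwise mysterious shift from $\nu$ to $m+\nu$. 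Both points hinge on a precise account of which Parikh vectors of consecutive runs occur, for which \cref{lem:subwords} and \cref{tab:summary} are the essential inputs.
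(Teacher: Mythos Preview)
Your proposal is correct and follows essentially the same approach as the paper: the same partition into the blocks $P_\ell^{(n)}$, the same trichotomy into generic ($x,y$ both nonempty), half-empty ($\ell\in\{0,\mu\}$ with $\mu\neq 0$), and doubly-empty ($\mu=0$, $\ell=0$) blocks, and the same counts $m^3-m^2+m$, $m^3-m^2+2m$, and $\bc{\infw{t}_m}{1}(m+\nu)$ respectively. The one small deviation is in the doubly-empty block, where the paper passes through factors of length $2m+\nu$ (so that the $\sm$-factorization is unique by \cref{pro:circular}) and then uses the $m$-periodicity of $\bc{\infw{t}_m}{1}$ to descend to $m+\nu$; your direct identification with length-$(m+\nu)$ factors works too, but you should say why non-unique $\sm$-factorizations at lengths below $2m$ still induce a well-defined bijection at the level of $\equiv_k$-classes versus abelian classes.
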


\begin{remark}
Note that for $k=2$, which was the case studied  in~\cite{ChenWen2024}, this expression matches the $2$-binomial complexity of $\infw{t}_m$.
Thus, we obtain the converse of \cref{pro:imply}: Let $U$ and $V$ be two factors of $\infw{t}_m$ of length at least $\ 2m^2$.
Then,
 $U\sim_2 V$ if and only if $(p_{_{U}},s_{_{U}})\equiv_2 (p_{_{V}},s_{_{V}})$. 
\end{remark}

\begin{proof}
$\bullet$ {\bf Case 1.a)} Let us consider $\mu\neq 0$ and $\ell\neq 0$.
Assume that $\ell\leqslant\mu$. Referring to the first column of \cref{tab:summary}, the elements of $P^{(n)}_\ell$ have the form 
given in \cref{tab:pnl}, 
where $x^j$ and $y^j$ are words and $r_i^j$, $t_i^j$ are letters.
\begin{table}[h!tbp]
  \[
    \begin{array}{c|r|l|c}
 \alpha&     p_{_{U}} & s_{_{U}} & \alpha'\\ \hline
    0&x^0 \smn{k-1} (\varepsilon) & \smn{k-1} (r^0_1\cdots r^0_{\nu-1} r^0_\nu) y^0 &\nu\\
    1&x^1 \smn{k-1} (t_1^1) & \smn{k-1} (r^1_1\cdots r^1_{\nu-1}) y^1 & \nu-1 \\
    \vdots&\vdots & \vdots & \vdots \\
    \nu-1&x^{\nu-1} \smn{k-1} (t^{\nu-1}_{\nu-1}\cdots t^{\nu-1}_1) & \smn{k-1} (r_1^{\nu-1}) y^{\nu-1} & 1 \\
    \nu&x^{\nu} \smn{k-1} (t^{\nu}_\nu\ t_{\nu-1}^{\nu}\cdots \ t^{\nu}_1\ ) & \smn{k-1} (\varepsilon) y^{\nu} & 0 \\
    \nu+1&x^{\nu+1} \smn{k-1} (t^{\nu+1}_{\nu+1} t_{\nu}^{\nu+1}\cdots t^{\nu+1}_1) & \smn{k-1} (r^{\nu+1}_1\cdots r^{\nu+1}_{\nu+1}\cdots r^{\nu+1}_{m-1}) y^{\nu+1} & m-1 \\
    \vdots & \vdots & \vdots & \vdots \\
    m-1&x^{m-1} \smn{k-1} (t^{m-1}_{m-1}\cdots t_{\nu}^{m-1}\cdots t^{m-1}_1) & \smn{k-1} (r^{m-1}_1\cdots r^{m-1}_{\nu+1}) y^{m-1} & \nu+1 \\
    \end{array}
  \]
  \caption{Words in $P^{(n)}_\ell$.}\label{tab:pnl}
\end{table}

Since we are dealing with proper suffixes or prefixes of the image of
a letter under $\smn{k}$, we also have
\[
  \forall j<m: \quad t^j_{i+1}=t^j_i-1 \text{ and } r^j_{i+1}=r^j_i+1.
\]
Since $\ell\neq 0$ (respectively, $\ell\neq \mu$), the words $x^j$ (respectively, $y^j$) are non-empty of length~$\ell$ (respectively, $\mu-\ell$).

Thanks to \cref{rem:uniquesu,rem:uniquepu}, there are at most $m^2$ words on each row of \cref{tab:pnl}: a prefix (respectively, \ suffix) of any given length is determined by its last (respectively, \ first) letter. Thanks to \cref{lem:boundaryseq}, there are exactly $m^2$ words on each row.

We now consider the quotient by $\equiv_k$. Since the words $r^0_1\cdots r^0_{\nu-1} r^0_\nu$ have length less than $m$ and are made of consecutive letters, if two such words have distinct first letter, then there are not abelian equivalent. Hence the $m^2$ words on this row are pairwise non-equivalent.

The same argument applies on the second row. Nevertheless, if $t_1^1=r_1^1-1$, then
\[
  (x^1 \smn{k-1} (t_1^1), \smn{k-1} (r^1_1\cdots r^1_{\nu-1}) y^1)\equiv_k
  (x^1 \smn{k-1} (\varepsilon), \smn{k-1} (t_1^1 r^1_1\cdots r^1_{\nu-1} ) y^1).
\]
If $t_1^1\neq r_1^1-1$, we cannot make such a move an keep equivalent pairs (we know from \eqref{eq:decompsu} that we must have consecutive letters in $t_1^1 r^1_1\cdots r^1_{\nu-1}$).  
So we find $m(m-1)$ new classes.

We have a similar counting in the first $\nu+1$ rows (we proceed downwards, comparing elements on a row with elements on previous rows). Take a word of the form
\[
  x^{j} \smn{k-1} (t^{j}_j \cdots\  t_s^{j}\cdots \ t^{j}_1\ )
\]
on the row $j\leqslant \nu$. Thanks to \cref{cor:delpu} (ii), we can only delete a suffix of 
$t^{j}_s \cdots \ t^{j}_1$ to keep a valid suffix of some $\smn{k}(a)$. If $t_1^j=r_1^j-1$, since the suffix is made of consecutive letters
\[
\begin{aligned}
    (x^{j} \smn{k-1} (t^{j}_j \cdots t^{j}_s \cdots t^{j}_1), \smn{k-1} (r^j_1 \cdots r^j_{\nu-r}) y^j) \\
    \equiv_k  (x^{j} \smn{k-1} (t^{j}_j \cdots t^{j}_{s+1}), \smn{k-1} (t^{j}_{s} \cdots t^{j}_1 r^j_1 \cdots r^j_{\nu-r}) y^j)
\end{aligned}
\]
for any $1\leqslant s\leqslant j$. We again find $m(m-1)$ new classes.

For the second part of the Table, take row $j\geqslant \nu+1$. The reasoning is again the same but this time, when $t_1^j=r_1^j-1$, take $s\geqslant j-\nu$, then $t^{j}_{s} \cdots t^{j}_1\  r^j_1\cdots r^j_{m+\nu-j}$ has length $m+\nu-j+s\geqslant m$. So it has a prefix which is a cyclic permutation of $0,1,\ldots,m-1$. Hence, so we find an equivalent pair
\[
  (x^{j} \smn{k-1} (t^{j}_j \cdots t^{j}_{s+1} ),  \smn{k-1} (r^j_{m-s}\cdots r^j_{m+\nu-j})  y^j)
\]
in the first part of the table. 

The case $\ell>\mu$ is treated similarly. As a conclusion, we have $m^2$ classes for the first row and $m(m-1)$ classes for each of the $m-1$ other rows for a total of $m^2+m(m-1)^2$ classes. 

We have considered so far $m^{k-1}-2$ sets $P^{(n)}_\ell$ each containing $m^2+m(m-1)^2$ classes.

$\bullet$ {\bf Case 1.b)} Let us consider $\mu\neq 0$ and focus on $P^{(n)}_0$ (similar discussion for $P^{(n)}_\mu$). The only difference in \cref{tab:pnl} is that there is no word $x^j$ (it is empty because $\ell=0$). The word $y^j$ remains non-empty (because $\mu\neq 0$). In the first row, we have $(\varepsilon,\smn{k-1} (r^0_1\cdots r^0_{\nu-1} r^0_\nu) y^0)$ so the number of classes is given by the number~$m$ of choices for $r^0_1$. Now come the extra discussion for $1\leqslant j\leqslant m-1$ due to the absence of~$x^j$.
In $\smn{k-1} (t^{j}_j\cdots \ t^{j}_1\ )$ 
to get equivalent pairs, we can as above move a suffix $t_s^{j}\cdots \ t^{j}_1$ to the second component whenever $t^j_1=r^j_1-1$ but also move a prefix $t^{j}_j \cdots t^j_{j-s+1}$ whenever $t^j_{j-s+1}=r^j_1-1$. Consequently, the word $t^{j}_j\cdots \ t^{j}_1$ should not contain $r^j_1-1$ which is equivalent to $t^{j}_j\in\{r^j_1,r^j_1+1,\ldots,r^j_1+m-j-1\}$ using the fact that the word is made of consecutive letters. So we have $m(m-j)$ choices. So the total is given by 
\[
m+\sum_{j=1}^{m-1} m(m-j)=\frac{1}{2} \left(m^3-m^2+2 m\right),
\]
 and this contribution is doubled to take the symmetric case of $P^{(n)}_\mu$.

As a conclusion, when $\mu\neq 0$, i.e.,  if $n\neq 0\pmod{m^k}$, then
\begin{eqnarray*}
  \#\left\{\left(p_{_{U}},s_{_{U}}\right) \mid \, U\in \Fac_n(\infw{t}_m)\right\}/\equiv_k&=&
                                                      (m^{k-1}-2)(m^2+m(m-1)^2)+m^3-m^2+2 m  \\
                                                  &=& (m^{k-1}-1)(m^3-m^2+ m)+ m.
\end{eqnarray*}

$\bullet$ Case 2) Let $\mu=0$. If $\ell\neq 0$, then from \cref{tab:summary} we get $\ell'=m^{k-1}-\ell\neq 0$. Then, we have the same discussion as in our first case. The $m^{k-1}-1$ sets $P_\ell^{(n)}$ for $\ell=1,\ldots,m-1$ contain $m^2+m(m-1)^2$ classes (we get the same main term in the expression). 

If $\ell=0$, then $\ell'=0$. Here, the particularity of the single set $P_0^{(n)}$ is that in \cref{tab:pnl} the words $x^j$ and $y^j$ are both empty. So we only consider pairs $(p_{_{U}},s_{_{U}})$ of the form $(\smn{k-1}(p'),\smn{k-1}(s'))$ with $|p'|,|s'|<m$ and $|p's'|=\nu$ or $m+\nu$. 
We will show that
\[
  \#(P_0^{(\nu\, m^{k-1})}/\equiv_k)= \#(\Fac_{2m+\nu}(\infw{t}_m)/\sim_1).
\]
Thanks to \cref{pro:circular}, any factor $x$ of length $2m+\nu$ has a unique factorization of the form
\[
  x=p_x\sigma(w)s_x \text{ with } |p_x|,|s_x|<m \text{ and } |w|\in\{1,2\}.
\]
 Thanks to \cref{lem:boundaryseq}, a pair $(p_{_{U}},s_{_{U}})=(\smn{k-1}(p'),\smn{k-1}(s'))$ belongs to $P_0^{(\nu\, m^{k-1})}$ if and only if $(p',s')$ is of the form $(p_x,s_x)$ for some $x$ in $\Fac_{2m+\nu}(\infw{t}_m)$.

 Let $x,y\in \Fac_{2m+\nu}(\infw{t}_m)$, and their corresponding factorizations $x=p_x\sigma(w)s_x$ and $y=p_y\sigma(w')s_y$. If $x\sim_1 y$ and $|p_xs_x|=|p_ys_y|$, then $|w|=|w'|$ and thus $\sm(w)\sim_1\sm(w')$. So $p_xs_x\sim_1 p_ys_y$ and we get $$(\smn{k-1}(p_x),\smn{k-1}(s_x))\equiv_k (\smn{k-1}(p_y),\smn{k-1}(s_y)).$$ If $x\sim_1 y$ but $|p_xs_x|\neq |p_ys_y|$, then the difference of their length is $m$. We may assume that $|p_xs_x|=|p_ys_y|+m$, so $|w|=1$ and $|w'|=2$. Since $\sm(a)$ is a circular permutation of $01\cdots (m-1)$, we deduce that $p_xs_x\sim_1 p_ys_y\sigma(0)$ and the same conclusion follows. The converse also holds, if $x,y\in \Fac_{2m+\nu}(\infw{t}_m)$ and $(\smn{k-1}(p_x),\smn{k-1}(s_x))\equiv_k (\smn{k-1}(p_y),\smn{k-1}(s_y))$, then considering both situations, one concludes that $x\sim_1 y$. It is known that for words of length at least $m$ the abelian complexity function is periodic of period $m$, see~\cite{ChenWen2019}. Hence,
 \[
 \#\left(\Fac_{2m+\nu}(\infw{t}_m)/\sim_1\right)=\#\left(\Fac_{m+\nu}(\infw{t}_m)/\sim_1\right).
 \]
\end{proof}


\section{Characterizing Binomial Equivalence in \texorpdfstring{$\infw{t}_m$}{tm}}\label{sec:characterizing}

In this section, we focus on characterizing $k$-binomial equivalence among factors of $\infw{t}_m$ through their $\smn{k-1}$-factorizations.
We recall the main result:

\conclusionfinalgeneralization*

We observe that this proposition extends \cite[Thm.~2]{ChenWen2024} by removing an additional assumption $|u|,|v| \geqslant 3$ and extending it to all $k\geqslant 2$.

To prove the main characterization, we shall present the following restricted version.

\begin{lemma}\label{lem:k-1-factorisation-first-last}
Let $k\geqslant 2$ and $U$ and $V$ be factors of~$\infw{t}_m$ for some $m\geqslant 2$. Assume further that $U$ and $V$ begin and end with distinct letters. Then $U \sim_k V$ if and only if there exist $\smn{k-1}$-factorizations
$U = \smn{k-1}(u)$ and $V = \smn{k-1}(v)$ such that $u \sim_1 v$.
\end{lemma}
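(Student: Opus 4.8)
The backward implication is immediate: if $U=\smn{k-1}(u)$ and $V=\smn{k-1}(v)$ with $u\sim_1 v$, then \cref{prop:phik}(ii), applied with $k-1$ in place of $k$, gives $\smn{k-1}(u)\sim_k\smn{k-1}(v)$, that is $U\sim_k V$. For the forward implication I first observe that, since $U$ and $V$ have distinct first letters, they are distinct; hence by \cref{thm:main1short} their common length satisfies $|U|=|V|\geqslant 2m^{k-1}$, and \cref{cor:unique-factorization-bound} endows each of them with a \emph{unique} $\smn{k-1}$-factorization. The plan is to show that these factorizations are \emph{pure}, i.e.\ $U=\smn{k-1}(u)$ and $V=\smn{k-1}(v)$ with empty prefix and suffix blocks. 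Once purity is known, $u\sim_1 v$ follows for free: $|U|=|V|$ forces $|u|=|v|$, and if we had $u\not\sim_1 v$ then \cref{cor:notequiv} (again with $k-1$ for $k$) would give $\smn{k-1}(u)\not\sim_k\smn{k-1}(v)$, contradicting $U\sim_k V$. So the entire difficulty is concentrated in proving purity.

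I would prove purity by induction on $k$. For the base case $k=2$, the $\sim_2$-characterization of Chen, Wen et al.\ (\cite[Thm.~2]{ChenWen2024}) supplies $\sm$-factorizations $U=p\,\sm(u)\,s$ and $V=p'\sm(v)\,s'$ with $p=p'$, $s=s'$ and $u\sim_1 v$. A nonempty common prefix is impossible here, as $U$ and $V$ would then share the first letter of $p=p'$, contradicting the hypothesis; thus $p=p'=\varepsilon$, and symmetrically the distinct last letters force $s=s'=\varepsilon$. Hence $U=\sm(u)$ and $V=\sm(v)$, as required. This is exactly where the distinct-boundary hypothesis is used, namely to collapse the common prefix and suffix produced by the general characterization.

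For the inductive step $k\geqslant 3$, assume the lemma at level $k-1$. Since $U\sim_k V$ implies the weaker $U\sim_{k-1} V$, the induction hypothesis yields pure factorizations $U=\smn{k-2}(u'')$ and $V=\smn{k-2}(v'')$ with $u''\sim_1 v''$. As $\smn{k-2}$ is marked --- $\smn{k-2}(a)$ starts with $a$ and ends with $a-(k-2)$ --- the distinct first and last letters of $U,V$ pass to $u'',v''$. It therefore suffices to refine these to $\sm$-factorizations, i.e.\ to show that $u''$ and $v''$ lie in the image of $\sm$: applying the base case to $u'',v''$ would then give $u''=\sm(u)$, $v''=\sm(v)$ with $u\sim_1 v$, so that $U=\smn{k-2}(\sm(u))=\smn{k-1}(u)$ and likewise $V=\smn{k-1}(v)$. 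To apply the base case I must upgrade $u''\sim_1 v''$ to $u''\sim_2 v''$, exploiting the fact that $\smn{k-2}(u'')\sim_k\smn{k-2}(v'')$ is one binomial order stronger than $\sim_{k-1}$. This descent --- that any discrepancy between the length-$2$ subword counts of $u''$ and $v''$ must propagate into the count of the length-$k$ subword $0\mi{1}\cdots\mi{k-1}$ (and its $\tau_m$-translates) inside the $\smn{k-2}$-images --- is precisely what the computations of \cref{sec:discern}, chiefly \cref{lem:bigdiff} together with \cref{lem:smart} and \cref{cor:notequiv}, are built to provide. I expect this descent to be the main obstacle: it requires choosing the $\sm$-factorizations of $u''$ and $v''$ so that the hypotheses of \cref{lem:bigdiff} (namely $\gamma\delta\sim_1\gamma'\delta'$ and equal central lengths) hold, and then verifying that the resulting difference vanishes exactly when $u''\sim_2 v''$.
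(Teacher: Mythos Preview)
Your outline has the right shape --- purity of the $\smn{k-1}$-factorizations is indeed the crux, and your inductive scheme mirrors the paper's --- but there are two genuine gaps. First, your opening step is circular: you invoke \cref{thm:main1short} to obtain $|U|\geqslant 2m^{k-1}$ and hence unique factorizations, but in the paper's logical order \cref{thm:main1short} is a \emph{corollary} of the very lemma you are proving (it is deduced from \cref{lem:k-1-factorisation-first-last} just before the proof of \cref{prop:conclusion-final-generalization}). The paper sidesteps this by never appealing to a length bound or to uniqueness at this stage: it takes \emph{any} $\smn{k-1}$-factorization (existence comes from \cref{rem:all-factors-smk-factorization}) and argues directly that the boundary blocks must vanish. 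Second, your base case is incomplete: \cite[Thm.~2]{ChenWen2024} carries the side hypothesis $|u|,|v|\geqslant 3$, which the paper explicitly flags as a restriction to be removed; this is precisely why the paper supplies an independent, self-contained proof of the $k=2$ case (\cref{lem:generalised-k=2-distinct-extremal-letters}) rather than simply quoting that result.

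On the inductive step, your plan to upgrade $u''\sim_1 v''$ to $u''\sim_2 v''$ and then reapply the base case is equivalent, up to an index shift, to what the paper does (the paper writes $U=\smn{k-1}(\gamma_u\sm(u)\delta_u)$ and shows $\gamma_u=\delta_u=\varepsilon$ directly). However, \cref{lem:bigdiff} does not deliver this descent in one stroke, and no statement in \cref{sec:discern} asserts that $\smn{k-2}(u'')\sim_k\smn{k-2}(v'')$ implies $u''\sim_2 v''$. The paper's argument is a substantial case analysis --- two cases on whether the central $\sm$-blocks have equal length, each involving a claim controlling the sign of a quantity $\Delta_\alpha$ --- that tracks the exact positions of $\alpha$ and $\alpha-1$ in the boundary words $\gamma_w,\delta_w$ and extracts a contradiction with the distinct-endpoint hypothesis. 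You have correctly located the obstacle; the execution, which is the bulk of the section, is still to be supplied.
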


Before diving into the proof of \cref{lem:k-1-factorisation-first-last}, let us observe how
\cref{prop:conclusion-final-generalization} follows from it. First, we obtain \cref{thm:main1short} as an immediate corollary of \cref{lem:k-1-factorisation-first-last}.

\shortlengths*

\begin{proof}
The shortest pair of distinct $k$-binomially equivalent factors necessarily begin and end with different letters due to $k$-binomial equivalence being cancellative (cf.~\cref{lem:cancel}). \cref{lem:k-1-factorisation-first-last}
thus shows that the pair of factors can be written in the form $\smn{k-1}(u)$ and $\smn{k-1}(v)$ with $u \sim_1 v$.
Therefore, $|u| = |v| \geqslant 2$ (since they must begin and end with different letters), giving the lower bound. The pair $\smn{k-1}(01)$ and
$\smn{k-1}(10)$, for example, gives
the desired pair of length $2m^{k-1}$.
\end{proof}

We can now prove \cref{prop:conclusion-final-generalization}

\begin{proof}[Proof of \cref{prop:conclusion-final-generalization}]
Let $k \geqslant 2$ be arbitrary.
If $U$ and $V$ have the $\smn{k-1}$-factorizations $U = p_{_{U}} \smn{k-1}(u) s_{_{U}}$ and $V = p_{_{V}} \smn{k-1}(v)s_{_{V}}$, where $p_{_{U}} = p_{_{V}}$, $s_{_{U}} = s_{_{V}}$, and $u\sim_1 v$, then $U \sim_k V$ follows by \cref{pro:both_dir} and the fact that $\sim_k$ is a congruence.

For the converse, assume  $U \sim_k V$. There is nothing to prove if $U = V$, as all factors have a $\smn{k-1}$-factorization by \cref{rem:all-factors-smk-factorization}.
So assume  $U \neq V$.
Write $U = pU's$ and $V = pV's$, where $U'$ and $V'$ begin and end with distinct letters.
By cancellativity (\cref{lem:cancel}), we have  $U' \sim_k V'$. By \cref{lem:k-1-factorisation-first-last},
there exist $\smn{k-1}$-factorizations $U' = \smn{k-1}(u')$ and $V' = \smn{k-1}(v')$, where
$u' \sim_1 v'$.
Note that \cref{thm:main1short} implies  $|U'|, |V'| \geqslant 2m^{k-1}$. By \cref{cor:unique-factorization-bound}, these
$\smn{k-1}$-factorizations are unique.
It follows that $U$ and $V$ have the desired (unique) $\smn{k-1}$-factorizations
$U = p\smn{k-1}(u')s$ and $V = p\smn{k-1}(v')s$, where $u' \sim_1 v'$.
\end{proof}

The proof of \cref{lem:k-1-factorisation-first-last} proceeds by induction on $k$. 
We divide the remainder of the section into two subsections: the base case $k=2$,  handled in the first subsection, and the induction step, covered in the second.
We observe that the base case $k=2$ is almost handled by \cite[Thm.~2]{ChenWen2024}, except
that the additional assumption $|u|$, $|v| \geqslant 3$ appearing there needs to be removed.
Although the cases where $|u|$, $|v| \leqslant 3$ could be treated separately, we provide a complete, independent, but similar, proof of the
case $k=2$, as it reveals our strategy for tackling the induction step.

\subsection{The base case}

We shall state the induction base case as a separate lemma:

\begin{lemma}\label{lem:generalised-k=2-distinct-extremal-letters}
Let $U$ and $V$ be factors of $\infw{t}_m$ that begin and end with distinct letters. Then $U \sim_2 V$ if and only if there exist
$\sm$-factorizations $U = \sm(u)$ and $V = \sm(v)$, such that $u \sim_1 v$.
\end{lemma}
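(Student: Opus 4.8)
The plan is to prove both directions separately, with the forward direction (constructing the $\sm$-factorizations from the binomial equivalence) being the substantive part.

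\textbf{The easy direction.} Suppose $U = \sm(u)$ and $V = \sm(v)$ with $u\sim_1 v$. Then $\sm$ being Parikh-constant, \cref{prop:phik}(ii) (with $k=1$) or equivalently \cref{pro:both_dir} immediately gives $\sm(u)\sim_2\sm(v)$, i.e. $U\sim_2 V$. This requires no work beyond citing the already-established machinery.

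\textbf{The hard direction.} Assume $U\sim_2 V$, where $U,V$ begin and end with distinct letters. First I would record the consequences of $U\sim_2 V$ at the abelian ($\sim_1$) level: since $\binom{U}{a}=\binom{V}{a}$ for every letter $a$, we have $|U|=|V|$ and $U\sim_1 V$, so in particular $\Psi(U)=\Psi(V)$. The key structural input is that $U$ and $V$ admit $\sm$-factorizations at all; this is guaranteed by \cref{rem:all-factors-smk-factorization}. So write $U = p_U\,\sm(u)\,s_U$ and $V = p_V\,\sm(v)\,s_V$ with $p_U,s_U$ proper suffix/prefix of images of letters, and similarly for $V$. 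The goal is to show we may take $p_U=p_V=s_U=s_V=\varepsilon$ and $u\sim_1 v$. The constraint that $U$ and $V$ begin and end with distinct letters, combined with the marked property of $\sm$ (distinct letters have images differing in first and last letter), is what should force the boundary blocks $p,s$ to be empty: if $p_U\neq\varepsilon$ were a proper suffix of $\sm(a)$, the first letter of $U$ would be pinned down in a way incompatible with the freedom demanded by ``begins with distinct letters.'' The main engine, however, is the quantitative computation: using \cref{lem:subwords} to count $2$-subwords $ij$ inside the images $\sm(0),\dots,\sm(m-1)$, one expresses $\binom{U}{ij}$ in terms of the Parikh vector of $u$ (the central blocks contribute a fixed bulk, and the boundary blocks contribute correction terms). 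Equating $\binom{U}{ij}=\binom{V}{ij}$ for all letter pairs $ij$ should force both that the boundaries match and that $\Psi(u)=\Psi(v)$, i.e. $u\sim_1 v$.

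\textbf{Key steps in order.} (1) Deduce $|U|=|V|$ and $U\sim_1 V$ from $U\sim_2 V$. (2) Invoke \cref{rem:all-factors-smk-factorization} to obtain $\sm$-factorizations of $U$ and $V$. (3) Use markedness of $\sm$ (\cref{pro:circular}) together with the distinct-extremal-letter hypothesis to argue the boundary blocks $p_U,s_U,p_V,s_V$ are empty, so $U=\sm(u)$, $V=\sm(v)$; this likely needs care when $U$ or $V$ is short and the factorization could place letters outside a full image. (4) Apply \cref{lem:binomial2} to expand $\binom{\sm(u)}{ij}=\sum_{i}\binom{\sm(a_i)}{ij} + (\text{split contributions across block boundaries})$, then use \cref{lem:subwords} to evaluate the within-block counts and a telescoping/counting argument for the cross-block counts. (5) Equate these expressions for $U$ and $V$ and solve: the within-block terms depend only on $\Psi(u)$, and the cross-block ``carry'' terms must also agree, yielding $\Psi(u)=\Psi(v)$, hence $u\sim_1 v$.

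\textbf{Main obstacle.} The genuine difficulty is Step (4)--(5): carefully accounting for the cross-block occurrences of $ij$ as a subword, where the letter $i$ sits in one image block $\sm(a_p)$ and $j$ in a later block $\sm(a_q)$. This produces a double sum over block pairs weighted by letter counts, and disentangling it to isolate $\Psi(u)$ from $\Psi(v)$ is where the bookkeeping is delicate — precisely the step where \cite{ChenWen2024} imposed $|u|,|v|\geqslant 3$, an assumption I would need to avoid by treating the short cases ($|u|\in\{0,1,2\}$) directly, since for those lengths the factorization and the subword counts degenerate and must be checked by hand. A secondary subtlety is confirming in Step (3) that no nontrivial boundary block can survive under the distinct-extremal-letters hypothesis, which relies on the uniqueness/recognizability structure but must be verified even for short factors where \cref{cor:unique-factorization-bound} does not yet apply.
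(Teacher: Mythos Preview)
Your proposal has the architecture inverted: you locate the main obstacle in Steps~(4)--(5), but once $U=\sm(u)$ and $V=\sm(v)$ are established, the conclusion $u\sim_1 v$ is immediate from \cref{pro:both_dir} (which you already cited for the easy direction). No expansion over letter pairs $ij$ is needed there. The real work is Step~(3), and your sketch of it does not go through.

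Concretely, markedness of $\sm$ together with the distinct-extremal-letters hypothesis does \emph{not} by itself force $p_U,s_U,p_V,s_V$ to be empty. The hypothesis says $U$ and $V$ begin with different letters from \emph{each other}; it does not restrict what letter $U$ begins with. So nothing prevents, a priori, $p_U$ from being a nonempty proper suffix of $\sm(a)$ and $p_V$ from being a nonempty proper suffix of $\sm(b)$ with the relevant first letters distinct. You also omit the case $\bigl||u|-|v|\bigr|=1$, which the length constraints allow (since $|p_Us_U|$ and $|p_Vs_V|$ can differ by $m$) and which must be ruled out separately.

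The paper's argument handles Step~(3) by a contradiction driven by $2$-binomial coefficients. Assuming $|u|=|v|$ and, say, $s_U\neq\varepsilon$ with last letter $\alpha-1$, one expands $\binom{U}{\alpha(\alpha-1)}=\binom{V}{\alpha(\alpha-1)}$ using \cref{lem:binomial2} and the fact (from \cref{lem:subwords}) that $\alpha(\alpha-1)$ occurs as a subword only in $\sm(\alpha)$. This yields an equation whose left-hand side is shown to be $\geqslant 0$ and whose right-hand side is $\leqslant 0$; equality forces $v=\alpha^{|v|}$, $|u|_\alpha=0$, and specific constraints on $\gamma_v,\delta_v$. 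A second computation, summing $\sum_{x}\binom{U}{(\alpha-1)x}+\binom{U}{x\alpha}$ and its $V$-counterpart, then forces $s_V=\varepsilon$, after which $V$ ends with $\alpha-1$, contradicting the distinct-last-letter hypothesis. A parallel (and slightly more involved) contradiction handles $|v|=|u|+1$. Only in the residual case $|u|=|v|$, $s_U=s_V=\varepsilon$ does the distinct-first-letter hypothesis combine with $p_U\sim_1 p_V$ (both being suffixes of images, hence determined by their Parikh vectors) to give $p_U=p_V=\varepsilon$. So the distinct-extremal-letters assumption is used, but only at the end of each case, after the binomial computations have pinned everything else down.
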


\begin{proof}
If such $\sm$-factorizations exist for $U$ and $V$, then the two words are $2$-binomially equivalent by \cref{pro:both_dir}.

Assume  that $U$ and $V$ are $2$-binomially equivalent factors, beginning and ending with distinct letters.
Let $U$ and $V$ have the $\sm$-factorizations $p_{_{U}} \sm(u) s_{_{U}}$ and $p_{_{V}} \sm(v) s_{_V}$, respectively (such factorizations exist by \cref{rem:all-factors-smk-factorization}). Notice that $\left||u|-|v|\right| \leqslant 1$ due to length constraints. W.l.o.g., we assume that $|u| \leqslant |v|$.

\textbf{First, assume that $|u| = |v|$.}
If both $s_{_{U}}$ and $s_{_{V}}$ are empty, it follows that $p_{_{U}}\sm(u) \sim_1 p_{_{V}} \sm(v)$. Since $\sm(u) \sim_1 \sm(v)$,
we conclude that $p_{_{U}} \sim_1 p_{_{V}}$.
This further implies $p_{_{U}} = \varepsilon = p_{_{V}}$, as $U$ and $V$ start with distinct letters, and $p_{_{U}}$
and $p_{_{V}}$ are proper suffixes of images of letters. 
By
\cref{pro:both_dir}, it follows that $u \sim_1 v$, thereby establishing the claimed factorizations.

Thus, we proceed under the assumption that at least one of the words $s_{_{U}}$ and $s_{_{V}}$ is non-empty, intending to get a contradiction.
W.l.o.g.,
we assume that $s_{_{U}}$ is non-empty.
Now,
let  $\alpha-1$ denote the last letter of $s_{_{U}}$.
By assumption, we have 
 $\binom{U}{\alpha (\alpha-1)} = \binom{V}{\alpha (\alpha-1)}$; 
 applying 
 \cref{lem:binomial2} twice, we obtain
\[
\begin{aligned}
    \binom{p_{_{_U}} s_{_{_U}}}{\alpha (\alpha-1)} +& \binom{\sm(u)}{\alpha (\alpha-1)} + |u|\left(|p_{_{_U}}|_{\alpha} + |s_{_{_U}}|_{\alpha-1}\right) \\
    = &\binom{p_{_{_V}} s_{_{_V}}}{\alpha (\alpha-1)} + \binom{\sm(v)}{\alpha (\alpha-1)} + |v|\left(|p_{_{_V}}|_{\alpha} + |s_{_{_V}}|_{\alpha-1}\right).
\end{aligned}
\]

Observe that $|p_w|_{\alpha} = |p_ws_w|_{\alpha} - |s_w|_{\alpha}$, where $w$
is either
$u$ or $v$. Similarly, we have 
$|s_{_{U}}|_{\alpha-1} = 1$ and $|s_{_{U}}|_{\alpha} = 0$. 
Substituting these values into the previous equation yields
\begin{multline*}
\binom{p_{_{U}} s_{_{U}}}{\alpha (\alpha-1)} + \binom{\sm(u)}{\alpha (\alpha-1)} + |u|\left(|p_{_{U}}s_{_{U}}|_{\alpha} + 1\right)\\
= \binom{p_{_{V}} s_{_{V}}}{\alpha (\alpha-1)} + \binom{\sm(v)}{\alpha (\alpha-1)} + |v|\left(\left|p_{_{V}} s_{_{V}}\right|_{\alpha} - |s_{_{V}}|_{\alpha} + |s_{_{V}}|_{\alpha-1}\right).
\end{multline*}
The terms $|u||p_{_{U}} s_{_{U}}|_{\alpha}$ and $|v||p_{_{V}} s_{_{V}}|_{\alpha}$ cancel  because $|u| = |v|$, and the equivalence $U\sim_2 V$ implies  $p_{_{U}} s_{_{U}} \sim_1 p_{_{V}} s_{_{V}}$. 
By \cref{lem:subwords}, $\alpha(\alpha-1)$ appears exclusively in $\sm(\alpha)$, implying that $\binom{\sm(u)}{\alpha (\alpha-1)}=|u|_\alpha$. 
Rearranging this equation yields the following equality
\begin{equation}\label{eq:equal-length-2bin}
|u|_{\alpha} + |v|\left(|s_{_{V}}|_{\alpha}-|s_{_{V}}|_{\alpha - 1}\right)
= |v|_{\alpha} - |u| +  \binom{p_{_{V}} s_{_{V}}}{\alpha(\alpha-1)} - \binom{p_{_{U}} s_{_{U}}}{\alpha(\alpha-1)}.
\end{equation}

\begin{claim}
\begin{enumerate}
    \item[1)] The left-hand side of \eqref{eq:equal-length-2bin} is non-negative. Furthermore, it is equal to $0$ if and only if
either $u = v = \varepsilon$, or $|u|_{\alpha} = 0$ and $|s_{_{V}}|_{\alpha} = |s_{_{V}}|_{\alpha-1}$.

\item[2)] 
 The right-hand side of \eqref{eq:equal-length-2bin} is non-positive. Moreover, it equals $0$ if and only if $|v|_{\alpha} = |v|$
and $\alpha$ does not appear in $p_{_{U}}s_{_{U}}$.
\end{enumerate}
\end{claim}

\begin{claimproof}
Consider the first claim. Note that the left-hand side can only be negative if $|s_{_{V}}|_{\alpha-1} > |s_{_{V}}|_{\alpha}$.
However, this situation cannot occur: if $\alpha-1$ appears in $s_{_{V}}$, then as $s_{_{V}}$ does not end with $\alpha-1$;
instead, $\alpha-1$ must be followed by $\alpha$.
Consequently, the coefficient of $|v|$ is non-negative, showing the non-negativity of the left-hand side.
To attain a value of $0$, we must have that either $u = \varepsilon$, or
$|u|_{\alpha} = 0$ and $|s_{_{V}}|_{\alpha} = |s_{_{V}}|_{\alpha-1}$.

Let us consider the second claim.
If $\alpha$ does not appear in $p_{_U}s_{_{U}}$, then 
\[
\binom{p_{_V}s_{_{V}}}{\alpha(\alpha-1)} = 0 = \binom{p_{_U}s_{_{U}}}{\alpha(\alpha-1)}.
\]
Consequently, the right-hand side is equal to $|v|_{\alpha} - |v|$, which is clearly non-positive, and it is equal to $0$ if and only if $|v|_{\alpha} = |v|$.

If $\alpha$ appears in $p_{_U}s_{_{U}}$, it must occur in $p_{_U}$ and does so precisely once.
Since $\alpha-1$ does not appear in $p_{_U}$ after $\alpha$, we have
$\binom{p_{_U} s_{_U}}{\alpha(\alpha-1)} = 1$.
Next,
consider the occurrences of $\alpha-1$ and $\alpha$ in $p_{_V} s_{_V}$. Note that $\alpha$ cannot precede $\alpha-1$ in $p_{_V}$ or $s_{_V}$.
If $\alpha-1$ appears in $s_{_V}$  then, because $s_{_V}$ does not end with $\alpha-1$, it must be followed by $\alpha$ in $s_{_V}$.
Thus, we conclude that
$\binom{p_{_V} s_{_V}}{\alpha(\alpha-1)} = 0$. Hence, the right-hand side equals $|v|_{\alpha} - |v| - 1$, which is strictly negative.
The desired conclusion thereby follows.
\end{claimproof}

The above claim shows that \eqref{eq:equal-length-2bin} can only be satisfied when both the left-hand side and the right-hand side are equal to zero.
In other words,  $\alpha$ must not appear in $p_{_U}s_{_U}$ (and consequently not in $p_{_V}s_{_V}$) and either:
(a) $u=v=\varepsilon$; or (b) $|u|_{\alpha} = 0$, $|s_{_V}|_{\alpha-1} = |s_{_V}|_{\alpha} = 0$, and $|v|_{\alpha} = |v|$.
Note that $p_{_V}$ must contain $\alpha-1$, which corresponds to the occurrence of $\alpha-1$ as the last letter of $s_{_U}$, and thus $p_{_V}$ must end with $\alpha-1$; otherwise, it would contain $\alpha$ immediately following $\alpha-1$.
This situation is illustrated in
\cref{fig:facsm1}.
Since $|u|_\alpha=0$, the image of each letter of $u$ under $\sm$ contains the factor $\alpha(\alpha-1)$.
Since $v=\alpha^{|v|}$, the image of each letter of $v$ under $\sm$ begins with $\alpha$ and ends with $\alpha-1$.
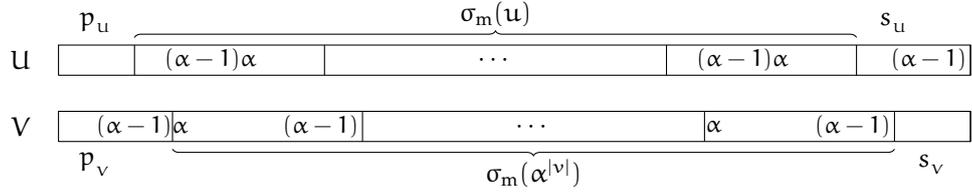
\begin{figure}[h!t]
\centering
\begin{tikzpicture}
\draw[yscale=.4] (0,0) -- (12,0) -- (12,1) -- (0,1) -- (0,0);
\draw[yscale=.4] (1,1) -- (1,0);
\draw[yscale=.4] (3.5,1) -- (3.5,0);
\draw[yscale=.4] (8,1) -- (8,0);
\draw[yscale=.4] (10.5,1) -- (10.5,0);
\draw[yscale=.4] (12,1) -- (12,0);

\node at (-.5,.2) {$U$};
\node at (.5,.7) {$p_{_U}$};
\node at (2,.2) {\small{$(\alpha-1)\alpha$}};
\draw[decoration={brace},decorate] (1,.5) -- (10.5,.5) node[above, midway] {$\sm(u)$};
\node at (5.75,.2) {$\cdots$};
\node at (9,.2) {\small{$(\alpha-1)\alpha$}};
\node at (11,.7) {$s_{_{U}}$};

\node at (11.45,.2) {\small{$(\alpha-1)$}};

\begin{scope}[yshift = -25]
\draw[yscale=.4] (0,0) -- (12,0) -- (12,1) -- (0,1) -- (0,0);
\draw[yscale=.4] (1.5,1) -- (1.5,0);
\draw[yscale=.4] (4,1) -- (4,0);
\draw[yscale=.4] (8.5,1) -- (8.5,0);
\draw[yscale=.4] (11,1) -- (11,0);
\draw[yscale=.4] (12,1) -- (12,0);

\node at (-.5,.2) {$V$};
\node at (.5,-.3) {$p_{_{_V}}$};
\node at (1.10,.2) {\small{$(\alpha-1)\alpha$}};
\node at (3.45,.2) {\small{$(\alpha-1)$}};
\draw[decoration={brace,mirror},decorate] (1.5,-.1) -- (11,-.1) node[below, midway] {$\sm(\alpha^{|v|})$};
\node at (6.25,.2) {$\cdots$};
\node at (8.62,.2) {\small{$\alpha$}};
\node at (10.45,.2) {\small{$(\alpha-1)$}};
\node at (11.5,-.3) {$s_{_{V}}$};
\end{scope}
\end{tikzpicture}
\caption{Illustrating the situation $|u|=|v|$ and $s_{_{_U}}$ or $s_{_{V}}$ non-empty.}\label{fig:facsm1}
\end{figure}

Consider the sum
\[
\sum_{x \in \am} \binom{U}{(\alpha-1) x} + \binom{U}{x \alpha} - \binom{V}{(\alpha-1) x} - \binom{V}{x \alpha},
\]
which equals zero, based on the assumption that $U \sim_2 V$. 
Observe that 
\(\sum_{x \in \am}\binom{U}{(\alpha-1)x}\)
counts, for each occurrence of $(\alpha-1)$ in $U$, the number of letters to its right.
Similarly,
\(
\sum_{x \in \am}\binom{U}{x\alpha}
\)
 counts,
for each occurrence of $\alpha$ in $U$, the number of letters to its left.
With this interpretation, the “positive” part of the sum is equal to  $|u|\cdot |U|$.
Each of the $|u|$ occurrences of the factor $(\alpha-1)\alpha$  contributes $|U|$ to the positive count, while the last occurrence of $\alpha-1$ contributes zero.
Similarly, the negative part of the sum is equal to
$-|v|\cdot |V| - |s_{_{V}}|$.
Each of the $|v|$ occurrences of the factor $(\alpha-1)\alpha$ contributes $-|v|\cdot |V|$ to the negative count, while the last occurrence of $\alpha-1$ contributes $-|s_{_{V}}|$. 
Since the sum must equal zero, we conclude that $s_{_{V}} = \varepsilon$.
However, now $V$ ends with $\alpha-1$: if $v \neq \varepsilon$, then $\sm(v)$ ends with $\alpha-1$, and if $v = \varepsilon$, then $p_{_{V}} = V$ ends with $\alpha - 1$.
This contradicts the assumption that $U$ and $V$ end with distinct letters, resulting in a contradiction when $|u| = |v|$ and at least one of the words $s_{_{U}}$ and $s_{_{V}}$ is non-empty.

\textbf{Second, assume that $|u|+1 = |v|$.} 
We will show that this case is impossible as it leads to a contradiction.
In this situation, $s_{_{U}}$ must be non-empty (as must $p_{_{U}}$), since $|p_{_{U}}s_{_{U}}| = |p_{_{V}}s_{_{V}}| + m$ and $|p_{_{U}}|$, $|s_{_{U}}| < m$.
Let $\alpha-1$ be the last letter of $s_{_{U}}$.
Let $\beta$ be the first letter of $v = \beta v'$, where $|v'| = |u|$, and let $p_{_{V}}' = p_{_{_V}}\sm(\beta)$. Note that $p_{_{U}}s_{_{U}}\sim_1 p_{_{v}}'s_{_{_V}}$. As before, we have
\[
\binom{U}{\alpha (\alpha-1)} = \binom{V}{\alpha (\alpha-1)}.
\]
 Using similar techniques as in the previous case, the equality can be expressed equivalently as
\[
|u|_{\alpha} + |v'|\left(|s_{_{V}}|_{\alpha} - |s_{_{V}}|_{\alpha-1}\right)
=  |v'|_{\alpha} - |u| + \binom{p_{_{V}}' s_{_{V}}}{\alpha (\alpha-1)} - \binom{p_{_{U}} s_{_{U}}}{\alpha (\alpha-1)}.
\]
We may proceed similarly as in the previous case. 
It is clear that the left-hand side is non-negative, and it equals zero if and only if either $u = v' = \varepsilon$ or $|s_{_{V}}|_{\alpha} = |s_{_{V}}|_{\alpha-1}$ and $|u|_{\alpha} = 0$.

\begin{claim}
The right-hand side is non-positive and, moreover, equals zero if and only if $v' = \alpha^i$ and $\beta = \alpha$.
\end{claim}

\begin{claimproof}
 To begin, we show that $\binom{p_{_{U}} s_{_{U}}}{\alpha(\alpha - 1)} = 1$.
Since $\alpha$ appears in $p_{_{V}}'s_{_{V}}$ (in $\sm(\beta)$), it must also appear in $p_{_{U}}s_{_{U}}$; 
 since 
it does not appear in $s_{_{U}}$,
it appears in $p_{_{U}}$.
Furthermore, there is exactly one occurrence of $\alpha$ in $p_{_{U}} s_{_{U}}$.
It should be noted that in $p_{_{U}}$, $\alpha - 1$  can precede  $\alpha$ (if it appears at all) since $|p_{_{U}}|<m$. Hence, there is only one occurrence of the subword $\alpha (\alpha-1)$, as desired.

Next, we consider $\binom{p_{_{V}}'s_{_{V}}}{\alpha (\alpha-1)}$.
Observe that $s_{_{V}}$ does not contain $\alpha-1$; if it did, then it would be followed by a second occurrence of $\alpha$ in $p_{_{V}}'s_{_{V}}$ since it cannot end with $\alpha-1$, resulting in a contradiction.

Since $\alpha$ appears in $\sm(\beta)$ within $p_{_{V}}'s_{_{V}}$ (and only once), we conclude that
$\binom{p_{_{V}}'s_{_{V}}}{\alpha (\alpha-1)} = 1$ if and only if $\beta = \alpha$. Otherwise, $\binom{p_{_{V}}'s_{_{V}}}{\alpha(\alpha-1)} = 0$. Consequently, the right-hand side is non-positive and equals $0$ if and only if $|v'| = |v'|_{\alpha}$ and $\beta = \alpha$.
\end{claimproof}

For the equation above to be satisfied,
we must have
$|u|_{\alpha} = 0$, $v' = \alpha^i$ for some $i \geqslant 0$, and $\beta = \alpha$.
Additionally, we have established that $|s_{_{V}}|_{\alpha} = |s_{_{V}}|_{\alpha -1} = 0$, regardless of whether $u=\varepsilon$ or not.
It should be noted that if $\alpha-1$ appears for a second time in $p_{_{U}}s_{_{U}}$, it must occur just before $\alpha$ in $p_{_{U}}$ and as the last letter of $p_{_{V}}$; otherwise $p_{_{V}}'s_{_{V}}$ would contain a second occurrence of $\alpha$. If $\alpha-1$ appears only once in $p_{_{U}}s_{_{U}}$,
then $p_{_{U}}$ begins with $\alpha$.
\cref{fig:facsm2} illustrates the situation (the possible occurrences of $\alpha-1$ in $p_{_{U}}$ and $p_{_{V}}$ are not shown).

\begin{figure}[h!t]
\centering
\begin{tikzpicture}
\draw[yscale=.4] (0,0) -- (12,0) -- (12,1) -- (0,1) -- (0,0);
\draw[yscale=.4] (2,1) -- (2,0);
\draw[yscale=.4] (4.5,1) -- (4.5,0);
\draw[yscale=.4] (7.5,1) -- (7.5,0);
\draw[yscale=.4] (10,1) -- (10,0);
\draw[yscale=.4] (12,1) -- (12,0);

\node at (-.5,.2) {$U$};
\node at (.5,.7) {$p_{_{U}}$};
\node at (1,.2) {\small{$\alpha$}};
\node at (3,.2) {\small{$(\alpha-1)\alpha$}};
\draw[decoration={brace},decorate] (2,.5) -- (10,.5) node[above, midway] {$\sm(u)$};
\node at (6,.2) {$\cdots$};
\node at (8.7,.2) {\small{$(\alpha-1)\alpha$}};
\node at (11,.7) {$s_{_{U}}$};

\node at (11.45,.2) {\small{$(\alpha-1)$}};

\begin{scope}[yshift = -35]
\draw[yscale=.4] (0,0) -- (12,0) -- (12,1) -- (0,1) -- (0,0);
\draw[yscale=.4] (1,1) -- (1,0);
\draw[yscale=.4] (3.5,1) -- (3.5,0);
\draw[yscale=.4] (9,1) -- (9,0);
\draw[yscale=.4] (11.5,1) -- (11.5,0);
\draw[yscale=.4] (12,1) -- (12,0);

\node at (-.5,.2) {$V$};
\node at (.5,-.3) {$p_{_{V}}$};
\node at (1.14,.2) {\small{$\alpha$}};
\node at (2.97,.2) {\small{$(\alpha-1)$}};
\node at (3.65,.2) {\small{$\alpha$}};
\draw[decoration={brace,mirror},decorate] (1,-.1) -- (3.5,-.1) node[below, midway] {$\sm(\beta)$};
\draw[decoration={brace,mirror},decorate] (3.5,-.1) -- (11.5,-.1) node[below, midway] {$\sm(v')$};
\draw[decoration={brace},decorate] (0,.5) -- (3.5,.5) node[above, midway] {$p_v'$};
\node at (7,.2) {$\cdots$};
\node at (9.12,.2) {\small{$\alpha$}};
\node at (11,.2) {\small{$(\alpha-1)$}};
\node at (11.8,-.3) {$s_{_{V}}$};
\end{scope}
\end{tikzpicture}
\caption{Illustrating the situation $|u|+1=|v|$.}\label{fig:facsm2}
\end{figure}
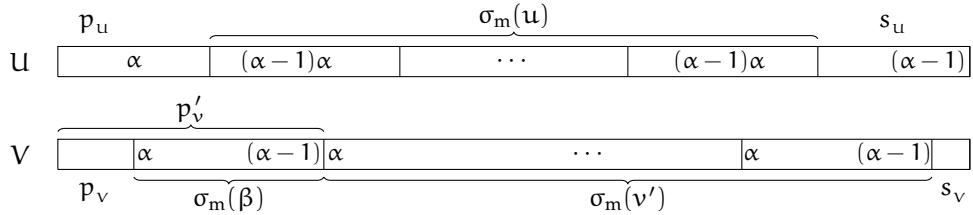

Consider now the sum 
\[
\sum_{x \in \am} \binom{U}{(\alpha-1) x} + \binom{U}{x \alpha} - \binom{V}{(\alpha-1) x} - \binom{V}{x \alpha},
\]
which is equal to $0$ due to the assumption that $U \sim_2 V$. If $\alpha-1$ does not appear in $p_{_{U}}$ (and thus not in $p_{_{V}}$),
then the positive side equals $|u||U|$; recall that $p_{_{U}}$ begins with $\alpha$ in this case. The negative side equals $-|v'||V|-|p_{_{V}}s_{_{V}}|$. This implies that
$p_{_{V}}s_{_{V}} = \varepsilon$. But then $V = \sm(\alpha^{i+1})$ ends with $\alpha-1$,  a contradiction.

If $\alpha-1$ does appear in $p_{_{U}}$ and $p_{_{V}}$, then the positive side is equal to $\left(|u|+1\right)|U|$ whereas the negative side is equal to
$-\left(|v'|+1\right)|V| - |s_{_{V}}|$. 
Hence, $s_{_{V}} = \varepsilon$, and again $V$ ends with $\alpha-1$. This shows that the case where $|u|+1 = |v|$
is impossible.

We have shown that the only possible way for $U \sim_2 V$ to hold is by having the claimed $\sm$-factorizations, thus completing the proof.
\end{proof}


\subsection{The induction step}

\begin{proof}[Proof of \cref{lem:k-1-factorisation-first-last}]
Suppose the two factors $U$ and $V$ possess the $\smn{k-1}$-factorizations $U = \smn{k-1}(u)$ and $V = \smn{k-1}(v)$, where $u\sim_1 v$. In that case, they are $k$-binomially equivalent, as stated in \cref{prop:phik}.

We consider the converse claim by induction on $k$, starting with the base case $k=2$ which is addressed by
\cref{lem:generalised-k=2-distinct-extremal-letters}.
Assume that the claim holds for some $k \geqslant 2$, and consider $U \sim_{k+1} V$ with $U \neq V$, beginning and ending with distinct letters.
Suppose $U$ and $V$ have $\smn{k}$-factorizations of the form $p_{_{U}} \smn{k}(u) s_{_{U}}$ and $p_{_{V}} \smn{k}(v) s_{_{V}}$, respectively, where $|u|$,
$|v| \geqslant 0$ (note that such factorizations are guaranteed by \cref{rem:all-factors-smk-factorization}). By factoring out full $\smn{k-1}$-images from $p_{_{U}}$, $s_{_{U}}$, $p_{_{V}}$, and $s_{_{V}}$, we obtain the corresponding $\smn{k-1}$-factorizations
of the form
\[
U = p_{_{U}}' \smn{k-1}\left(\gamma_u\sm(u)\delta_u\right) s_{_{U}}' \quad \text{and} \quad V = p_{_{V}}' \smn{k-1}\left(\gamma_v\sm(u)\delta_v\right) s_{_{V}}',
\] where $p_w = p'_w\smn{k-1}(\gamma_w)$ and
$s_w = \smn{k-1}(\delta_w)s'_w$ for $w \in \{u,v\}$.
Under this assumption, it follows that $U \sim_k V$,  and by the induction hypothesis, we have
$p_w's_w' = \varepsilon$ for $w \in \{u,v\}$. Furthermore, $\gamma_u \sm(u) \delta_u \sim_1 \gamma_v \sm(v) \delta_v$,
where the words $U$ and $V$ begin and end with distinct letters.

\textbf{First, assume that $|u| = |v|$.}
Then $\gamma_u \delta_u \sim_1 \gamma_v \delta_v$.
If both $\delta_u$ and $\delta_v$ are empty, it follows that $\gamma_u \sim_1 \gamma_v$. Since $\gamma_u$ and $\gamma_v$ are suffixes of
$\sm$-images of letters, they must be equal.
Moreover, since $U$ and $V$ begin with distinct letters, this implies that $\gamma_u = \gamma_v = \varepsilon$.
Thus, we have $U = \smn{k}(u)$ and $V = \smn{k}(v)$, confirming the claimed $\smn{k}$-factorizations by 
\cref{pro:both_dir}.

We now proceed to the case where either $\delta_u$ or $\delta_v$ is non-empty. W.l.o.g, we may assume
that that $\delta_u \neq \varepsilon$, and let $\alpha-1$  denote its final letter.
In particular, 
$\alpha$ does not occur in $\delta_u$.
We can apply \cref{lem:bigdiff} to $\smn{k-1}(\gamma_u \sm(u)\delta_u)$ and $\smn{k-1}(\gamma_v \sm(v) \delta_v)$,
using  $\alpha(\alpha-1)\cdots$ in place of $0\overline{1} \cdots$.
Since these two words are assumed to be $(k+1)$-binomially equivalent, we obtain, by dividing by $m^{\binom{k}{2}-1}$
\begin{multline*}
0
=
m\biggl[ |u|_\alpha-|v|_\alpha + |u|\,  (|\gamma_u|_\alpha-|\gamma_v|_\alpha +|\delta_u|_{\alpha-1}  - |\delta_v|_{\alpha-1} )\biggr]
      \\+ \sum_{b\in\am}\left( \binom{\gamma_u\delta_u}{b(\alpha-1)}-\binom{\gamma_v\delta_v}{b(\alpha-1)}+ \binom{\gamma_u\delta_u}{\alpha b}-\binom{\gamma_v\delta_v}{\alpha b}\right).
\end{multline*}
By observing that $|\gamma_w|_\alpha = |\gamma_w\delta_w|_\alpha - |\delta_w|_\alpha$,  where $w \in \{u,v\}$,  we can simplify the first term as follows
\[
\begin{aligned}
    m \bigl[ |u|_{\alpha} - |v|_{\alpha} + |u|(|\delta_{u}|_{\alpha - 1} - |\delta_u|_{\alpha} -|\delta_v|_{\alpha-1} + |\delta_v|_{\alpha})\bigr] \\
    = m\bigl[|u|_{\alpha} + |u|(|\delta_v|_{\alpha} - |\delta_v|_{\alpha-1})\bigr] + m(|v|-|v|_{\alpha}).
\end{aligned}
\]
Let us define
$\Delta_{\alpha}$
as 
\[\Delta_{\alpha}:= \sum_{b\in\am}\left( \binom{\gamma_v\delta_v}{b(\alpha-1)}+\binom{\gamma_v\delta_v}{\alpha b}-\binom{\gamma_u\delta_u}{b(\alpha-1)}- \binom{\gamma_u\delta_u}{\alpha b}\right).\] Rearranging the previous equation, we obtain
\begin{equation}\label{eq:simplified-bigdiff-equal-length}
m\bigl[|u|_{\alpha} + |u|\left(|\delta_v|_{\alpha} - |\delta_v|_{\alpha-1}\right)\bigr]
=
m(|v|_{\alpha}-|v|) + \Delta_{\alpha}.
\end{equation}
Recall that $|\delta_u|_{\alpha-1}=1$ and $|\delta_u|_\alpha=0$. 
Notice that the left-hand side is non-negative, and the only way where it could become negative is if $\alpha-1$ appears in $\delta_v$.
However, since $\delta_v$ does not end with $\alpha-1$ (as $\delta_u$ ends with it), this occurrence of $\alpha-1$ must be followed by $\alpha$.
Furthermore, the left-hand side is equal to zero if and only if $|u|_{\alpha} = 0$ and $|\delta_v|_{\alpha} = |\delta_v|_{\alpha-1}$.

Next, we show that the right-hand side is non-positive. Indeed, since $m\left(|v|_\alpha - |v|\right)$  is non-positive, it is sufficient to show that the sum $\Delta_{\alpha}$ is also non-positive.

\begin{claim}
The value of $\Delta_{\alpha}$ is $-|\delta_v|$ if and only if $|\gamma_u\delta_u|_{\alpha-1} = |\gamma_u\delta_u|_{\alpha} + 1$. Otherwise, $\Delta_\alpha = -|\gamma_u\delta_u|$. Moreover, in the former case, $\alpha - 1$ is the last letter of $\gamma_v$.
\end{claim}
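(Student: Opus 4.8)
The plan is to convert the four binomial sums defining $\Delta_\alpha$ into a single position count and then read off the two values from the geometry of consecutive runs. First I would record the elementary interpretation: for any word $W$, the sum $\sum_{b\in\am}\binom{W}{b(\alpha-1)}$ counts, over all occurrences of $\alpha-1$, the number of letters lying to its left, while $\sum_{b\in\am}\binom{W}{\alpha b}$ counts, over all occurrences of $\alpha$, the number of letters lying to its right. Writing $N=|\gamma_u\delta_u|=|\gamma_v\delta_v|$ and letting $S(W)$ denote the sum of the positions of the occurrences of $\alpha-1$ in $W$ minus the sum of the positions of the occurrences of $\alpha$, a direct computation gives
\[
\sum_{b\in\am}\left(\binom{W}{b(\alpha-1)}+\binom{W}{\alpha b}\right)=S(W)+N\,|W|_{\alpha}-|W|_{\alpha-1}.
\]
Since $\gamma_u\delta_u\sim_1\gamma_v\delta_v$, the quantities $N$, $|W|_{\alpha}=q$ and $|W|_{\alpha-1}=p$ coincide for the two words, so the bulk terms cancel and $\Delta_\alpha=S(\gamma_v\delta_v)-S(\gamma_u\delta_u)$. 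This is the key simplification, reducing everything to comparing two position sums.

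Next I would exploit that $\gamma_u,\gamma_v$ are suffixes and $\delta_u,\delta_v$ prefixes of images $\sm(\cdot)$, hence each is a run of consecutive letters. In such a run, $\alpha-1$ and $\alpha$ occur at most once; whenever both occur they are adjacent with $\alpha-1$ immediately preceding $\alpha$; $\alpha-1$ occurs without $\alpha$ only at the end of the run, and $\alpha$ occurs without $\alpha-1$ only at its start. Combined with the two boundary facts already in force, namely that $\delta_u$ ends with $\alpha-1$ and omits $\alpha$, while $\delta_v$ does not end with $\alpha-1$, this pins each run's contribution to $S$: every adjacent pair $(\alpha-1)\alpha$ contributes $-1$ irrespective of location, the $\alpha-1$ anchored at position $N$ (the end of $\delta_u$) contributes $+N$, and an unpaired $\alpha-1$ (resp.\ $\alpha$) contributes $+$ (resp.\ $-$) its position. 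Since $p-q=|\gamma_u|_{\alpha-1}-|\gamma_u|_{\alpha}+1$, a short case check shows $p-q\in\{0,1,2\}$ according to whether $\gamma_u$ begins with $\alpha$, contains both or neither of $\alpha-1,\alpha$, or ends with $\alpha-1$.

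I would then observe that the same run analysis applied to $\gamma_v,\delta_v$ forces $|\gamma_v|_{\alpha-1}-|\gamma_v|_{\alpha}\leqslant 1$ and $|\delta_v|_{\alpha-1}-|\delta_v|_{\alpha}\leqslant 0$ (the latter precisely because $\delta_v$ cannot end with $\alpha-1$). Summing, $p-q\leqslant 1$, so abelian equivalence rules out $p-q=2$ entirely; that configuration is vacuous, and the genuine dichotomy is $p=q+1$ versus $p=q$. For $p-q=1$ the two inequalities force $|\gamma_v|_{\alpha-1}-|\gamma_v|_{\alpha}=1$, i.e.\ $\gamma_v$ ends with $\alpha-1$ and omits $\alpha$ (this is the asserted ``moreover''); matching the count $q$ of $\alpha$'s between $\gamma_u$ and $\delta_v$ then shows $S(\gamma_u\delta_u)=N$ or $N-1$ exactly when $S(\gamma_v\delta_v)=g_v$ or $g_v-1$, where $g_v=|\gamma_v|$, so in both subcases $\Delta_\alpha=g_v-N=-|\delta_v|$. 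For $p-q=0$ one checks, over the two admissible shapes of $\gamma_v\delta_v$ (an internal pair inside one run, or $\gamma_v$ ending in $\alpha-1$ with $\delta_v$ beginning in $\alpha$ forming a boundary pair), that $S(\gamma_v\delta_v)=-1$ while $S(\gamma_u\delta_u)=N-1$, giving $\Delta_\alpha=-N$.

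The step I expect to be most delicate is the position bookkeeping at the $\gamma$–$\delta$ junction: an occurrence of $\alpha-1$ ending $\gamma_v$ may or may not pair with an $\alpha$ beginning $\delta_v$, and the admissible decompositions of $\gamma_v\delta_v$ need not share the length split $|\gamma_v|$ with those of $\gamma_u\delta_u$. The clean cancellation to $-|\delta_v|$ (resp.\ $-N$) only emerges after using abelian equivalence to tie the shape of $\gamma_v$ to that of $\gamma_u$ through the common counts $p$ and $q$; getting this correspondence exactly right, and ensuring the boundary-crossing pair is counted precisely once, is where the care lies. Everything else is the routine evaluation of $S$ on consecutive runs.
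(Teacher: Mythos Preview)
Your approach is correct and, while it rests on the same structural facts as the paper's proof (each of $\gamma_u,\delta_u,\gamma_v,\delta_v$ is a run of consecutive letters of length $<m$, so $\alpha-1$ and $\alpha$ each occur at most once in each and, when both occur, appear adjacently as $(\alpha-1)\alpha$), it is organized rather differently. The paper argues by a direct case split on $|\gamma_u\delta_u|_{\alpha-1}\in\{1,2\}$ and, inside each case, further subcases according to where $\alpha-1$ lands in $\gamma_v\delta_v$; in every branch it recomputes the ``letters to the left of $\alpha-1$'' and ``letters to the right of $\alpha$'' contributions from scratch. You instead package the two binomial sums into the single position statistic $S(W)$ and observe that abelian equivalence makes the bulk terms $N|W|_\alpha-|W|_{\alpha-1}$ cancel, so that $\Delta_\alpha=S(\gamma_v\delta_v)-S(\gamma_u\delta_u)$; after that, evaluating $S$ on runs is almost mechanical (an adjacent pair contributes $-1$, an unpaired endpoint contributes $\pm$ its position). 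Your reduction also makes transparent why the case $p-q=2$ is vacuous, a point the paper handles only implicitly. One small remark: when you say ``the two admissible shapes of $\gamma_v\delta_v$'' in the $p=q$ case, there are in fact three (the pair inside $\gamma_v$, the pair inside $\delta_v$, or the boundary pair with $\gamma_v$ ending in $\alpha-1$ and $\delta_v$ beginning with $\alpha$); all give $S=-1$, so the conclusion is unaffected.
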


\begin{claimproof}
 We first observe that
 \[
 \sum_{b \in \am} \binom{x}{\alpha b}
 \]
counts, for each occurrence of the letter $\alpha$ in the word $x$, the number of letters that occur to its right. Similarly,
\[
\sum_{b \in \am} \binom{x}{b(\alpha-1)},
\]
counts, for each occurrence of $\alpha-1$, the number of letters occurring to its left.

We then consider the occurrences of $\alpha$ and $\alpha-1$ in the two words $\gamma_u\delta_u$ and $\gamma_v\delta_v$,
as well as their contributions to the sum 
$\Delta_\alpha$. 
Notice that since $\delta_u$ does not contain $\alpha$, there is at most one occurrence of $\alpha$ in $\gamma_u\delta_u$. Furthermore, there can be at most two occurrences of $\alpha-1$.

First of all, note that $\alpha$ does not appear in $\delta_u$. 
The contribution from
$\alpha-1$,
 as the last letter of
$\delta_u$,
to the term
$-\binom{\gamma_u\delta_u}{b(\alpha-1)}$
results in a value of
$-|\gamma_u\delta_u|+1$
to
$\Delta_\alpha$.

\begin{itemize}
\item First, assume that $|\gamma_u\delta_u|_{\alpha-1} = 1$.
We proceed by dividing this into additional subcases, considering whether $\alpha-1$
appears in  $\delta_v$ or not.

\begin{itemize}
\item If $\delta_v$ contains $\alpha-1$, then this occurrence must be followed by
  by $\alpha$. 
  These two occurrences provide $|\gamma_v\delta_v|-2$ towards $\Delta_\alpha$. Now, $\alpha$ must appear in $\gamma_u$, whereas $\alpha-1$  should not.
  This situation occurs only if $\gamma_u$ starts with $\alpha$, as it is a suffix of the image of a letter (as depicted in \cref{fig:facsit1}).
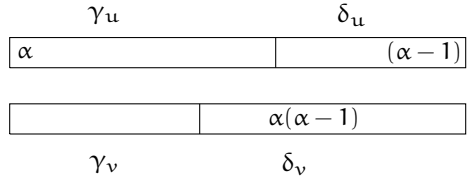
\begin{figure}[h!t]
\centering
\begin{tikzpicture}
\draw[yscale=.4] (0,0) -- (6,0) -- (6,1) -- (0,1) -- (0,0);
\draw[yscale=.4] (3.5,1) -- (3.5,0);

\node at (1.25,.7) {$\gamma_u$};
\node at (0.2,.2) {\small{$\alpha$}};
\node at (5.45,.2) {\small{$(\alpha-1)$}};
\node at (4.5,.7) {$\delta_u$};

\begin{scope}[yshift = -25]
\draw[yscale=.4] (0,0) -- (6,0) -- (6,1) -- (0,1) -- (0,0);
\draw[yscale=.4] (2.5,1) -- (2.5,0);

\node at (1.25,-.4) {$\gamma_v$};
\node at (4,.2) {\small{$\alpha(\alpha-1)$}};
\node at (3.75,-.4) {$\delta_v$};
\end{scope}
\end{tikzpicture}
\caption{Illustrating the situation $|\gamma_u\delta_u|_{\alpha-1}=|\gamma_v|_{\alpha-1}=1$.}\label{fig:facsit1}
\end{figure}

  This occurrence contributes $-|\gamma_u\delta_u|+1$ towards $\Delta_\alpha$.
Consequently, in this case, we have: 
\[
\Delta_\alpha = -|\gamma_u\delta_u|+1 + |\gamma_v\delta_v|-2 -|\gamma_u\delta_u|+1 = -|\gamma_u\delta_u|.
\]
Moreover, in this situation, we also have 
\[
|\gamma_u\delta_u|_{\alpha} = |\gamma_u\delta_u|_{\alpha-1}.
\]

\item If $\delta_v$ does not contain $\alpha-1$, then $\gamma_v$ contains $\alpha-1$.
We then further split this case based on whether
$\alpha$ appears in $\gamma_u\delta_u$ or not.

\begin{itemize}
\item Assume that $\alpha$ appears in $\gamma_v\delta_v$.
In this case, either  $\gamma_v$ contains $\alpha$ as the letter directly following $\alpha-1$ with $|\delta_v|_\alpha = 0$, or $\alpha-1$ is the last letter of $\gamma_v$ and $\delta_v$ begins with $\alpha$ (because, in this case,
$\alpha-1$ does not appear in $\delta_v$). In both cases, we have $\alpha-1$ followed by $\alpha$ in $\gamma_v\delta_v$, resulting in a contribution of $|\gamma_v\delta_v|-2$.
Now, $\alpha$ appears in $\gamma_u$, while $\alpha-1$ does not. This is possible only when $\alpha$ is the first letter of $\gamma_u$, thus contributing $-|\gamma_u\delta_u|+1$ towards $\Delta_\alpha$. 
Hence, we find
\[
\Delta_\alpha = -|\gamma_u\delta_u|+1 + |\gamma_v\delta_v|-2-|\gamma_u\delta_u|+1 = -|\gamma_u\delta_u|.
\]
Note also that in this case 
\[
|\gamma_u\delta_u|_{\alpha} = |\gamma_u\delta_u|_{\alpha-1}.
\]

\item Assume that $\alpha$ does not occur in $\gamma_v\delta_v$.
Consequently, the occurrence $\alpha-1$ in $\gamma_v$ must be its last letter. Therefore, in this case, we have
\[
\Delta_\alpha = -|\gamma_u\delta_u|+1 + |\gamma_v|-1 = -|\delta_v|.
\]
Moreover, in this case, we have 
\[
|\gamma_u\delta_u|_{\alpha} + 1 = |\gamma_u\delta_u|_{\alpha-1}
\]
and 
$\alpha-1$
is the last letter of
 $\gamma_u$.
\end{itemize}
\end{itemize}

\item Assume secondly that$|\gamma_u\delta_u|_{\alpha-1} = 2$. Therefore, $\delta_v$ must contain $\alpha-1$, and this is followed by
$\alpha$ since $\delta_v$ cannot end with $\alpha-1$.
These occurrences contribute $|\gamma_v \delta_v|-2$ to $\Delta_\alpha$.
Now, $\gamma_u$ also contains $\alpha-1$. Since $\alpha$ appears in $\delta_v$,  it must also occur in $\gamma_u$ causing the two letters to appear consecutively.
These occurrences contribute $-|\gamma_u\delta_u|+2$ to $\Delta_\alpha$.
Finally, we consider the contribution of $\alpha-1$ in $\gamma_v$.
Since $\alpha$ is already present in $\delta_v$ it cannot occur in
$\gamma_v$, thus  $\gamma_v$ ends with $\alpha-1$. 
This provides $\Delta_\alpha$ with $|\gamma_v|-1$.  \cref{fig:facsit2} illustrates this situation.
\begin{figure}[h!t]
\centering
\begin{tikzpicture}
\draw[yscale=.4] (0,0) -- (6,0) -- (6,1) -- (0,1) -- (0,0);
\draw[yscale=.4] (3.5,1) -- (3.5,0);

\node at (1.25,.7) {$\gamma_u$};
\node at (1.4,.2) {\small{$(\alpha-1)\alpha$}};
\node at (5.45,.2) {\small{$(\alpha-1)$}};
\node at (4.5,.7) {$\delta_u$};

\begin{scope}[yshift = -25]
\draw[yscale=.4] (0,0) -- (6,0) -- (6,1) -- (0,1) -- (0,0);
\draw[yscale=.4] (2.5,1) -- (2.5,0);

\node at (1.25,-.4) {$\gamma_v$};
\node at (2,.2) {\small{$(\alpha-1)$}};
\node at (4,.2) {\small{$(\alpha-1)\alpha$}};
\node at (3.75,-.4) {$\delta_v$};
\end{scope}
\end{tikzpicture}
\caption{Illustrating the situation $|\gamma_u\delta_u|_{\alpha-1}=2$.}\label{fig:facsit2}
\end{figure}
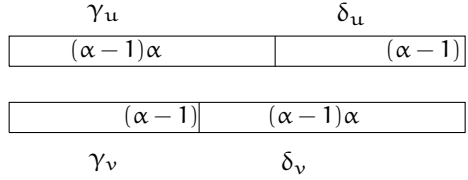

Thus, in this case, we have
\[
\begin{aligned}
    \Delta_\alpha &= -|\gamma_u \delta_u| + 1 + |\gamma_v \delta_v| - 2 - |\gamma_u \delta_u| + 2 
    + |\gamma_v| - 1 \\ &= -|\gamma_u \delta_u| + |\gamma_v| = -|\delta_v|.
\end{aligned}
\]
Observe once more that 
\[
|\gamma_u\delta_u|_{\alpha} + 1 = |\gamma_u\delta_u|_{\alpha-1},
\]
and furthermore,  $\alpha-1$ is the last letter of
$\gamma_v$.

\end{itemize}

All cases have been considered, and each one leads to the desired conclusion.
\end{claimproof}

The preceding claim indicates that $\Delta_\alpha$ in \eqref{eq:simplified-bigdiff-equal-length} is non-positive.
For \eqref{eq:simplified-bigdiff-equal-length} to hold true, it must be the case that $\delta_v = \varepsilon$
and $|v|_{\alpha} = |v|$. Moreover, $\gamma_v$ ends with $\alpha-1$ as stated in the above claim. However, $\gamma_v\sm(v)$ ends with $\alpha-1$: 
either
$\gamma_v$ ends with $\alpha-1$ when $v\neq \varepsilon$, or $\gamma_v$ ends with $\alpha-1$ if $v = \varepsilon$.  This conclusion contradicts the initial assumption that the words end with distinct letters. Therefore, we have shown that the case $|u| = |v|$ is impossible if either of the words $s_{_{U}}$ or $s_{_{V}}$ is empty.

\textbf{Second, assume that $|u| \neq |v|$.}
Due to the length constraints, it follows that
$\left||u|-|v|\right| = 1$. 
W.l.o.g, let us assume that $|v| = |u|+1$ and express $v$ in the form $v = \beta v'$, where $\beta \in \am$.
Consequently, we have $\gamma_u\delta_u \sim_1 \gamma_v\sm(\beta)\delta_v$ implying that both $\gamma_u$ and $\delta_u$ are non-empty.
Let $\alpha-1$ denote the last letter of $\delta_u$.

We may now apply \cref{lem:bigdiff}, since we have $|u| = |v'|$ and $\gamma_u\delta_u \sim_1 \gamma_v\sm(\beta)\delta_v$ (with $\alpha$ in place of $0$).
Rewriting
$\gamma_v' $  as  $ \gamma_v\sm(\beta)$, we obtain (after dividing both sides by $m^{\binom{k}{2}-1}$)
  \begin{multline*} 
    0=  m \biggl[ |u|_{\alpha}-|v'|_\alpha + |u|\,  (|\gamma_u|_\alpha - |\gamma_v'|_\alpha +|\delta_u|_{\alpha-1}  - |\delta_v|_{\alpha-1} )\biggr]\\
      +\sum_{b\in\am}\left( \binom{\gamma_u\delta_u}{b(\alpha-1)}-\binom{\gamma_v'\delta_v}{b(\alpha-1)}+ \binom{\gamma_u\delta_u}{\alpha b}-\binom{\gamma_v'\delta_v}{\alpha b}\right).
    \end{multline*}
    Write again 
    \[
    |\gamma_u|_\alpha -|\gamma_v'|_\alpha +|\delta_u|_{\alpha-1}  - |\delta_v|_{\alpha-1} = |\delta_u|_{\alpha-1} - |\delta_u|_\alpha - |\delta_v|_{\alpha-1} + |\delta_v|_\alpha.    \]
    Furthermore, defining
    \[
    \Delta_\alpha = \sum_{b\in\am}\left( \binom{\gamma_v'\delta_v}{b(\alpha-1)} + \binom{\gamma_v'\delta_v}{\alpha b} - \binom{\gamma_u\delta_u}{b(\alpha-1)}- \binom{\gamma_u\delta_u}{\alpha b}\right),
    \]
    and recalling that $|\delta_u|_{\alpha-1} = 1$ and $|\delta_u|_{\alpha} = 0$,
    the preceding equation simplifies to
\begin{equation}\label{eq:S_h-equation-diff-lengths}
m \left(|u|_\alpha + |u|(|\delta_v|_{\alpha} - |\delta_v|_{\alpha-1})\right) = m\left(|v'|_{\alpha} - |v'|\right) +  \Delta_\alpha.
\end{equation}

Using arguments analogous to those in \textbf{Case 1}, the left-hand side is shown to be non-negative. Moreover, it equals zero if and only if $|u|_{\alpha} = 0$ and $|\delta_v|_{\alpha} = |\delta_v|_{\alpha-1}$.
Additionally, we compute the right-hand side in an analogous manner, showing that it is non-positive.

\begin{claim}\label{cl:sign-Sh-diff}
We have $\Delta_{\alpha} = -|\delta_v|$, or $\Delta_{\alpha} = -|\gamma_v\delta_v|$ if and only if $\beta = \alpha$. In all other cases,
$\Delta_{\alpha} = -|\gamma_u\delta_u|$ or $\Delta_{\alpha} = -m -|\delta_v|$.
\end{claim}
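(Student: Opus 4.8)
The plan is to reuse, verbatim, the combinatorial reading of the two inner sums established in the proof of the preceding claim (the $|u|=|v|$ analogue): for any word $x$, the quantity $\sum_{b\in\am}\binom{x}{\alpha b}$ counts, over all occurrences of $\alpha$ in $x$, the number of letters lying to their right, while $\sum_{b\in\am}\binom{x}{b(\alpha-1)}$ counts, over all occurrences of $\alpha-1$, the number of letters lying to their left. Thus $\Delta_\alpha$ is the difference between these two weighted counts evaluated on $\gamma_v'\delta_v=\gamma_v\sm(\beta)\delta_v$ and on $\gamma_u\delta_u$, and the entire computation reduces to locating each occurrence of $\alpha$ and of $\alpha-1$ in the two words and summing the corresponding position weights. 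The only genuinely new feature compared with the $|u|=|v|$ case is the presence of one extra full image block $\sm(\beta)$ inside $\gamma_v'$, which is responsible both for the dichotomy on $\beta$ and for the extra value $-m-|\delta_v|$.

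First I would pin down the relevant occurrences using the abelian relation $\gamma_u\delta_u\sim_1\gamma_v\sm(\beta)\delta_v$ together with the structural facts at hand: $\delta_u$ ends with $\alpha-1$ and contains no $\alpha$; each $\sm$-image is a cyclic permutation of $01\cdots(m-1)$, so the block $\sm(\beta)$ carries exactly one $\alpha$ and one $\alpha-1$; and $\gamma_u$, being a proper suffix of an image, carries at most one $\alpha$ (and, if $\gamma_u$ were a suffix of $\sm(\alpha)$, none at all). Comparing $\alpha$-counts forces $|\gamma_u\delta_u|_\alpha=1$ and hence $|\gamma_v\delta_v|_\alpha=0$, while comparing $(\alpha-1)$-counts shows $|\gamma_u\delta_u|_{\alpha-1}\in\{1,2\}$ and fixes the number of $\alpha-1$'s in $\gamma_v\delta_v$ accordingly. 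Since $\gamma_v,\delta_v$ are a suffix and a prefix of images containing no $\alpha$, any occurrence of $\alpha-1$ in them that is not immediately followed by $\alpha$ can only sit at the very end of $\gamma_v$ or at the very end of $\delta_v$.

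I would then split on whether $\beta=\alpha$ or $\beta\neq\alpha$. When $\beta=\alpha$ the block $\sm(\alpha)$ begins with $\alpha$ and ends with $\alpha-1$, so these two letters occupy the two extremes of the block; when $\beta\neq\alpha$ the letters $\alpha-1$ and $\alpha$ instead appear consecutively inside $\sm(\beta)$. Inside each case I would further distinguish $|\gamma_u\delta_u|_{\alpha-1}=1$ from $=2$, locating in the latter situation the extra $\alpha-1$ at the end of $\gamma_v$. A direct summation then gives, for $\beta=\alpha$, the value $-|\gamma_v\delta_v|$ when $|\gamma_u\delta_u|_{\alpha-1}=1$ and $-|\delta_v|$ when $|\gamma_u\delta_u|_{\alpha-1}=2$; and for $\beta\neq\alpha$, the value $-|\gamma_u\delta_u|$ when $|\gamma_u\delta_u|_{\alpha-1}=1$ and $-m-|\delta_v|$ when $|\gamma_u\delta_u|_{\alpha-1}=2$. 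This reproduces the four stated values and the correspondence that $\Delta_\alpha\in\{-|\delta_v|,-|\gamma_v\delta_v|\}$ occurs exactly when $\beta=\alpha$.

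The step I expect to be the main obstacle, and the one that genuinely invokes the hypotheses of the lemma, is excluding the configuration in which the extra $\alpha-1$ lies at the \emph{end} of $\delta_v$: a naive summation there produces $\Delta_\alpha=+m$ (when $\beta=\alpha$) or $\Delta_\alpha=0$ (when $\beta\neq\alpha$), both incompatible with the asserted non-positive values. I would rule this out structurally rather than by computation: if $\delta_v$ ended with $\alpha-1$, then the genuine ancestor factor $\gamma_v\sm(v)\delta_v$ would end with $\alpha-1$, exactly as $\gamma_u\sm(u)\delta_u$ ends with the last letter $\alpha-1$ of $\delta_u$; applying the marked morphism $\smn{k-1}$ and using that $\smn{k-1}(\alpha-1)$ ends with $\alpha-k$, both $U$ and $V$ would then terminate in $\alpha-k$, contradicting the standing hypothesis that $U$ and $V$ end with distinct letters. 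Hence $\delta_v$ never ends with $\alpha-1$, the problematic sub-cases disappear, and $\Delta_\alpha$ is confined to the four non-positive values listed.
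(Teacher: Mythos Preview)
Your proposal is correct and follows essentially the same approach as the paper: both interpret the sums $\sum_b\binom{x}{\alpha b}$ and $\sum_b\binom{x}{b(\alpha-1)}$ as position-weighted counts of $\alpha$ and $\alpha-1$, locate these letters in $\gamma_u\delta_u$ and $\gamma_v\sm(\beta)\delta_v$ using the abelian relation, split on $\beta=\alpha$ versus $\beta\neq\alpha$ and on $|\gamma_u\delta_u|_{\alpha-1}\in\{1,2\}$, and sum the contributions to obtain the four stated values. Your organization differs only cosmetically (you branch on $\beta$ first, while the paper first isolates the contributions from $\delta_u$ and $\sm(\beta)$), and you make explicit the reason $\delta_v$ cannot end with $\alpha-1$ (distinct last letters of $U$ and $V$), which the paper uses as an implicit standing fact.
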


\begin{claimproof}
We once again consider the occurrences of $\alpha$ and $\alpha-1$ in the two words $\gamma_u\delta_u$ and $\gamma_v'\delta_v$,
and examine their contributions to the sum $\Delta_\alpha$.

Recall that $\alpha-1$ is the last letter of $\delta_u$.
Therefore, $\alpha$ can appear at most once in  $\gamma_u \delta_u$.
Since $\gamma_u\delta_u \sim_1 \gamma_v'\delta_v = \gamma_v\sm(\beta)\delta_v$, and $\alpha$ appears in
$\sm(\beta)$, we conclude that $\alpha$ appears precisely once in $\gamma_u\delta_u$, and therefore must appear in  $\gamma_u$.

$\bullet$
\textbf{Occurrences in $\delta_u$ and $\sm(\beta)$:}
The occurrence of $\alpha-1$ as the last letter of  $\delta_u$ contributes $\Delta_\alpha$ to
$-|\gamma_u\delta_u|+1$.

Since $\sm(\beta)$ contains both $\alpha-1$ and $\alpha$,
there are two possible cases:
\begin{enumerate}
    \item[1)]
    if $\alpha-1$ is the last letter of $\sm(\beta)$ (which is equivalent to $\beta=\alpha$), the contribution is
    \(
    |\gamma_v'|-1 + |\delta_v| + m-1 = |\gamma_v'\delta_v| + m-2.
    \)

    \item[2)] 
    Otherwise, if the two letters appear consecutively, the contribution is $|\gamma_v'\delta_v|-2$.
\end{enumerate}

$\bullet$ \textbf{Other occurrences:} 
 We consider two cases based on the number of the occurrences of $\alpha-1$.
 
\begin{itemize}
\item Suppose first that $\alpha-1$ appears exactly once in $\gamma_u\delta_u$. 
Consequently, $\alpha$ must be the first letter of $\gamma_u$,
contributing $-|\gamma_u\delta_u| + 1$ to $\Delta_\alpha$. 
Thus, in this case, $\Delta_\alpha = m-|\gamma_u\delta_u| = -|\gamma_v\delta_v|$ if $\beta = \alpha$, and
$\Delta_\alpha = -|\gamma_u\delta_u|$ otherwise.

\item Now, assume that $\alpha-1$ occurs for a second time in $\gamma_u\delta_u$.
Since $\alpha$ must appear in $\gamma_u$ 
with
$\alpha-1$, the letters must appear consecutively, with $\alpha-1$ preceding $\alpha$. These occurrences give the contribution
$-|\gamma_u\delta_u|+2$. It remains to consider the second occurrence of $\alpha-1$ in $\gamma_v'\delta_v$. Notice that $\alpha-1$
cannot appear in $\delta_v$; since it cannot be the last letter of $\delta_v$, it would be followed by a second $\alpha$. Thus
$\alpha-1$ appears in $\gamma_v$. Since $\gamma_v$ does not contain $\alpha$, we must have that $\alpha-1$ is the last letter of
$\gamma_v$. This gives the contribution $|\gamma_v|-1 = |\gamma_v'\delta_v| - |\delta_v| - m - 1$.

In total, we have
\begin{align*}
    \Delta_\alpha &= |\gamma_v'\delta_v|-2 -|\gamma_u\delta_u|+1 -|\gamma_u\delta_u|+2 + |\gamma_v'\delta_v| - |\delta_v| - m - 1 = -|\delta_v|-m
\end{align*}
if $\beta \neq \alpha$, and $\Delta_\alpha = -|\delta_v|$ if $\beta = \alpha$.\qedhere
\end{itemize}
\end{claimproof}

We are now ready to conclude with the proof. The claim above asserts that the only way \eqref{eq:S_h-equation-diff-lengths} holds is if both sides are equal to zero. In particular, this implies that 
$|v'|_{\alpha} = |v'|$, $\beta = \alpha$, and $\delta_v = \varepsilon$. Consequently, the last letter of $\sm(v')$ is $\alpha-1$, leading us to conclude that the words 
$U = \smn{k-1}\left(\gamma_u \sm(u) \delta_u\right)$ and
$V = \smn{k-1}(\gamma_v\sm(\beta v'))$  both end with the last letter of $\smn{k-1}(\alpha-1)$. This is a contradiction, in the case where $|u| \neq |v|$.

Thus, we conclude that the only possible way for $U \sim_{k+1} V$ is when $\delta_u = \delta_v = \gamma_u = \gamma_v = \varepsilon$ and $u \sim_1 v$. Hence, the proof is complete.
\end{proof}


\section{Abelian Complexity for Short Factors}\label{sec:abco}

The initial values of the abelian complexity  $\bc{\infw{t}_m}{1}(\ell)$ of $\mathbf{t}_m$, for $1\leqslant\ell<m$ are presented in \cref{tab:aml}.
For lengths $\ell\geqslant m$, the function is periodic with period~$m$, i.e., $\bc{\infw{t}_m}{1}(\ell+m)=\bc{\infw{t}_m}{1}(\ell)$, and its behavior is fully described by \cref{thm:abelian_complexity} from \cite{ChenWen2019}.
Thus, the following proposition complements the findings of Chen et al.

\begin{table}[h!t]
  {\small
    \[
    \begin{array}{ccccccccccccc}
      2 & \text{} & \text{} & \text{} & \text{} & \text{} & \text{} &
      \text{} & \text{} & \text{} & \text{} \\
      3 & 6 & \text{} & \text{} & \text{} & \text{} & \text{} &
      \text{} & \text{} & \text{} & \text{} \\
      4 & 10 & 12 & \text{} & \text{} & \text{} & \text{} & \text{} &
      \text{} & \text{} & \text{} \\
      5 & 15 & 20 & 25 & \text{} & \text{} & \text{} & \text{} &
      \text{} & \text{} & \text{} \\
      6 & 21 & 30 & 39 & 42 & \text{} & \text{} & \text{} & \text{} &
      \text{} & \text{} \\
      7 & 28 & 42 & 56 & 63 & 70 & \text{} & \text{} & \text{} &
      \text{} & \text{} \\
      8 & 36 & 56 & 76 & 88 & 100 & 104 & \text{} & \text{} & \text{}
      & \text{} \\
      9 & 45 & 72 & 99 & 117 & 135 & 144 & 153 & \text{} & \text{} &
      \text{} \\
      10 & 55 & 90 & 125 & 150 & 175 & 190 & 205 & 210 & \text{} &
      \text{} \\
      11 & 66 & 110 & 154 & 187 & 220 & 242 & 264 & 275 & 286 &
      \text{} \\
      12 & 78 & 132 & 186 & 228 & 270 & 300 & 330 & 348 & 366 & 372 \\
      13 & 91 & 156 & 221 & 273 & 325 & 364 & 403 & 429 & 455 & 468 & 481 & \text{} \\
      14 & 105 & 182 & 259 & 322 & 385 & 434 & 483 & 518 & 553 & 574 & 595 & 602 \\
    \end{array}
    \]
  }
  \caption{Values of \texorpdfstring{$\bc{\infw{t}_m}{1}(\ell)$}{bc(t_m,1)(ell)} for $1 \leqslant \ell < m \leqslant 14$.}
  \label{tab:aml}
\end{table}

\begin{proposition}\label{pro:abco_small}
The initial values of the abelian complexity  $\bc{\infw{t}_m}{1}(\ell)$ of the generalized Thue--Morse word over $m$ letters are given as follows.

\begin{itemize}
    \item For odd $\ell<m$, say $\ell=2\ell'+1$, where $\ell'\ge 0$, we have
\[\bc{\infw{t}_m}{1}(\ell)=m \left(1 - \ell' - \ell'^2 + \ell' m\right).\]

\item For even $\ell<m$, we have
\[\bc{\infw{t}_m}{1}(\ell)  =  \frac{m}{4} \left(6 - \ell^2 - 2 m + 2 \ell m\right).\]
\end{itemize}

\end{proposition}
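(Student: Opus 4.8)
The plan is to translate the proposition into a purely combinatorial count of Parikh vectors and then enumerate them by shape. First I would reduce: since $\ell<m$ and each block $\sm(a)=a(a+1)\cdots(a+m-1)$ has length $m$, every factor $w\in\Fac_\ell(\infw{t}_m)$ either lies inside a single block $\sm(a)$ or straddles exactly one boundary $\sm(a)\sm(b)$. A window of length $\ell$ inside $\sm(a)$ is a run of $\ell$ consecutive residues, i.e. an \emph{arc} $I$ of length $\ell$ in $\Z/(m\Z)$ with $\Psi(w)=\mathbf{1}_I$; a straddling factor is a length-$p$ suffix of $\sm(a)$ followed by a length-$q$ prefix of $\sm(b)$ with $p,q\geqslant 1$ and $p+q=\ell$, hence two arcs $I_1,I_2$ of lengths $p,q$ with $\Psi(w)=\mathbf{1}_{I_1}+\mathbf{1}_{I_2}$. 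By \cref{lem:boundaryseq} every pair $(a,b)\in\am^2$ is a factor, so every pair of arcs of lengths summing to $\ell$ is realised, and single arcs are realised inside a block. Thus $\bc{\infw{t}_m}{1}(\ell)$ equals the number of distinct $v\in\{0,1,2\}^{\Z/(m\Z)}$ with $\sum_i v_i=\ell$ of the form $\mathbf{1}_{I_1}+\mathbf{1}_{I_2}$ for arcs $I_1,I_2$ (one possibly empty, which absorbs the single-arc case).

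Next I would classify these vectors. The geometric fact I would use, and which is exactly where $\ell<m$ enters, is that two arcs of total length $<m$ meet in at most one arc: if they met in two components their union would be the whole cycle, forcing total length $\geqslant m$. Consequently each admissible $v$ has exactly one of three shapes: (I) a single arc of $\ell$ ones; (II) two disjoint arcs of ones (total length $\ell$) separated by two nonempty gaps; (III) a single support arc of length $\ell-d$ carrying a contiguous block of $d\geqslant 1$ twos, flanked by runs of ones, i.e. a pattern $1^{a}2^{d}1^{c}$ with $a+c=\ell-2d$. Each such $v$ is realisable by an explicit choice $I_1=1^{a}2^{d}$, $I_2=2^{d}1^{c}$, and the three shapes are disjoint (only III has a $2$; I and II differ by their number of runs) and exhaustive.

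The counting then proceeds shape by shape. Shape I contributes $m$. For Shape III, the support arc has length $\ell-d<m$, so its position on the cycle is unambiguous (no wraparound), giving $m$ positions and $\ell-2d+1$ choices of $(a,c)$; summing $m(\ell-2d+1)$ over $1\leqslant d\leqslant\lfloor \ell/2\rfloor$ yields $m\lfloor \ell/2\rfloor(\ell-\lfloor \ell/2\rfloor)$. For Shape II, a vector is a cyclic binary word with exactly two runs of ones and two runs of zeros; the standard count of $\ell$-subsets of $\Z/(m\Z)$ with exactly two cyclic runs (provable by a short direct argument) gives $\tfrac{m}{2}(\ell-1)(m-\ell-1)$. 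Adding the three contributions and splitting on the parity of $\ell$ to evaluate $\lfloor \ell/2\rfloor(\ell-\lfloor \ell/2\rfloor)$ produces $\tfrac{m}{4}(6-\ell^2-2m+2\ell m)$ when $\ell$ is even and, writing $\ell=2\ell'+1$, $m(1-\ell'-\ell'^2+\ell' m)$ when $\ell$ is odd, matching the statement.

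The reduction is immediate from recognizability and \cref{lem:boundaryseq}, so it is not the obstacle. The real care lies in the enumeration: the load-bearing step is the ``at most one arc of overlap'' dichotomy, which is precisely what the hypothesis $\ell<m$ buys and what guarantees every shape is counted once with no wraparound ambiguity. The most delicate pieces are verifying that Shapes I--III are genuinely disjoint and exhaustive and pinning down the Shape II count without over- or undercounting; the parity-dependent evaluation of $\lfloor \ell/2\rfloor(\ell-\lfloor \ell/2\rfloor)$ is then exactly what forces the even/odd split in the final formula.
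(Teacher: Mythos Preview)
Your proposal is correct. The reduction to counting vectors of the form $\mathbf{1}_{I_1}+\mathbf{1}_{I_2}$ via \cref{lem:boundaryseq} is exactly what the paper does, and your key geometric observation (two arcs of total length $<m$ overlap in at most one arc) is the right way to justify that the three shapes are disjoint and exhaustive. Your per-shape counts check out: Shape~I gives $m$; Shape~III gives $\sum_{d=1}^{\lfloor\ell/2\rfloor} m(\ell-2d+1)=m\lfloor\ell/2\rfloor(\ell-\lfloor\ell/2\rfloor)$; and the cyclic two-run formula $\tfrac{m}{2}(\ell-1)(m-\ell-1)$ for Shape~II is standard (and easy to prove by cutting the cycle at a $0\!\to\!1$ transition). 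Summing and splitting on parity recovers the stated formulas.

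Where you and the paper differ is purely in the organization of the count. The paper iterates over the split parameter $u=|s|$ (equivalently, over the decomposition $w=ps$ with $|p|\ge|s|$), and at step $u$ counts the Parikh vectors not yet seen at smaller $u$; it shows these are precisely the Shape~II vectors with arc-lengths $\{u,\ell-u\}$ together with the Shape~III vectors whose $2$-block has length exactly $u$, for a contribution of $m(m-2u)$ per step, with a separate treatment of $u=\ell/2$ when $\ell$ is even. Your approach instead partitions the target set by the final shape of the Parikh vector and sums each class directly. Your route is a bit more conceptual and sidesteps the ``which $j$ are already seen'' bookkeeping and the even-$\ell$ double-counting caveat; the paper's route is slightly more elementary in that it never invokes the cyclic-runs formula. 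If you want the argument to be fully self-contained, include a one-line derivation of $\tfrac{m}{2}(\ell-1)(m-\ell-1)$ rather than citing it as standard.
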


\begin{proof}
 By \cref{lem:boundaryseq}, every pair $(i,j)\in\mathcal{A}_m^2$ appears in $\mathbf{t}_m$. 
 Thus,
  any factor $w$ of length $\ell<m$ can be written as $w=ps$, where $p$ is a suffix of some $\sm(i)$ and $s$ is a prefix of some $\sm(j)$. Our aim is to count the possible Parikh vectors for such $w$. Since we are dealing with abelian equivalence, and the images of a letter under $\sm$ are cyclic permutations of $01\cdots (m-1)$, we can limit ourselves to $|p|=\ell,\ell-1,\ldots, \lceil\ell/2\rceil$. When $p$ is shorter than $s$, we obtain exactly the same Parikh vectors.

If $|p|=\ell$, then $p$ is of the form $t\, (t+1) \cdots (t+\ell-1)$, which is a factor of some $\sm(i)$. This corresponds to the $m$ cyclic permutations of the Parikh vector $1^\ell 0^{m-\ell}$ (expressed as a word of length~$m$).

If $|p|=\ell-1$ and $|s|=1$, there are $m$ possible suffixes of $\sm(i)$ of the form $t\, (t+1)\cdots (t+\ell-2)$, where $t\in\am$.
We need to determine which $j\in\am$ provides Parikh vectors that have not already been listed. Here, $s$ is the first letter of $\sm(j)$, which is $j$. If $j=t-1$ or $j=t+\ell-1$, then we get a Parikh vector from the first case.
Thus, for $j$, we can choose any elements in $\am$ except these two,
resulting in $m-2$ possibilities and a total of $m(m-2)$ new Parikh vectors. 
Note that we obtain Parikh vectors (along with their cyclic permutations) of the form $1^{\ell-1}0^r10^s$ with some isolated $1$,  where $r,s>0$ and $r+s=m-\ell$, or of the form $1^r 2 1^{\ell-r-1} 0^{m-\ell}$, with one $2$ in any position within the block of size $\ell-1$.

If $|p|=\ell-2$ and $|s|=2$, this case is similar. We have $m$ possible suffixes of the form $t\, (t+1)\cdots (t+\ell-3)$, where $t\in\am$.
We need to determine which $j\in\am$ provides new Parikh vectors. Here, $s$ is the first two letters of $\sm(j)$, which are $j (j+1)$.
If $j\in\{t-2,t-1,t+\ell-3,t+\ell-2\}$, the Parikh vectors are already described in the first two cases. Otherwise, we obtain new vectors either with a block $1^r 22 1^{\ell-r-2}$, or with two isolated block $1^2$ and $1^{\ell-2}$. This results in $m.(m-4)$ new Parikh vectors.

In general, if $|p|=\ell-u$ and $|s|=u$ with $\ell/2>u$, then $p$ is of the form $t\, (t+1)\cdots (t+\ell-u-1)$.
To obtain new Parikh vectors, either with a block $1^s 2^u 1^{\ell-s-u}$, or with two isolated blocks $1^u$ and $1^{\ell-u}$, from $s=j(j+1)\cdots (j+u-1)$, then $j$ cannot be in $\{t-u,\ldots,t-1,t+\ell-2u,\ldots,t+\ell-u-1\}$. Therefore,  $j$ can take $m-2u$ values.

In conclusion, if $\ell$ is odd of the form $\ell=2\ell'+1$, we obtain a total of 
\[\bc{\infw{t}_m}{1}(\ell)=m+\sum_{u=1}^{\ell'}m(m-2u)
  =m \left(1 - \ell' - \ell'^2 + \ell' m\right).\]
Now, if $\ell$ is even, we still have to consider the situation where $|p|=|s|=\ell/2$. In this case, $p$ and $s$ have  symmetric roles, and we should avoid double counting.
We need to select two elements $i,j\in\am$  that are at distance greater than $\ell/2$ from each other (over $\mathbb{Z}/(m\mathbb{Z})$) in order to obtain Parikh vectors that are a cyclic permutation of $1^{\ell/2}0^r1^{\ell/2}0^s$, where $r,s>0$. The number of such pairs $\{i,j\}$, where $j\not\in\{i-\ell/2,\ldots,i-1,i,i+1,\ldots,i+\ell/2\}$, is given by $m (m-\ell-1)/2$. There are also  $m$ permutations of $2^{\ell/2}0^{m-\ell/2}$ when $p=s$. Hence, for even $\ell$, we obtain
\[\bc{\infw{t}_m}{1}(\ell)=m+\sum_{u=1}^{\frac{\ell}{2}-1}m(m-2u)+\frac{m (m-\ell-1)}{2}+m
  =  \frac{m}{4} \left(6 - \ell^2 - 2 m + 2 \ell m\right).\]
\end{proof}

\begin{remark}
Interestingly, the infinite triangular array whose initial elements are given in \cref{tab:aml}, exhibits several intriguing combinatorial properties and identities.

\begin{itemize}
\item Regarding the rows of the triangle, the following relation holds for $1\leqslant \ell <m-4$
  \[ \bc{\infw{t}_{m}}{1}(\ell+4)= 2\bc{\infw{t}_{m}}{1}(\ell+3)-2 \bc{\infw{t}_{m}}{1}(\ell+1)+ \bc{\infw{t}_{m}}{1}(\ell).\]
 This relation can be easily deduced from the previous proposition.
  For $m\geqslant 5$, the initial conditions are given by
  \[ \left(\bc{\infw{t}_{m}}{1}(1),\ldots,\bc{\infw{t}_{m}}{1}(4)\right)=\left(m, m(m+1)/2,m(m-1),m(3m-5)/2\right).\]
\item Similarly, for each column,  the following holds for all $m\geqslant 2$ and all $\ell<m$, 
  \[ \bc{\infw{t}_{m+3}}{1}(\ell)= 3\bc{\infw{t}_{m+2}}{1}(\ell)-3 \bc{\infw{t}_{m+1}}{1}(\ell)+ \bc{\infw{t}_{m}}{1}(\ell).\]
\item Furthermore, the diagonal and parallels to the diagonal  $\left(\bc{\infw{t}_{\ell+2+i}}{1}(\ell+1)\right)_{\ell\geqslant 0}$ for all $i\geqslant 0$ satisfy the same recurrence relation of order $6$
\[
x_{n+6}=2x_{n+5}+x_{n+4}-4x_{n+3}+x_{n+2}+2x_{n+1}-x_n.
\]

\item The sequence \( \left(\bc{\infw{t}_{m}}{1}(m-2)\right)_{m\geqslant 3}=3, 10, 20, 39, 63, 100, 144,\ldots \) appears in several entries  of the OEIS, as {\tt A005997} (number of paraffins) and {\tt A272764} (number of positive roots in reflection group~$E_n$), among others.

\item The sequence \( \left(\bc{\infw{t}_{2m+1}}{1}(2m)\right)_{m\geqslant 1}\)
is given by
\( 2m^3+m^2+2m+1\).
  
\item The sequence \( \left((\bc{\infw{t}_{2m}}{1}(2m-1))/2\right)_{m\geqslant 1}=1, 6, 21, 52, 105, 186, 301,\ldots\) is the sequence of $q$-factorial numbers $([3]!_q)_{q\geqslant 0}$ where 
\[
[3]!_q=
\frac{(1-q)(1-q^2)(1-q^3)}{(1-q)^3}=
(1 + q) (1 + q + q^2).
\]
 It appears as {\tt A069778} in the OEIS.
\end{itemize}
\end{remark}



\section{Description of the Abelian Rauzy Graphs}\label{sec:arg}

The abelian Rauzy graph is defined in \cref{def:abr}.
Refer to \cref{sec:strategy} for the definitions of the sets $Y_{m,L},Y_{m,R}$, and $Y_m$.
The aim of this section is to count the number of edges in the abelian Rauzy graph $G_{m,\ell}$ of order $\ell$ for $\infw{t}_m$, where $\ell<2m$, as well as determine the size of the corresponding set $Y_m(\ell)$. These expressions, together with \cref{pro:5.5}, lead to \cref{thm:inter}.

The structure of these graphs depends on the value of the parameter $\ell$. Specifically, the behavior varies significantly depending on whether $\ell<m$ or $\ell\geqslant m$.

\begin{example}\label{exa:abr64}
 \cref{fig:abr64}   depicts the graph~$G_{6,4}$.  To keep clarity in the figure, we have omitted the edge labels.
 The color of each edge is determined by the second component of its label.
 Thus, two edges originating from the same vertex and sharing the same color correspond to the same element of $Y_{m,R}$. The vertices are labeled with Parikh vectors.
 According to  \cref{pro:abco_small}, $\bc{\infw{t}_{6}}{1}(4)=39$, which implies that the graph~$G_{6,4}$ has $39$ vertices.
 The symmetry of the graph results from \cref{lem:permut}.
\begin{figure}[h!t]
  \centering
  \includegraphics[width=11cm]{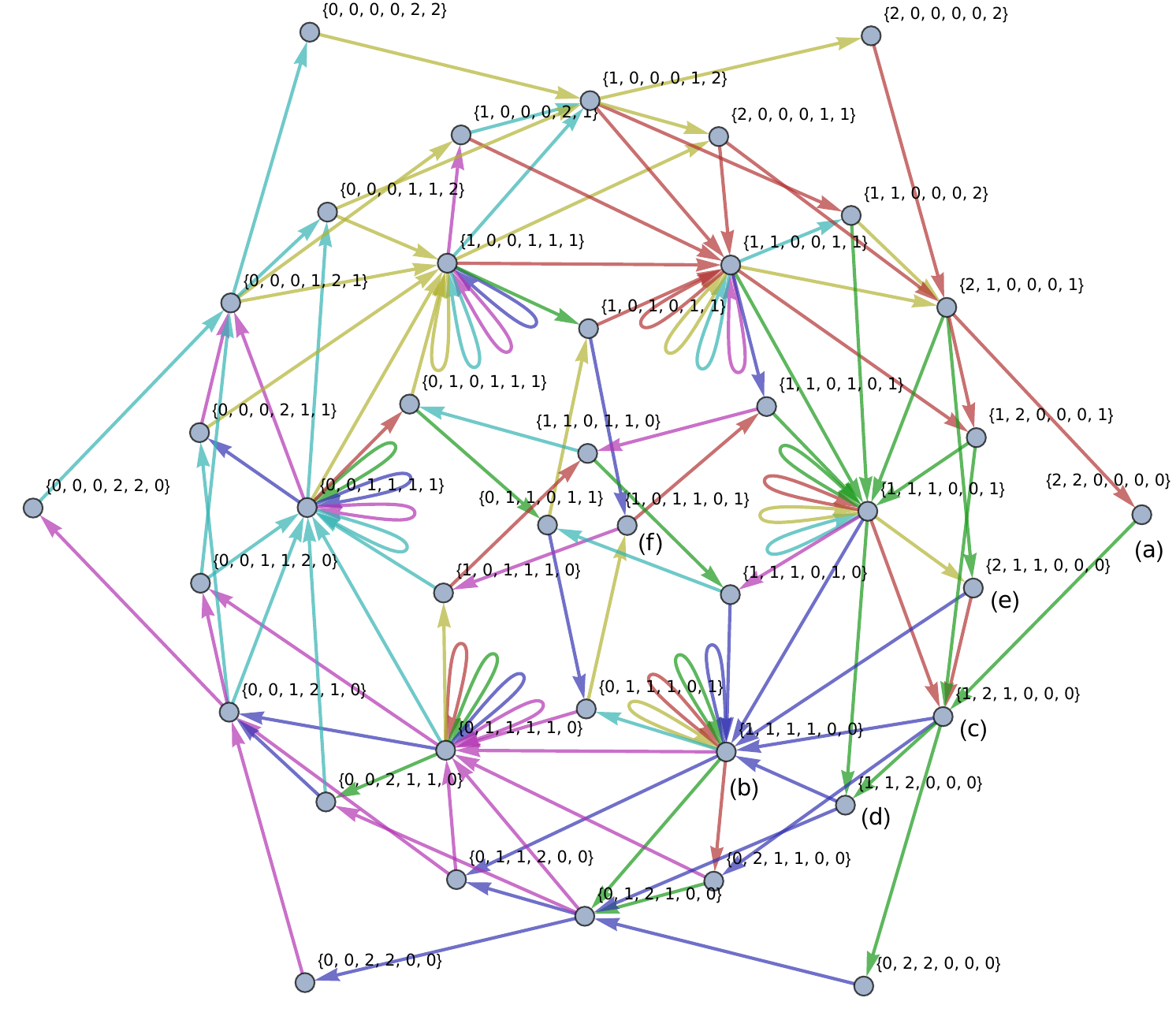}
  \caption{Abelian Rauzy graph $G_{6,4}$ of order $4$ for $\infw{t}_6$.}
  \label{fig:abr64}
\end{figure}
\end{example}

\begin{example}\label{exa:abr54}
   \cref{fig:abr54}  depicts the graph~$G_{5,4}$, which  has $\bc{\infw{t}_{5}}{1}(4)=25$ vertices.
   This example may help the reader follow the developments presented in the proof below, where the case of odd 
$m$ and even 
$\ell$ is discussed.
   Providing two distinct examples is insightful.
    \cref{fig:abr54} exhibits a $5$-fold symmetry in the graph. 
    However,  \cref{fig:abr64}  shows that the three central vertices exhibit a different behavior, specifically a $3$-fold symmetry, instead of the $6$-fold symmetry present in the rest of the graph.

  \begin{figure}[h!t]
  \centering
  \includegraphics[width=11cm]{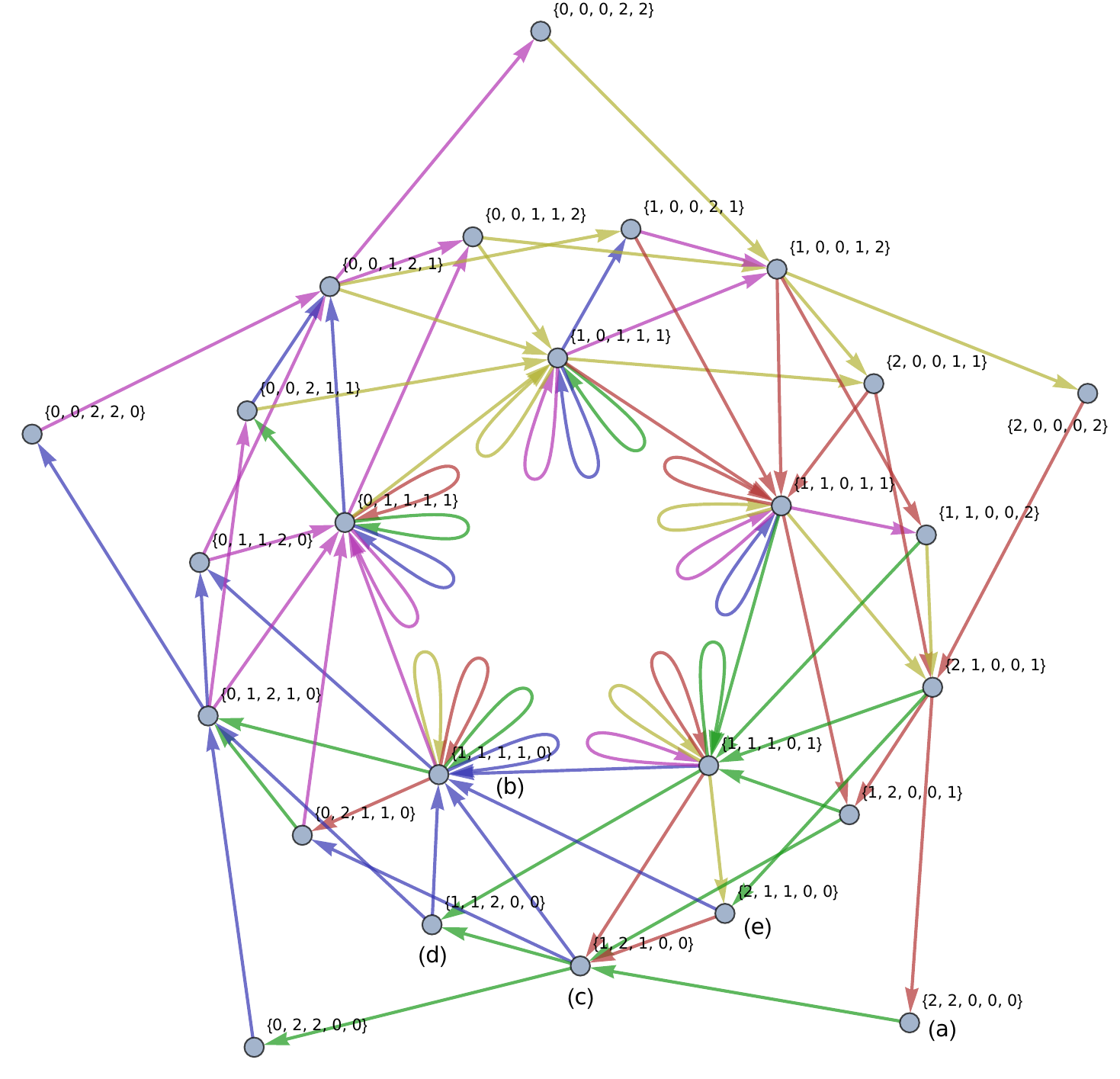}
  \caption{Abelian Rauzy graph $G_{5,4}$ of order $4$ for $\infw{t}_5$.}
  \label{fig:abr54}
\end{figure}
\end{example}

\subsection{When \texorpdfstring{$\ell < m$}{ell < m}}

\begin{proposition}\label{pro:edgml1}
  For $1\leqslant \ell<m$, the number of edges in the abelian Rauzy graph $G_{m,\ell}$ is given by \[m(1+\ell m-\ell).\]
\end{proposition}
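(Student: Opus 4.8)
The plan is to set up a bijection between the edges of $G_{m,\ell}$ and certain triples, collapse the count using the rotational symmetry $\tau_m$, and then enumerate the factors of length $\ell+1\leqslant m$ via the two-block structure of $\infw{t}_m$.

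First I would record the following bijection. By \cref{def:abr}, an edge of $G_{m,\ell}$ comes from a factor $aUb\in\Fac_{\ell+1}(\infw{t}_m)$ (with $a,b$ letters and $|U|=\ell-1$) as the directed edge $\Psi(aU)\to\Psi(Ub)$ labelled $(a,b)$, and two factors give the same edge precisely when they share source, target and label. Writing $e_a:=\Psi(a)$ for the unit Parikh vector, we have $\Psi(aU)=\Psi(U)+e_a$ and $\Psi(Ub)=\Psi(U)+e_b$, so the triple $(\Psi(U),a,b)$ determines source, target and label, while conversely $\Psi(U)=\Psi(aU)-e_a$ is recovered from the source and the label. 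Hence the edges are in bijection with the triples $(\Psi(U),a,b)$ realised by some factor $aUb$ of length $\ell+1$.

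Next I would use the symmetry. By \cref{lem:permut} the morphism $\tau_m$ permutes $\Fac_{\ell+1}(\infw{t}_m)$, and on triples it acts by $(\Psi(U),a,b)\mapsto(\tau_m\Psi(U),a+1,b+1)$, where $\tau_m$ also denotes the induced cyclic shift of Parikh vectors. Since the first coordinate has period $m$, every orbit has size exactly $m$ and contains a unique triple with $a=0$. Thus the number of edges equals $m\,T$, where $T$ is the number of triples with first letter $0$; equivalently, $T$ is the number of distinct pairs $(\Psi(w),b)$ with $w\in\Fac_{\ell+1}(\infw{t}_m)$ beginning with $0$ and $b$ the last letter of $w$. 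To compute $T$, I would enumerate such factors. As $\ell+1\leqslant m$, each factor of length $\ell+1$ lies in at most two consecutive blocks $\sm(i)\sm(j)$, and by \cref{lem:boundaryseq} (where $\Fac_2(\infw{t}_m)=\am^2$) every such block pair occurs. A factor starting with $0$ is therefore either the single run $0\,1\cdots\ell$, or a two-run word $w_{k,d}=\suff_k(\sm(k))\,\pref_{\ell+1-k}(\sm(d-\ell+k))=0\,1\cdots(k-1)\cdot(d-\ell+k)\cdots d$, for $k\in\{1,\dots,\ell\}$ and last letter $d\in\am$; the single run is exactly the case $d=\ell$, where all $w_{k,\ell}$ collapse to $0\,1\cdots\ell$. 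Grouping by $d$ and writing $\Psi(w_{k,d})=\mathbb{1}_{\{0,\dots,k-1\}}+\mathbb{1}_{\{d-\ell+k,\dots,d\}}$, for $k<k'$ the difference $\Psi(w_{k,d})-\Psi(w_{k',d})$ equals $\mathbb{1}_{\{d-\ell+k,\dots,d-\ell+k'-1\}}-\mathbb{1}_{\{k,\dots,k'-1\}}$, a difference of indicators of two runs of equal length $k'-k<m$. This vanishes iff the two runs coincide, i.e.\ iff $d-\ell+k\equiv k$, that is $d=\ell$. Hence for each $d\neq\ell$ the $\ell$ vectors $\Psi(w_{1,d}),\dots,\Psi(w_{\ell,d})$ are pairwise distinct, while $d=\ell$ contributes a single vector; summing gives $T=1+\ell(m-1)$ and the number of edges $m\,T=m(1+\ell m-\ell)$.

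The main obstacle is the combinatorial heart of the last paragraph: exhaustively listing the length-$(\ell+1)$ factors beginning with $0$ through the two-block decomposition, and establishing the clean dichotomy that the two-run Parikh vectors are pairwise distinct exactly when $d\neq\ell$. Reducing the comparison to a difference of two equal-length runs modulo $m$ is what makes this tractable; the delicate point is ruling out accidental coincidences from modular wraparound, which is controlled precisely by the hypothesis $\ell+1\leqslant m$, ensuring every run has length $<m$ and is therefore determined by its starting point.
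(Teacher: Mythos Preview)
Your proof is correct and takes a genuinely different route from the paper's. The paper classifies the vertices of $G_{m,\ell}$ into six types (a)--(f) according to the shape of their Parikh vectors, computes the out-degree of each type, and sums over all cyclic permutations; this forces a case split on the parities of $\ell$ and $m$. You instead exploit the $\tau_m$-symmetry to reduce the edge count to $m$ times the number of edges whose label has first component~$0$, and then enumerate those directly via the two-block structure of factors of length $\ell+1\leqslant m$, reducing the distinctness question to comparing two runs of equal length in $\Z/m\Z$. Your argument is shorter, uniform in parity, and is in fact the same strategy the paper itself adopts for the range $\ell\geqslant m$ in \cref{pro:abrl2}. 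The trade-off is that the paper's vertex-by-vertex analysis yields the out-degree of each Parikh-vector type as a by-product, and that structural information is reused verbatim in the proof of \cref{pro:Ym1} for $\#Y_{m,R}(\ell)$ and $\#Y_{m,L}(\ell)$; with your approach, that proposition would require its own (similar but separate) enumeration. One tiny wording point: the orbit size is forced to be exactly $m$ because the label component $a$ (not the Parikh vector $\Psi(U)$) has period $m$ under $\tau_m$; the Parikh vector alone could have a smaller period.
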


\begin{proof}
  For $\ell=1$, all length-$2$ factors of the form $ab$ appear in $\infw{t}_m$.
  Thus, $G_{m,1}$ is a complete directed graph with $m^2$ edges.

  Now, assume $\ell\geqslant 2$.
   As a first case, let $\ell$ be even, in the form $2\ell'$, where $\ell'>0$, and $m$ is odd (as in \cref{exa:abr54}).
    \cref{tab:different_vertices}
    lists the possible Parikh vectors
     $v$ and their corresponding out-degree $d^+(v)$.
    Note that we must also consider the cyclic permutations of these vectors, which correspond to other vertices in the graph.
  \begin{table}[h!t]
   \[
    \begin{array}{c||l|lr|l}
      \text{type} & \Psi(u) & d^+ & \text{choices} & \text{total when }\ell\text{ even}\\
      \hline
      a)&2^{\ell'}0^{m-\ell} & 1 & & 1\\
      b)&1^\ell0^{m-\ell} & \ell+m-1 & & 1\\
      c)&1^i2^j1^{\ell-i-2j}0^{m-\ell+j} & 4& i,j,\ell-i-2j>0 & (\ell'-1)^2\\
      d)&1^{\ell-2i}2^i0^{m-\ell+i} & 2 & i,\ell-2i>0 & \ell'-1\\
      e)&2^i1^{\ell-2i}0^{m-\ell+i}&2 & i,\ell-2i>0 & \ell'-1 \\
      f)&1^i0^j1^{\ell-i}0^{m-\ell-j} & 2 & i,j,\ell-i,m-\ell-j>0 & (\ell'-\frac12)(m-\ell-1)\\
    \end{array}
  \]
    \caption{The different types of vertices (not counting permutations).}
    \label{tab:different_vertices}
  \end{table}
 
  We proceed similarly to the proof of \cref{pro:abco_small},  describing the Parikh vectors represented succinctly as words.

  \begin{enumerate}
      \item[(a)]
       The factor $0 1 \cdots \ell' 0 1 \cdots \ell'$ has a unique successor in $\infw{t}_m$, which is $1 \cdots \ell' 0 1 \cdots \ell' (\ell'+1)$.
       Thus, there is an edge $2^{\ell'}0^{m-\ell} \to 1 2^{\ell'-1}10^{m-\ell-1}$. The reader may refer to \cref{exa:abr54} to observe the different types of vertices described in this proof. 
       For the first type, these vertices are located on the outermost part of \cref{fig:abr54}.

\item[(b)] 
       The Parikh vector $1^\ell0^{m-\ell}$ can be associated with the factor $0\cdots (\ell-1)$. Since all pairs of letters occur in $\infw{t}_m$, the factor $0\cdots (\ell-1) a$ occurs in $\infw{t}_m$ for all $a\in\am$.
       Thus, there are 
$m$ edges with the label
       $(0,a)$; in particular, one of them is a loop with label 
 $(0,0)$. 
  This Parikh vector is also associated with a factor of the form $vu$, where $u=0\cdots (i-1)$ and $v=i\cdots (\ell-1)$, with $i=1,\ldots,\ell-1$. 
  Thus, there are $\ell-1$ loops labeled $(i,i)$. 
For the second type, these vertices are located on the innermost part of \cref{fig:abr54}.

\item[(c)] 
The Parikh vector $1^i2^j1^{\ell-i-2j}0^{m-\ell+j}$ is associated with a factor of the form $uv$ or $vu$, where $u=0\cdots (i+j-1)$ and $v=i\cdots (\ell-j-1)$.
  It can also be associated with a factor $uv$ or $vu$, where $u=0\cdots (\ell-j-1)$ and $v=i\cdots (i+j-1)$. 
  This results in four edges towards the following vertices:
 \begin{eqnarray*}
      &01^{i-1}2^j1^{\ell-i-2j+1}0^{m-\ell+j-1},
      &1^{i+1}2^{j}1^{\ell-i-2j-1}0^{m-\ell+j},\\
      &01^{i-1}2^{j+1}1^{\ell-i-2j-1}0^{m-\ell+j-1},
      &1^{i+1}2^{j-1}1^{\ell-i-2j+1}0^{m-\ell+j}.
  \end{eqnarray*}
  
  \item[(d) $\&$ (e)] 
These cases are similar.
   The Parikh vector $2^i1^{\ell-2i}0^{m-\ell+i}$ is associated with a factor of the form $uv$ or $vu$, where $u=0\cdots (i-1)$ and $v=0\cdots (\ell-i-1)$.
   This results in two edges labeled $(0,\ell-i)$ and $(0,i)$, which are distinct because $\ell-2i>0$. 

 \item[(f)]  
 We have factors of the form $uv$ or $vu$, where $u=0\cdots (i-1)$ and $v=(i+j)\cdots (\ell+j-1)$. This results in two edges labeled $(0,\ell+j)$ and $(i+j,i)$.
  \end{enumerate}

  Next, we count the total number of edges. To do so, we need to determine the number of vertices of each type. 
  There are $m$ pairwise distinct cyclic permutations of the vector of type (a).
  The same observation applies for type (b). This results in $m+m(\ell+m-1)=m(\ell+m)$ edges in $G_{m,\ell}$.

  For a vector of type (c), for each valid $j\leqslant\ell'-1$, there are $\ell-2j-1$ ways to arrange  $\ell-2j$ ones on both sides of $2^j$. 
  This results in
  \begin{equation}
    \label{eq:totl'}
    \sum_{j=1}^{\ell'-1} (\ell-2j-1)=(\ell'-1)^2.
\end{equation}
  Taking into account the cyclic permutations, we obtain $4m(\ell'-1)^2$ edges.

  For a vector of type (d) or (e), there are $\ell'-1$ choices for $i$. This resulting in a total of $4m(\ell'-1)$ edges.

  The type (f) requires extra caution: since $\ell$ is even, not all cyclic permutations are distinct, so we must avoid double counting.
  We have to limit ourselves to $i\leqslant\ell'$. Indeed, the $m$~cyclic permutations of $1^i0^j1^{\ell-i}0^{m-\ell-j}$ and  those of $1^{\ell-i}0^{m-\ell-j}1^i0^j$ are identical.
  For each $i<\ell'$, there are $m-\ell-1$ choices for~$j$. This results in  $2m(\ell'-1)(m-\ell-1)$ edges.
  When $i=\ell'$, there are two blocks of ones of the same size, giving only $(m-\ell-1)/2$ choices for $j$.
  This is the only place where the fact that $m$ is odd plays a role. This provides $2m(m-\ell-1)/2=m(m-\ell-1)$ edges.
  Summing up all contributions yields the expected value
 \[
\begin{aligned}
m (\ell + m) + 4 m (\ell' - 1)^2 &+ 4 m (\ell' - 1) 
+ 2 m (\ell' - 1) (m - \ell - 1)\\ &+ m (m - \ell - 1) = m(1+\ell m-\ell).
\end{aligned}
\]
  If $m$ is even and $i=\ell'$, we must consider $j<(m-\ell)/2$ and $j=(m-\ell)/2$ separately because in the latter case, there are also two blocks of zeroes of the same size.
  Thus, we must again avoid double counting. This results in 
  $2m\left(({m-\ell}/{2})-1\right)+m/2$
  edges. The last term corresponds to the permutations of $1^{\ell'}0^{(m-\ell)/2}1^{\ell'}0^{(m-\ell)/2}$, which can be observed in \cref{fig:abr64} with the three innermost vertices. The summation yields the same expression.

  The case where $\ell$ is odd treated similarly. Note that there are no Parikh vectors of type (a).
\end{proof}

\begin{remark}
  For $1\leqslant\ell<m$, the graph $G_{m,\ell}$ is an Eulerian graph.
  The previous proof can be reproduced by focusing on the in-degree of the vertices and show that for all vertices $v$, $d^+(v)=d^-(v)$. Since $\infw{t}_m$ is recurrent, the graph $G_{m,\ell}$ is strongly connected. This suffices to conclude.
\end{remark}

\begin{proposition}\label{pro:Ym1}
  For $1\leqslant \ell<m$, the following holds
  \[\# Y_{m,R}(\ell)=\# Y_{m,L}(\ell)=m(1+\ell m-\ell)-\frac{m}{2}\ell(\ell-1).\]
  In particular, the value of
  $\# Y_{m}(\ell)$
  is given by
  \[
  \# Y_{m}(\ell)=2m(1+\ell m-\ell)-m\ell(\ell-1).
  \]
\end{proposition}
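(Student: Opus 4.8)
The plan is to reduce the whole statement to a single Parikh-vector count and then evaluate that count through the two-run structure of short factors.

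First I would dispose of the ``In particular'' clause cheaply. An element of $Y_{m,R}(\ell)$ is an ordered pair (Parikh vector, letter), whereas an element of $Y_{m,L}(\ell)$ is a pair (letter, Parikh vector); since a Parikh vector in $\N^m$ is never equal to a letter in $\am$, the two sets are disjoint and $\#Y_m(\ell)=\#Y_{m,R}(\ell)+\#Y_{m,L}(\ell)$. To see the two summands agree, I would use the anti-morphism $\theta$ defined by $\theta(a_1\cdots a_n)=(-a_n)\cdots(-a_1)$. A direct computation gives $\theta(\sm(i))=\sm(-i+1)$, so together with $\sm\circ\tau_m=\tau_m\circ\sm$ and \cref{lem:permut} the set $\Fac(\infw{t}_m)$ is closed under $\theta$. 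Since $\theta$ sends a factor $W$ to a factor with first letter $-W_{|W|}$, last letter $-W_1$, and reflected Parikh vector, it induces a bijection $Y_{m,R}(\ell)\to Y_{m,L}(\ell)$. Hence it suffices to compute $\#Y_{m,R}(\ell)$.

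Next I would rewrite $Y_{m,R}(\ell)=\{(\Psi(W),W_{\ell+1}) : W\in\Fac_{\ell+1}(\infw{t}_m)\}$, using that $W=Ua\leftrightarrow(\Psi(U),a)$ is a bijection because $\Psi(U)=\Psi(W)-\Psi(a)$. Grouping by the last letter $b=W_{\ell+1}$ and applying $\tau_m$ (\cref{lem:permut}), which permutes last letters cyclically and Parikh vectors bijectively, yields $\#Y_{m,R}(\ell)=m\cdot N$, where $N$ is the number of distinct Parikh vectors among length-$(\ell+1)$ factors ending in $0$. Thus the proposition reduces to the claim $N=1+\ell m-\binom{\ell+1}{2}$, after which $\#Y_{m,R}(\ell)=m\bigl(1+\ell m-\binom{\ell+1}{2}\bigr)=m(1+\ell m-\ell)-\tfrac m2\ell(\ell-1)$.

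To compute $N$ I would exploit that any factor of length $\ell+1\le m$ consists of at most two maximal ascending runs: it is either an increasing block $j(j+1)\cdots(j+\ell)$, or a suffix of some $\sm(c)$ followed by a prefix of some $\sm(c')$, every transition $cc'$ occurring by \cref{lem:boundaryseq}. Consequently each factor ending in $0$ has Parikh vector $\Psi(A)+\Psi(B)$, where $B$ is the arc $\{1-r,\dots,0\}$ (the final run ending in $0$, of length $r$) and $A$ is a free arc of length $\ell+1-r$. Counting distinct such sums splits into three cases by the relative position of the arcs: they form a single length-$(\ell+1)$ arc (all entries $1$), two disjoint non-adjacent arcs (a gap), or two overlapping arcs (doubled letters). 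I expect the contributions $\ell+1$, $\ell(m-\ell-2)$, and $\binom{\ell+1}{2}$, summing to $1+\ell m-\binom{\ell+1}{2}=N$. The main obstacle is the overlapping case: one must track whether the doubled sub-arc ends at the fixed letter $0$ or lies strictly to its left (equivalently, whether the support arc ends at $0$) and, for each overlap size $o$, count the distributions of the remaining $\ell+1-2o$ cells on the two sides; the two resulting sums $\sum_o(\ell+2-2o)$ and $\sum_o(\ell+1-2o)$ over the admissible range are what collapse to the clean value $\binom{\ell+1}{2}$. A secondary point requiring care is the boundary value $\ell=m-1$ (so $\ell+1=m$): there the single-arc and gapped counts individually degenerate—only one full arc exists and no gap is possible—but the excess of the first count exactly cancels the formal deficit $\ell(m-\ell-2)=-(m-1)$ of the second, so $N$, and hence the stated formula, remains valid for all $1\le\ell<m$.
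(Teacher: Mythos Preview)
Your argument is correct and takes a genuinely different route from the paper's proof. The paper argues by subtraction: it starts from the edge count of \cref{pro:edgml1}, revisits the vertex-type classification $(a)$--$(f)$ developed there, identifies for each type how many outgoing edges share a second label component, and subtracts those redundancies to obtain $\#Y_{m,R}(\ell)$; the computation of $\#Y_{m,L}(\ell)$ is then declared symmetric. Your approach is entirely independent of that graph-theoretic bookkeeping: you use the anti-morphism $\theta$ to get $\#Y_{m,R}=\#Y_{m,L}$ for free, then the $\tau_m$-symmetry to reduce $\#Y_{m,R}(\ell)$ to $m$ times the number $N$ of Parikh vectors of length-$(\ell{+}1)$ factors ending in a fixed letter, and finally compute $N$ by a direct two-arc analysis. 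This is more self-contained (it does not rely on \cref{pro:edgml1} or its proof) and arguably cleaner; the paper's version is shorter in context precisely because it recycles the earlier classification. Your treatment of the overlap case is the delicate part, and the split into ``support arc ends at $0$'' (contributing $\ell+2-2o$) versus ``support arc does not end at $0$, hence the doubled arc must'' (contributing $\ell+1-2o$) is exactly right, though your parenthetical ``equivalently'' slightly overstates the logical relationship between those two conditions—the counts you actually sum are correct. The cancellation at $\ell=m-1$ that you flag (single-arc overcount $m-1$ versus the formal deficit $-(m-1)$ in the gapped case) is real and handled correctly.
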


\begin{proof}
  Assume $\ell$  is even, of the form $2\ell'$. To compute $\# Y_{m,R}(\ell)$, we must identify the edges in $G_{m,\ell}$ that are outgoing from a vertex with labels sharing the same second component.
 If such edges exist, they are counted once in $\#Y_{m,R}(\ell)$. 
 Our strategy is to subtract, from the total number of edges given by \cref{pro:edgml1}, those that do not contribute a new element to the set $\# Y_{m,R}(\ell)$.
 In \cref{exa:abr64}, to compute $\# Y_{6,R}(4)$, one must sum, for each vertex, the number of outgoing edges, counting only one edge per distinct color.
  
  Using the same notation as in the proof of  \cref{pro:edgml1}, only vertices of type (b), (c), or (d) will contribute.
  We now identify the edges whose labels share the same second component.
  The vertex $1^\ell0^{m-\ell}$ has $\ell-1$ outgoing edges  labeled $(0,j)$ and $\ell-1$ loops  labeled $(j,j)$, for $j=1,\ldots,\ell-1$. (Refer to \cref{fig:abr54,fig:abr64} to observe the vertices having loops.) 
  Considering the cyclic permutations of the Parikh vector, we must subtract $m(\ell-1)$ from the total number of edges.
  A vertex of type (c) has $2$ has two outgoing edges with a second component of $\ell-j$, and two outgoing edges with a second component of $i+j$.
  (Refer to \cref{fig:abr54,fig:abr64} to observe the vertices with an out-degree of $4$.)
  Moreover, $\ell-j\neq i+j$ since $\ell-i-2j>0$. 
  From \eqref{eq:totl'}, we must subtract $2m(\ell'-1)^2$.
  Finally a vertex of type (d) has two outgoing edges with a second component of $\ell-i$. Hence, we subtract $m(\ell'-1)$. 
  The total amount to subtract is:
  \[m \left[ \ell-1+2(\ell'-1)^2+\ell'-1\right]=
  \frac{m \ell (\ell-1)}{2}.
  \]
  The remaining cases are treated similarly.

  To determine $\# Y_{m,L}(\ell)$, we need to identify the edges in $G_{m,\ell}$ that are incoming to a vertex with labels sharing the same first component. If such edges exist, they are counted once in 
 $Y_{m,L}(\ell)$.
 Only vertices of type (b), (c), or (e) contribute.
 Refer to \cref{exa:abr54b}
 for further clarification.
  The reasoning is similar in this case.
\end{proof}

\begin{example}\label{exa:abr54b}
 \cref{fig:abr54first} depicts the graph $G_{5,4}$. 
 Compared to \cref{exa:abr54,exa:abr64},  the color of each edge is determined by the first component of its label.
Vertices are labeled with their corresponding Parikh vectors. 
\begin{figure}[h!t]
  \centering
  \includegraphics[width=11cm]{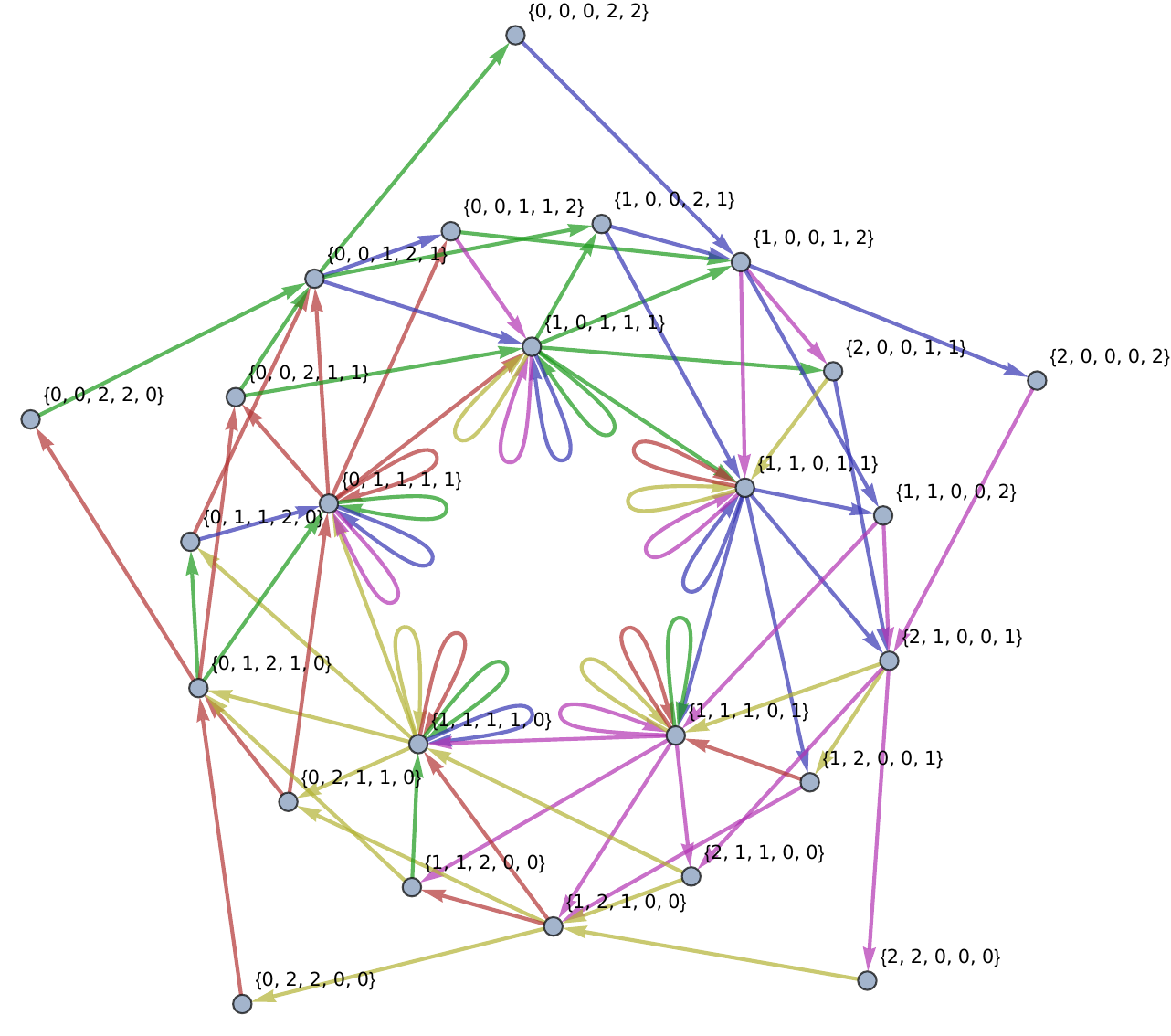}
  \caption{Abelian Rauzy graph $G_{5,4}$ of order $4$ for $\infw{t}_5$ ; edges colored by the first component of the label.}
  \label{fig:abr54first}
\end{figure}
\end{example}

\subsection{When \texorpdfstring{$\ell \geqslant m$}{ell >= m}}

\begin{proposition}\label{pro:abrl2}
  For $m\leqslant \ell<2m$, the number of edges in the abelian Rauzy graph $G_{m,\ell}$ is given by \[m(m^2-m+1).\]
\end{proposition}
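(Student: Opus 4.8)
The plan is to count edges exactly as in the proof of \cref{pro:edgml1}, starting from the observation that each edge of $G_{m,\ell}$ is uniquely described by a triple $(\Psi(U),a,b)$ with $aUb\in\Fac_{\ell+1}(\infw{t}_m)$, where $U$ has length $\ell-1$: the source is $\Psi(U)+e_a$, the target is $\Psi(U)+e_b$, and the label is $(a,b)$, so two such factors determine the same directed labelled edge precisely when they agree on $\Psi(U)$, on $a$, and on $b$. Equivalently, $\#E_m(\ell)=\sum_v d^+(v)$ over the vertices $v=\Psi(P)$ (i.e.\ over $\Fac_\ell(\infw{t}_m)/\!\sim_1$), where $d^+(v)$ is the number of pairs $(a,b)$ such that some length-$\ell$ factor with Parikh vector $v$ begins with $a$ and can be extended on the right by $b$ to a factor of length $\ell+1$. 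Setting $\ell=m+s$ with $0\le s<m$, I would first describe the factors of length in $(m,2m]$ through $\infw{t}_m=\sm(\infw{t}_m)$: according to the offset relative to the block boundaries, each such factor reads either as a two-block word $\suff_i(\sm(c))\,\pref_j(\sm(d))$ with $i+j=\ell$ and $i,j\le m$, or as a three-block word $\suff_i(\sm(c))\,\sm(c')\,\pref_j(\sm(d))$ with $i+j=s$. Since $\sm$ is $m$-uniform with constant image Parikh vector $\mathbf 1$, and each $\suff$/$\pref$ is an arc of consecutive letters of $\mathbb Z/m\mathbb Z$, every vertex is encoded by two such arcs, plus the background $\mathbf 1$ in the three-block case.

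I would then enumerate the vertex types by the shape of their Parikh vectors, in the spirit of \cref{tab:different_vertices}. The new phenomenon, compared with the regime $\ell<m$, is that the two arcs now have total length $\ell\ge m$, so they must overlap (in the two-block case) or sit over a strictly positive background (in the three-block case); hence the admissible entries are $\{0,1,2\}$ for two-block vectors and $\{1,2,3\}$ for three-block vectors, and one must also notice that a vector with entries in $\{1,2\}$ arises from both configurations and represents a single vertex. For each resulting type I would compute $d^+(v)$ by inspecting the one-letter right extensions of a representative: \cref{lem:boundaryseq} guarantees that every pair of letters occurs, the consecutive-letter structure of the $\sm$-images pins down where several extensions collapse to the same labelled edge, and \cref{prop:GTM-overlap-free} together with the explicit description of $\Fac_3(\infw{t}_m)$ controls the extensions that cross into a third block. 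Each contribution is then multiplied by the number of distinct cyclic permutations of the vector, exactly as the factor $m$ was produced in \cref{pro:edgml1}.

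Summing over all types should give $\#E_m(\ell)=m(m^2-m+1)$, and I would cross-check the bookkeeping via the $\tau_m$-symmetry of \cref{lem:permut} and the Eulerian balance $d^+(v)=d^-(v)$, so that the same total is recovered from in-degrees, as after \cref{pro:edgml1}. As before one must split the count according to the parities of $\ell$ and of $m$: when two arcs, or two maximal runs of equal entries, have equal length, some cyclic permutations coincide and must not be double counted. This symmetry reduction in the overlapping/wrap-around regime, rather than any individual computation, is the delicate part of the argument and is where I expect the bulk of the casework and the main risk of error to lie. A useful internal consistency check is that the total is independent of $s$: the surplus edges created by a longer overlap in one family of vertices must be exactly compensated elsewhere, and verifying this cancellation for every residue $s$ is really the crux of the proof.
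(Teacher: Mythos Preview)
Your strategy is sound in principle: classifying vertices by Parikh-vector shape and summing out-degrees would indeed compute $\#E_m(\ell)$, and your description of the two-block/three-block structure and the resulting $\{0,1,2\}$ vs.\ $\{1,2,3\}$ entry patterns is accurate. However, what you have written is a plan rather than a proof: the enumeration of vertex types, the out-degree of each type, and the cyclic-permutation multiplicities are all deferred, and you yourself flag the parity/overlap casework as the risky part. The independence from $s$ that you propose as a consistency check is in fact the whole content of the proposition, so appealing to it does not reduce the work.

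The paper sidesteps all of this by counting edges by \emph{label} rather than by source vertex. By the $\tau_m$-symmetry you invoke, it suffices to count edges labeled $(0,b)$ for each $b\in\am$ and multiply by $m$. The key observation is that the number of edges with label $(0,b)$ equals the number of distinct Parikh vectors $\Psi(0U)$ among factors $0Ub$ of length $\ell+1$; writing such a factor as $uvb$ or $u\sm(a)vb$ (exactly your two-block/three-block split, applied to the length-$(\ell+1)$ word), the paper shows directly that these Parikh vectors are pairwise distinct as the cut position varies---unless $b\equiv \ell\pmod m$, in which case they all coincide. This gives $1$ edge for one value of $b$ and $m$ edges for each of the remaining $m-1$ values, hence $m\bigl(1+m(m-1)\bigr)=m(m^2-m+1)$ with no vertex-type enumeration, no parity split, and no dependence on $s$ to verify. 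Your approach would work, but it trades this short injectivity argument for a long case analysis; if you pursue it, the paper's computation of $\bc{\infw{t}_m}{1}(\ell)$ for $\ell\ge m$ in \cref{thm:abelian_complexity} gives the vertex count you would need to cross-check against.
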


\begin{proof}
  Let $b\in\am$.
  Due to the symmetry of $\sm$, we  count the number of edges labeled $(0,b)$ and then multiply the result by $m$. 
  So, we  focus on factors of length~$\ell+1$ that start with $0$ and end with $b$. These factors can be of one of the following two forms
  
  \begin{itemize}
  \item $uvb$, where $u$ starts with $0$, $|u|=t\leqslant m$, 
  and
  $|v|=\ell-t<m$, i.e.,  $\ell-m<t\leqslant m$ ; or
  
  \item  $u\sm(a)vb$, for some letter $a$,  and where $u$ starts with $0$, $|u|=t\leqslant \ell-m$, and $|v|=\ell-m-t$, i.e., $1\leqslant t\leqslant\ell-m$.
  \end{itemize}
  In both cases, $u$ (respectively, $vb$) is a suffix (respectively, prefix) of the image of a letter under $\sm$.
  In particular, all letters of $u$ are determined by the first letter~$0$, and all letters of $v$ are determined by $b$. Note that the first letter of $v$ is congruent to $b-\ell+t$ modulo~$m$.

  Consider the first case, where $\ell-b=m$. There is a single edge labeled $(0,b)$ from $2^b1^{m-b}$ to $12^b1^{m-b-1}$.
  Since $|u|=t$, the last letter of $u$ is $t-1$. Under the assumption $\ell-b=m$, the first letter of $v$ is $t$.
  Therefore, all the previously described factors have the same Parikh vector.

  Next, assume that $\ell-b\neq m$.
  We will prove that there are $m$ pairwise distinct Parikh vectors, each with an outgoing edge labeled $(0,b)$.
  Since there are $m-1$ possible values for $b$, we obtain the expected value of $m(m-1)=m^2-m$. In this case, the last letter of $u$ is $t-1$, and the first letter of $v$ is $b-\ell-t$ which is not congruent to $t$ modulo~$m$.

  First, assume that we have two factors $uvb$ and $u'v'b$ of the first form, where $|u'|=t'<|u|=t\leqslant m$. 
  Then, $\Psi(uvb)-\Psi(u'v'b)$, and also contains $1$'s in positions corresponding to $t',t'+1,\ldots,t-1$ and contains $-1$'s in positions corresponding to $b-\ell+t',\ldots,b-\ell+t-1$ (modulo~$m$). Since $\ell-b\neq m$, the two intervals of length $t-t'$, made of these positions are not equal over $\mathbb{Z}/(m\mathbb{Z})$. Therefore, $\Psi(uvb)-\Psi(u'v'b)\neq 0$.

  A similar reasoning applies to the two factors $u\sm(a)vb$ and $u'\sm(a')v'b$ of the second form.

  Finally, we  compare a factor $x=uvb$ of the first form with a factor $y=u'\sm(a)v'b$ of the second form. Let $t=|u|$ and $t'=|u'|$,
  with $\ell-m<t\leqslant m$ and $0<t'\leqslant \ell-m$. Then, $x$ and $y$ have the same prefix (respectively, suffix) of length $t'$ (respectively, $\ell-m-t'$).
  Thus,
  \[\Psi(x)-\Psi(y)=\Psi\left(t' (t'+1)\cdots (t-1) (b-\ell+t)\cdots (b-t') \right)-\Psi\left(\sm(a)\right).\]
This difference is non-zero, as $\ell-b\neq m$. Consequently, the length-$m$ word 
\[t' (t'+1)\cdots (t-1) (b-\ell+t)\cdots (b-t')\]
contains at least one repeated letter. 
\end{proof}

\begin{example}\label{exa:abr45b}
In \cref{fig:abr45first}, we have depicted the graph $G_{4,5}$. 
The color of each edge is determined by the first component of its label, as the next proof focuses on the set $Y_{m,L}$.
The vertices are labeled with their corresponding Parikh vectors.
\begin{figure}[h!t]
  \centering
  \includegraphics[width=9cm]{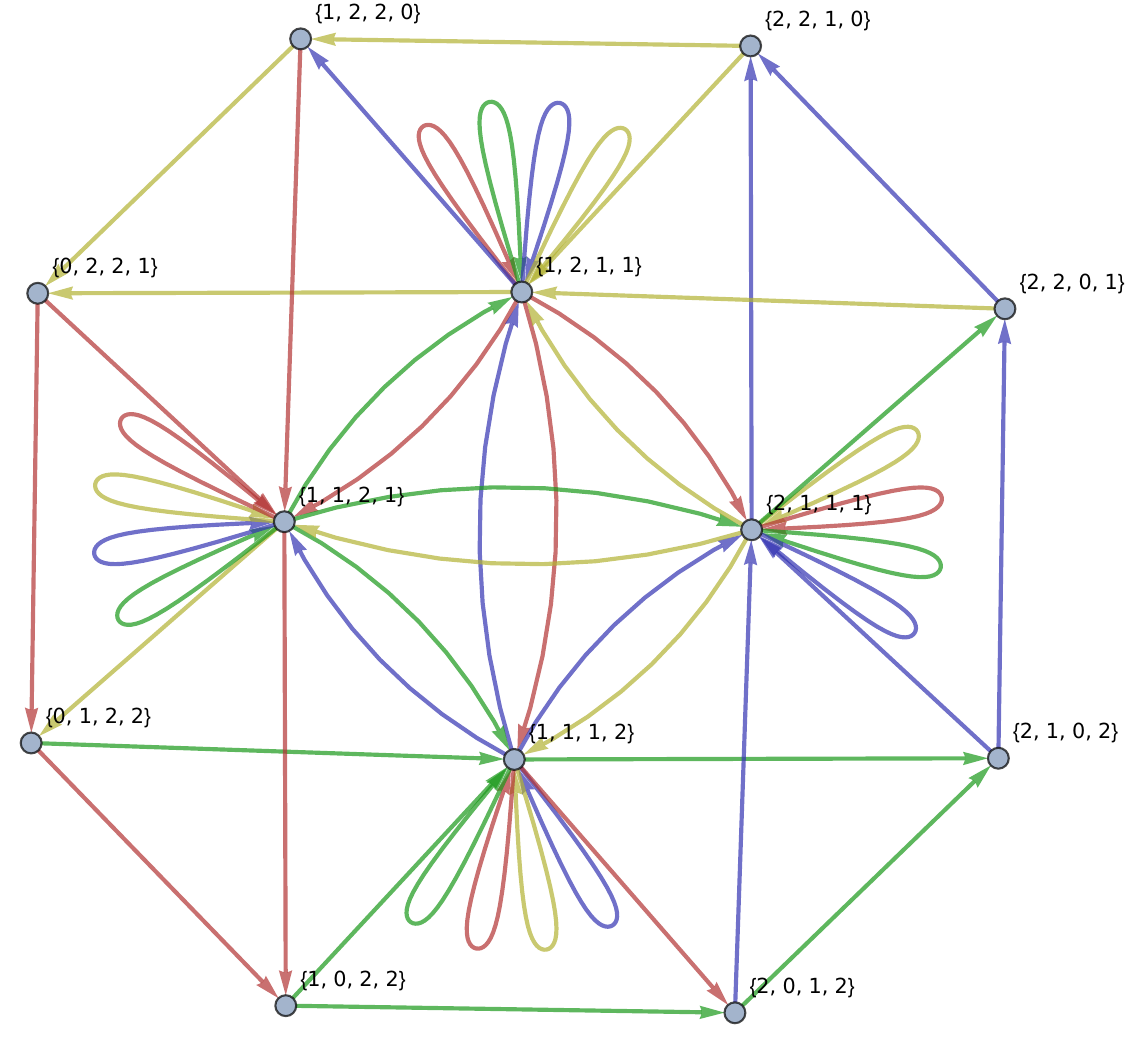}
  \caption{Abelian Rauzy graph $G_{4,5}$ of order $5$ for $\infw{t}_4$ ; edges colored by the first component of the label.}
  \label{fig:abr45first}
\end{figure}
\end{example}

\begin{proposition}
  For $m\leqslant \ell<2m$, the following holds
  \[\# Y_{m,R}=
  \# Y_{m,L}=
  \frac{m + m^2 (m - 1)}{2}.
  \]
  In particular,
   $\# Y_{m}(\ell)$
  is given by
  \[\# Y_{m}(\ell)=2m + m^2 (m - 1).\]
\end{proposition}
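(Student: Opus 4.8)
The plan is to obtain $\#Y_{m,R}(\ell)$ first, and then deduce $\#Y_{m,L}(\ell)$ and $\#Y_m(\ell)$ from it. Since the elements of $Y_{m,R}(\ell)$ and $Y_{m,L}(\ell)$ are pairs of different shapes, $(\Psi(U),a)$ versus $(a,\Psi(U))$, these two sets are disjoint, so $\#Y_m(\ell)=\#Y_{m,R}(\ell)+\#Y_{m,L}(\ell)$. The count of $\#Y_{m,L}(\ell)$ is carried out by the symmetric argument (organizing by in-edges and first label components), exactly as was done for the left version in \cref{pro:Ym1}; one may also invoke the reversal--complement symmetry $w\mapsto\overline{w^R}$ of $\Fac(\infw{t}_m)$, under which $\sm(i)\mapsto\sm(1-i)$ so that factors are sent to factors, to see directly that $\#Y_{m,L}(\ell)=\#Y_{m,R}(\ell)$. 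Thus it suffices to evaluate $\#Y_{m,R}(\ell)$, which I will show equals $m+\tfrac{m^2(m-1)}{2}$, giving $\#Y_m(\ell)=2m+m^2(m-1)$.

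To evaluate $\#Y_{m,R}(\ell)$ I first reduce modulo the $\tau_m$-symmetry. Writing $Y_{m,R}(\ell)=\bigsqcup_{b\in\am}\{(\Psi(W),b)\mid W b\in\Fac_{\ell+1}(\infw{t}_m)\}$, each block has size $N_b:=\#\{\Psi(W)\mid |W|=\ell,\ Wb\in\Fac_{\ell+1}(\infw{t}_m)\}$, the number of distinct source Parikh vectors admitting an outgoing edge with second label $b$. By \cref{lem:permut}, the map $X\mapsto\tau_m(X)$ sends factors ending in $b$ to factors ending in $b+1$ and shifts Parikh vectors cyclically, hence is a bijection witnessing $N_b=N_{b+1}$. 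Therefore all $N_b$ equal a common value $N$, so $\#Y_{m,R}(\ell)=mN$; and since $\Psi(W)$ and $\Psi(Wb)$ differ only in the coordinate of $b$, the number $N$ equals the number of distinct Parikh vectors among the length-$(\ell+1)$ factors ending in a fixed letter $b$.

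I then compute $N$ using the two shapes of length-$(\ell+1)$ factors isolated in the proof of \cref{pro:abrl2} (available since $m\le\ell<2m$): a factor ending in $b$ is either of the form $\suff_t(\sm(c))\,\pref_{\ell+1-t}(\sm(c'))$, with no full block, or of the form $\suff_t(\sm(c_0))\,\sm(c_1)\,\pref_{\ell+1-m-t}(\sm(c_2))$, with a single full middle block. In both shapes the prefix of $\sm(c')$ (resp. $\sm(c_2)$) ends at $b$, so its position is determined by the split length $t$, while the trailing suffix $\suff_t$ is an interval of $t$ consecutive residues whose position is free as $c$ (resp. $c_0$) ranges over $\am$; moreover, by \cref{lem:boundaryseq} every needed triple $c_0 c_1 c_2$ with prescribed endpoints is realizable, so the middle block imposes no constraint and merely contributes the all-ones vector. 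Consequently each Parikh vector is a sum of two consecutive arcs on $\Z/(m\Z)$ (plus the all-ones vector in the second shape), one arc fixed by $t$ and the other free, and $N$ is the number of distinct such configurations over all admissible $t$ and arc placements; this number is independent of $\ell$ on the range $[m,2m)$, matching the $\ell$-independence of $\#E_m(\ell)$.

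The main obstacle is precisely this enumeration: counting distinct two-arc configurations on the cycle while discarding the coincidences that occur (i) within one shape, when different pairs $(t,c)$ yield the same overlap pattern, (ii) between the two shapes, and (iii) at the degenerate or reflection-symmetric configurations (the analogue of the special value $b=\ell-m$ and of the parity-of-$m$ subtleties that appeared in \cref{pro:Ym1,pro:edgml1}). Carrying out this bookkeeping gives $N=1+\binom{m}{2}$, hence $\#Y_{m,R}(\ell)=m\bigl(1+\binom{m}{2}\bigr)=m+\tfrac{m^2(m-1)}{2}$. Equivalently, this equals $\tfrac12\#E_m(\ell)$ with $\#E_m(\ell)=m(m^2-m+1)$ from \cref{pro:abrl2}, so the passage from edges to $Y_{m,R}$ merges exactly half of the edges; combined with the first paragraph this yields $\#Y_{m,L}(\ell)=\#Y_{m,R}(\ell)$ and $\#Y_m(\ell)=2m+m^2(m-1)$, as claimed.
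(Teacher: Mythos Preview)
Your outline is reasonable but the proof has a genuine gap: the sentence ``Carrying out this bookkeeping gives $N=1+\binom{m}{2}$'' is the entire content of the argument, and you have not carried it out. You correctly reduce to counting distinct Parikh vectors of length-$(\ell+1)$ factors with a fixed last letter, and you correctly describe the two shapes of such factors, but identifying exactly which two-arc configurations coincide (your items (i)--(iii)) \emph{is} the proof, not a detail to defer. Also, your side claim that $\#Y_{m,R}(\ell)=\tfrac12\#E_m(\ell)$ is false: for $m=2$ one has $\#Y_{m,R}=4$ while $\tfrac12\#E_m=3$; in general $\#Y_{m,R}(\ell)=\tfrac12(\#E_m(\ell)+m)$, so the edges are not merged in pairs.

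The paper avoids your two-arc enumeration entirely by the subtraction strategy of \cref{pro:Ym1}. Working with $Y_{m,L}$ and fixing the first label to~$0$, it reuses the structural facts already proved in \cref{pro:abrl2}: the special value $b=\ell-m$ contributes a single target vertex receiving $m$ incoming edges all with first label~$0$ (overcount $m-1$), and for each unordered pair $\{b,b'\}\subseteq\am\setminus\{\ell-m\}$ there is exactly one target vertex receiving two edges $(0,b)$ and $(0,b')$, these vertices being pairwise distinct (overcount $\binom{m-1}{2}$). Subtracting $m\bigl[(m-1)+\binom{m-1}{2}\bigr]$ from $\#E_m(\ell)=m(m^2-m+1)$ gives the result directly. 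This is shorter because the coincidence analysis was already done inside \cref{pro:abrl2}; your approach would have to redo an equivalent analysis from scratch.
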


\begin{proof}
  We focus on $Y_{m,L}$, using the same notation as in the proof of  \cref{pro:abrl2}.
  The strategy is similar to that used in \cref{pro:Ym1}: subtracting, from the total number of edges given by \cref{pro:abrl2}, those that do not contribute a new element to the set $\# Y_{m,L}(\ell)$.

  If $\ell-b=m$, there are $m$ incoming edges labeled as $(0,i)$ for all $i\in\am$, directed to $x=12^b1^{m-b-1}$.
  The initial vertices $\Psi(0)+x-\Psi(i)$ are pairwise distinct. So we have to subtract $m-1$ from the total number of edges in $G_{m,\ell}$.
  For example, observe the four yellow vertices leading to vertex $1211$ in \cref{fig:abr45first}.
  For distinct $b,b'\neq\ell-m$, there exists a unique Parikh vector $x_{\{b,b'\}}$ with two incoming edges labeled as $(0,b)$ and $(0,b')$.
  For two such pairs $\{b,b'\}$ and $\{c,c'\}$, the corresponding vertices are such that $x_{\{b,b'\}}\neq x_{\{c,c'\}}$. Note that the number of these pairs is $\binom{m-1}{2}$. 
  In \cref{fig:abr45first},  three vertices --- namely, $0221$, $1121$ and $1112$,  each have two yellow incoming edges. So we also have to subtract $(m-1)(m-2)/2$. Thus,
  \[\# Y_{m,L}=m(m^2-m+1)-m\left[m-1+
  \frac{(m-1)(m-2)}{2}\right]=m\left(\frac{(m^2-m)}{2}+1\right).\]
  To obtain the result for $Y_{m,R}$, the reasoning remains identical; however, one has to consider edges labeled as $(b,0)$. 
\end{proof}

\begin{remark}
  For all $j\geqslant 1$ and $m\leqslant \ell<2m$, the abelian Rauzy graph $G_{m,\ell+j\cdot m}$ is isomorphic to $G_{m,\ell}$.
  Refer to the proof of \cref{pro:abrl2}. We may have factors of length~$\ell$ in one of the following two forms
  
  \begin{itemize}
  \item $uv$ where $u$ starts with $|u|=t\leqslant m$, $|v|=\ell-t<m$, i.e.,  $\ell-m<t\leqslant m$; or
  
  \item  $u'\sm(c)v'$ for some letter $c$,   where $|u'|=t\leqslant \ell-m$ and $|v'|=\ell-m-t$, i.e., $1\leqslant t\leqslant\ell-m$.
  \end{itemize}
  In both cases, $u,u'$ (respectively, $v,v'$) is a suffix of $\sm(a)$ for some letter $a$ (respectively, prefix of $\sm(b)$ for some letter $b$). By \cref{lem:boundaryseq}, there exists a factor $x$ (respectively, $y$) of length $j$ (respectively, $j+1$) such that $u\sm(x)v$ and $u'\sm(y)v'$ are factors of $\infw{t}_m$. Note that 
  \[
  \Psi\left(u\sm(x)v\right)=\Psi(uv)+j\cdot (1,\ldots,1)
  \]
   and 
   \[
   \Psi\left(u'\sm(y)v'\right)=\Psi\left(u'\sm(c)v'\right)+j\cdot (1,\ldots,1).
   \]
    These two observations show that  $G_{m,\ell+t\cdot m}$ and $G_{m,\ell}$ are the same graph up to a renaming of the vertices.
\end{remark}

The careful reader may observe that this remark provides an alternative proof of our main result, \cref{thm:main}.
Once the structure of the abelian Rauzy graphs is well understood, the formula given by \cref{pro:5.5} also provides a characterization of the $k$-binomial complexity.
The two approaches developed in this paper are, in our view, complementary.
Each approach provides its own set of combinatorial perspectives.
With this article, we have reconciled several approaches.
First, we simplified Lejeune's arguments in \cite{LLR} and considered the same type of equivalence relation for larger alphabets. Next, we applied abelian Rauzy graphs in a different context from that in \cite{RSW}.



\section{Proof of \texorpdfstring{\cref{lem:bigdiff}}{Lemma bigdiff}}\label{sec:appbigfiff}

Recall from \cref{sec:discern} that  $\overline{a}$ denotes $-a$ for $a\in\mathbb{Z}$. \cref{lem:bigdiff} is crucial for proving  \cref{lem:k-1-factorisation-first-last}.
\begin{proof}
Let $e=0 \mi{1}\cdots \mi{k}$. The subword $e$ may appear entirely in $\smn{k}(u)$, entirely in $\smn{k-1}(\gamma\delta)$, or intersects both parts. So we have
\[
\begin{aligned}
  \binom{\smn{k-1}(\gamma \sm(u) \delta)}{e} &=
      \binom{\smn{k}(u)}{e}+
      \binom{\smn{k-1}(\gamma \delta)}{e} \\
      &\quad + \sum_{\substack{e=xyz\\ 0<|y|<k+1}}
      \binom{\smn{k-1}(\gamma)}{x}\binom{\smn{k}(u)}{y} \binom{\smn{k-1}(\delta)}{z}.
\end{aligned}
\]
Since $|u|=|u'|$, by \cref{prop:phik}, $\smn{k}(u)\sim_{k}\smn{k}(u')$ and
\begin{eqnarray}
  &&\binom{\smn{k-1}(\gamma \sm(u) \delta)}{e}-
     \binom{\smn{k-1}(\gamma' \sm(u') \delta')}{e} \label{bigsumeq1} \\
  &=&
      \binom{\smn{k}(u)}{e}-\binom{\smn{k}(u')}{e}+
      \binom{\smn{k-1}(\gamma \delta)}{e}-\binom{\smn{k-1}(\gamma' \delta')}{e}\nonumber \\
  &&+
     \sum_{\substack{e=xyz\\ 0<|y|<k+1}}\binom{\smn{k}(u)}{y}
  \left[
  \binom{\smn{k-1}(\gamma)}{x} \binom{\smn{k-1}(\delta)}{z}
  -\binom{\smn{k-1}(\gamma')}{x} \binom{\smn{k-1}(\delta')}{z}\right]. \label{bigsumeq3} 
\end{eqnarray}
Observing that the factors $x$, $y$, and $z$ in the above sum are respectively of the form $x=0\mi{1}\cdots \mi{j-1}$; $y=\mi{j} \cdots \mi{j+\ell-1}$; $z=\mi{j+\ell}\cdots \mi{k}$ for $1\leqslant \ell\leqslant k$, let us rewrite term~\eqref{bigsumeq3} of the latter expression as
\[
  \sum_{\ell=1}^k
      \sum_{j=0}^{k-\ell+1}\binom{\smn{k}(u)}{\mi{j} \cdots \mi{j+\ell-1}}\left[
      \binom{\smn{k-1}(\gamma)}{0\mi{1}\cdots \mi{j-1}} \binom{\smn{k-1}(\delta)}{\mi{j+\ell}\cdots \mi{k}}
      -\binom{\smn{k-1}(\gamma')}{0\mi{1}\cdots \mi{j-1}} \binom{\smn{k-1}(\delta')}{\mi{j+\ell}\cdots \mi{k}}\right].
\]
By \cref{lem:smart}, the coefficient $\binom{\smn{k}(u)}{\mi{j} \cdots \mi{j+\ell-1}}$ equals $\binom{\smn{k}(u)}{0\mi{1} \cdots \mi{\ell-1}}$ for each $j$ since $\ell \leqslant k$; thus, the sum simplifies to
\[
  \sum_{\ell=1}^k \binom{\smn{k}(u)}{0\mi{1} \cdots \mi{\ell-1}}
  \sum_{j=0}^{k-\ell+1}\left[ \binom{\smn{k-1}(\gamma)}{0\mi{1}\cdots
      \mi{j-1}} \binom{\smn{k-1}(\delta)}{\mi{j+\ell}\cdots \mi{k}}
    -\binom{\smn{k-1}(\gamma')}{0\mi{1}\cdots \mi{j-1}}
    \binom{\smn{k-1}(\delta')}{\mi{j+\ell}\cdots \mi{k}}\right].
 \]
By \cref{lem:smart} again, we may replace $\binom{\smn{k-1}(\delta)}{\mi{j+\ell}\cdots \mi{k}}$
 with $\binom{\smn{k-1}(\delta)}{\mi{j}\cdots \mi{k-\ell}}$ and $\binom{\smn{k-1}(\delta')}{\mi{j+\ell}\cdots \mi{k}}$
 with $\binom{\smn{k-1}(\delta')}{\mi{j}\cdots \mi{k-\ell}}$, as long as $|\mi{j}\cdots \mi{k-\ell}| < k$, i.e., when $\ell \geqslant 2$ or $\ell = 1$ and $j\geqslant 1$. We decompose the sum accordingly (for convenience, we also add and subtract the same extra term)
\begin{align*}
  &\sum_{\ell=2}^k \binom{\smn{k}(u)}{0\mi{1} \cdots \mi{\ell-1}}
     \sum_{j=0}^{k-\ell+1}\left[
     \binom{\smn{k-1}(\gamma)}{0\mi{1}\cdots \mi{j-1}} \binom{\smn{k-1}(\delta)}{\mi{j}\cdots \mi{k-\ell}}
     -\binom{\smn{k-1}(\gamma')}{0\mi{1}\cdots \mi{j-1}} \binom{\smn{k-1}(\delta')}{\mi{j}\cdots \mi{k-\ell}}\right]\\
  &+ \binom{\smn{k}(u)}{0} \biggl(
      \sum_{j=1}^{k}\left[
      \binom{\smn{k-1}(\gamma)}{0\mi{1}\cdots \mi{j-1}} \binom{\smn{k-1}(\delta)}{\mi{j}\cdots \mi{k-1}}
      -\binom{\smn{k-1}(\gamma')}{0\mi{1}\cdots \mi{j-1}} \binom{\smn{k-1}(\delta')}{\mi{j}\cdots \mi{k-1}}\right]\\
      &+ \binom{\smn{k-1}(\delta)}{0\mi{1}\cdots \mi{k-1}} -\binom{\smn{k-1}(\delta')}{0\mi{1}\cdots \mi{k-1}}\\
  &+  \binom{\smn{k-1}(\delta)}{\mi{1}\cdots \mi{k}}
      - \binom{\smn{k-1}(\delta')}{\mi{1}\cdots \mi{k}} - \left[ \binom{\smn{k-1}(\delta)}{0\mi{1}\cdots \mi{k-1}} -\binom{\smn{k-1}(\delta')}{0\mi{1}\cdots \mi{k-1}} \right] \biggr).
\end{align*}
Since
 \[
   \sum_{j=0}^{k-\ell+1}\binom{\smn{k-1}(x)}{0\mi{1}\cdots \mi{j-1}} \binom{\smn{k-1}(y)}{\mi{j}\cdots \mi{k-\ell}} = \binom{\smn{k-1}(xy)}{0\mi{1}\cdots \mi{k-\ell}}
 \]
for any words $x$, $y$, we further simplify to
\begin{multline}
  \sum_{\ell=1}^k\binom{\smn{k}(u)}{0\mi{1} \cdots \mi{\ell-1}} \left[
    \binom{\smn{k-1}(\gamma\delta)}{0\mi{1}\cdots \mi{k-\ell}}
    - \binom{\smn{k-1}(\gamma'\delta')}{0\mi{1}\cdots \mi{k-\ell}}\right]\\
  + m^{k-1}|u|\left(
    \binom{\smn{k-1}(\delta)}{\mi{1}\cdots\mi{k}} -
    \binom{\smn{k-1}(\delta)}{0\cdots\mi{k-1}} -
    \binom{\smn{k-1}(\delta')}{\mi{1}\cdots\mi{k}} +
    \binom{\smn{k-1}(\delta')}{0\cdots\mi{k-1}}
  \right). \label{eq:almost_simplified}
\end{multline}
Now $\binom{\smn{k-1}(\delta)}{\mi{1}\cdots\mi{k}} = \binom{\smn{k-1}\tau_m(\delta)}{0\cdots\mi{k-1}}$, where we recall that $\tau_m$ is the morphism defined by $\tau_m(i) = i+1$. Thus, by \cref{cor:notequiv}, the second term in \eqref{eq:almost_simplified} simplifies to:
\begin{equation}\label{eq:missing}
\begin{aligned}
   m^{k-1}|u|\left(m^{\binom{k-1}{2}}\left(|\delta+1|_0 - |\delta|_0 - |\delta'+1|_0 + |\delta'|_0 \right)\right) \\
   = m^{\binom{k}{2}}|u|\left(|\delta|_{\mi{1}} - |\delta|_0 - |\delta'|_{\mi{1}} + |\delta'|_0 \right).
\end{aligned}
\end{equation}
      
Consider the sum appearing in \eqref{eq:almost_simplified}. Since $|\delta\gamma|=|\delta'\gamma'|$, by \cref{prop:phik}, $\smn{k-1}(\gamma\delta)\sim_{k-1}\smn{k-1}(\gamma'\delta')$, and the sum reduces to a single term (corresponding to $\ell=1$)
  \[
    \binom{\smn{k}(u)}{0}
    \left[ \binom{\smn{k-1}(\gamma\delta)}{0\mi{1}\cdots \mi{k-1}}
      - \binom{\smn{k-1}(\gamma'\delta')}{0\mi{1}\cdots \mi{k-1}} \right]
    = m^{k-1} |u|\, \left(|\gamma\delta|_0-|\gamma'\delta'|_0\right) m^{\binom{k-1}{2}}
  \]
(where we have used \cref{cor:notequiv}) and is equal to
  \[
    |u|\, m^{\binom{k}{2}} \left(|\gamma\delta|_0-|\gamma'\delta'|_0\right).
  \]
  
 We can now return to the initial difference~\eqref{bigsumeq1} of interest. By applying \cref{cor:notequiv} again, we get that ~\eqref{bigsumeq1} is equal to
\[
\begin{aligned}
  &\binom{\smn{k-1}(\gamma \delta)}{e}-\binom{\smn{k-1}(\gamma' \delta')}{e} + \\
  &m^{\binom{k}{2}} \biggl[ |u|_0 - |u'|_0 + |u| \, \bigl( |\gamma\delta|_0 - |\gamma'\delta'|_0 + |\delta|_{\mi{1}} - |\delta|_0 - |\delta'|_{\mi{1}} + |\delta'|_0 \bigr) \biggr].
\end{aligned}
\]

To conclude the proof, we develop the difference between the first two terms.
Let $\gamma\delta=x_1\cdots x_t$ and $\gamma'\delta'=x_1'\cdots x_t'$. We use the same argument as in the proof of  \cref{prop:-1}.
We need to count occurrences of the subword $e$. 
If an occurrence is split across multiple $m^{k-1}$-blocks and at most $k-1$ letters appear in any block, then these occurrences will cancel because $\smn{k-1}(x_i)\sim_{k-1}\smn{k-1}(x_i')$. We only have to consider occurrences where at least $k$ letters (out of $k+1$) appear in the same $m^{k-1}$-block. Then, we look at $e$ occurring entirely within one $m^{k-1}$-block, given by the following expression
\[\sum_{i=1}^t \left( \binom{\smn{k-1}(x_i)}{e} - \binom{\smn{k-1}(x_i')}{e} \right)\]
and this sum vanishes because $\gamma\delta\sim_1\gamma'\delta'$. 
Alternatively, $e$ is split with $k$ letters in one $m^{k-1}$-block and one (the first or the last) in another $m^{k-1}$-block, we obtain
\begin{eqnarray*}
  &&
     \sum_{i=1}^{t-1}\sum_{j=i+1}^t  \left( \binom{\smn{k-1}(x_i)}{0} \binom{\smn{k-1}(x_j)}{\mi{1}\cdots \mi{k}}- \binom{\smn{k-1}(x_i')}{0}  \binom{\smn{k-1}(x_j')}{\mi{1}\cdots \mi{k}}\right)\\
  &+& \sum_{i=1}^{t-1}\sum_{j=i+1}^t  \left( \binom{\smn{k-1}(x_i)}{0\,\mi{1}\cdots \mi{k-1}} \binom{\smn{k-1}(x_j)}{\mi{k}}- \binom{\smn{k-1}(x_i')}{0\,\mi{1}\cdots \mi{k-1}}  \binom{\smn{k-1}(x_j')}{\mi{k}}\right).
\end{eqnarray*}
We get
\begin{eqnarray*}
  &&
     \sum_{i=1}^{t-1}\sum_{j=i+1}^t  m^{k-2} \left(  \binom{\smn{k-1}(x_j+1)}{0\mi{1}\cdots \mi{k-1}}-   \binom{\smn{k-1}(x_j'+1)}{0\mi{1}\cdots \mi{k-1}}\right)\\ 
  &+& \sum_{i=1}^{t-1}\sum_{j=i+1}^t  m^{k-2} \left( \binom{\smn{k-1}(x_i)}{0\,\mi{1}\cdots \mi{k-1}}- \binom{\smn{k-1}(x_i')}{0\,\mi{1}\cdots \mi{k-1}} \right).
\end{eqnarray*}
By \cref{cor:notequiv}, it is equal to 
\begin{eqnarray*}
  &&
     \sum_{i=1}^{t-1}\sum_{j=i+1}^t  m^{k-2} m^{\binom{k-1}{2}}(|x_j|_{\mi{1}}-|x_j'|_{\mi{1}})          + \sum_{i=1}^{t-1}\sum_{j=i+1}^t  m^{k-2}  m^{\binom{k-1}{2}}(|x_i|_0-|x_i'|_0)
\end{eqnarray*}
which can be rewritten as
\[
  m^{k-2} m^{\binom{k-1}{2}} \sum_{j=2}^t (j-1) \left(|x_j|_{\mi{1}}-|x_j'|_{\mi{1}}\right)          + m^{k-2}  m^{\binom{k-1}{2}} \sum_{i=1}^{t-1} (t-i) \left(|x_i|_0-|x_i'|_0\right).
\]
If $x_j=\mi{1}$, the factor $j-1$ represents the number of letters to the left of  $x_j$ and if $x_i=0$, the factor $t-i$ represents the number of letters to the right of $x_i$.
Therefore, we can write
\[
  m^{k-2} m^{\binom{k-1}{2}} \sum_{b\in\am}\left( \binom{\gamma\delta}{b\mi{1}}-\binom{\gamma'\delta'}{b\mi{1}}+ \binom{\gamma\delta}{0b}-\binom{\gamma'\delta'}{0b}\right).
\]
\end{proof}

\bibliographystyle{plainurl}
\bibliography{./bibliography}

\begin{thebibliography}{10}

\bibitem{JTNB_2015__27_2_375_0}
Jean-Paul Allouche.
\newblock Thue, {Combinatorics} on words, and conjectures inspired by the
  {Thue-Morse} sequence.
\newblock {\em Journal de th\'eorie des nombres de Bordeaux}, 27(2):375--388,
  2015.
\newblock URL: \url{http://www.numdam.org/articles/10.5802/jtnb.906/}, \href
  {https://doi.org/10.5802/jtnb.906} {\path{doi:10.5802/jtnb.906}}.

\bibitem{ubiquitous}
Jean-Paul Allouche and Jeffrey Shallit.
\newblock The ubiquitous {P}rouhet-{T}hue-{M}orse sequence.
\newblock In {\em Sequences and their applications ({S}ingapore, 1998)},
  Springer Ser. Discrete Math. Theor. Comput. Sci., pages 1--16. Springer,
  London, 1999.

\bibitem{AlloucheS2000sums}
Jean-Paul Allouche and Jeffrey Shallit.
\newblock Sums of digits, overlaps, and palindromes.
\newblock {\em Discrete Mathematics \& Theoretical Computer Science}, 4, 2000.
\newblock \href {https://doi.org/10.46298/dmtcs.282}
  {\path{doi:10.46298/dmtcs.282}}.

\bibitem{AS}
Jean-Paul Allouche and Jeffrey Shallit.
\newblock {\em Automatic sequences: Theory, applications, generalizations}.
\newblock Cambridge University Press, Cambridge, 2003.
\newblock \href {https://doi.org/10.1017/CBO9780511546563}
  {\path{doi:10.1017/CBO9780511546563}}.

\bibitem{Andrieu}
M{\'e}lodie Andrieu and L{\'e}o Vivion.
\newblock Personal communication.
\newblock Work in progress.

\bibitem{Balkova2012}
{\v{L}}{\'u}bom{\'{\i}}ra Balkov{\'a}.
\newblock Factor frequencies in generalized {Thue}-{Morse} words.
\newblock {\em Kybernetika}, 48(3):371--385, 2012.

\bibitem{cyclic}
Julien Cassaigne, Gabriele Fici, Marinella Sciortino, and Luca~Q. Zamboni.
\newblock Cyclic complexity of words.
\newblock {\em J. Combin. Theory Ser. A}, 145:36--56, 2017.
\newblock \href {https://doi.org/10.1016/j.jcta.2016.07.002}
  {\path{doi:10.1016/j.jcta.2016.07.002}}.

\bibitem{ChenWen2019}
Jin Chen and Zhi-Xiong Wen.
\newblock On the abelian complexity of generalized {Thue}-{Morse} sequences.
\newblock {\em Theor. Comput. Sci.}, 780:66--73, 2019.
\newblock \href {https://doi.org/10.1016/j.tcs.2019.02.014}
  {\path{doi:10.1016/j.tcs.2019.02.014}}.

\bibitem{MR4046776}
Micha\'l{} D{\c e}bski, Jaros\l~aw Grytczuk, Barbara Nayar, Urszula Pastwa,
  Joanna Sok\'o\l, Micha\l{} Tuczy\'nski, Przemys\l~aw Wenus, and Krzysztof
  W{\c e}sek.
\newblock Avoiding multiple repetitions in {E}uclidean spaces.
\newblock {\em SIAM J. Discrete Math.}, 34(1):40--52, 2020.
\newblock \href {https://doi.org/10.1137/18M1180347}
  {\path{doi:10.1137/18M1180347}}.

\bibitem{DumontThomas}
Jean-Marie Dumont and Alain Thomas.
\newblock Syst{\`e}mes de num{\'e}ration et fonctions fractales relatifs aux
  substitutions. ({Numeration} systems and fractal functions related to
  substitutions).
\newblock {\em Theor. Comput. Sci.}, 65(2):153--169, 1989.
\newblock \href {https://doi.org/10.1016/0304-3975(89)90041-8}
  {\path{doi:10.1016/0304-3975(89)90041-8}}.

\bibitem{Frid1998}
Anna~E. Frid.
\newblock On the frequency of factors in a {D0L} word.
\newblock {\em J. Autom. Lang. Comb.}, 3(1):29--41, 1998.

\bibitem{GuoWen2022}
Ying-Jun Guo, Xiao-Tao L{\"u}, and Zhi-Xiong Wen.
\newblock On the boundary sequence of an automatic sequence.
\newblock {\em Discrete Math.}, 345(1):9, 2022.
\newblock Id/No 112632.
\newblock \href {https://doi.org/10.1016/j.disc.2021.112632}
  {\path{doi:10.1016/j.disc.2021.112632}}.

\bibitem{Kennard}
L.~Kennard, M.~Zaremsky, and J.~Holdener.
\newblock Generalized {T}hue-{M}orse sequences and the von {K}och curve.
\newblock {\em Int. J. Pure Appl. Math.}, 47(3):397--403, 2008.

\bibitem{PhysRevB.43.1034}
M.~Kol\'a\ifmmode~\check{r}\else \v{r}\fi{}, M.~K. Ali, and Franco Nori.
\newblock Generalized thue-morse chains and their physical properties.
\newblock {\em Phys. Rev. B}, 43:1034--1047, Jan 1991.
\newblock \href {https://doi.org/10.1103/PhysRevB.43.1034}
  {\path{doi:10.1103/PhysRevB.43.1034}}.

\bibitem{MR3032928}
Jakub Kozik and Piotr Micek.
\newblock Nonrepetitive choice number of trees.
\newblock {\em SIAM J. Discrete Math.}, 27(1):436--446, 2013.
\newblock \href {https://doi.org/10.1137/120866361}
  {\path{doi:10.1137/120866361}}.

\bibitem{LLR}
Marie Lejeune, Julien Leroy, and Michel Rigo.
\newblock Computing the {{\(k\)}}-binomial complexity of the {Thue}-{Morse}
  word.
\newblock {\em J. Comb. Theory, Ser. A}, 176:44, 2020.
\newblock Id/No 105284.
\newblock \href {https://doi.org/10.1016/j.jcta.2020.105284}
  {\path{doi:10.1016/j.jcta.2020.105284}}.

\bibitem{Lothaire}
M.~Lothaire.
\newblock {\em Combinatorics on words}.
\newblock Cambridge Mathematical Library. Cambridge University Press,
  Cambridge, 1997.
\newblock With a foreword by Roger Lyndon and a preface by Dominique Perrin,
  Corrected reprint of the 1983 original, with a new preface by Perrin.
\newblock \href {https://doi.org/10.1017/CBO9780511566097}
  {\path{doi:10.1017/CBO9780511566097}}.

\bibitem{ChenWen2024}
Xiao-Tao L{\"u}, Jin Chen, Zhi-Xiong Wen, and Wen Wu.
\newblock On the 2-binomial complexity of the generalized {Thue}-{Morse} words.
\newblock {\em Theor. Comput. Sci.}, 986:14, 2024.
\newblock Id/No 114342.
\newblock \href {https://doi.org/10.1016/j.tcs.2023.114342}
  {\path{doi:10.1016/j.tcs.2023.114342}}.

\bibitem{Merai}
L\'aszl\'o{} M\'erai and Arne Winterhof.
\newblock On the pseudorandomness of automatic sequences.
\newblock {\em Cryptogr. Commun.}, 10(6):1013--1022, 2018.
\newblock \href {https://doi.org/10.1007/s12095-017-0260-7}
  {\path{doi:10.1007/s12095-017-0260-7}}.

\bibitem{Morse1921}
Harold~Marston Morse.
\newblock Recurrent geodesics on a surface of negative curvature.
\newblock {\em Trans. Amer. Math. Soc.}, 22(1):84--100, 1921.
\newblock \href {https://doi.org/10.2307/1988844} {\path{doi:10.2307/1988844}}.

\bibitem{Palacios}
Ignacio Palacios-Huerta.
\newblock Tournaments, fairness and the {P}rouhet-{T}hue-{M}orse sequence.
\newblock {\em Economic inquiry}, 50:848--849, 2012.

\bibitem{Parshina}
Olga~G. Parshina.
\newblock On arithmetic index in the generalized {T}hue-{M}orse word.
\newblock In {\em Combinatorics on words}, volume 10432 of {\em Lecture Notes
  in Comput. Sci.}, pages 121--131. Springer, Cham, 2017.
\newblock \href {https://doi.org/10.1007/978-3-319-66396-8\_12}
  {\path{doi:10.1007/978-3-319-66396-8\_12}}.

\bibitem{RigoBook}
Michel Rigo.
\newblock {\em Formal languages, automata and numeration systems. {Vol}. 2.
  {Applications} to recognizability and decidability}.
\newblock Hoboken, NJ: John Wiley \& Sons; London: ISTE, 2014.
\newblock \href {https://doi.org/10.1002/9781119042853}
  {\path{doi:10.1002/9781119042853}}.

\bibitem{RigoRelations}
Michel Rigo.
\newblock Relations on words.
\newblock {\em Indag. Math. (N.S.)}, 28(1):183--204, 2017.
\newblock \href {https://doi.org/10.1016/j.indag.2016.11.018}
  {\path{doi:10.1016/j.indag.2016.11.018}}.

\bibitem{RigoSalimov}
Michel Rigo and P.~Salimov.
\newblock Another generalization of abelian equivalence: binomial complexity of
  infinite words.
\newblock {\em Theor. Comput. Sci.}, 601:47--57, 2015.
\newblock \href {https://doi.org/10.1016/j.tcs.2015.07.025}
  {\path{doi:10.1016/j.tcs.2015.07.025}}.

\bibitem{RigoSW2023automaticity}
Michel Rigo, Manon Stipulanti, and Markus~A. Whiteland.
\newblock Automaticity and parikh-collinear morphisms.
\newblock In Anna Frid and Robert Merca{\c{s}}, editors, {\em Combinatorics on
  Words}, pages 247--260, Cham, 2023. Springer Nature Switzerland.
\newblock \href {https://doi.org/10.1007/978-3-031-33180-0_19}
  {\path{doi:10.1007/978-3-031-33180-0_19}}.

\bibitem{RigoSW2024automatic}
Michel Rigo, Manon Stipulanti, and Markus~A. Whiteland.
\newblock Automatic abelian complexities of parikh-collinear fixed points.
\newblock {\em Theory Comput Syst}, 2024.
\newblock \href {https://doi.org/10.1007/s00224-024-10197-5}
  {\path{doi:10.1007/s00224-024-10197-5}}.

\bibitem{RSW}
Michel Rigo, Manon Stipulanti, and Markus~A. Whiteland.
\newblock Characterizations of families of morphisms and words via binomial
  complexities.
\newblock {\em Eur. J. Comb.}, 118:35, 2024.
\newblock Id/No 103932.
\newblock \href {https://doi.org/10.1016/j.ejc.2024.103932}
  {\path{doi:10.1016/j.ejc.2024.103932}}.

\bibitem{SAHEL20171}
S.~Sahel, R.~Amri, D.~Gamra, M.~Lejeune, M.~Benlahsen, K.~Zellama, and
  H.~Bouchriha.
\newblock Effect of sequence built on photonic band gap properties of
  one-dimensional quasi-periodic photonic crystals: Application to thue-morse
  and double-period structures.
\newblock {\em Superlattices and Microstructures}, 111:1--9, 2017.
\newblock URL: \url{https://doi.org/10.1016/j.spmi.2017.04.031}.

\bibitem{Seebold}
Patrice S\'e\'ebold.
\newblock On some generalizations of the {T}hue-{M}orse morphism.
\newblock {\em Theoret. Comput. Sci.}, 292(1):283--298, 2003.
\newblock Selected papers in honor of Jean Berstel.
\newblock \href {https://doi.org/10.1016/S0304-3975(01)00228-6}
  {\path{doi:10.1016/S0304-3975(01)00228-6}}.

\bibitem{Starosta}
{\v S}t{\v e}p{\'a}n Starosta.
\newblock Generalized {T}hue-{M}orse words and palindromic richness.
\newblock {\em Kybernetika (Prague)}, 48(3):361--370, 2012.

\bibitem{WOLNY2000313}
Janusz Wolny, Anna Wnęk, and Jean-Louis Verger-Gaugry.
\newblock Fractal behaviour of diffraction pattern of thue–morse sequence.
\newblock {\em Journal of Computational Physics}, 163(2):313--327, 2000.
\newblock URL: \url{https://doi.org/10.1006/jcph.2000.6563}.

\bibitem{MR104622}
E.~M. Wright.
\newblock Prouhet's 1851 solution of the {T}arry-{E}scott problem of 1910.
\newblock {\em Amer. Math. Monthly}, 66:199--201, 1959.
\newblock \href {https://doi.org/10.2307/2309513} {\path{doi:10.2307/2309513}}.

\end{thebibliography}

\end{document}